\documentclass[a4paper,11pt]{article}
\pdfoutput=1

\usepackage[sec]{modstyle}
\title{On the tensor product of enriched $\infty$-categories}
\author{Rune Haugseng}

\date{\today}

\newcommand{\Algdpsh}{\Algd_{/\catname{PSh}}}
\newcommand{\LSpc}{\widehat{\Spc}}
\newcommand{\LPSh}{\widehat{\PSh}}
\renewcommand{\Seg}{\txt{Seg}}
\renewcommand{\FunV}{\FUN^{\uV}}
\begin{document}

\maketitle
\begin{abstract}
  We show that the tensor product of $\infty$-categories enriched in a
  suitable monoidal $\infty$-category preserves colimits in each
  variable, fixing a mistake in an earlier paper of Gepner and the
  author. We also prove that essentially surjective and fully faithful
  functors form a factorization system on enriched
  $\infty$-categories, and that the tensor product and internal hom
  are compatible with this.
\end{abstract}

\tableofcontents

\section{Introduction}
If $\uV$ is a symmetric monoidal category,
then we can define a
tensor product of $\uV$-enriched categories: for $\uV$-categories
$\eA$ and $\eB$ with sets of objects $S$ and $T$, respectively, their tensor product
$\eA \otimes \eB$ has objects $S \times T$, and Hom objects
\[ (\eA \otimes \eB)((s,t), (s',t')) \cong \eA(s,s') \otimes \eB(t,t').\]

The \icatl{} version of this tensor product was defined in \cite{enr},
where Gepner and the author introduced enriched \icats{}, but there is
an unfortunate mistake in our treatment of the tensor product: In
\cite{enr}*{Proposition 5.7.16} we claim that the functor that assigns
to a presentably monoidal \icat{} $\uV$ the \icat{} $\Cat(\uV)$ of
$\uV$-enriched \icats{} is lax monoidal with respect to the cocomplete
tensor product. This means, in particular, that if $\uV$ is
presentably symmetric monoidal, then so is $\Cat(\uV)$, \ie{} the
tensor product of $\uV$-\icats{} preserves colimits in each variable.
However, we do not actually give a proof of this! To explain the
issue, we need to unpack the construction a bit:
\begin{itemize}
\item The \icat{} $\Cat(\uV)$ is obtained as a localization of a larger \icat{} $\Algcat(\uV)$ of \emph{categorical algebras}.\footnote{In the body of the paper we will instead refer to these objects as \emph{algebroids}. Other names used in the literature are \emph{$\uV$-precategories} and \emph{flagged $\uV$-\icats{}}.}
The tensor product is obtained by restricting one on $\Algcat(\uV)$, and \cite{enr}*{Proposition 5.7.16} follows from the analogous statement for $\Algcat(\uV)$ \cite{enr}*{Corollary 4.3.16}.
\item The desired colimit-preserving lax monoidal structure map \[\Algcat(\uV) \otimes \Algcat(\uV') \to \Algcat(\uV \otimes \uV')\] corresponds to a functor \[\Algcat(\uV) \times \Algcat(\uV') \to \Algcat(\uV \otimes \uV')\] that preserves colimits in each variable. The latter is defined as the composite
  \begin{equation}
    \label{eq:algcattens}
  \Algcat(\uV) \times \Algcat(\uV') \xto{\boxtimes} \Algcat(\uV \times \uV') \xto{\mu_{*}} \Algcat(\uV \otimes \uV')
  \end{equation}
  of an ``external product'' $\boxtimes$ and a functor given by
  composing with a canonical monoidal functor
  $\mu \colon \uV \times \uV' \to \uV \otimes \uV'$.
\item In the proof of \cite{enr}*{Corollary 4.3.16}, the statement
  that \cref{eq:algcattens} preserves colimits in each variable is
  asserted to follow from \cite{enr}*{Proposition 4.3.15}, which says
  that the external product preserves colimits in each variable, and
  \cite{enr}*{Proposition 4.3.14}, which says that if
  $F \colon \uU \to \uU'$ is a colimit-preserving monoidal functor, then the induced functor
  \[ F_{*} \colon \Algcat(\uU) \to \Algcat(\uU') \]
  preserves colimits.
\item However, this argument
  does not work since\footnote{Let us note that there are also issues with the proof of
  \cite{enr}*{Proposition 4.3.15}: This relies on \cite{enr}*{Lemma
    3.6.15}, but writing out the relevant colimit formulas for
  operadic left Kan extensions we see that this can only be true under
  strong cofinality assumptions.}
  the functor $\mu$ certainly does \emph{not}
  preserve colimits! (Instead, it preserves colimits \emph{in each
  variable}.)
\end{itemize}
The goal of this note is to give a correct proof that
\cref{eq:algcattens} preserves colimits in each variable, for any cocomplete
monoidal \icats{} $\uV$ and $\uV'$ whose tensor products preserve small
colimits in each variable.

\subsection*{Overview}
In \S\ref{sec:algds} we recall the definition of enriched \icats{} via
algebroids (or categorical algebras), and in \S\ref{sec:algdtens} we
review the definition of tensor products of enriched \icats{} in this
setting. To understand the behaviour of the tensor product we will
make use of an alternative model of \icats{} enriched in presheaves
with Day convolution using ``Segal
presheaves'', which we recall in \S\ref{sec:segalpsh}. We can also
define a tensor product in this setting, and we relate this to the one
for algebroids in \S\ref{sec:comp}. We then discuss inner anodyne maps
in Segal presheaves in \S\ref{sec:innan} and use these to prove our
main result in the case of enrichment in presheaves in
\S\ref{sec:tenspsh}. Building on this we extend the result to
presentable \icats{} in \S\ref{sec:tenspres} and then to general
cocomplete \icats{} in \S\ref{sec:tensgen}. Finally, in
\S\ref{sec:ffes} we show that the essentially surjective and the fully
faithful functors form a factorization system on enriched \icats{}
that is compatible with the tensor product.

\subsection*{Notation}
We write $\Spc$ for the \icat{} of spaces, $\MonCatI$ for that of monoidal \icats{} (and strong monoidal functors), and $\MonCatIlax$ for that of monoidal \icats{} and lax monoidal functors. We often denote the unit of a monoidal \icat{} by $\bbone$.

\subsection*{Acknowledgments}
I thank Fernando Abell\'an Garc\'ia and David Gepner for helpful conversations about this paper.
I also thank Bastiaan Cnossen and Adrian Clough for the proof of
\cref{lem:pbgivesmono} and Louis Martini for inspiring that of \cref{cor:FunFF}.

\section{Enriched \icats{} as algebroids}\label{sec:algds}
In this section we will briefly review the main definitions of enriched
\icats{} from \cite{enr}. We assume the
reader is already familiar with the algebraic framework of
(generalized) non-symmetric \iopds{}; see \cite{enr}*{\S 2} for
motivation and \cite{enr}*{\S 3} for a detailed discussion of these
objects\footnote{But note that here we will denote the \icat{} of
  algebras for a \gnsiopd{} $\uO$ in a monoidal \icat{} $\uV$ as just
  $\Alg_{\uO}(\uV)$.}.

\begin{defn}
  For $X \in \Spc$, let $\DopX \to \Dop$ denote the left fibration for
  the functor $\Dop \to \Spc$ obtained from $X$ by right Kan extension
  along the inclusion $\{[0]\} \hookrightarrow \Dop$. Then
  $(\DopX)_{[n]} \simeq X^{\times (n+1)}$, and $\DopX$ is a double
  \icat{} (and so in particular a \gnsiopd{}). An algebra for $\DopX$
  we call a \emph{$\Dop$-algebroid}\footnote{In \cite{enr}, these were
    called \emph{categorical algebras}.}, or in this paper just
  \emph{algebroid}, with space of objects $X$. If $\uC^{\otimes}$ is a
  monoidal \icat{}, we will write
  $\Algd(\uC) = \Algd_{\Dop}(\uC) \to \Spc$ for the cartesian
  fibration corresponding to the functor
  $X \mapsto \Alg_{\DopX}(\uC)$.
\end{defn}

\begin{defn}
  Let $\Alg_{/\OpdIgns} \to \OpdIgns \times \OpdIgns$ denote the orthofibration (in the sense of \cite{HHLN1}) for the functor \[\Alg_{(\blank)}(\blank) \colon (\OpdIgns)^{\op} \times \OpdIgns \to \CatI.\] We can then define $\Algd \to \Spc \times \MonCatIlax$ as the pullback
  \[
    \begin{tikzcd}
      \Algd \arrow{r} \arrow{d} & \Alg_{\OpdIgns} \arrow{d} \\
      \Spc \times \MonCatIlax \arrow{r} & \OpdIgns \times \OpdIgns.
    \end{tikzcd}
  \]
  In other words, $\Algd \to \MonCatIlax$ is the cocartesian fibration for the functor $\uC^{\otimes} \mapsto \Algd(\uC)$; we write $F_{*} \colon \Algd(\uC)\to \Algd(\uD)$ for the functor induced by composition with a lax monoidal functor $F \colon \uC \to \uD$.
\end{defn}

Next, we want to define enriched \icats{} as a full subcategory of algebroids; this requires introducing some notation:

\begin{defn}
  For a space $S$, let $E_{S}$ denote the unique algebroid with $S$ as
  space of objects in the terminal \gnsiopd{} $\Dop$ (which is also
  the unique monoidal structure on the terminal category); this gives
  an equivalence $E_{\blank} \colon \Spc \isoto \Algd(*)$. For any
  monoidal \icat{} $\uC^{\otimes}$, the unit gives the unique monoidal
  functor $u_{\uC} \colon \Dop \to \uC^{\otimes}$. We write
  $E_{S,\uC} := u_{\uC,*}E_{S}$ (and we denote this also by $E_{S}$
  when $\uC$ is clear from the context), so
  $E_{S,\uC}(x,y) \simeq \bbone$ for all $x,y \in S$. When $S$ is the
  set $\{0,\ldots,n\}$ we write $E^{n}$ for $E_{S}$, so that we have a
  cosimplicial object $E^{\bullet} \colon \simp \to \Algd(\uC)$ for
  any monoidal \icat{} $\uC$. Let $\eA \in \Algd(\uC)$ be an algebroid
  with underlying space $X$; we write
  \[\iota_{n}\eA := \Map_{\Algd(\uC)}(E^{n}, \eA),\] so that
  $\iota_{\bullet}\eA$ is a simplicial space, and we define
  $\iota \eA$ to be its colimit. Here $\iota_{0}\eA$ is the underlying
  space $X$, and we say that $\eA$ is \emph{complete} if the canonical
  map $\iota_{0}\eA \to \iota \eA$ is an equivalence. Equivalently (by
  \cite{enr}*{Corollary 5.2.10}), $\eA$ is complete \IFF{} it is local
  with respect to the degeneracy map $E^{1} \to E^{0}$.
\end{defn}

\begin{remark}
  We refer to \cite{enr}*{\S 5.2} for further discussion of this definition. In particular, note that $\eA$ is complete \IFF{} its underlying $\Spc$-enriched \icat{}, \ie{} its image under the lax monoidal functor $\Map_{\uC}(\bbone,\blank)$, corresponds to a Segal space that is complete in the sense of Rezk~\cite{RezkCSS}.
\end{remark}

\begin{notation}
  We denote the full subcategory of $\Algd(\uC)$ spanned by the complete objects by $\Cat(\uC)$; we will also refer to its objects as \emph{$\uC$-enriched \icats{}}. We also write $\catname{Enr}$ for the full subcategory of $\Algd$ spanned by the complete algebroids in all monoidal \icats{}.
\end{notation}

\begin{observation}
  By \cite{enr}*{Theorem 5.6.6} the full subcategory $\Cat(\uC)$ is a
  localization of $\Algd(\uC)$ for any $\uC$. From this it follows
  (\cite{enr}*{Proposition 5.7.1}) that the restricted projection
  $\Enr \to \MonCatIlax$ is still a cocartesian fibration, and the
  inclusion $\Enr \hookrightarrow \Algd$ has a left adjoint that
  preserves cocartesian morphisms. (In other words, the cocartesian
  transport functor $\Cat(\uC) \to \Cat(\uD)$ over
  $F \colon \uC \to \uD$ is given by composing with $F$ and then
  completing in $\Algd(\uD)$.) Note, however, that the other
  projection $\Enr \to \Spc$ is no longer a cartesian fibration (but
  we will see in \S\ref{sec:ffes} that it has cartesian morphisms over
  monomorphisms in $\Spc$).
\end{observation}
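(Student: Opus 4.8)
The plan is to assemble the statement from the two results being cited, so the content is largely conceptual. The reflectivity of $\Cat(\uC) \subseteq \Algd(\uC)$, with reflector the completion functor that I will write $L_{\uC}$, is \cite{enr}*{Theorem 5.6.6}, and there is nothing further to do for the first assertion. From there I would obtain the statements over $\MonCatIlax$ from the general mechanism for fibrewise reflective localizations of cocartesian fibrations: if $q \colon \mathcal{E} \to \mathcal{B}$ is a cocartesian fibration, $\mathcal{E}' \subseteq \mathcal{E}$ a full subcategory whose fibre $\mathcal{E}'_{b}$ over each $b$ is a reflective subcategory of $\mathcal{E}_{b}$, and every cocartesian transport functor $f_{!} \colon \mathcal{E}_{b} \to \mathcal{E}_{b'}$ carries $\mathcal{E}'_{b}$-local equivalences to $\mathcal{E}'_{b'}$-local equivalences, then $\mathcal{E}' \to \mathcal{B}$ is again a cocartesian fibration, the inclusion $\mathcal{E}' \hookrightarrow \mathcal{E}$ admits a left adjoint relative to $\mathcal{B}$ which acts fibrewise by the local reflectors and preserves cocartesian edges, with cocartesian transport over $f$ given by $L_{b'} \circ f_{!}$ restricted to $\mathcal{E}'_{b}$.

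I would apply this with $\mathcal{E} = \Algd$, $\mathcal{E}' = \Enr$, $\mathcal{B} = \MonCatIlax$, the reflectors the $L_{\uC}$, and the cocartesian transport the base-change functors $F_{*}$. The only non-formal input is then the hypothesis that $F_{*} \colon \Algd(\uC) \to \Algd(\uD)$ carries completion equivalences to completion equivalences, for every lax monoidal $F$ --- and this is precisely \cite{enr}*{Proposition 5.7.1}, which I expect to be the crux. The useful characterization here (really the business of \S\ref{sec:ffes}) is that a map of algebroids is a completion equivalence exactly when it is fully faithful and becomes an equivalence upon applying the colimit functor $\iota$. That $F_{*}$ preserves the first property is clear, since it acts on mapping objects simply by applying $F$; preservation of the second needs an honest argument, and some care, because a merely lax monoidal $F$ need not carry the unit of $\uC$ to that of $\uD$, so $F_{*}$ does not send the generating degeneracy $E^{1} \to E^{0}$ to its counterpart over $\uD$ on the nose. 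Granting the hypothesis, the parenthetical description falls out: a cocartesian edge of $\Enr$ over $F$ is obtained by applying $L_{\uD}$ to a cocartesian edge $\eA \to F_{*}\eA$ of $\Algd$ over $F$, \ie{} by composing with $F$ and then completing in $\Algd(\uD)$.

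The last sentence I would dispatch quickly. Its positive half --- that cartesian morphisms exist over monomorphisms of $\Spc$ --- is the content of \S\ref{sec:ffes}, which I would simply reference. For the assertion that $\Enr \to \Spc$ is not a cartesian fibration in general, it suffices to produce one offending morphism, and this is already visible in the fibre over $\Spc \in \MonCatIlax$: restricting a complete algebroid along a non-injective map of underlying spaces need not preserve completeness, since, for instance, the pullback of the complete algebroid $E^{0}$ along the fold map $\{0,1\} \to *$ in the cartesian fibration $\Algd(\Spc) \to \Spc$ is $E^{1}$, which is not complete --- its underlying Segal space is the codiscrete groupoid on two points --- and the description of completion equivalences from \S\ref{sec:ffes} then shows that no complete algebroid can take its place either. (Equivalently: under the identification $\Cat(\Spc) \simeq \CatI$ the projection $\Cat(\Spc) \to \Spc$ is the core functor $\mathcal{D} \mapsto \mathcal{D}^{\simeq}$, which is not a cartesian fibration.) So the one genuinely delicate step in the argument is the compatibility hypothesis underlying \cite{enr}*{Proposition 5.7.1}; everything else is formal bookkeeping on top of \cite{enr}*{Theorem 5.6.6} and \cite{enr}*{Proposition 5.7.1}.
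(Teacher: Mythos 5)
Correct, and essentially the same approach as the paper, whose "proof" is just the two citations to \cite{enr}*{Theorem 5.6.6} and \cite{enr}*{Proposition 5.7.1}: you correctly identify that the only non-formal content is the compatibility of the lax-monoidal transport functors $F_{*}$ with completion equivalences, which is exactly what \cite{enr}*{Proposition 5.7.1} supplies. Your explicit counterexample for the failure of $\Enr \to \Spc$ to be a cartesian fibration (the fold map $\{0,1\} \to *$ at $E^{0}$, whose cartesian lift in $\Algd(\Spc)$ is the non-complete $E^{1}$, with no complete replacement possible since under $\Cat(\Spc) \simeq \CatI$ the projection is the core functor) is a welcome addition that the paper leaves implicit.
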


\begin{defn}
  We say a morphism $F \colon \ec{A} \to
  \ec{B}$ in $\Algd(\uC)$ is \emph{fully faithful} if for all objects $x,y \in
  \ec{A}$, the induced map
  \[ \eA(x,y) \to \eB(Fx,Fy)\] is an equivalence in $\uC$, and
  \emph{essentially surjective} if the morphism of spaces
  $\iota \eA \to \iota \eB$ is surjective on $\pi_{0}$.
\end{defn}

\begin{observation}
  By \cite{enr}*{Lemma 5.3.2}, the fully faithful morphisms are precisely the cartesian morphisms for the projection $\Algd(\uC) \to \Spc$.
\end{observation}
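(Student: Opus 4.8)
The plan is to use the standard description of cartesian morphisms via a cartesian factorisation. Recall that $\Algd(\uC) \to \Spc$ is by construction the cartesian fibration corresponding to the functor $X \mapsto \Alg_{\DopX}(\uC)$, whose transition functor along a map $f \colon X \to Y$ of spaces is given by restriction along the induced map of double \icats{} over $\Dop$; at level $[1]$ this map is $f \times f \colon X^{\times 2} \to Y^{\times 2}$, and I write $f^{*}$ for the resulting restriction functor. Given a morphism $F \colon \eA \to \eB$ in $\Algd(\uC)$ lying over $f \colon X \to Y$, I would first factor it as $\eA \xto{\bar{F}} f^{*}\eB \to \eB$, where the second map is the $f$-cartesian lift with target $\eB$ and $\bar{F}$ is a morphism in the fibre $\Alg_{\DopX}(\uC)$; then $F$ is cartesian if and only if $\bar{F}$ is an equivalence.

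Next I would pass to Hom objects. Since $f^{*}$ is restriction along a map that is $f \times f$ at level $[1]$, one gets $(f^{*}\eB)(x,y) \simeq \eB(fx,fy) = \eB(Fx,Fy)$ for all $x,y \in X$, and, by naturality of the cartesian factorisation, the map induced by $\bar{F}$ on the Hom object $\eA(x,y)$ agrees, under this identification, with the structure map $\eA(x,y) \to \eB(Fx,Fy)$ from the definition of a fully faithful morphism. It then remains to recall that a morphism in $\Alg_{\DopX}(\uC)$ is an equivalence precisely when it induces equivalences on all Hom objects: $\Alg_{\DopX}(\uC)$ is a full subcategory of the \icat{} of functors $\DopX \to \uC^{\otimes}$ over $\Dop$, so equivalences are detected pointwise on the objects of $\DopX$; the objects lying over $[0]$ carry no data, and by the Segal condition the value of an algebroid at an object over $[n]$ is the product of its values at the intermediate objects over $[1]$, which are exactly the Hom objects. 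Putting these together, $F$ is cartesian if and only if $\bar{F}$ is an equivalence, hence if and only if each map $\eA(x,y) \to \eB(Fx,Fy)$ is an equivalence, which is exactly the condition that $F$ be fully faithful.

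I do not expect a serious obstacle here; the argument is essentially formal once the transition functors of $\Algd(\uC) \to \Spc$ are unwound as restriction along the induced maps of double \icats{} over $\Dop$. The one point requiring a little care is matching the map induced by $\bar{F}$ on $\eA(x,y)$ with the structure map of $F$, which is a short diagram chase through the cartesian factorisation together with the level-$[1]$ identification $(f^{*}\eB)(x,y) \simeq \eB(Fx,Fy)$; this is precisely the bookkeeping already carried out in \cite{enr}*{\S 5.3}, so in practice one may simply invoke \cite{enr}*{Lemma 5.3.2}.
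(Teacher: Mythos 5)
Your argument is correct and is essentially the paper's: the observation is proved in the paper simply by citing \cite{enr}*{Lemma 5.3.2}, and your unwinding (cartesian factorization $\eA \to f^{*}\eB \to \eB$, the identification $(f^{*}\eB)(x,y) \simeq \eB(Fx,Fy)$, and detection of equivalences of algebroids on Hom objects via the Segal condition) is precisely the content of that lemma. No gaps.
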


\begin{remark}
  By \cite{enr}*{Theorem 5.6.6}, the full subcategory $\Cat(\uC)$ is
  the localization of $\Algd(\uC)$ with respect to the fully faithful
  and essentially surjective morphisms. In particular, a morphism
  $F \colon \eA \to \eB$ between \emph{complete} algebroids is an
  equivalence \IFF{} it is fully faithful and essentially surjective.
\end{remark}

\section{The tensor product via algebroids}\label{sec:algdtens}
In this section we recall the tensor products of algebroids and
of enriched \icats{}, as defined in \cite{enr}. Following
\cite{HeineEnr}, we will define this by internalizing the cartesian
product in an \icat{} of algebroids in varying monoidal \icats{},
starting with an observation about cartesian products in fibrations:
\begin{propn}\label{propn:orthfibprod}
  Suppose $\uA$ and $\uB$ are \icats{} with finite products, and
  suppose $F \colon \uA^{\op} \times \uB \to \CatI$ is a functor such
  that $F(a,\blank) \colon \uB \to \CatI$ preserves products for all
  $a \in \uA$. Then if $\uE \to \uA \times \uB$ is the corresponding
  orthofibration, the \icat{} $\uE$ has cartesian products, and these
  are preserved by the projections to $\uA$ and $\uB$. Specifically,
  for $u \in F(a,b)$ and $v \in F(a',b')$, their cartesian product in
  $\uE$ is the image of $(u,v)$ under the composite functor
  \[
    \begin{tikzcd}[column sep=3cm]
      F(a,b) \times F(a',b') \arrow{r}{F(\pi_{a},b) \times F(\pi_{a'}, b)} &
      F(a \times a', b) \times F(a \times a', b') \arrow{d}{(F(a \times a', \pi_{b}), F(a \times a', \pi_{b'}))^{-1}}\\
       & F(a \times a', b \times b');
    \end{tikzcd}
  \]
  in other words, we have
  \[ \pi_{b,!}(u \times v) \simeq \pi_{a}^{*}u, \qquad \pi_{b',!}(u \times v) \simeq \pi_{a'}^{*}v.\]
\end{propn}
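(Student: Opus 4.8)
The plan is to reduce the statement to a universal property of the orthofibration $\uE \to \uA \times \uB$ together with the characterization of cartesian products as terminal objects in an appropriate slice, so that the formula for $u \times v$ is forced rather than computed by hand. Recall that for an orthofibration classified by $F$, a morphism in $\uE$ from $w \in F(a'',b'')$ to $u \in F(a,b)$ over $(f \colon a'' \to a, g \colon b'' \to b)$ consists of the data of a map $g$ together with a morphism $w \to F(f, b'')^{*}$-pushforward\ldots more precisely, using the description of $\uE$ as the two-sided fibration, maps $w \to u$ in $\uE$ lying over $(f,g)$ are identified with maps $F(f, b'')(w) \to F(a, g)^{\ast}(u)$ in $F(a, b'')$, i.e.\ (after adjunction, using that the $g$-variable is covariant) with maps $w \to F(f,b'')^{\mathrm{op}}$-adjoint of $F(a,g)^{\ast} u$. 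I would set up this mapping-space description carefully first, since every subsequent step is an application of it.

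Next I would verify that the proposed object $P := F(\pi_{a}, b\times b')^{-1}$-style composite\,---\,call it the image of $(u,v)$ under the displayed composite, living in $F(a\times a', b\times b')$\,---\,admits the two projection maps. The projection $P \to u$ lies over $(\pi_{a}, \pi_{b})$ and corresponds under the mapping-space identification to a map $F(\pi_{a}, b)(P) \to F(a\times a', \pi_{b})^{\ast}(u)$ in $F(a\times a', b)$; by construction $F(a \times a', \pi_b)$ applied to $P$ recovers $F(\pi_a, b)(u)$ (this is exactly what inverting $(F(a\times a',\pi_b), F(a\times a',\pi_{b'}))$ buys us, using that $F(a\times a', \blank)$ preserves products so that $F(a\times a', b\times b') \simeq F(a\times a', b) \times F(a\times a', b')$), so this map is an equivalence; symmetrically for $P \to v$. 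This simultaneously establishes the two displayed formulas $\pi_{b,!}(u\times v) \simeq \pi_a^{\ast} u$ and $\pi_{b',!}(u\times v)\simeq \pi_{a'}^{\ast}v$.

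Then comes the universal property: given $w \in F(a'',b'')$ with maps $w \to u$ over $(f, g)$ and $w \to v$ over $(f', g')$, I must produce a unique map $w \to P$ over $((f,f'), (g,g'))$ (note both projections $\uE \to \uA$ and $\uE \to \uB$ must be product-preserving, so the base component is forced to be the pair map into $a\times a'$ and $b \times b'$). Translating through the mapping-space description, a map $w \to P$ over this pair amounts to a map $F((f,f'), b'')(w) \to F(a\times a', (g,g'))^{\ast}(P)$ in $F(a\times a', b'')$; using the product decompositions in both the source $\uA$-variable (composing with $\pi_a, \pi_{a'}$) and, via preservation of products by $F(a\times a', \blank)$, in the target, this unwinds to the pair consisting of a map $F(f,b'')(w) \to F(a,g)^{\ast}(u)$ and a map $F(f',b'')(w)\to F(a',g')^{\ast}(v)$\,---\,which is precisely the pair of given maps $w\to u$, $w\to v$. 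Contractibility of the space of such $w \to P$ then follows from contractibility of each factor, i.e.\ from $w \to u$ and $w\to v$ being the given (hence fixed) data. I expect the main obstacle to be bookkeeping the variance and the adjunctions in the mapping-space description of the orthofibration cleanly enough that the product decompositions on the two sides line up; once that dictionary is set up correctly, each individual verification is formal, and the hypothesis that $F(a,\blank)$ preserves products is used exactly twice\,---\,once to make $P$ well-defined and once to split the universal property into two independent factors.
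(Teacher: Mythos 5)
Your strategy is essentially the one the paper uses: exhibit the candidate product $P$ together with its two projections, then verify the universal property by computing mapping spaces into $P$ fibrewise over the base and splitting them using the hypothesis that $F(a\times a',\blank)$ preserves products. The displayed identities $\pi_{b,!}(u\times v)\simeq\pi_a^* u$ and $\pi_{b',!}(u\times v)\simeq\pi_{a'}^* v$ are correctly identified as the content of inverting $(F(a\times a',\pi_b),F(a\times a',\pi_{b'}))$, and your reduction of the universal property to two independent factors over a fixed pair of base morphisms matches the paper's fibrewise argument.

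The one point you must repair is the mapping-space dictionary itself, which as written does not typecheck: for $w\in F(a'',b'')$, $u\in F(a,b)$ and $(f\colon a''\to a,\ g\colon b''\to b)$, the functor $F(f,b'')$ goes from $F(a,b'')$ to $F(a'',b'')$ (the first variable is contravariant), so it cannot be applied to $w$; and $F(a,g)$ is the covariant pushforward $F(a,b'')\to F(a,b)$, whose right adjoint ``$F(a,g)^{*}$'' is not part of the data of an orthofibration and need not exist. The correct identification is
\[ \Map_{\uE}(w,u)_{(f,g)}\ \simeq\ \Map_{F(a'',b)}\bigl(F(a'',g)(w),\ F(f,b)(u)\bigr), \]
obtained by factoring a morphism over $(f,g)$ first through a cocartesian lift of $g$ and then through a cartesian lift of $f$ (either order gives the same answer). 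This is exactly how the paper organizes the computation: it first uses that $\uE\to\uB$ is cocartesian to rewrite $\Map_{\uE}(x,q)_{\beta}\simeq\Map_{\uE_{b\times b'}}(\beta_{!}x,q)$ and splits this using product-preservation in the second variable, and only afterwards uses that $\uE\to\uA$ is cartesian to identify the resulting factors with $\Map_{\uE}(x,u)_{\alpha_{1}}$ and $\Map_{\uE}(x,v)_{\alpha_{2}}$. Performing the two transports sequentially in this order is precisely the ``bookkeeping of variance'' you anticipate as the main obstacle; with the formula above in place, the remainder of your argument goes through as outlined.
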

\begin{proof}
  Let $q$ be the object of $\uE$ that we claim is the product of $u$ and $v$. Then we must show that composition with the maps
  \[ q \to \pi_{b,!}q \simeq \pi_{a}^{*}u \to u, \qquad q \to \pi_{b',!}q \simeq \pi_{a'}^{*}v \to v\]
  induces an equivalence
  \[ \Map_{\uE}(x, q) \isoto \Map_{\uE}(x, u) \times \Map_{\uE}(x, v)\]
  for every object $x \in \uE$ over $(s,t) \in \uA \times \uB$.
  We note that this composition map lies over the equivalence
  \[ \Map_{\uA}(s, a \times a') \times \Map_{\uB}(t, b \times b') \isoto \Map_{\uA}(s,a) \times \Map_{\uB}(t,b) \times \Map_{\uA}(s,a') \times \Map_{\uB}(t, b').\]
  Given maps $\alpha = (\alpha_{1},\alpha_{2}) \colon s \to a \times
  a'$ and $\beta = (\beta_{1},\beta_{2}) \colon t \to b \times b'$, it
  suffices to show that we get an equivalence on fibres over $(\alpha,\beta)$. Since $\uE \to \uB$ is a cocartesian fibration, we can identify
  \[
    \begin{split}
      \Map_{\uE}(x,q)_{\beta} & \simeq \Map_{\uE_{b \times b'}}(\beta_{!}x, q) \\ & \simeq \Map_{\uE_{b}}(\beta_{1,!}x, \pi_{b,!}q) \times \Map_{\uE_{b'}}(\beta_{2,!}x, \pi_{b',!}q) \\
       & \simeq \Map_{\uE}(x, \pi_{a}^{*}u)_{\beta_{1}} \times \Map_{\uE}(x, \pi_{a'}^{*}v)_{\beta_{2}},
    \end{split}
  \]
  where the second equivalence uses that $F$ preserves products in the second variable, and the composite equivalence is given by composition with our cocartesian morphisms. Next we use that $\uE \to \uA$ is a cartesian fibration to conclude that composition with our cartesian morphisms gives equivalences
  \[ \Map_{\uE}(x, \pi_{a}^{*}u)_{\alpha} \simeq \Map_{\uE}(x, u)_{\alpha_{1}}, \qquad \Map_{\uE}(x, \pi_{a'}^{*}v)_{\alpha} \simeq \Map_{\uE}(x,v)_{\alpha_{2}}.\]
  Taking fibres over $(\alpha,\beta)$ we can combine these two equivalences to see that our composition map indeed gives an equivalence
  \[ \Map_{\uE}(x, q)_{(\alpha,\beta)} \simeq \Map_{\uE}(x,u)_{(\alpha_{1},\beta_{1})} \times\Map_{\uE}(x,v)_{(\alpha_{2},\beta_{2})}, \]
  as required.
\end{proof}

\begin{cor}\label{algdcart}
  The \icat{} $\Algd$ has cartesian products. The product of $\eA \colon \DopX \to \uC^{\otimes}$ and $\eB \colon \Dop_{Y} \to \uD^{\otimes}$ is the algebroid
  \[ \eA \boxtimes \eB \colon \Dop_{X \times Y} \simeq \Dop_{X} \times_{\Dop} \Dop_{Y} \xto{\eA \times_{\Dop} \eB} \uC^{\otimes} \times_{\Dop} \uD^{\otimes} \simeq (\uC \times \uD)^{\otimes}. \]
\end{cor}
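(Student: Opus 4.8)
The plan is to deduce this directly from Proposition \ref{propn:orthfibprod} by exhibiting $\Algd$ as an orthofibration of the right shape. Recall from the previous definition that $\Algd \to \Spc \times \MonCatIlax$ is obtained by pulling back $\Alg_{/\OpdIgns} \to \OpdIgns \times \OpdIgns$ along the map that sends a space $S$ to $\Dop_S$ and a monoidal $\infty$-category to its underlying generalized non-symmetric $\infty$-operad. The orthofibration $\Alg_{/\OpdIgns}$ classifies the functor $\Alg_{(\blank)}(\blank) \colon (\OpdIgns)^{\op} \times \OpdIgns \to \CatI$; pulling back along a product of functors $\Spc \to \OpdIgns$ and $\MonCatIlax \to \OpdIgns$, the $\infty$-category $\Algd$ is the orthofibration over $\Spc \times \MonCatIlax$ for the composite functor $(S, \uC) \mapsto \Alg_{\Dop_S}(\uC)$.

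To apply Proposition \ref{propn:orthfibprod} with $\uA = \Spc^{\op}$ and $\uB = \MonCatIlax$ (so that $\uA^{\op} \times \uB = \Spc \times \MonCatIlax$ and $F(S, \uC) = \Alg_{\Dop_S}(\uC)$), I must check two hypotheses: that $\Spc^{\op}$ and $\MonCatIlax$ have finite products, and that $F(S, \blank) = \Alg_{\Dop_S}(\blank)$ preserves finite products for each fixed $S$. The first is standard: $\Spc$ has finite coproducts and $\MonCatIlax$ has finite products, computed pointwise (the product of monoidal $\infty$-categories $\uC \times \uD$ carries the pointwise monoidal structure, with unit $\bbone \times \bbone$). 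For the second, since $\Alg_{\uO}(\blank)$ for a generalized non-symmetric $\infty$-operad $\uO$ is computed as a limit of mapping $\infty$-categories into the monoidal $\infty$-category, and such limits commute with the pointwise products in $\MonCatIlax$, we get $\Alg_{\Dop_S}(\uC \times \uD) \simeq \Alg_{\Dop_S}(\uC) \times \Alg_{\Dop_S}(\uD)$; concretely an algebroid valued in $\uC \times \uD$ is a pair of algebroids with the same underlying space. Granting these, Proposition \ref{propn:orthfibprod} immediately yields that $\Algd$ has finite products, that they are preserved by the projections to $\Spc^{\op}$ and $\MonCatIlax$, and supplies the formula.

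It then remains to unwind the formula of Proposition \ref{propn:orthfibprod} into the stated one. The product of $\eA \in \Alg_{\Dop_X}(\uC)$ and $\eB \in \Alg_{\Dop_Y}(\uD)$ is obtained by first applying the restriction maps along the projections $X \times Y \to X$ and $X \times Y \to Y$ in $\Spc$ — i.e. along $\Dop_{X \times Y} \to \Dop_X$ and $\Dop_{X \times Y} \to \Dop_Y$, using $\Dop_{X \times Y} \simeq \Dop_X \times_{\Dop} \Dop_Y$ — and then inverting the equivalence given by the two projections out of $\uC \times \uD$. Chasing through, the resulting algebroid over $\Dop_{X \times Y}$ is exactly the composite $\Dop_{X \times Y} \simeq \Dop_X \times_{\Dop} \Dop_Y \xrightarrow{\eA \times_{\Dop} \eB} \uC^{\otimes} \times_{\Dop} \uD^{\otimes} \simeq (\uC \times \uD)^{\otimes}$, which is the claimed description of $\eA \boxtimes \eB$.

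The only genuine point requiring care — the main obstacle — is the verification that $\Alg_{\Dop_S}(\blank)$ preserves products in the monoidal variable, i.e. that the product in $\MonCatIlax$ and the algebra construction are compatible; once this is in hand everything else is a direct application of the proposition and bookkeeping with the pullback square defining $\Algd$.
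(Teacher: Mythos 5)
Your proposal follows the same route as the paper: \cref{algdcart} is deduced by applying \cref{propn:orthfibprod} to the orthofibration $\Algd \to \Spc \times \MonCatIlax$, and the only substantive hypothesis is that $\Alg_{\Dop_{S}}(\blank)$ preserves finite products in the monoidal variable. Your justification of that point (a functor $\Dop_{S} \to \uC^{\otimes} \times_{\Dop} \uD^{\otimes}$ preserving inerts is the same as a pair of algebroids with the same space of objects) is essentially the paper's, which simply notes that $\Alg_{\uO}(\blank)$ is a right adjoint and hence preserves limits. The unwinding of the formula at the end is also correct.

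There is, however, a variance slip in how you feed $\Algd$ into \cref{propn:orthfibprod}. In that proposition the orthofibration lives over $\uA \times \uB$ while the classifying functor has domain $\uA^{\op} \times \uB$. Since $\Algd$ lives over $\Spc \times \MonCatIlax$ and is \emph{cartesian} over the $\Spc$ factor, the correct identification is $\uA = \Spc$, so that $F$ has domain $\Spc^{\op} \times \MonCatIlax$ — consistent with the contravariance of $S \mapsto \Alg_{\Dop_{S}}(\uC)$. Taking $\uA = \Spc^{\op}$ as you do would place the orthofibration over $\Spc^{\op} \times \MonCatIlax$ and make the relevant hypothesis ``$\Spc^{\op}$ has finite products,'' i.e.\ that $\Spc$ has finite coproducts, which is indeed what you verify; but followed through consistently this would put the product of $\eA$ and $\eB$ over $X \sqcup Y$ rather than $X \times Y$. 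Your later unwinding silently reverts to $\uA = \Spc$ (you restrict along the projections $X \times Y \to X$ and $X \times Y \to Y$), which is the right thing to do. So the argument is salvageable as written, but you should correct the setup so that the hypothesis you check is the one the proposition actually requires, namely that $\Spc$ has finite \emph{products}.
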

\begin{proof}
  Apply \cref{propn:orthfibprod} to the functor $\Alg_{\Dop_{(\blank)}}(\blank) \colon \Spc \times \MonCatIlax \to \CatI$ (which preserves products in the second variable since $\Alg_{\uO}(\blank)$ is right adjoint to the tensoring of the \gnsiopd{} $\uO$ with \icats{}).
\end{proof}

To see that the product on $\Algd$ restricts to $\Enr$ we make some observations:

\begin{lemma}\label{extprodcomp}\
  \begin{enumerate}[(i)]
  \item For $\eA \in \Algd(\uC)$ and $\eB \in \Algd(\uD)$, we have a natural equivalence
    \[ \iota_{\bullet}(\eA \boxtimes \eB) \simeq \iota_{\bullet}(\eA) \times \iota_{\bullet}(\eB)\]
    of simplicial spaces.
  \item If $\eA \in \Algd(\uC)$ and $\eB \in \Algd(\uD)$ are both
    complete, then $\eA \boxtimes \eB$ is also complete.
  \item If $F \colon \eA \to \eA'$ and $G \colon \eB \to \eB'$ are both either fully faithful or essentially surjective in $\Algd(\uC)$ and $\Algd(\uD)$, respectively, then so is
    \[ F \boxtimes G \colon  \eA \boxtimes \eB \to \eA' \boxtimes \eB' \] in
    $\Algd(\uC \times \uD)$.
  \end{enumerate}
\end{lemma}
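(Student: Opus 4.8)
The plan is to prove (i) by a direct computation of the simplicial spaces $\iota_{\bullet}$ and then deduce (ii) and (iii) from it by essentially formal arguments.

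For (i), I would use the equivalence $\Algd(\uC \times \uD) \simeq \Algd(\uC) \times_{\Spc} \Algd(\uD)$ over $\Spc$ coming from $(\uC \times \uD)^{\otimes} \simeq \uC^{\otimes} \times_{\Dop} \uD^{\otimes}$ (exactly as in the proof of \cref{algdcart}). Under this equivalence $E^{n}_{\uC \times \uD}$ corresponds to the pair $(E^{n}_{\uC}, E^{n}_{\uD})$, while by the formula of \cref{algdcart} the algebroid $\eA \boxtimes \eB$ (with $\eA$ on $X$ and $\eB$ on $Y$) corresponds to $(\pi_{X}^{*}\eA, \pi_{Y}^{*}\eB)$, where $\pi_{X}^{*}\eA$ denotes the restriction of $\eA$ along $\pi_{X} \colon X \times Y \to X$ via the cartesian fibration $\Algd(\uC) \to \Spc$ (whose cartesian morphisms are the fully faithful ones). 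Computing the mapping space in the fibre product and using that the object space of $E^{n}$ is the $(n+1)$-element set, one gets
\[ \iota_{n}(\eA \boxtimes \eB) \simeq \Map_{\Algd(\uC)}(E^{n}_{\uC}, \pi_{X}^{*}\eA) \times_{(X\times Y)^{n+1}} \Map_{\Algd(\uD)}(E^{n}_{\uD}, \pi_{Y}^{*}\eB). \]
The universal property of the cartesian morphism $\pi_{X}^{*}\eA \to \eA$ then identifies $\Map_{\Algd(\uC)}(E^{n}_{\uC}, \pi_{X}^{*}\eA) \simeq \iota_{n}\eA \times_{X^{n+1}} (X\times Y)^{n+1}$, and likewise for $\eB$; feeding this back in, the iterated pullback over $(X\times Y)^{n+1} = X^{n+1} \times Y^{n+1}$ collapses to $\iota_{n}\eA \times \iota_{n}\eB$. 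Since every step is natural in $[n] \in \simp$, this assembles to the asserted equivalence of simplicial spaces.

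Part (ii) is then immediate: by \cite{enr}*{Corollary 5.2.10}, $\eA \boxtimes \eB$ is complete \IFF{} the map $\iota_{0}(\eA \boxtimes \eB) \to \iota_{1}(\eA \boxtimes \eB)$ induced by the degeneracy $E^{1} \to E^{0}$ is an equivalence, and by (i) (applied naturally in $\simp$) this map is the product of $\iota_{0}\eA \to \iota_{1}\eA$ and $\iota_{0}\eB \to \iota_{1}\eB$, which are equivalences since $\eA$ and $\eB$ are complete; a product of equivalences is an equivalence. (Alternatively one can use the maps $\iota_{0}(-) \to \iota(-)$ and the fact that realization of simplicial spaces commutes with finite products.)

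For (iii), the fully faithful case is read off from \cref{algdcart}: on Hom objects $F \boxtimes G$ induces $\bigl(\eA(x,x') \to \eA'(Fx,Fx'),\ \eB(y,y') \to \eB'(Gy,Gy')\bigr)$ in $\uC \times \uD$, which is an equivalence exactly when both components are, and they are by hypothesis. For the essentially surjective case, (i) together with the fact that realization commutes with finite products yields a natural equivalence $\iota(\eA \boxtimes \eB) \simeq \iota\eA \times \iota\eB$ carrying $\iota(F \boxtimes G)$ to $\iota F \times \iota G$; on $\pi_{0}$ this is $\pi_{0}\iota F \times \pi_{0}\iota G$, which is surjective because a product of surjective maps of sets is surjective. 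The main obstacle is part (i) — concretely, justifying the identification of $\eA \boxtimes \eB$ with the pair of restrictions and tracking the naturality in $\simp$ through the pullback manipulations; with that in hand, (ii) and (iii) are formal.
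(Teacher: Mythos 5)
Your proposal is correct and follows essentially the same route as the paper: part (i) is a formal mapping-space computation exploiting that $E^{n}$ corepresents $\iota_{n}$ and that $\boxtimes$ is a cartesian product, and (ii) and (iii) are then deduced exactly as in the paper (degeneracy-locality for completeness, products of $\pi_{0}$-surjections, and the componentwise description of the map on Hom objects). The only real difference is in how (i) is organized: the paper splits $\Map_{\Algd}(E_{S}, \eA \boxtimes \eB)$ directly via the universal property of the product in the total category $\Algd$ and then takes fibres over the unit of $\uC \times \uD$, whereas you identify $\eA \boxtimes \eB$ with the pair $(\pi_{X}^{*}\eA, \pi_{Y}^{*}\eB)$ in $\Algd(\uC) \times_{\Spc} \Algd(\uD)$ and collapse the resulting iterated fibre product by hand; both are valid.
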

\begin{proof}
  For any space $S$, we have a natural commutative square
  \[
    \begin{tikzcd}
      \Map_{\Algd}(E_{S}, \eA \boxtimes \eB) \arrow{r}{\sim} \arrow{d} & \Map_{\Algd}(E_{S}, \eA) \times \Map_{\Algd}(E_{S}, \eB) \arrow{d} \\
      \Map_{\MonCatIlax}(*, \uC \times \uD) \arrow{r}{\sim} & \Map_{\MonCatIlax}(*, \uC) \times \Map_{\MonCatIlax}(*, \uD),
    \end{tikzcd}
  \]
  where the horizontal morphisms are equivalences since
  $\eA \boxtimes \eB$ is a product in $\Algd$. Taking fibres over the
  unit in $\uC \times \uD$, we get a natural equivalence
  \[ \Map_{\Algd(\uC \times \uD)}(E_{S,\uC \times \uD}, \eA \boxtimes
    \eB) \simeq \Map_{\Algd(\uC)}(E_{S,\uC}, \eA) \times
    \Map_{\Algd(\uD)}(E_{S,\uD}, \eB).\] This gives in particular the
  equivalence of simplicial spaces
  \[\iota_{\bullet}(\eA \boxtimes \eB) \simeq \iota_{\bullet}(\eA)
  \times \iota_{\bullet}(\eB)\] required for (i). It is then immediate that $\eA \boxtimes \eB$ is
  complete if $\eA$ and $\eB$ are so. Moreover, for the essential surjectivity case in (iii) we have
  \[ \iota(F \boxtimes G) \simeq \iota F \times \iota G,\] which is indeed surjective on $\pi_{0}$ if $F$ and $G$ are essentially surjective. For the fully faithful case we note that for $(x,y)$ and $(x',y')$ in $\eA \boxtimes \eB$ the induced map
  \[ (\eA \boxtimes \eB)((x,y), (x',y'))  \to (\eA' \boxtimes \eB')((F \boxtimes G)(x,y), (F \boxtimes G)(x',y'))\]
  is the tensor product of the maps for $F$ and $G$, so this is an equivalence if these are both fully faithful.
\end{proof}

Combining \cref{algdcart} and \cref{extprodcomp}, we have:
\begin{cor}
  The \icat{} $\Enr$ has cartesian products, and these are preserved by the inclusion in $\Algd$ and by the localization $\Algd \to \Enr$. \qed
\end{cor}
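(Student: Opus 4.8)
The plan is to deduce everything formally from \cref{algdcart} and \cref{extprodcomp}, using the general principle that a full subcategory closed under an ambient limit inherits that limit, with the inclusion preserving it.

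First, by \cref{algdcart} the \icat{} $\Algd$ has finite products, computed via the external product $\boxtimes$; in particular its terminal object lies over $* \in \Spc$ and the terminal monoidal \icat{}, so it is $E^{0}$ in $\Dop$, which is complete since $\iota E^{0} \simeq * \simeq \iota_{0} E^{0}$. By \cref{extprodcomp}(ii) the external product of two complete algebroids is complete, so the full subcategory $\Enr \subseteq \Algd$ is closed under finite products. Hence for any finite family of complete algebroids the product formed in $\Algd$ also corepresents the product functor on $\Enr$; therefore $\Enr$ has finite products and the inclusion $\Enr \hookrightarrow \Algd$ preserves them. This disposes of the first two assertions.

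It remains to show that the localization $L \colon \Algd \to \Enr$ preserves finite products, and this is the step I expect to carry the actual content: a left adjoint does not preserve products in general, so something special must happen here. It certainly fixes the terminal object, which is already complete, so it suffices to produce a natural equivalence $L(\eA \boxtimes \eB) \simeq L\eA \boxtimes L\eB$ for $\eA, \eB \in \Algd$. The key input is that the unit maps $\eta_{\eA} \colon \eA \to L\eA$ and $\eta_{\eB} \colon \eB \to L\eB$ are fully faithful and essentially surjective --- i.e.\ that the local equivalences for this localization are exactly the fully faithful and essentially surjective morphisms, which is \cite{enr}*{Theorem 5.6.6}. Applying \cref{extprodcomp}(iii) once reading ``fully faithful'' and once reading ``essentially surjective'', we get that $\eta_{\eA} \boxtimes \eta_{\eB} \colon \eA \boxtimes \eB \to L\eA \boxtimes L\eB$ is both fully faithful and essentially surjective, hence inverted by $L$; since its target is complete by \cref{extprodcomp}(ii), it exhibits $L\eA \boxtimes L\eB$ as the localization of $\eA \boxtimes \eB$, so the canonical comparison map $L(\eA \boxtimes \eB) \to L\eA \boxtimes L\eB$ is an equivalence.

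Thus the only genuinely non-formal ingredient is \cref{extprodcomp}(iii) combined with the identification of the localization $\Algd(\uC) \to \Cat(\uC)$ as a localization at the fully faithful and essentially surjective morphisms; everything else (closure of $\Enr$ under the ambient products, the terminal-object cases, the fact that a local equivalence into a complete object is a localization) is formal bookkeeping, and I do not anticipate any further obstacle.
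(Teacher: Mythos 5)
Your argument is correct and is exactly the (unwritten) proof the paper intends: closure of $\Enr$ under $\boxtimes$ via \cref{extprodcomp}(ii) gives existence and preservation by the inclusion, and \cref{extprodcomp}(iii) applied to the completion units gives preservation by the localization. The one input you use beyond the stated lemmas --- that the unit $\eA \to L\eA$ is itself fully faithful and essentially surjective, not merely a local equivalence --- is indeed supplied by the proof of \cite{enr}*{Theorem 5.6.6}, so there is no gap.
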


We can now internalize these external products:
\begin{cor}\label{cor:algdOmonoidal}
  Suppose $\uC$ is an $\uO \times \Dop$-monoidal \icat{}. Then $\Algd(\uC)$ has a canonical $\uO$-monoidal structure, given as the pullback
  \[
    \begin{tikzcd}
      \Algd(\uC)^{\otimes} \arrow{r} \arrow{d} & \Algd^{\times} \arrow{d} \\
      \uO \arrow{r}{\uC} & (\MonCatIlax)^{\times},
    \end{tikzcd}
  \]
  where the bottom horizontal map views the $\uO \times \Dop$-monoidal \icat{} $\uC$ as an $\uO$-algebra in monoidal \icats{}. 
\end{cor}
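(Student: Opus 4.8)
The plan is to obtain the $\uO$-monoidal structure for free from \cref{algdcart}, by internalizing the cartesian product of algebroids along the algebra $\uC$ --- the standard recipe for internalizing a cartesian monoidal structure. Concretely, I would take the displayed square as the \emph{definition} of $\Algd(\uC)^{\otimes}$ and then verify that the resulting functor $\Algd(\uC)^{\otimes} \to \uO$ is a cocartesian fibration satisfying the Segal condition.

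By \cref{propn:orthfibprod} (applied as in \cref{algdcart}), $\Algd$ has finite products, these are preserved by the projection $\Algd \to \MonCatIlax$, and the binary product is the external product $\boxtimes$. Since both $\Algd$ and $\MonCatIlax$ have finite products, they carry cartesian monoidal structures $\Algd^{\times} \to \Dop$ and $(\MonCatIlax)^{\times} \to \Dop$, and the product-preserving projection induces a map $\Algd^{\times} \to (\MonCatIlax)^{\times}$ over $\Dop$. The point I would emphasize is that this map is itself a \emph{cocartesian} fibration: over inert morphisms of $\Dop$ the cocartesian lifts are the projections onto subtuples, while over the active morphism $[n] \to [1]$ the cocartesian lift at $(\eA_1, \ldots, \eA_n)$, with $\eA_i \in \Algd(\uV_i)$, is witnessed by $\eA_1 \boxtimes \cdots \boxtimes \eA_n$, which exists in $\Algd$ and lies over $\uV_1 \times \cdots \times \uV_n$ precisely because $\Algd \to \MonCatIlax$ preserves products. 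In particular the fibre of $\Algd^{\times}$ over a tuple $(\uV_1, \ldots, \uV_n)$ of monoidal \icats{} is $\prod_i \Algd(\uV_i)$. (This is the relative form of the statement that a finite-product-preserving functor induces a strong monoidal functor of cartesian monoidal \icats{}, which I would either cite or check directly along these lines.)

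Viewing the $\uO \times \Dop$-monoidal \icat{} $\uC$ as an $\uO$-algebra $\uC \colon \uO \to (\MonCatIlax)^{\times}$ in the cartesian monoidal structure as in the statement, I then define $\Algd(\uC)^{\otimes}$ by the displayed pullback. Since pullbacks of cocartesian fibrations are cocartesian fibrations, $\Algd(\uC)^{\otimes} \to \uO$ is a cocartesian fibration. For the Segal condition, let $o \in \uO$ lie over $[n]$ with inert images $o_1, \ldots, o_n$ over $[1]$; unwinding the pullback gives $\Algd(\uC)^{\otimes}_{o} \simeq \Algd^{\times}_{\uC(o)}$, and because $\uC$ is an algebra in the cartesian structure the inerts identify $\uC(o)$ with the tuple $(\uC(o_1), \ldots, \uC(o_n))$ of monoidal \icats{}, whose fibre in $\Algd^{\times}$ is $\prod_i \Algd(\uC(o_i)) \simeq \prod_i \Algd(\uC)^{\otimes}_{o_i}$. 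Hence $\Algd(\uC)^{\otimes} \to \uO$ is $\uO$-monoidal. Unwinding the cocartesian lifts over active maps further identifies the multiplication functor for an active $o \to o'$ with the composite $\prod_i \Algd(\uC(o_i)) \xto{\boxtimes} \Algd(\prod_i \uC(o_i)) \to \Algd(\uC(o'))$ of the external product with pushforward along the multiplication of $\uC$, so this construction is exactly the promised internalization of $\boxtimes$.

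The one step demanding genuine care is the assertion that $\Algd^{\times} \to (\MonCatIlax)^{\times}$ is a cocartesian fibration, as opposed to merely a morphism of cocartesian fibrations over $\Dop$: this is what makes the pullback a cocartesian fibration over $\uO$, and it uses that $\Algd$ has \emph{all} finite products compatibly over $\MonCatIlax$ (via \cref{algdcart}), not just that the fibres $\Algd(\uC)$ have products --- which in general they need not. If one would rather avoid invoking a general lemma, one can instead construct $\Algd(\uC)^{\otimes} \to \uO$ by hand from the data above (fibres $\prod_i \Algd(\uC(o_i))$, active maps the internalized $\boxtimes$) and check the cocartesian-fibration and Segal conditions directly; the pullback description is cleaner and makes naturality in $\uC$ manifest.
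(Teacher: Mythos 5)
Correct, and this is essentially the paper's (implicit) argument: the corollary is stated without proof as the standard internalization, following \cite{HeineEnr}, of the cartesian product on $\Algd$ supplied by \cref{algdcart}. Your identification of the one nontrivial point --- that $\Algd^{\times} \to (\MonCatIlax)^{\times}$ is itself a cocartesian fibration, which uses both that $\Algd \to \MonCatIlax$ is a cocartesian fibration preserving the finite products of \cref{algdcart} and that $\boxtimes$ of cocartesian morphisms is cocartesian --- is exactly the verification the paper leaves to the reader, and your description of the fibres and of the active-map transport as $\boxtimes$ followed by pushforward along the multiplication of $\uC$ matches the observation immediately following the corollary.
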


\begin{observation}
  In particular, if $\uC$ is symmetric (or $E_{n+1}$-)monoidal then
  $\Algd(\uC)$ is symmetric (or $E_{n}$)monoidal; the tensor product of $\eA \colon \DopX \to \uC^{\otimes}$ and $\eB \colon \Dop_{Y} \to \uC^{\otimes}$ is the composite
  \[ \Dop_{X \times Y} \simeq \Dop_{X} \times_{\Dop} \Dop_{Y} \xto{\eA
      \times_{\Dop} \eB} \uC^{\otimes} \times_{\Dop} \uC^{\otimes} \to
    \uC^{\otimes},\] where the last map is the tensor product on $\uC$
  viewed as a commutative (or $E_{n}$-)algebra in monoidal \icats{}.
\end{observation}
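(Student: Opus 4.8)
The plan is to deduce the Observation from \cref{cor:algdOmonoidal} by specializing the operad $\uO$, and then to read off the binary tensor product from the description of cartesian products in $\Algd$ given by \cref{algdcart}.

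First recall that, by Dunn additivity, a symmetric (resp.\ $E_{n+1}$-)monoidal \icat{} $\uC$ is the same datum as a commutative (resp.\ $E_{n}$-)algebra in $\MonCatIlax$ with its cartesian symmetric monoidal structure. This provides exactly the input of \cref{cor:algdOmonoidal} --- an $\uO$-algebra in $\MonCatIlax$, equivalently an $\uO \times \Dop$-monoidal \icat{} --- for $\uO = E_{\infty}$ (resp.\ $\uO = E_{n}$), with the algebra structure playing the role of the classifying map $\uO \to (\MonCatIlax)^{\times}$ appearing there. Applying \cref{cor:algdOmonoidal} with this $\uO$ therefore equips $\Algd(\uC)$ with an $\uO$-monoidal structure, i.e.\ a symmetric (resp.\ $E_{n}$-)monoidal one, realized as the pullback of $\Algd^{\times} \to (\MonCatIlax)^{\times}$ along the classifying map of $\uC$. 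This gives the first assertion.

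For the explicit formula I would unwind this pullback. Fix algebroids $\eA \colon \DopX \to \uC^{\otimes}$ and $\eB \colon \Dop_{Y} \to \uC^{\otimes}$. The binary multiplication in $\Algd^{\times}$ is the cartesian product of $\Algd$, which by \cref{algdcart} carries $(\eA, \eB)$ to the external product $\eA \boxtimes \eB \in \Algd(\uC \times \uC)$; and under the classifying map of $\uC$, a binary multiplication of $\uO$ is sent to the tensor product functor $m \colon \uC \times \uC \to \uC$ of $\uC$ regarded as a commutative (resp.\ $E_{n}$-)algebra in $\MonCatIlax$. Tracing $(\eA, \eB)$ through the cocartesian edge of the pulled-back operad that computes this binary operation --- first forming $\eA \boxtimes \eB$ in $\Algd^{\times}$, then cocartesian-transporting along $m$ --- we obtain $\eA \otimes \eB \simeq m_{*}(\eA \boxtimes \eB)$. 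Since cocartesian transport along a lax monoidal functor over $\Algd \to \MonCatIlax$ is postcomposition (as recalled when $F_{*}$ was introduced), and $\eA \boxtimes \eB$ is by \cref{algdcart} the composite $\Dop_{X \times Y} \simeq \DopX \times_{\Dop} \Dop_{Y} \xto{\eA \times_{\Dop} \eB} \uC^{\otimes} \times_{\Dop} \uC^{\otimes}$, we conclude that $\eA \otimes \eB$ is the composite
\[
\Dop_{X \times Y} \simeq \DopX \times_{\Dop} \Dop_{Y} \xto{\eA \times_{\Dop} \eB} \uC^{\otimes} \times_{\Dop} \uC^{\otimes} \xto{m} \uC^{\otimes},
\]
which is exactly the stated formula, $m$ being the tensor product on $\uC$.

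The step that needs genuine care is the second one: checking that the binary operation of a monoidal structure obtained by pulling back $\Algd^{\times} \to (\MonCatIlax)^{\times}$ along an algebra map really is computed by ``form the external product, then cocartesian-transport along $m$''. This is a diagram chase with cocartesian edges --- factor the relevant active map through the product in $\Algd^{\times}$ and then through the morphism $m$ in the fiber $\MonCatIlax$, and combine the formula for products in $\Algd$ from \cref{propn:orthfibprod} and \cref{algdcart} with the description of the cocartesian transport of $\Algd \to \MonCatIlax$ as postcomposition. No idea beyond those already in the excerpt is needed.
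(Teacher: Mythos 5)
Your proposal is correct and is exactly the intended reading of the Observation, which the paper leaves as a direct unwinding of \cref{cor:algdOmonoidal} and \cref{algdcart}: the binary operation of the pulled-back $\uO$-monoidal structure factors through the cartesian product in $\Algd^{\times}$ (giving $\eA \boxtimes \eB$) followed by cocartesian transport along $m$, which is postcomposition. The only imprecision is the claim that a symmetric (resp.\ $E_{n+1}$-)monoidal structure is ``the same datum as'' a commutative (resp.\ $E_{n}$-)algebra in $\MonCatIlax$ --- by Dunn additivity it is the same as an algebra in $\MonCatI$, and algebras in $\MonCatIlax$ are the strictly more general duoidal structures of the paper's subsequent Remark --- but only the direction you actually use is needed, so nothing breaks.
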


\begin{remark}
  In fact, to get an $\uO$-monoidal structure on $\Algd(\uC)$, it
  suffices for $\uC$ to be an $\uO$-monoid in $\MonCatIlax$, which
  means that $\uC$ has both an $\uO$-monoidal structure and a monoidal
  structure that are compatible in a lax sense. Such ``duoidal''
  structures on \icats{} have been studied by Torii~\cite{ToriiDuoid}.
\end{remark}

\begin{cor}
    Suppose $\uC$ is an $\uO \times \Dop$-monoidal \icat{}. Then $\Cat(\uC)$ has a canonical $\uO$-monoidal structure, given as the pullback
  \[
    \begin{tikzcd}
      \Cat(\uC)^{\otimes} \arrow{r} \arrow{d} & \Enr^{\times} \arrow{d} \\
      \uO \arrow{r}{\uC} & \MonCatIlax.
    \end{tikzcd}
  \]
  Moreover, the localization $\Algd(\uC) \to \Cat(\uC)$ is an $\uO$-monoidal functor, with the inclusion $\Cat(\uC) \hookrightarrow \Algd(\uC)$ as its lax $\uO$-monoidal right adjoint. \qed
\end{cor}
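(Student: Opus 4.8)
The plan is to mirror the construction of the $\uO$-monoidal structure on $\Algd(\uC)$ in \cref{cor:algdOmonoidal}. There one uses that $\Algd$ has cartesian products, compatibly with the projection to $\MonCatIlax$, to obtain a cocartesian fibration $\Algd^{\times}\to(\MonCatIlax)^{\times}$ encoding the relative cartesian product, and then pulls this back along the map of \iopds{} $\uC\colon\uO\to(\MonCatIlax)^{\times}$. So I would first produce the analogous cocartesian fibration $\Enr^{\times}\to(\MonCatIlax)^{\times}$ together with a comparison $L^{\times}\colon\Algd^{\times}\to\Enr^{\times}$ over $(\MonCatIlax)^{\times}$, and only then pull back.

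Everything needed for this is already available. By the corollary just before \cref{cor:algdOmonoidal}, the full subcategory $\Enr\subseteq\Algd$ is closed under cartesian products and both the inclusion and the localization $L\colon\Algd\to\Enr$ preserve them; by the observation in \S\ref{sec:algds} (from \cite{enr}*{Proposition 5.7.1}), the localization $L$ also preserves cocartesian morphisms over $\MonCatIlax$, with the inclusion as its fibrewise right adjoint. Unwinding the ``Segal'' decomposition of a morphism in $(\MonCatIlax)^{\times}$ into a cocartesian morphism over $\mathrm{Fin}_{*}$ (realized on algebroids by an external product $\boxtimes$) followed by a morphism in a fibre (a lax monoidal functor, realized by an $F_{*}$), these two facts say precisely that $L^{\times}$ preserves cocartesian morphisms over $(\MonCatIlax)^{\times}$. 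It then follows by the usual formalism --- a fibrewise reflective localization of a cocartesian fibration whose reflection preserves cocartesian morphisms is again a cocartesian fibration, and the reflection is then left adjoint to the inclusion relative to the base --- that $\Enr^{\times}\to(\MonCatIlax)^{\times}$ is a cocartesian fibration, with cocartesian lifts obtained by completing those of $\Algd^{\times}$; that $L^{\times}$ is an $(\MonCatIlax)^{\times}$-monoidal functor; and that the inclusion $\Enr^{\times}\hookrightarrow\Algd^{\times}$ is its lax $(\MonCatIlax)^{\times}$-monoidal right adjoint --- lax rather than strong precisely because the cocartesian lift in $\Enr$ over a lax monoidal functor $F$ is ``apply $F_{*}$, then complete''.

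Pulling all of this back along $\uC\colon\uO\to(\MonCatIlax)^{\times}$, exactly as in \cref{cor:algdOmonoidal}, yields the $\uO$-monoidal \icat{} $\Cat(\uC)^{\otimes}=\uO\times_{(\MonCatIlax)^{\times}}\Enr^{\times}$ displayed in the statement --- which by construction is the full subcategory of $\Algd(\uC)^{\otimes}$ on the tuples of complete algebroids --- together with the $\uO$-monoidal localization $\Algd(\uC)^{\otimes}\to\Cat(\uC)^{\otimes}$ coming from $L^{\times}$ and its lax $\uO$-monoidal right adjoint, the inclusion $\Cat(\uC)^{\otimes}\hookrightarrow\Algd(\uC)^{\otimes}$. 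I expect the main obstacle to be the \icatl{} bookkeeping in the middle paragraph --- assembling the fibrewise data into a genuine adjunction over $(\MonCatIlax)^{\times}$ and checking the cocartesian fibration claim while simultaneously tracking the two directions $\mathrm{Fin}_{*}$ and $\MonCatIlax$ --- rather than any new mathematical input: the product-compatibility of completion comes from \cref{extprodcomp}, and the rest is recalled from \cite{enr}.
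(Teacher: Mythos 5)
Your proposal is correct and matches the paper's intended argument: the paper gives no explicit proof (the corollary is stated with a \qed), treating it as an immediate consequence of the fact that $\Enr$ is closed under cartesian products in $\Algd$ with the localization preserving both products and cocartesian morphisms over $\MonCatIlax$, followed by the same pullback along $\uC$ as in \cref{cor:algdOmonoidal}. Your middle paragraph simply spells out the standard compatible-localization formalism that the paper leaves implicit.
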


Our aim in this paper is to prove that if $\uC$ is an $\uO \otimes \mathbb{E}_{1}$-monoidal \icat{} that is compatible with small colimits, then so is the induced $\uO$-monoidal structures on $\Algd(\uC)$ and $\Cat(\uC)$. More generally, if $\mu \colon \uC \times \uD \to \uE$ is a monoidal functor that preserves colimits in each variable, we want to show that the induced functor
\[ \Algd(\uC) \times \Algd(\uD) \xto{\boxtimes} \Algd(\uC \times \uD)
  \xto{\mu_{*}} \Algd(\uE)\] preserves colimits in each variable, and
similarly for its restriction to complete algebroids.

\begin{observation}\label{obs:compprescolim}
  If $F \colon \uC \to \uD$ is a monoidal functor that preserves
  colimits, then the induced functor
  $F_{*}\colon \Algd(\uC) \to \Algd(\uD)$ preserves colimits by
  \cite{enr}*{Proposition 3.6.10}, as does the functor
  $F_{*} \colon \Cat(\uC) \to \Cat(\uD)$ by \cite{enr}*{Lemma
    5.7.7}. Now recall that the tensor product of cocomplete \icats{}
  \cite{HA}*{\S 4.8.1} provides for cocomplete \icats{} $\uC, \uD$ a
  universal functor $\uC \times \uD \to \uC \otimes \uD$ that
  preserves colimits in each variable: composition with this gives an
  equivalence between functors $\uC \times \uD \to \uE$ that preserve
  colimits in each variable and functors $\uC \otimes \uD \to \uE$
  that preserve colimits. To prove our desired result it therefore
  suffices to prove the universal case, namely that the functor
  \[ \Algd(\uC) \times \Algd(\uD) \to \Algd(\uC \times \uD) \to
    \Algd(\uC \otimes \uD)\] preserves colimits in each variable, as
  does the corresponding functor on $\Cat(\blank)$.
\end{observation}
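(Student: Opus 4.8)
The plan is to prove the asserted reduction directly. Given a monoidal functor $\mu \colon \uC \times \uD \to \uE$ between cocomplete monoidal \icats{} (with tensor products preserving colimits in each variable) that preserves colimits in each variable, I would factor it through the canonical functor to the tensor product by a \emph{colimit-preserving} monoidal functor, and then invoke the two cited colimit-preservation results to collapse the general case to the universal one.

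First, recall that the tensor product of cocomplete \icats{} assembles into a symmetric monoidal structure \cite{HA}*{\S 4.8.1}, for which a cocomplete monoidal \icat{} whose tensor product preserves colimits in each variable is precisely an $E_{1}$-algebra. Under this structure $\uC \otimes \uD$ carries the tensor-product-of-algebras monoidal structure, with multiplication determined by $(c \otimes d)\otimes(c'\otimes d') \simeq (c\otimes c')\otimes(d\otimes d')$, and the canonical functor $\mathrm{can}\colon \uC\times\uD \to \uC\otimes\uD$ is monoidal for the product monoidal structure on the source. The universal property of the tensor product yields a unique colimit-preserving functor $\overline{\mu}\colon \uC\otimes\uD\to\uE$ with $\mu\simeq\overline{\mu}\circ\mathrm{can}$. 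The key point is that $\overline{\mu}$ is again monoidal: a monoidal functor out of the product $\uC\times\uD$ is the same datum as a pair of monoidal functors $\uC\to\uE$ and $\uD\to\uE$ with commuting images (write $\mu(c,d)\simeq\mu(c,\bbone)\otimes\mu(\bbone,d)$), and this is exactly the universal property of the tensor product of $E_{1}$-algebras; restricting both sides to the colimit-preserving functors matches monoidal colimit-in-each-variable functors $\uC\times\uD\to\uE$ with monoidal colimit-preserving functors $\uC\otimes\uD\to\uE$.

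With the monoidal factorization in hand, the reduction is formal. Since $\overline{\mu}$ is monoidal and preserves all colimits, \cite{enr}*{Proposition 3.6.10} gives that $\overline{\mu}_{*}\colon\Algd(\uC\otimes\uD)\to\Algd(\uE)$ preserves all colimits, and \cite{enr}*{Lemma 5.7.7} gives the same for $\overline{\mu}_{*}\colon\Cat(\uC\otimes\uD)\to\Cat(\uE)$. Writing $\mu_{*}\simeq\overline{\mu}_{*}\circ\mathrm{can}_{*}$, the composite $\mu_{*}\circ\boxtimes$ factors as
\[ \Algd(\uC)\times\Algd(\uD)\xto{\boxtimes}\Algd(\uC\times\uD)\xto{\mathrm{can}_{*}}\Algd(\uC\otimes\uD)\xto{\overline{\mu}_{*}}\Algd(\uE), \]
whose first two arrows compose to the universal-case functor. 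As the post-composition of a functor preserving colimits in each variable with a functor preserving all colimits again preserves colimits in each variable, the universal case implies the general case; the identical argument with $\Cat(\blank)$ in place of $\Algd(\blank)$ handles the complete case.

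The main obstacle I anticipate is justifying that $\overline{\mu}$ is monoidal rather than merely a plain colimit-preserving functor, i.e.\ upgrading the universal property of the cocomplete tensor product from functors to monoidal functors. This rests on identifying the monoidal structure on $\uC\otimes\uD$ with the tensor product of $E_{1}$-algebras in cocomplete \icats{} and on the (Dunn-additivity-style) universal property of that tensor product; once this identification is made, the remaining steps are a formal manipulation of the cited colimit-preservation statements.
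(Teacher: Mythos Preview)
Your proposal is correct and follows exactly the reduction the paper's observation sketches: factor $\mu$ through the canonical map $\uC \times \uD \to \uC \otimes \uD$ by a colimit-preserving monoidal functor $\overline{\mu}$, and then apply the two cited colimit-preservation results to $\overline{\mu}_{*}$. You are more explicit than the paper about why $\overline{\mu}$ is \emph{monoidal} (via the universal property of the tensor of $E_{1}$-algebras in cocomplete \icats{}), a point the observation leaves implicit; otherwise the arguments coincide.
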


We will take a rather indirect route to this result, however: since colimits in algebroids are complicated to describe, we will instead consider an alternative description of enriched \icats{} as certain presheaves satisfying Segal conditions.

\section{Enriched \icats{} as Segal presheaves}\label{sec:segalpsh}
In this section we recall the alternative description of enriched
\icats{} as ``Segal presheaves'', first introduced in
\cite{enr}*{\S 4.5}.

\begin{notation}
  Suppose $\uC^{\otimes} \to \Dop$ is a monoidal \icat{}. We write
  $p \colon \uC_{\otimes}\to \simp$ for the cartesian fibration corresponding
  to the same functor as this; note that the opposite $(\uC_{\otimes})^{\op} \to \Dop$ is the cocartesian fibration for the canonical monoidal structure on $\uC^{\op}$, so we will also write $\uC^{\op,\otimes} := (\uC_{\otimes})^{\op}$. An object of $\uC_{\otimes}$ over $[n]$
  that corresponds to $\bfc = (c_{1},\ldots,c_{n})$ under the
  equivalence $\uC_{\otimes,[n]} \simeq \uC^{\times n}$ will be denoted
  $[n](\bfc)$; given a morphism $\phi \colon [m] \to [n]$ in $\simp$ and
  an object $[n](\bfc)$ of $\uC_{\otimes,[n]}$, we will write
  \[ \phi^{\bfc} \colon [m](\phi^{*}\bfc) \to [n](\bfc)\]
  for a cartesian morphism over $\phi$; here we have
  \[ (\phi^{*}\bfc)_{j} \simeq \bigotimes_{\phi(j-1) < i \leq \phi(j)}
    c_{i}.\] We will also write $\Delta^{n}(\bfc)$ for the presheaf on
  $\uC_{\otimes}$ represented by $[n](\bfc)$.
\end{notation}

\begin{observation}
  In $\PSh(\uC_{\otimes})$ we have a pullback square
  \[
    \begin{tikzcd}
      \Delta^{m}(\phi^{*}\bfc) \arrow{r} \arrow{d} & \Delta^{n}(\bfc) \arrow{d} \\
      p^{*}\Delta^{m} \arrow{r} & p^{*}\Delta^{n},
    \end{tikzcd}
  \]
  as a special case of \cite{enropd}*{Lemma 2.7.10}.
\end{observation}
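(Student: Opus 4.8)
The plan is to check the square objectwise, since limits in the presheaf \icat{} $\PSh(\uC_{\otimes})$---pullbacks in particular---are computed pointwise. So I would first fix an arbitrary object $X$ of $\uC_{\otimes}$ and unwind the two bottom corners: straight from the definition of $p^{*}$ one has $(p^{*}G)(Y)\simeq G(pY)$ for every presheaf $G$ on $\simp$ and every object $Y$ of $\uC_{\otimes}$, so $(p^{*}\Delta^{j})(X)\simeq\Delta^{j}(pX)\simeq\Map_{\simp}(pX,[j])$ for $j=m,n$; under these identifications the bottom map of the square evaluates to post-composition with $\phi\colon[m]\to[n]$, while the two vertical maps evaluate to the maps induced by $p$ on mapping spaces. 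By the Yoneda lemma the top map evaluates to post-composition with the chosen $p$-cartesian morphism $\phi^{\bfc}\colon[m](\phi^{*}\bfc)\to[n](\bfc)$. Hence, after evaluating at $X$, the square in question becomes
\[
  \begin{tikzcd}
    \Map_{\uC_{\otimes}}(X,[m](\phi^{*}\bfc)) \arrow{r} \arrow{d} & \Map_{\uC_{\otimes}}(X,[n](\bfc)) \arrow{d} \\
    \Map_{\simp}(pX,[m]) \arrow{r} & \Map_{\simp}(pX,[n]).
  \end{tikzcd}
\]

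The remaining point is then exactly the defining universal property of $p$-cartesian morphisms, in its mapping-space form: because $\phi^{\bfc}$ is $p$-cartesian over $\phi$, for every object $w$ of $\uC_{\otimes}$ the square with corners $\Map_{\uC_{\otimes}}(w,[m](\phi^{*}\bfc))$, $\Map_{\uC_{\otimes}}(w,[n](\bfc))$, $\Map_{\simp}(pw,[m])$ and $\Map_{\simp}(pw,[n])$---the maps being post-composition with $\phi^{\bfc}$, post-composition with $\phi$, and $p$---is a pullback of spaces. Taking $w=X$ recovers the evaluated square above, and since $X$ was arbitrary this shows that the original square of presheaves is a pullback.

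I do not expect a genuine obstacle here: the argument is purely a matter of unwinding the definitions of $p^{*}$ and of the representable presheaves $\Delta^{n}$ and $\Delta^{n}(\bfc)$, together with the standard mapping-space characterisation of cartesian edges. Indeed, this is nothing but the specialisation to $p\colon\uC_{\otimes}\to\simp$ of the general statement \cite{enropd}*{Lemma 2.7.10} on how representable presheaves pull back along a cartesian fibration, so one may alternatively invoke that result directly.
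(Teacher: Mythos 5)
Your argument is correct and is essentially the content of the cited result: the paper offers no proof beyond invoking \cite{enropd}*{Lemma 2.7.10}, which is exactly the statement that representables pull back along $p^{*}$ of simplices via cartesian lifts, and your pointwise verification using the mapping-space characterisation of $p$-cartesian morphisms is the standard proof of that lemma specialised to $p\colon\uC_{\otimes}\to\simp$. No gaps.
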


\begin{defn}
  Let $\Delta^{n}_{\Seg}$ denote the spine of $\Delta^{n}$, that is
  the iterated pushout
  $\Delta^{1} \amalg_{\Delta^{0}} \cdots \amalg_{\Delta^{0}}
  \Delta^{1}$, which maps to $\Delta^{n}$ via the inert maps
  $[0],[1] \to [n]$. Then we define $\Delta^{n}_{\Seg}(\bfc)$ as the
  pullback
  \[
    \begin{tikzcd}
      \Delta^{n}_{\Seg}(\bfc) \arrow{r} \arrow{d} & \Delta^{n}(\bfc) \arrow{d} \\
      p^{*}\Delta^{n}_{\Seg} \arrow{r} & p^{*}\Delta^{n}.
    \end{tikzcd}
  \]
  Since $p^{*}$ preserves colimits and $\PSh(\uC_{\otimes})$ is locally cartesian closed, we see that
  \[ \Delta^{n}_{\Seg}(\bfc) \simeq \Delta^{1}(c_{1}) \amalg_{\Delta^{0}} \cdots \amalg_{\Delta^{0}} \Delta^{1}(c_{n}).\]
  An object $\Phi \in \PSh(\uC_{\otimes})$ is then called a \emph{Segal presheaf} if it is local with respect to all maps $\Delta^{n}_{\Seg}(\bfc) \to \Delta^{n}(\bfc)$. In other words, $\Phi$ is a Segal presheaf if all of the maps
  \[ \Phi([n](\bfc)) \to \Phi([1](c_{1})) \times_{\Phi([0])} \cdots \times_{\Phi([0])} \Phi([1](c_{n}))\]
  are equivalences. We write $\PSeg(\uC_{\otimes})$ for the full
  subcategory of $\PSh(\uC_{\otimes})$ spanned by the Segal presheaves
  and $L_{\Seg}$ for the localization functor.
\end{defn}

\begin{remark}
  The Segal presheaves are precisely the Segal
  $\uC^{\op,\otimes}$-spaces in the terminology of \cite{patterns3}.
\end{remark}

\begin{observation}
  Let $\Phi \colon \uC_{\otimes} \to \uD_{\otimes}$ be an oplax monoidal functor, that is a functor over $\simp$ that preserves inert cartesian morphisms. Then for the induced adjunction
  \[ \Phi_{!} : \PSh(\uC_{\otimes}) \rightleftarrows \PSh(\uD_{\otimes}) : \Phi^{*}\]
  on presheaves, we have that the image of the Segal morphism $\Delta^{n}_{\Seg}(\bfc) \to \Delta^{n}(\bfc)$ is precisely the Segal morphism
  \[ \Delta^{n}_{\Seg}(\Phi(\bfc)) \to \Delta^{n}(\Phi(\bfc)),\] since
  $\Phi_{!}$ restricts to $\Phi$ on representable presheaves and
  preserves colimits; the presheaf $\Phi_{!}(\Delta^{n}(\bfc))$ is
  represented by $\Phi([n](\bfc))$, which is $[n](\Phi(\bfc))$ where
  $\Phi(\bfc)_{i}= \Phi(c_{i})$, since $\Phi$ preserves inert cartesian
  morphisms. It follows that $\Phi$ induces an adjunction
  \[ L_{\Seg}\Phi_{!} \colon \PSeg(\uC_{\otimes}) \rightleftarrows \PSeg(\uD_{\otimes}) : \Phi^{*}\]
  on Segal presheaves.
\end{observation}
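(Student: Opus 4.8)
The plan is to unwind the definitions so that the claim follows from two standard facts: that left Kan extension along $\Phi$ takes the representable $\Delta^{n}(\bfc)$ to $\Delta^{n}(\Phi(\bfc))$, and that precomposing an adjunction with Bousfield localizations on both sides is again an adjunction.

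First I would recall that $\Phi_{!} \colon \PSh(\uC_{\otimes}) \to \PSh(\uD_{\otimes})$ is the unique colimit-preserving functor with $\Phi_{!} \circ y \simeq y \circ \Phi$ (for $y$ the Yoneda embedding), so that $\Phi_{!}\Delta^{n}(\bfc)$ is represented by $\Phi([n](\bfc))$. The point to check is that $\Phi([n](\bfc)) \simeq [n](\Phi(\bfc))$ with $\Phi(\bfc)_{i} = \Phi(c_{i})$: since the fibre $\uD_{\otimes,[n]}$ is recovered as $\uD^{\times n}$ by cartesian transport along the $n$ inert edges $\rho_{i}\colon [1]\hookrightarrow[n]$, and $\Phi$ preserves cartesian morphisms over inert maps, the $i$-th inert restriction of $\Phi([n](\bfc))$ is $\Phi(\rho_{i}^{*}[n](\bfc)) = \Phi([1](c_{i}))$, which is $[1](\Phi(c_{i}))$ because $\Phi$ restricts to its underlying functor $\uC \to \uD$ on fibres over $[1]$ and $\rho_{i}^{*}\bfc$ has single entry $c_{i}$ by the explicit formula; similarly $\Phi([0]) \simeq [0]$ since $\uD_{\otimes,[0]}\simeq *$. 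As $\Phi_{!}$ preserves colimits and $\Delta^{n}_{\Seg}(\bfc)$ is the iterated pushout $\Delta^{1}(c_{1}) \amalg_{\Delta^{0}} \cdots \amalg_{\Delta^{0}} \Delta^{1}(c_{n})$, it follows that $\Phi_{!}$ carries the Segal map $\Delta^{n}_{\Seg}(\bfc) \to \Delta^{n}(\bfc)$ to $\Delta^{n}_{\Seg}(\Phi(\bfc)) \to \Delta^{n}(\Phi(\bfc))$.

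Next I would deduce that $\Phi^{*}$ preserves Segal presheaves: for $\Psi \in \PSeg(\uD_{\otimes})$ and any $n,\bfc$, the restriction map $\Map(\Delta^{n}(\bfc), \Phi^{*}\Psi) \to \Map(\Delta^{n}_{\Seg}(\bfc), \Phi^{*}\Psi)$ is identified under $\Phi_{!}\dashv\Phi^{*}$ with $\Map(\Delta^{n}(\Phi(\bfc)), \Psi) \to \Map(\Delta^{n}_{\Seg}(\Phi(\bfc)), \Psi)$, an equivalence because $\Psi$ is Segal; hence $\Phi^{*}\Psi$ is $\Seg$-local. Finally, since $\Phi^{*}$ thus restricts to $\PSeg(\uD_{\otimes}) \to \PSeg(\uC_{\otimes})$, its left adjoint there is $L_{\Seg}\Phi_{!}$: for $\Xi \in \PSeg(\uC_{\otimes})$ and $\Psi \in \PSeg(\uD_{\otimes})$,
\[ \Map_{\PSeg(\uD_{\otimes})}(L_{\Seg}\Phi_{!}\Xi, \Psi) \simeq \Map_{\PSh(\uD_{\otimes})}(\Phi_{!}\Xi, \Psi) \simeq \Map_{\PSh(\uC_{\otimes})}(\Xi, \Phi^{*}\Psi) \simeq \Map_{\PSeg(\uC_{\otimes})}(\Xi, \Phi^{*}\Psi), \]
using $\Seg$-locality of $\Psi$, the adjunction $\Phi_{!}\dashv\Phi^{*}$, and full faithfulness of the inclusions, respectively.

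The only step requiring genuine care is the identification $\Phi_{!}\Delta^{n}(\bfc) \simeq \Delta^{n}(\Phi(\bfc))$ — that is, checking that ``preserves cartesian morphisms over inert maps'' is exactly the hypothesis making the images of the Segal maps come out as Segal maps again; the remaining steps are formal manipulations of localizations and adjoints.
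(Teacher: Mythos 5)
Your proposal is correct and follows the same route as the paper's inline justification: identify $\Phi_!\Delta^n(\bfc)\simeq\Delta^n(\Phi(\bfc))$ via preservation of representables and inert cartesian morphisms, use colimit preservation for the spine, and then pass formally to the localizations. The extra detail you supply (checking the inert restrictions componentwise and verifying that $\Phi^*$ preserves Segal objects) is exactly what the paper leaves implicit.
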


Although Segal presheaves are functorial in oplax monoidal functors, to compare this functoriality to that for algebroids we would need to enter into some $(\infty,2)$-categorical considerations. Since this is not necessary to prove our desired result on tensor products, we will restrict ourselves to (strong) monoidal functors:
\begin{defn}
  Let $\PSeg \to \MonCatI$ be the fibration for the functor
  $\uC \mapsto \PSeg(\uC_{\otimes})$; this is a cocartesian fibration
  for the covariant functoriality and cartesian for the contravariant
  functorality of $\PSeg(\blank)$ in monoidal functors. 
\end{defn}

\begin{observation}\label{obs:psegviafib}
  We can describe the \icat{} $\PSeg$ explicitly in terms of right
  fibrations, as follows: Let $\RFib$ denote the full subcategory of
  $\Ar(\CatI)$ spanned by the right fibrations; then
  $\ev_{1} \colon \RFib \to \CatI$ is the fibration for the functor
  taking $\uC$ to $\PSh(\uC)$. We can define $\PSeg$ as the full
  subcategory of the pullback $\MonCatI \times_{\CatI} \RFib$ of
  $\ev_{1}$ and $(\blank)_{\otimes} \colon \MonCatI \to \CatI$ spanned
  by the right fibrations that correspond to Segal presheaves. Here we
  can identify $\MonCatI$ (in terms of cartesian fibrations) with a
  subcategory of $\Cat_{\infty/\simp}$, so that $\PSeg$ corresponds to
  a subcategory of
  \[\Ar(\CatI) \times_{\CatI} \Cat_{\infty/\simp} \simeq
    \Ar(\Cat_{\infty/\simp}).\]
\end{observation}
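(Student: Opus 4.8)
The plan is to deduce the statement from the standard fact that the target projection $\ev_{1} \colon \Ar(\CatI) \to \CatI$ is a cartesian fibration classifying the functor $\uC \mapsto (\CatI)_{/\uC}$, with functoriality given by pullback, together with the straightening equivalence between right fibrations over $\uC$ and presheaves on $\uC$. First I would check that $\ev_{1}$ restricts to a cartesian fibration $\RFib \to \CatI$: its fibre over $\uC$ is the full subcategory of $(\CatI)_{/\uC}$ on the right fibrations, which is $\PSh(\uC)$, and since a pullback of a right fibration is again a right fibration this full subcategory is closed under $\ev_{1}$-cartesian morphisms. Hence $\RFib \to \CatI$ is a cartesian fibration whose transition functors are the restriction functors $f^{*}$; it is moreover cocartesian, since each $f^{*}$ has the left adjoint $f_{!}$, with cocartesian transition functors the $f_{!}$ --- note that these are \emph{not} inherited from the cocartesian structure on $\Ar(\CatI)$, since the composite of a right fibration with an arbitrary functor need not be a right fibration. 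This identifies $\ev_{1} \colon \RFib \to \CatI$ with the fibration for $\uC \mapsto \PSh(\uC)$ claimed in the statement.

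Next I would pull this back along $(\blank)_{\otimes} \colon \MonCatI \to \CatI$, obtaining a fibration over $\MonCatI$ --- cartesian and cocartesian --- with fibre $\PSh(\uC_{\otimes})$, contravariant functoriality $(g_{\otimes})^{*}$ and covariant functoriality $(g_{\otimes})_{!}$ for a monoidal functor $g$. By the observation preceding the definition of $\PSeg$, $(g_{\otimes})_{!}$ carries the Segal morphism $\Delta^{n}_{\Seg}(\bfc) \to \Delta^{n}(\bfc)$ to $\Delta^{n}_{\Seg}(g(\bfc)) \to \Delta^{n}(g(\bfc))$, so $(g_{\otimes})^{*}$ preserves Segal-local objects. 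Therefore the full subcategory of $\MonCatI \times_{\CatI} \RFib$ on those pairs whose right fibration classifies a Segal presheaf is closed under cartesian morphisms, so it is a cartesian fibration over $\MonCatI$ classifying $\uC \mapsto \PSeg(\uC_{\otimes})$ with contravariant functoriality $(g_{\otimes})^{*}$; being a fibrewise reflective subcategory of the cocartesian fibration $\MonCatI \times_{\CatI} \RFib$ that is closed under cartesian morphisms, it is also cocartesian, with transition functor $(g_{\otimes})_{!}$ followed by Segal localization, \ie{} $L_{\Seg}(g_{\otimes})_{!}$. This matches both functorialities of $\PSeg$, and since a fibration that is both cartesian and cocartesian is determined up to equivalence over its base by its cartesian classifying functor (the cocartesian one being recovered by passing to left adjoints), this full subcategory is equivalent over $\MonCatI$ to $\PSeg$.

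Finally, for the last identification I would use the natural equivalence $\Ar(\CatI) \times_{\CatI} (\CatI)_{/\uB} \simeq \Ar\bigl((\CatI)_{/\uB}\bigr)$, valid for any $\uB$ --- both sides parametrize towers $A \to X \to \uB$, where on the left $\Ar(\CatI) \to \CatI$ is $\ev_{1}$ and $(\CatI)_{/\uB} \to \CatI$ is the forgetful functor --- applied with $\uB = \simp$, together with the (non-full) embedding $\MonCatI \hookrightarrow \Cat_{\infty/\simp}$, $\uC \mapsto (\uC_{\otimes} \to \simp)$, which is compatible with the projections to $\CatI$. This exhibits $\MonCatI \times_{\CatI} \RFib$, and hence its full subcategory $\PSeg$, as a subcategory of $\Ar(\CatI) \times_{\CatI} \Cat_{\infty/\simp} \simeq \Ar(\Cat_{\infty/\simp})$, as claimed. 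I do not expect any serious difficulty here: the one point one should not gloss over is the first step --- that the plain full subcategory $\RFib \subseteq \Ar(\CatI)$, equipped with the target projection, really is the (cartesian and cocartesian) fibration classified by $\PSh$ --- and then the bookkeeping in the second step that the concrete model's two functorialities agree with those of $\PSeg$; everything else is formal.
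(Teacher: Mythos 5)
Your argument is correct and follows exactly the route the paper intends: the statement is an unproved "observation", and your writeup simply supplies the details it leaves implicit — that $\RFib \subseteq \Ar(\CatI)$ with the target projection is the bicartesian fibration for $\uC \mapsto \PSh(\uC)$ (with the right caveat that the cocartesian structure is not the one inherited from $\Ar(\CatI)$), that the Segal-local full subcategory of the pullback is closed under cartesian morphisms and hence models $\PSeg$ with both of its functorialities, and that the final identification comes from $\Ar(\CatI) \times_{\CatI} \Cat_{\infty/\simp} \simeq \Ar(\Cat_{\infty/\simp})$. No gaps.
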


We can now introduce the (external) tensor product of Segal presheaves:

\begin{notation}
  Let $\uC^{\otimes}, \uD^{\otimes}$ be monoidal \icats{}. For objects $X \in \PSh(\uC_{\otimes}), Y \in \PSh(\uD_{\otimes})$ we write $X \boxtimes Y$ for the composite
  \[ (\uC \times \uD)^{\op,\otimes} := (\uC_{\otimes} \times_{\simp}\uD_{\otimes})^{\op} \to (\uC_{\otimes})^{\op} \times (\uD_{\otimes})^{\op} \xto{X \times Y} \Spc \times \Spc \to \Spc, \]
  where the last functor is the cartesian product.
\end{notation}

\begin{observation}\label{obs:boxlimcolim}
  The external product is characterized by the equivalence
  \[
    \begin{split}
      \Map(\Delta^{n}(\bfc, \bfd), X \boxtimes Y) & \simeq (X \boxtimes Y)([n](\bfc, \bfd)) \\
                                                  & \simeq X([n], \bfc) \times Y([n], \bfd) \\
      & \simeq \Map(\Delta^{n}(\bfc), X) \times \Map(\Delta^{n}(\bfd), Y).
    \end{split}
  \]
  Since colimits in functor \icats{} are computed pointwise, and the cartesian product of spaces preserves colimits in each variable, we see that the external product
  \[ \blank \boxtimes \blank \colon \PSh(\uC_{\otimes}) \times
    \PSh(\uD_{\otimes}) \to \PSh(\uC_{\otimes} \times_{\simp}
    \uD_{\otimes})\] preserves colimits in each variable. Moreover, it also preserves limits in $\PSh(\uC_{\otimes}) \times
    \PSh(\uD_{\otimes})$ (\ie{}, diagrams that are limits in both variables are taken to limits by $\boxtimes$).
\end{observation}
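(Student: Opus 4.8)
The plan is to reduce all three assertions to an objectwise description of the external product. Let $\pi_{\uC} \colon \uC_{\otimes} \times_{\simp} \uD_{\otimes} \to \uC_{\otimes}$ and $\pi_{\uD} \colon \uC_{\otimes} \times_{\simp} \uD_{\otimes} \to \uD_{\otimes}$ be the projections obtained by composing the canonical functor $\uC_{\otimes} \times_{\simp} \uD_{\otimes} \to \uC_{\otimes} \times \uD_{\otimes}$ with the two product projections. Unwinding the defining composite of $\boxtimes$, one identifies $X \boxtimes Y$ with the pointwise cartesian product $\pi_{\uC}^{*}X \times \pi_{\uD}^{*}Y$ in $\PSh(\uC_{\otimes} \times_{\simp} \uD_{\otimes})$, so that $(X \boxtimes Y)(W) \simeq X(\pi_{\uC}W) \times Y(\pi_{\uD}W)$ for every object $W$. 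Carrying out this identification --- keeping track of the opposite categories and the fibre product over $\simp$ --- is the only mildly fiddly point; everything after it is formal.

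Granting the formula, the characterization is just the Yoneda lemma. Every object of $\uC_{\otimes} \times_{\simp} \uD_{\otimes}$ lying over $[n]$ has the form $[n](\bfc,\bfd) := ([n](\bfc), [n](\bfd))$, and $\pi_{\uC}$, $\pi_{\uD}$ send it to $[n](\bfc)$ and $[n](\bfd)$ respectively, so applying Yoneda first to $X \boxtimes Y$ and then to $X$ and $Y$ gives
\[ \Map(\Delta^{n}(\bfc,\bfd), X \boxtimes Y) \simeq X([n](\bfc)) \times Y([n](\bfd)) \simeq \Map(\Delta^{n}(\bfc), X) \times \Map(\Delta^{n}(\bfd), Y). \]
Since the $\Delta^{n}(\bfc,\bfd)$ are precisely the representable presheaves on $\uC_{\otimes} \times_{\simp} \uD_{\otimes}$, they generate $\PSh(\uC_{\otimes} \times_{\simp} \uD_{\otimes})$ under colimits, so together with colimit-preservation in each variable this formula pins $\boxtimes$ down.

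For the colimit claim I would fix $Y$ and check that $\blank \boxtimes Y$ preserves colimits: colimits in $\PSh(\uC_{\otimes} \times_{\simp} \uD_{\otimes})$ are computed objectwise, and by the formula above it suffices that for each $W$ the endofunctor $A \mapsto A \times Y(\pi_{\uD}W)$ of $\Spc$ preserves colimits, which holds because $\Spc$ is cartesian closed; the functor $X \boxtimes \blank$ for fixed $X$ is symmetric. For the limit statement, let $i \mapsto (X_{i}, Y_{i})$ be a diagram in $\PSh(\uC_{\otimes}) \times \PSh(\uD_{\otimes})$ that is a limit in each variable, with limits $X$ and $Y$; since limits in presheaf \icats{} are objectwise and limits commute with the cartesian product (itself a limit), we get for every $W$ that
\[ \bigl(\lim\nolimits_{i}(X_{i} \boxtimes Y_{i})\bigr)(W) \simeq \lim\nolimits_{i}\bigl(X_{i}(\pi_{\uC}W) \times Y_{i}(\pi_{\uD}W)\bigr) \simeq X(\pi_{\uC}W) \times Y(\pi_{\uD}W) \simeq (X \boxtimes Y)(W), \]
so that $X_{i} \boxtimes Y_{i}$ has limit $X \boxtimes Y$. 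The whole argument is thus a matter of unwinding definitions; there is no substantive obstacle beyond the bookkeeping in the first step.
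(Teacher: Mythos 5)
Your proof is correct and follows essentially the same route as the paper: unwind the definition to the objectwise formula $(X \boxtimes Y)(W) \simeq X(\pi_{\uC}W) \times Y(\pi_{\uD}W)$, apply Yoneda for the characterization, and then use that (co)limits of presheaves are computed objectwise together with the fact that the cartesian product of spaces preserves colimits in each variable and commutes with limits. Nothing further is needed.
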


\begin{lemma}\label{lem:boxpresseg}
  If $X \in \PSeg(\uC_{\otimes})$ and $Y \in \PSeg(\uD_{\otimes})$, then the presheaf $X \boxtimes Y$ is a Segal presheaf on $(\uC \times \uD)_{\otimes}$.
\end{lemma}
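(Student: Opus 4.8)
The plan is to verify the Segal condition for $X \boxtimes Y$ directly at the level of values, using the pointwise description of the external product. Fix an object $[n]((\bfc,\bfd))$ of $(\uC \times \uD)_{\otimes}$ lying over $[n] \in \simp$, where under $(\uC \times \uD)_{\otimes,[n]} \simeq \uC^{\times n} \times \uD^{\times n}$ it corresponds to tuples $\bfc = (c_1,\ldots,c_n)$ and $\bfd = (d_1,\ldots,d_n)$; its images under the two projections to $\uC_{\otimes}$ and $\uD_{\otimes}$ are $[n](\bfc)$ and $[n](\bfd)$. First I would record that, since $(\uC \times \uD)_{\otimes}$ is the fibre product $\uC_{\otimes} \times_{\simp} \uD_{\otimes}$ a morphism there is cartesian over $\simp$ precisely when both its projections are, so the inert morphisms $[1](c_i,d_i) \to [n](\bfc,\bfd)$ and $[0] \to [n](\bfc,\bfd)$ witnessing the Segal maps project to the corresponding inert morphisms in $\uC_{\otimes}$ and $\uD_{\otimes}$. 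Since $X \boxtimes Y$ is by definition the composite of $((\uC \times \uD)_{\otimes})^{\op} \to (\uC_{\otimes})^{\op} \times (\uD_{\otimes})^{\op}$, the functor $X \times Y$, and the product $\Spc \times \Spc \to \Spc$, it follows that $(X \boxtimes Y)([n](\bfc,\bfd)) \simeq X([n](\bfc)) \times Y([n](\bfd))$, naturally with respect to these inert maps; this is a restatement of \cref{obs:boxlimcolim} via the Yoneda lemma together with the naturality of the construction.

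It then remains to check that the Segal map
\[ X([n](\bfc)) \times Y([n](\bfd)) \longrightarrow \bigl(X([1](c_1)) \times Y([1](d_1))\bigr) \times_{X([0]) \times Y([0])} \cdots \times_{X([0]) \times Y([0])} \bigl(X([1](c_n)) \times Y([1](d_n))\bigr) \]
is an equivalence. Since finite limits commute in $\Spc$ — equivalently, the product functor $\Spc \times \Spc \to \Spc$ preserves limits — the target is naturally equivalent to the product
\[ \bigl(X([1](c_1)) \times_{X([0])} \cdots \times_{X([0])} X([1](c_n))\bigr) \times \bigl(Y([1](d_1)) \times_{Y([0])} \cdots \times_{Y([0])} Y([1](d_n))\bigr), \]
and under this identification the displayed map becomes the product of the Segal map for $X$ at $[n](\bfc)$ and the Segal map for $Y$ at $[n](\bfd)$. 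Both of these are equivalences because $X$ and $Y$ are Segal presheaves, hence so is their product. As $[n](\bfc,\bfd)$ was arbitrary, this shows that $X \boxtimes Y$ is a Segal presheaf.

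There is no real obstacle here — the argument is a direct computation — so the only thing needing care is the bookkeeping in the first paragraph: namely that the inert maps defining the spine in $(\uC \times \uD)_{\otimes}$ are carried by the defining composite of $X \boxtimes Y$ to the corresponding inert maps over $\uC_{\otimes}$ and $\uD_{\otimes}$ separately, so that the Segal map for $X \boxtimes Y$ genuinely splits as a product as claimed. Alternatively, one could phrase the whole proof contravariantly, writing the spine $\Delta^{n}_{\Seg}(\bfc,\bfd) \simeq \Delta^1((c_1,d_1)) \amalg_{\Delta^0} \cdots \amalg_{\Delta^0} \Delta^1((c_n,d_n))$ as an iterated pushout of representables and applying \cref{obs:boxlimcolim} to each term after mapping into $X \boxtimes Y$; the pointwise version above seems the most transparent.
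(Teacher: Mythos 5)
Your proof is correct and is essentially the paper's argument: the paper phrases it contravariantly, as a commutative square comparing $\Map(\Delta^{n}(\bfc,\bfd), X \boxtimes Y)$ and $\Map(\Delta^{n}_{\Seg}(\bfc,\bfd), X \boxtimes Y)$ with the corresponding products for $X$ and $Y$ separately, but this is just the Yoneda-dual of your pointwise verification that the Segal map for $X \boxtimes Y$ splits as the product of the Segal maps for $X$ and $Y$. The alternative you mention in your last paragraph is exactly what the paper does.
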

\begin{proof}
  We must show that $X \boxtimes Y$ is local with respect to $\Delta^{n}_{\Seg}(\bfc,\bfd) \to \Delta^{n}(\bfc, \bfd)$. But we have a commutative square
  \[
    \begin{tikzcd}
      \Map(\Delta^{n}(\bfc, \bfd), X \boxtimes Y) \arrow{r}{\sim} \arrow{d} & \Map(\Delta^{n}(\bfc), X) \times \Map(\Delta^{n}(\bfd), Y) \arrow{d} \\
      \Map(\Delta_{\Seg}^{n}(\bfc, \bfd), X \boxtimes Y) \arrow{r}{\sim}  & \Map(\Delta_{\Seg}^{n}(\bfc), X) \times \Map(\Delta_{\Seg}^{n}(\bfd), Y), 
    \end{tikzcd}
  \]
  where the right vertical map is an equivalence if $X$ and $Y$ are Segal presheaves. Then the left vertical map is also an equivalence, as required.
\end{proof}

\begin{propn}
  The external tensor product is the cartesian product on the \icat{} $\PSeg$, and this is preserved by the projection to $\MonCatI$.  
\end{propn}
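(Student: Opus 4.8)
The plan is to realize $\PSeg$ as a full subcategory of the orthofibration (in fact just an honest pullback of fibrations) described in \cref{obs:psegviafib}, and then apply \cref{propn:orthfibprod}. Recall that $\PSeg(\uC_{\otimes})$ arises as the fibre over $\uC \in \MonCatI$ of the fibration $\ev_{1} \colon \RFib \to \CatI$ restricted along $(\blank)_{\otimes}$. More precisely, the relevant functor is \[ \PSh((\blank)_{\otimes}) \colon \MonCatI^{\op} \to \CatI, \] which is contravariant in monoidal functors, together with the observation that the domain projection $\ev_{0} \colon \RFib \to \CatI$ sending a right fibration $\uE \to \uB$ to its total space $\uE$ records the \emph{covariant} functoriality $\Phi_{!}$. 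Rather than invoking \cref{propn:orthfibprod} in the form stated, it is cleanest here to argue directly using the explicit description: $\PSeg \subseteq \MonCatI \times_{\CatI} \RFib$, and $\MonCatI$ has finite products (computed as on underlying \icats{}, since the class of cartesian fibrations over $\simp$ that are monoidal is closed under products over $\simp$), while the fibre $\PSh((\blank)_{\otimes})$ depends only on $\uC_{\otimes}$ through the functor $\PSh(\blank)$, which \emph{sends products of \icats{} to products of \icats{}} up to the identification $\PSh(\uA \times \uB) \simeq \PSh(\uA) \times_{?} \dots$—wait, this is \emph{not} a product. I need the correct universal property.

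So here is the honest strategy. I would \emph{not} try to exhibit the external product as a categorical product in some orthofibration via a black-box lemma; instead I would verify the universal property of the product directly. Given monoidal \icats{} $\uC^{\otimes}, \uD^{\otimes}$, objects $X \in \PSeg(\uC_{\otimes})$, $Y \in \PSeg(\uD_{\otimes})$, the candidate product is the pair $(\uC \times \uD, X \boxtimes Y)$, where $X \boxtimes Y \in \PSeg((\uC\times\uD)_{\otimes})$ by \cref{lem:boxpresseg}. The two projection maps in $\PSeg$ are: the monoidal projections $\pi_{\uC}\colon \uC\times\uD \to \uC$ and $\pi_{\uD} \colon \uC\times\uD\to\uD$, together with, on the presheaf side, the counit maps of the adjunction $(\pi_{\uC})_! \dashv \pi_{\uC}^*$. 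Concretely $\pi_{\uC}^* X \to X\boxtimes Y$ is induced by $Y \to *$ and dually. An object of $\PSeg$ over a test monoidal \icat{} $\uE^{\otimes}$ is a pair $(G, Z)$ with $G\colon \uE \to \uC\times\uD$ monoidal and $Z \in \PSeg(\uE_{\otimes})$; a map from $(G,Z)$ to $(\uC\times\uD, X\boxtimes Y)$ in $\PSeg$ over $G = (G_1, G_2)$ is (by the defining pullback and the structure of the fibration $\RFib$) a map $Z \to G^*(X\boxtimes Y) \simeq G_1^* X \boxtimes' G_2^* Y$ in $\PSeg(\uE_{\otimes})$, where $\boxtimes'$ now denotes the external product along the diagonal $\uE_{\otimes} \to \uE_{\otimes}\times_{\simp}\uE_{\otimes}$—i.e.\ the \emph{internal} product in $\PSh(\uE_{\otimes})$, which is the cartesian product since $\PSh$ is cartesian closed. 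Thus $G^*(X\boxtimes Y) \simeq G_1^* X \times G_2^* Y$, and the mapping space from $(G,Z)$ decomposes accordingly.

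Carrying this out: I would first unwind that for a monoidal functor $G = (G_1,G_2)\colon \uE \to \uC\times\uD$, pulling back along $(\uC\times\uD)_{\otimes} \to (\uC_\otimes)\times_{\simp}(\uD_\otimes)$ and then along $G$ gives $(G_1 \times G_2)_\otimes$ precomposed with the diagonal, so that $G^*(X \boxtimes Y) \simeq G_1^*X \times G_2^* Y$ as presheaves on $\uE_\otimes$ (this is where \cref{obs:boxlimcolim}, or rather its defining equivalence, and the compatibility of $\boxtimes$ with base change along monoidal functors, gets used). Then \[ \Map_{\PSeg}\bigl((G,Z),\,(\uC\times\uD, X\boxtimes Y)\bigr) \simeq \coprod_{G}\Map_{\PSh(\uE_\otimes)}(Z,\, G_1^*X\times G_2^*Y) \simeq \coprod_{G}\Map(Z,G_1^*X)\times\Map(Z,G_2^*Y), \] and reassembling the coproduct over $G\in\Map_{\MonCatI}(\uE,\uC\times\uD)\simeq\Map(\uE,\uC)\times\Map(\uE,\uD)$ matches \[ \Map_{\PSeg}\bigl((G,Z),(\uC,X)\bigr)\times\Map_{\PSeg}\bigl((G,Z),(\uD,Y)\bigr). \] One checks the projection maps in this identification are exactly the ones coming from the counits, so $(\uC\times\uD,X\boxtimes Y)$ is the product. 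That the projection $\PSeg \to \MonCatI$ preserves it is visible since the underlying monoidal \icat{} of the product is $\uC\times\uD$ by construction; and the claim that this product equals the external tensor product is precisely the description of $\boxtimes$ above. The main obstacle is the bookkeeping in the first step—correctly identifying $G^*(X\boxtimes Y)$ with $G_1^*X \times G_2^* Y$ and tracking that the right fibration / presheaf contravariance in $\RFib$ interacts with the product on $\MonCatI$ in the way the universal property needs; once that base-change identity is pinned down, the rest is the cartesian-closedness of presheaf \icats{} plus reassembling a coproduct of products, which is formal. Alternatively, and perhaps more slickly, one can just apply \cref{propn:orthfibprod} directly to the functor $\MonCatI^{\op}\to\CatI$, $\uC\mapsto\PSh(\uC_{\otimes})$ after checking it sends products to products \emph{up to the cartesian-closed structure}—but since that functor does not literally preserve products, the direct verification above is safer, and the full subcategory $\PSeg\subseteq\MonCatI\times_{\CatI}\RFib$ is closed under the product by \cref{lem:boxpresseg}.
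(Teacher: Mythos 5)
Your final argument is correct, but it takes a different route from the paper. The paper's proof is a two-line structural observation: via \cref{obs:psegviafib}, $\PSeg$ sits inside the full subcategory of $\Ar(\Cat_{\infty/\simp})$ spanned by right fibrations, where the cartesian product is simply the pullback over $\simp$ applied to both source and target; this pullback of right fibrations is exactly the one classifying $X \boxtimes Y$, and \cref{lem:boxpresseg} shows the Segal condition is closed under it. You instead verify the universal property by hand, fibrewise over $\MonCatI$: the key identity $G^{*}(X \boxtimes Y) \simeq G_{1}^{*}X \times G_{2}^{*}Y$ (cartesian product of presheaves on $\uE_{\otimes}$, computed pointwise) is correct and does all the work, after which the mapping space over each $G$ splits as a product and the pieces reassemble over $\Map_{\MonCatI}(\uE,\uC\times\uD) \simeq \Map(\uE,\uC)\times\Map(\uE,\uD)$. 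This is essentially the same fibrewise scheme as \cref{propn:orthfibprod}, redone in this special case; the paper's route avoids the computation entirely at the cost of setting up the embedding into $\Ar(\Cat_{\infty/\simp})$. Your first paragraph's false start (trying to feed a non-product-preserving functor into \cref{propn:orthfibprod}) is correctly abandoned, so it does no harm. Two small slips in the execution: the structure map to $(\uC,X)$ over $\pi_{\uC}$ corresponds to a map $X\boxtimes Y \to \pi_{\uC}^{*}X$ (induced by $Y\to *$ via $\pi_{\uC}^{*}X\simeq X\boxtimes *$), not a map in the direction you wrote; and the decomposition over $G$ is a fibration over the mapping space of monoidal functors rather than literally a coproduct, so one should phrase it as checking the map on fibres over each $G$, exactly as in the proof of \cref{propn:orthfibprod}. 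Neither affects the substance.
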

\begin{proof}
  To see this, consider the full subcategory of
  $\Ar(\Cat_{\infty/\simp})$ spanned by the right fibrations. This
  obviously has a cartesian product, given by taking pullbacks over
  $\simp$, and if $\uE \to \uB$ and $\uE' \to \uB'$ are right
  fibrations where $\uB,\uB'$ live over $\simp$, then their cartesian
  product $\uE \times_{\simp} \uE' \to \uB \times_{\simp} \uB'$ is
  precisely the right fibration corresponding to our external
  product. Via the description of $\PSeg$ from \cref{obs:psegviafib}, we can identify this with a subcategory of $\Ar(\Cat_{\infty/\simp})$, which we know is closed under cartesian products from \cref{lem:boxpresseg}.
\end{proof}

Applying the same pullback construction as in \cref{cor:algdOmonoidal}, we get:
\begin{cor}
  If $\uC$ is an $\uO \otimes E_{1}$-monoidal \icat{}, then
  $\PSeg(\uC_{\otimes})$ has an $\uO$-monoidal structure. \qed
\end{cor}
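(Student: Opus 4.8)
The plan is to carry out, for $\PSeg$, the same pullback construction that lies behind \cref{cor:algdOmonoidal}. By the preceding proposition, $\PSeg$ has finite products, computed fibrewise over $\MonCatI$ by the external tensor product, and the projection $q \colon \PSeg \to \MonCatI$ preserves them. Equipping $\PSeg$ and $\MonCatI$ with their cartesian symmetric monoidal structures, I then obtain from $q$ a symmetric monoidal functor $q^{\times} \colon \PSeg^{\times} \to (\MonCatI)^{\times}$ whose underlying functor is $q$.

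The point that needs care is that $q^{\times}$ is not merely a morphism of \iopds{} but a \emph{cocartesian fibration} of \iopds{}. Since $q$ is itself a cocartesian fibration, a cocartesian lift in $\PSeg^{\times}$ of a morphism of $(\MonCatI)^{\times}$ can be assembled componentwise: on each component one forms the relevant finite product of a chosen lift (available in $\PSeg$, and preserved by $q$) and transports it along the corresponding structure map of $\MonCatI$ using that $q$ is cocartesian; the resulting morphism of $\PSeg^{\times}$ is cocartesian over $q^{\times}$ because mapping spaces in $\PSeg^{\times}$ decompose as products of mapping spaces in $\PSeg$. Equivalently, via straightening: $q$ classifies the product-preserving functor $\MonCatI \to \CatI$, $\uC \mapsto \PSeg(\uC_{\otimes})$, which for the cartesian structures is canonically symmetric monoidal, and unstraightening such a functor is exactly the datum of a cocartesian fibration of \iopds{} over $(\MonCatI)^{\times}$. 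I expect this verification --- the $\PSeg$-analogue of the (unstated) input to \cref{cor:algdOmonoidal} --- to be the only genuine content of the argument; everything else is formal.

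It then remains to observe that an $\uO \otimes E_{1}$-monoidal \icat{} $\uC$ amounts to an $\uO$-algebra in monoidal \icats{} and strong monoidal functors --- equivalently, a monoidal object in $\uO$-monoidal \icats{}; the $E_{1}$-tensor factor is exactly what forces the $\uO$-operations to act through \emph{strong} monoidal functors (whereas in \cref{cor:algdOmonoidal}, where $\Algd$ is moreover functorial in lax monoidal functors, only the product $\uO \times \Dop$ is needed). Such a $\uC$ thus corresponds to a morphism of \iopds{} $\uC \colon \uO \to (\MonCatI)^{\times}$, and I would define the $\uO$-monoidal structure on $\PSeg(\uC_{\otimes})$ as the pullback
\[
  \begin{tikzcd}
    \PSeg(\uC_{\otimes})^{\otimes} \arrow{r} \arrow{d} & \PSeg^{\times} \arrow{d} \\
    \uO \arrow{r}{\uC} & (\MonCatI)^{\times}.
  \end{tikzcd}
\]
As the base change of a cocartesian fibration of \iopds{} along a morphism of \iopds{}, the projection $\PSeg(\uC_{\otimes})^{\otimes} \to \uO$ is again a cocartesian fibration of \iopds{}, \ie{} an $\uO$-monoidal \icat{}; and its underlying \icat{} is the fibre of $\PSeg^{\times}$ over the monoidal \icat{} $\uC$, which by definition of $q$ is $\PSeg(\uC_{\otimes})$.
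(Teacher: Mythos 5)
Your main argument is correct and is exactly the paper's (essentially one-line) proof: one pulls back $\PSeg^{\times} \to (\MonCatI)^{\times}$ --- the cartesian symmetric monoidal structures supplied by the preceding proposition, which says that $\PSeg$ has products given by $\boxtimes$ and that the projection preserves them --- along the map of \iopds{} $\uO \to (\MonCatI)^{\times}$ classifying $\uC$, just as in \cref{cor:algdOmonoidal}. One caveat: your ``equivalently, via straightening'' reformulation is false and should be discarded. The functor $\uC \mapsto \PSeg(\uC_{\otimes})$ classified by $q$ does \emph{not} preserve products: already for $\uC = \uD = *$ (so $\uC_{\otimes} \simeq \simp$) product preservation would require $\PSeg(\simp) \simeq \PSeg(\simp) \times \PSeg(\simp)$, which fails. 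The point is that the product of $X \in \PSeg(\uC_{\otimes})$ and $Y \in \PSeg(\uD_{\otimes})$ in the total \icat{} $\PSeg$ lies over $\uC \times \uD$ and is not computed fibrewise over a fixed base; this is precisely why the construction works with the total category and its cartesian product rather than with a product-preserving straightening. Your first, fibration-level verification of the cocartesian lifts (assemble the finite product in $\PSeg$, then push forward along the structure map using that $q$ is cocartesian) is the correct one and is what the paper implicitly relies on.
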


\section{Comparison for presheaf enrichment}\label{sec:comp}
In this section we first recall the equivalence between Segal
presheaves on $\uC_{\otimes}$ and algebroids valued in presheaves on
$\uC$ equipped with Day convolution; this was first discussed in
\cite{enr}*{\S 4.5}, and is also a special case of the results of
\cite{patterns3}*{\S 2.5}. We then explicate the relation between the cartesian product on $\PSeg$ and that on $\Algd$ under this correspondence.

First, recall that for any small
monoidal \icat{} $\uC^{\otimes}$ we can define a \emph{Day
  convolution} monoidal structure on $\PSh(\uC)$. This can be defined
as the pullback
\[
  \begin{tikzcd}
    \PSh(\uC)^{\otimes} \arrow{r} \arrow{d} & \RFib^{\times} \arrow{d}{\ev_{1}} \\
    \Dop \arrow{r}{\uC} & \CatI^{\times},
  \end{tikzcd}
\]
where $\RFib$ is the full subcategory of $\Ar(\CatI)$ spanned by the right fibrations and the bottom horizontal map is the algebra in $\CatI$ corresponding to the monoidal \icat{} $\uC^{\otimes}$. The Day convolution has the universal property that for any \gnsiopd{} $\uO$ we have a natural equivalence
\[ \Alg_{\uO}(\PSh(\uC)) \simeq \Mon_{\uO \times_{\Dop}
    \uC^{\op,\otimes}}(\Spc).\]
We refer to \cite{patterns2}*{\S 6} for a
discussion of this construction (but note that it is originally due to
Heine~\cite{HeineThesis}), or see \cite{GlasmanDay} and \cite{HA}*{\S 2.2.6} for alternative approaches. In particular, we have
\[ \Alg_{\Dop_{X}}(\PSh(\uC)) \simeq \Mon_{\uC^{\op,\otimes}_{X}}(\Spc),\] where 
$\uC^{\op,\otimes}_{X} := \DopX \times_{\Dop} \uC^{\op,\otimes}$. From this it is easy to prove the following, which is also a special case of \cite{patterns3}*{Corollary 2.5.2}:
\begin{propn}
  There is an equivalence of \icats{} \[\PSeg(\uC_{\otimes}) \simeq \Algd(\PSh(\uC))\]
  for every small monoidal \icat{} $\uC$. \qed
\end{propn}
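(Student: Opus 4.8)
The plan is to produce the equivalence by matching up the two descriptions fibrewise over the space of objects $X$. Recall that an algebroid in $\PSh(\uC)$ with underlying space $X$ is, by the universal property of Day convolution recorded just above, precisely a $\DopX \times_{\Dop} \uC^{\op,\otimes}$-monoid in $\Spc$; that is, an object of $\Mon_{\uC^{\op,\otimes}_{X}}(\Spc)$. Explicitly, this is a functor out of the underlying \icat{} of the \gnsiopd{} $\uC^{\op,\otimes}_{X}$ that sends inert morphisms to products. On the other side, a Segal presheaf on $\uC_{\otimes}$ is a presheaf $\Phi$ on $\uC_{\otimes}$, \ie{} a functor $\uC^{\op,\otimes} = (\uC_{\otimes})^{\op} \to \Spc$, that is local with respect to the spine inclusions $\Delta^{n}_{\Seg}(\bfc) \to \Delta^{n}(\bfc)$. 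I would first unwind the restriction $\Phi \mapsto \Phi([0])$ to the fibre over $[0]$: this records the underlying space, and the Segal condition at $n = 0$ (i.e. no condition) together with naturality lets us organize $\PSeg(\uC_{\otimes})$ as living over $\Spc$ via $\Phi \mapsto \Phi|_{\{[0]\}}$, compatibly with the projection $\Algd(\PSh(\uC)) \to \Spc$.

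Next, I would fix $X \in \Spc$ and compare the fibres. The fibre of $\PSeg(\uC_{\otimes}) \to \Spc$ over $X$ consists of presheaves on $\uC_{\otimes}$ whose restriction along $\{[0]\} \times_{\simp} \uC_{\otimes} \hookrightarrow \uC_{\otimes}$ is the constant-ish presheaf with value $X$ (more precisely, the right Kan extension data making $(\uC_{\otimes})_{[0]} \to \Spc$ correspond to $X$) and which satisfy the Segal condition; by the standard "fibrewise" form of the Segal/monoid equivalence (the left fibration over $\uC_{\otimes}$ classified by such a $\Phi$ restricts over the inert subcategory to the data of a functor $\uC^{\op,\otimes}_{X} \to \Spc$ sending inerts to products, and the full Segal condition says this extends uniquely), this fibre is identified with $\Mon_{\uC^{\op,\otimes}_{X}}(\Spc) \simeq \Alg_{\DopX}(\PSh(\uC))$. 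One then checks this identification is natural in $X$ — i.e. compatible with the cartesian transport along a map $X \to X'$ of spaces, which on both sides is restriction along $\DopX \to \Dop_{X'}$ (resp. $\uC^{\op,\otimes}_{X} \to \uC^{\op,\otimes}_{X'}$) — and conclude by assembling fibrewise equivalences over $\Spc$ into an equivalence of total \icats{}.

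Rather than redoing this by hand, the cleaner route — and the one the paper explicitly offers — is to cite \cite{patterns3}*{Corollary 2.5.2}: the framework of algebraic patterns identifies Segal $\uC^{\op,\otimes}$-spaces with algebras for the pattern $\DopX \times_{\Dop} \uC^{\op,\otimes}$ as $X$ varies, which is exactly the assertion. So the core of the proof is just to match the definitions: verify that "Segal presheaf on $\uC_{\otimes}$" in the sense defined above coincides with "Segal $\uC^{\op,\otimes}$-space" in the sense of \cite{patterns3} (this is the content of the Remark after the definition of $\PSeg$), and that the \gnsiopd{} $\DopX$ together with the Day convolution universal property package $\Algd(\PSh(\uC))$ over $\Spc$ into the pattern-algebra \icat{} $\bigcup_X \Mon_{\uC^{\op,\otimes}_X}(\Spc)$.

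The main obstacle is bookkeeping rather than mathematics: one must be careful about the several layers of op's and the two different fibrations (the left fibration presenting a presheaf on $\uC_{\otimes}$ versus the \gnsiopd{} structure $\uC^{\op,\otimes} \to \Dop$), and about the fact that the "underlying space $X$" appears on the algebroid side as a choice of \gnsiopd{} $\DopX$ but on the presheaf side is recovered only a posteriori as $\Phi([0])$ — so the equivalence must be built as a map of \icats{} over $\Spc$ and the naturality in $X$ genuinely needs checking. Once the patterns dictionary of \cite{patterns3}*{\S 2.5} is invoked, however, all of this is subsumed, and the proof reduces to the translation of definitions indicated above; hence the "$\qed$" in the statement as printed.
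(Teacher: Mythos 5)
Your proposal matches the paper's intended argument: the paper states the proposition with a \qed precisely because it follows from the Day convolution universal property $\Alg_{\Dop_X}(\PSh(\uC)) \simeq \Mon_{\uC^{\op,\otimes}_X}(\Spc)$ recorded just beforehand (assembled over $\Spc$), and is in any case a special case of \cite{patterns3}*{Corollary 2.5.2}. Both routes you describe — the fibrewise identification with naturality in $X$, and the direct citation of the patterns dictionary — are exactly the ones the paper offers.
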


Moreover, this equivalence is natural in monoidal functors:
\begin{defn}
  Let $\Algdpsh$ be the \icat{} defined by the pullback
  \[
    \begin{tikzcd}
      \Algdpsh \arrow{r} \arrow{d} & \Algd \arrow{d} \\
      \MonCatI \arrow{r}{\PSh_{!}} & \LMonCatI.
    \end{tikzcd}
  \]
\end{defn}

\begin{propn}
  There is an equivalence of \icats{}
  \[ \Algdpsh \simeq \PSeg \]
  over $\MonCatI$.
\end{propn}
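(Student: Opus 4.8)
The plan is to upgrade the fibrewise equivalence of the previous proposition to an equivalence over $\MonCatI$. Both projections to $\MonCatI$ are cocartesian fibrations --- for $\PSeg$ this is part of its definition, and $\Algdpsh \to \MonCatI$ is a base change of $\Algd \to \LMonCatI$, which is a restriction of the cocartesian fibration $\Algd \to \MonCatIlax$ --- so it suffices to produce a functor $\Algdpsh \to \PSeg$ over $\MonCatI$ that preserves cocartesian morphisms and restricts to an equivalence on each fibre; any such functor is automatically an equivalence.

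First I would construct the underlying functor, using the right-fibration descriptions of both sides. By \cref{obs:psegviafib}, $\PSeg$ is the full subcategory of $\MonCatI \times_{\CatI} \RFib$ (formed from $\ev_{1}$ and $(\blank)_{\otimes}$) spanned by the right fibrations over $\uC_{\otimes}$ whose classifying presheaf is Segal. On the other hand, an object of $\Algdpsh$ over $\uC \in \MonCatI$ is a pair $(X, \eA)$ with $X \in \Spc$ and $\eA$ an algebroid in $\PSh(\uC)$ with objects $X$; unwinding the pullback $\PSh(\uC)^{\otimes} \simeq \Dop \times_{\CatI^{\times}} \RFib^{\times}$ defining Day convolution and the universal property it satisfies, $\eA$ amounts to an object of $\Mon_{\uC^{\op,\otimes}_{X}}(\Spc)$, where $\uC^{\op,\otimes}_{X} = \DopX \times_{\Dop} \uC^{\op,\otimes}$, and left Kan extending along the left fibration $\uC^{\op,\otimes}_{X} \to \uC^{\op,\otimes}$ --- equivalently, summing over the fibres $X^{\times(n+1)}$ of $\DopX \to \Dop$ --- produces a presheaf $\Phi_{\eA}$ on $\uC_{\otimes}$ with $\Phi_{\eA}([n](\bfc)) \simeq \coprod_{x_{0},\ldots,x_{n} \in X} \prod_{i=1}^{n} \eA(x_{i-1},x_{i})(c_{i})$ and $\Phi_{\eA}([0]) \simeq X$. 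All of this is natural in monoidal functors --- here one uses the naturality of the Day convolution universal property in $\uC$ (see \cite{patterns2}*{\S 6} and \cite{HeineThesis}) --- so $\eA \mapsto \Phi_{\eA}$ assembles into a functor $\Algdpsh \to \MonCatI \times_{\CatI} \RFib$ over $\MonCatI$. The previous proposition says precisely that on each fibre this functor is fully faithful with essential image the Segal presheaves, so it corestricts to a functor $\Algdpsh \to \PSeg$ over $\MonCatI$ that is a fibrewise equivalence.

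It then remains to check that this functor preserves cocartesian morphisms, i.e. that the two covariant functorialities are intertwined. Over a monoidal functor $F \colon \uC \to \uD$ the cocartesian transport in $\Algdpsh$ is $\PSh_{!}(F)_{*}$ --- postcomposition with the strong monoidal left adjoint $\PSh_{!}(F) \colon \PSh(\uC) \to \PSh(\uD)$ --- whereas in $\PSeg$ it is $L_{\Seg}(F_{\otimes})_{!}$, computed along the induced functor $F_{\otimes} \colon \uC_{\otimes} \to \uD_{\otimes}$; one must verify that applying $\PSh_{!}(F)_{*}$ to an algebroid and then passing to the underlying presheaf agrees with applying $L_{\Seg}(F_{\otimes})_{!}$ to the underlying presheaf. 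Granting once more the naturality of the Day convolution universal property, this comes down to the compatibility of $\PSh_{!}(F)$ with the external product on presheaves, together with the fact --- noted already in the discussion of the oplax monoidal functoriality of Segal presheaves --- that $F_{\otimes}$ carries the Segal morphisms $\Delta^{n}_{\Seg}(\bfc) \to \Delta^{n}(\bfc)$ to Segal morphisms, so that the effect of $(F_{\otimes})_{!}$ on a Segal presheaf is correctly computed by applying it and then Segal-localizing. This last step is the only part of the argument that is not routine unwinding of the definitions, and is where I expect the real bookkeeping to lie.
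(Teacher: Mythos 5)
The paper's own ``proof'' is a one-line citation of \cite{patterns3}*{Corollary 2.6.12}, so your proposal is necessarily taking a different, more hands-on route. Your overall strategy is the standard and correct one: both projections to $\MonCatI$ are cocartesian fibrations, so it suffices to build a functor over $\MonCatI$ that preserves cocartesian edges and is a fibrewise equivalence. Your description of the fibrewise comparison (pass from $\Alg_{\Dop_{X}}(\PSh(\uC)) \simeq \Mon_{\uC^{\op,\otimes}_{X}}(\Spc)$ to a presheaf on $\uC_{\otimes}$ by left Kan extension along the left fibration $\uC^{\op,\otimes}_{X} \to \uC^{\op,\otimes}$) is the right picture; one small correction is that the resulting formula is a colimit over the space $X^{\times(n+1)}$, not a coproduct over elements, unless $X$ is discrete.

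The genuine gap is the step you explicitly ``grant'': the naturality in $\uC$ of the Day convolution universal property. This is not bookkeeping that can be waved through --- it is exactly the coherence needed to assemble the fibrewise equivalences into a single functor $\Algdpsh \to \MonCatI \times_{\CatI} \RFib$ over $\MonCatI$ in the first place, and it is precisely the content of the result the paper cites. Constructing such a natural family of equivalences of $\infty$-categories requires either a point-set model (as in the fibrational/unstraightened description you gesture at, where one must still check that the assignment is functorial in monoidal functors, not merely defined fibrewise) or an appeal to a reference that has done this, such as \cite{patterns3}*{\S 2.6}. On the cocartesian-edge check, your plan would be easier to execute by comparing the \emph{right} adjoints instead: the contravariant transport on Segal presheaves is $(F_{\otimes})^{*}$ and on algebroids it is composition with the lax monoidal right adjoint of $\PSh_{!}(F)$, and these are seen to agree directly from the limit formula for monoid objects, with no Segal localization intervening; the agreement of the left adjoints (the cocartesian transports $L_{\Seg}(F_{\otimes})_{!}$ and $\PSh_{!}(F)_{*}$) then follows by uniqueness of adjoints. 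As written, your argument for that step again leans on unproven naturality. So: right architecture, but the load-bearing coherence statement is assumed rather than established.
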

\begin{proof}
  This is a special case of \cite{patterns3}*{Corollary 2.6.12}.
\end{proof}

In the previous section we saw that the cartesian product on $\PSeg$ induced monoidal structures on Segal presheaves, and we now want to relate this to the monoidal structures we constructed in \S\ref{sec:algdtens}. For this we need to understand how the cartesian product on $\Algdpsh$ relates to that on $\Algd$.

\begin{observation}
  A functor $\PSh(\uC) \times \PSh(\uD) \to \uX$ that preserves
  colimits in each variable is determined by its restriction to
  $\uC \times \uD$, and so is the same thing as a colimit-preserving
  functor $\PSh(\uC \times \uD) \to \uX$. This shows that
  $\PSh(\uC \times \uD)$ is the cocomplete tensor product of
  $\PSh(\uC)$ and $\PSh(\uD)$. Thus, there is a canonical functor
  $\mu \colon \PSh(\uC) \times \PSh(\uD) \to \PSh(\uC \times \uD)$ that witnesses
  this universal property of the target. Explicitly, this is the
  functor that takes $(F,G)$ to the presheaf $F \times G$ given by
  $(c,d) \mapsto F(c) \times G(d)$: indeed, this functor preserves
  colimits in each variable (since the cartesian product in $\Spc$
  preserves these in each variable, and colimits in presheaves are
  computed objectwise), and restricts on representables to the Yoneda
  embedding for $\uC \times \uD$, since
  \[\Map_{\uC \times \uD}((\blank, \blank), (c,d)) \simeq
    \Map_{\uC}(\blank, c) \times \Map_{\uD}(\blank, d).\]
  We also have a functor $\pi \colon \PSh(\uC \times \uD) \to \PSh(\uC) \times \PSh(\uD)$, given by left Kan extension along the two projections from $\uC \times \uD$ to $\uC$ and $\uD$, and we claim that $\pi$ is the left adjoint of $\mu$. In terms of right fibrations, if $\uF \to \uC$ and $\uG \to \uD$ are the fibrations for $F$ and $G$, then  $\mu(F,G)$ corresponds to $\uF \times \uG \to \uC \times \uD$. Given a right fibration $\uE \to \uC \times \uD$ corresponding to $E \in \PSh(\uC \times \uD)$, then we have
  \[ \Map_{\PSh(\uC \times \uD)}(E, \mu(F,G)) \simeq \Map_{/\uC \times
      \uD}(\uE, \uF \times \uG) \simeq \Map_{/\uC}(\uE, \uF) \times
    \Map_{/\uD}(\uE, \uG).\] Here we can identify the right-hand side
  with $\Map_{\PSh(\uC) \times \PSh(\uD)}(\pi E, (F,G))$, since the
  functors given by left Kan extensions along the projections
  correspond on fibrations to composing with the projections and then forcing the result to
  be a right fibration.
\end{observation}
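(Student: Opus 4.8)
The plan is to derive every assertion in the observation from the universal property of presheaf \icats{} as free cocompletions, computing explicitly only where the statement demands an explicit formula. First I would recall that for any \icat{} $\uX$ admitting small colimits, restriction along the Yoneda embedding identifies colimit-preserving functors $\PSh(\uC) \to \uX$ with arbitrary functors $\uC \to \uX$. Applying this separately in each variable shows that a functor $\PSh(\uC) \times \PSh(\uD) \to \uX$ preserving colimits in each variable is the same datum as a functor $\uC \times \uD \to \uX$, and hence, by the universal property once more, as a colimit-preserving functor $\PSh(\uC \times \uD) \to \uX$. This string of equivalences is exactly the defining universal property of the cocomplete tensor product of \cite{HA}*{\S 4.8.1}, so $\PSh(\uC \times \uD) \simeq \PSh(\uC) \otimes \PSh(\uD)$, with $\mu$ the universal functor preserving colimits in each variable, characterized up to equivalence by the requirement that its restriction to pairs of representables be the Yoneda embedding of $\uC \times \uD$.

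To pin down the explicit formula I would check that the candidate $(F,G) \mapsto F \times G$, where $(F \times G)(c,d) := F(c) \times G(d)$, has both characterizing properties. It preserves colimits in each variable because colimits of presheaves are computed objectwise and the cartesian product of spaces preserves colimits in each variable. Evaluating on a pair of representables and using $\Map_{\uC \times \uD}((\blank,\blank),(c,d)) \simeq \Map_{\uC}(\blank,c) \times \Map_{\uD}(\blank,d)$ shows that it returns the representable at $(c,d)$, so it agrees with the Yoneda embedding on representables; by the uniqueness clause of the universal property it is therefore equivalent to $\mu$.

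The substantive point is the adjunction $\pi \dashv \mu$, and here I would avoid fibration bookkeeping in favour of a direct computation of mapping spaces. Writing $p_{\uC} \colon \uC \times \uD \to \uC$ and $p_{\uD} \colon \uC \times \uD \to \uD$ for the projections, the key observation is that $\mu(F,G)$ is a product of pullbacks: since $(p_{\uC}^{*} F)(c,d) \simeq F(c)$ and $(p_{\uD}^{*} G)(c,d) \simeq G(d)$, the product $p_{\uC}^{*} F \times p_{\uD}^{*} G$ in $\PSh(\uC \times \uD)$ is precisely $\mu(F,G)$. As $\pi = (p_{\uC,!}, p_{\uD,!})$ is by definition the pair of left Kan extensions, each left adjoint to the corresponding restriction, I would then compute, naturally in $E$ and $(F,G)$,
\[
  \begin{split}
    \Map_{\PSh(\uC) \times \PSh(\uD)}(\pi E, (F,G))
      & \simeq \Map_{\PSh(\uC)}(p_{\uC,!} E, F) \times \Map_{\PSh(\uD)}(p_{\uD,!} E, G) \\
      & \simeq \Map_{\PSh(\uC \times \uD)}(E, p_{\uC}^{*} F) \times \Map_{\PSh(\uC \times \uD)}(E, p_{\uD}^{*} G) \\
      & \simeq \Map_{\PSh(\uC \times \uD)}(E, p_{\uC}^{*} F \times p_{\uD}^{*} G) \\
      & \simeq \Map_{\PSh(\uC \times \uD)}(E, \mu(F,G)),
  \end{split}
\]
which exhibits $\pi$ as left adjoint to $\mu$. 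To reconcile this with the fibrational language of the rest of the paper I would then note that under straightening $F \leftrightarrow (\uF \to \uC)$ the presheaf $\mu(F,G)$ corresponds to $\uF \times \uG \to \uC \times \uD$, so that $\Map_{\PSh(\uC \times \uD)}(E, \mu(F,G))$ becomes $\Map_{/\uC \times \uD}(\uE, \uF \times \uG)$, which splits as $\Map_{/\uC}(\uE, \uF) \times \Map_{/\uD}(\uE, \uG)$ by the universal property of the product; the identification of $p_{\uC,!}$ with ``compose with $p_{\uC}$ and reflect into right fibrations'' recovers the same adjunction.

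I expect the only non-formal ingredient to be the identity $\mu(F,G) \simeq p_{\uC}^{*} F \times p_{\uD}^{*} G$ --- equivalently, the fibrational statement that left Kan extension along a projection is given by composing with the projection and then reflecting into right fibrations --- but once the standard adjunction $p_{!} \dashv p^{*}$ and the objectwise formula for products of presheaves are in hand, this is an immediate pointwise check and poses no real obstacle, so the whole result is essentially formal.
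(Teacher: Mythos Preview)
Your proposal is correct and covers all the claims in the observation. The first half (universal property, explicit formula for $\mu$) follows the paper verbatim. For the adjunction $\pi \dashv \mu$ you take a slightly different route: the paper argues in the unstraightened world, noting that $\mu(F,G)$ corresponds to the right fibration $\uF \times \uG \to \uC \times \uD$ and then splitting $\Map_{/\uC \times \uD}(\uE, \uF \times \uG)$ via the universal property of the product, whereas you argue in the straightened world via the identity $\mu(F,G) \simeq p_{\uC}^{*}F \times p_{\uD}^{*}G$ and the elementary adjunctions $p_{\uC,!} \dashv p_{\uC}^{*}$, $p_{\uD,!} \dashv p_{\uD}^{*}$. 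These are the same computation under straightening/unstraightening; your version has the minor advantage of not needing the fibrational description of left Kan extension as ``compose and reflect'', while the paper's version fits more seamlessly with the right-fibration description of $\PSeg$ used elsewhere. Since you close by translating back into the fibrational language anyway, there is no real gap between the two.
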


\begin{observation}
  If $\uC$ and $\uD$ are monoidal \icats{}, then $\pi \colon \PSh(\uC \times \uD) \to \PSh(\uC) \times \PSh(\uD)$ is a monoidal functor with respect to Day convolution (since left Kan extension along a monoidal functor gives a monoidal functor on Day convolution, and the cartesian product is the limit also in monoidal \icats{}). It follows that the right adjoint $\mu$ is lax monoidal.
\end{observation}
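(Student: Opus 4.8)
The plan is to recognize $\pi$ as a pair of strong monoidal functors and then invoke the fact that the right adjoint of a strong monoidal functor is canonically lax monoidal.

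First I would recall from the previous observation that $\pi$ is the functor $E \mapsto (p_{1,!}E,\, p_{2,!}E)$, where $p_{1}\colon \uC\times\uD\to\uC$ and $p_{2}\colon\uC\times\uD\to\uD$ are the two projections and $p_{i,!}$ denotes left Kan extension along $p_{i}$. Since $\uC\times\uD$ is the cartesian product in $\MonCatI$, the projections $p_{1}$ and $p_{2}$ are monoidal functors; applying the functor $\PSh_{!}\colon\MonCatI\to\LMonCatI$ that sends a monoidal \icat{} to its Day convolution monoidal structure --- the same functor that appears in the definition of $\Algdpsh$ --- we get that $p_{1,!}=\PSh_{!}(p_{1})$ and $p_{2,!}=\PSh_{!}(p_{2})$ are strong monoidal with respect to Day convolution.

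Then I would use that the product in $\MonCatI$ of two monoidal \icats{} has as underlying \icat{} the product of the underlying \icats{}, equipped with the componentwise monoidal structure, so that by the universal property of the product a monoidal functor into such a product is exactly a pair of monoidal functors into the two factors. Applied to $(p_{1,!},p_{2,!})$, this shows that $\pi$ is a strong monoidal functor $\PSh(\uC\times\uD)\to\PSh(\uC)\times\PSh(\uD)$ for the product of the two Day convolution structures. Finally, the previous observation exhibits $\mu$ as the right adjoint of $\pi$, and since the right adjoint of a strong monoidal functor carries a canonical lax monoidal structure --- with the adjunction refining to an adjunction of lax monoidal functors --- I would conclude that $\mu$ is lax monoidal. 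This ``doctrinal adjunction'' statement is standard; see \cite{HA} or the calculus of mates in \cite{HHLN1}.

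The mathematical content is light, so the main thing to watch out for is bookkeeping: one must pin down that the monoidal structure implicitly placed on $\PSh(\uC)\times\PSh(\uD)$ is the product of the two Day convolution structures (which is forced, as that is the cartesian product in $\MonCatI$ and is the structure with respect to which $\pi$, resp.\ $\mu$, is to be (lax) monoidal), and one must invoke the $\infty$-categorical form of ``the right adjoint of a strong monoidal functor is lax monoidal'' in the refined version that also records that the induced adjunction lives in $\MonCatIlax$ --- it is that refined statement which is used downstream when transporting the tensor product through the comparison equivalences.
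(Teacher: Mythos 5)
Your proposal is correct and follows essentially the same route as the paper: you decompose $\pi$ as the pair of left Kan extensions along the two projections, use that left Kan extension along a monoidal functor is strong monoidal for Day convolution together with the fact that the cartesian product in $\MonCatI$ carries the componentwise structure, and then apply doctrinal adjunction to get the lax monoidal structure on $\mu$. Your added remarks on pinning down the monoidal structure on $\PSh(\uC)\times\PSh(\uD)$ and on needing the adjunction to live in $\MonCatIlax$ are accurate but do not change the argument.
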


\begin{lemma}\label{lem:algdadj}
  If $L \colon \uC \to \uD$ is a monoidal functor with (lax monoidal) right adjoint $R \colon \uD \to \uC$, then composition with $L$ and $R$ gives an adjunction
  \[ L_{*} : \Algd(\uC) \rightleftarrows \Algd(\uD) : R_{*}.\]
\end{lemma}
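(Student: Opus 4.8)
The plan is to deduce the statement from its fibrewise version over $\Spc$, together with the standard fact that a monoidal adjunction induces an adjunction on algebras for any \gnsiopd{}. First I would observe that $L_{*}$ and $R_{*}$ are functors over $\Spc$: composing an algebroid $\DopX \to \uC^{\otimes}$ with the (strong) monoidal functor $L$ or with the lax monoidal functor $R$ leaves the space of objects $X$ unchanged. Over a point $X \in \Spc$ these restrict to the functors
\[ L_{X,*} \colon \Alg_{\DopX}(\uC) \rightleftarrows \Alg_{\DopX}(\uD) : R_{X,*} \]
given by post-composition with $L$ and $R$. Since $L \dashv R$ is a monoidal adjunction --- with $L$ strong monoidal, $R$ canonically lax monoidal, and unit $\mathrm{id}_{\uC} \to RL$ and counit $LR \to \mathrm{id}_{\uD}$ lax monoidal natural transformations (so that whiskering them with algebras yields morphisms of algebroids) --- post-composition exhibits $L_{X,*}$ as left adjoint to $R_{X,*}$ for every $X$; this is the usual fact that $\Alg_{\uO}(\blank)$ sends a monoidal adjunction to an adjunction, here applied to the \gnsiopd{} $\DopX$.

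Next I would assemble these fibrewise adjunctions into an adjunction on total \icats{}. The projections $\Algd(\uC) \to \Spc$ and $\Algd(\uD) \to \Spc$ are cartesian fibrations whose cartesian morphisms are precisely the fully faithful morphisms, and both $L_{*}$ and $R_{*}$ preserve fully faithful morphisms: if $F \colon \eA \to \eB$ is fully faithful in $\Algd(\uD)$, then for objects $x,y$ the induced map
\[ (R_{*}\eA)(x,y) \simeq R(\eA(x,y)) \to R(\eB(Fx,Fy)) \simeq (R_{*}\eB)(Fx,Fy) \]
is $R$ applied to an equivalence, hence an equivalence, and likewise for $L_{*}$. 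Thus $L_{*}$ and $R_{*}$ are functors of cartesian fibrations over $\Spc$ that restrict to an adjoint pair on each fibre, and hence by the standard recognition criterion for adjunctions of fibrations (see \cite{HHLN1}) they assemble into an adjunction $L_{*} \dashv R_{*}$, with unit and counit obtained by whiskering those of $L \dashv R$ with algebroids.

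If one prefers a direct verification, the defining equivalence can be checked fibre by fibre over $\Map_{\Spc}(X,Y)$: for $\eA \in \Algd(\uC)$ with objects $X$ and $\eB \in \Algd(\uD)$ with objects $Y$, a morphism $L_{*}\eA \to \eB$ lies over some $\gamma \colon X \to Y$, and writing $\gamma^{*}$ for cartesian pullback one has
\[
  \begin{split}
    \Map_{\Algd(\uD)}(L_{*}\eA, \eB)_{\gamma}
      & \simeq \Map_{\Alg_{\DopX}(\uD)}(L_{*}\eA, \gamma^{*}\eB) \\
      & \simeq \Map_{\Alg_{\DopX}(\uC)}(\eA, R_{*}\gamma^{*}\eB) \\
      & \simeq \Map_{\Alg_{\DopX}(\uC)}(\eA, \gamma^{*}R_{*}\eB) \\
      & \simeq \Map_{\Algd(\uC)}(\eA, R_{*}\eB)_{\gamma},
  \end{split}
\]
where the second equivalence is the fibrewise adjunction and the third uses that $R_{*}$ preserves cartesian morphisms; integrating over $\gamma$ gives the claim.

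I expect the coherence in the second step --- turning the family of fibrewise adjunctions into a single adjunction on the total \icats{} --- to be the only real point requiring care, and it is routine once one knows $L_{*}$ and $R_{*}$ preserve cartesian morphisms. The fibrewise input itself (a monoidal adjunction induces an adjunction on $\DopX$-algebras) should be taken from the literature on generalized non-symmetric \iopds{} rather than from the symmetric treatment in \cite{HA}.
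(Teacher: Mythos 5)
Your proof is correct, but it takes a more hands-on route than the paper, which disposes of the lemma in one line: since $\Algd(\blank)$ is a functor of \itcats{} out of $\MonCatIlax$, it carries the adjunction $L \dashv R$ (viewed as an adjunction internal to $\MonCatIlax$, which exists because $L$ is strong monoidal) directly to an adjunction $L_{*} \dashv R_{*}$ on total \icats{}, with no fibrewise decomposition needed. Your argument instead applies this $(\infty,2)$-categorical fact only fibrewise, to each $\Alg_{\DopX}(\blank)$, and then reassembles over $\Spc$ using that $R_{*}$ preserves cartesian morphisms (the fully faithful ones, by the observation quoting \cite{enr}*{Lemma 5.3.2}) together with the recognition criterion for relative adjunctions; your direct mapping-space computation over $\gamma \in \Map_{\Spc}(X,Y)$ is essentially the proof of that criterion and correctly identifies the relative left adjoint with $L_{*}$. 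What your approach buys is that one only needs the $2$-functoriality of $\Alg_{\uO}(\blank)$ for a fixed \gnsiopd{} $\uO = \DopX$, rather than of the assembled construction $\Algd(\blank)$; the cost is the extra bookkeeping of the fibrewise-to-global assembly, which the paper's formulation of $\Algd$ as a functor of \itcats{} renders unnecessary. Both arguments are sound, and the inputs you cite (the lax monoidality of $R$ and of the unit and counit, and the relative adjunction criterion) are all available in the references the paper already uses.
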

\begin{proof}
  Immediate from the definition, since $\Algd(\blank)$ is a functor of \itcats{}.
\end{proof}

\begin{propn}
  The \icat{} $\Algdpsh$ has finite cartesian products which are preserved by the projections to $\MonCatI$ and $\Spc$. The product of $\eA \in \Alg_{\Dop_{X}}(\PSh(\uC))$ and $\eB \in \Alg_{\Dop_{Y}}(\PSh(\uD))$ is given by
  \[ \mu_{*}(\eA \boxtimes \eB) \colon \Dop_{X \times Y} \xto{\eA \times_{\Dop} \eB} \PSh(\uC)^{\otimes} \times_{\Dop} \PSh(\uD)^{\otimes} \xto{\mu} \PSh(\uC \times \uD)^{\otimes}.\]
\end{propn}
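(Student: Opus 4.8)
The plan is to deduce this from the abstract fact about cartesian products in orthofibrations (\cref{propn:orthfibprod}) exactly as we did for $\Algd$ itself in \cref{algdcart}, but now applied to the functoriality of $\Algd(\blank)$ along the monoidal functor $\PSh_{!}$. First I would observe that $\Algdpsh$ is, by construction, the pullback of $\Algd \to \MonCatIlax$ along $\PSh_{!} \colon \MonCatI \to \LMonCatI$, so that $\Algdpsh \to \MonCatI \times \Spc$ is the orthofibration associated to the functor $(\uC, X) \mapsto \Alg_{\Dop_{X}}(\PSh(\uC))$ on $\MonCatI^{\op} \times \Spc$ — contravariant in $\uC$ via the cartesian functoriality of $\Algd(\blank)$ along monoidal functors (which exists since a monoidal functor $L$ has a lax monoidal right adjoint $R$, and $L_{*} \dashv R_{*}$ by \cref{lem:algdadj}), and covariant in $\Spc$ via $\DopX \to \Dop_{Y}$ for $X \to Y$. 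Crucially, for fixed $\uC$ the functor $X \mapsto \Alg_{\Dop_{X}}(\PSh(\uC))$ preserves products, since $\Alg_{\Dop_{(\blank)}}(\uV)$ sends products of spaces to products for any $\uV$ (as in the proof of \cref{algdcart}, because $\DopX$ is a limit-preserving functor of $X$ and $\Alg_{\uO}(\uV)$ is right adjoint to tensoring $\uO$ with \icats{}).

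With those hypotheses in place, \cref{propn:orthfibprod} applies verbatim: $\Algdpsh$ has finite cartesian products, and the projections to $\MonCatI$ and to $\Spc$ preserve them. It remains to identify the product explicitly using the formula in \cref{propn:orthfibprod}. Given $\eA \in \Alg_{\Dop_{X}}(\PSh(\uC))$ and $\eB \in \Alg_{\Dop_{Y}}(\PSh(\uD))$, that formula says the product sits over $X \times Y \in \Spc$ and over $\uC \times \uD \in \MonCatI$ (the product in $\MonCatI$, which is computed as the underlying product of \icats{} with the product Day convolution structure), and is obtained from $(\eA, \eB)$ by first applying the cartesian transport in the $\Spc$-variable along the projections $X \times Y \to X$, $X \times Y \to Y$ — which is exactly pulling back $\eA$ and $\eB$ to $\Dop_{X \times Y}$, giving $\eA \boxtimes \eB$ in the $\Algd$-sense — and then inverting the two cartesian transports in the $\MonCatI$-variable along the projections $\uC \times \uD \to \uC$, $\uC \times \uD \to \uD$. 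Those cartesian transports are given by $R_{*}$ for the right adjoints of the projection monoidal functors; under Day convolution the projection $\PSh(\uC) \to \PSh(\uC \times \uD)$... more precisely, the relevant right-adjoint composite is precisely the monoidal functor $\pi \dashv \mu$ discussed in the preceding observations, so inverting the cartesian transports amounts to applying $\mu_{*}$. This yields $\mu_{*}(\eA \boxtimes \eB)$, which by the definitions of $\boxtimes$ and $\mu$ unwinds to the displayed composite $\Dop_{X \times Y} \xto{\eA \times_{\Dop} \eB} \PSh(\uC)^{\otimes} \times_{\Dop} \PSh(\uD)^{\otimes} \xto{\mu} \PSh(\uC \times \uD)^{\otimes}$.

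The main obstacle I anticipate is the bookkeeping in the last step: matching the abstract product formula of \cref{propn:orthfibprod} — phrased in terms of $F(\pi_{a}, b)$-type transports in an orthofibration — with the concrete description in terms of $\boxtimes$ and $\mu_{*}$, and in particular checking that the cartesian transport of $\Algd(\blank)$ along the projection monoidal functors $\uC \times \uD \to \uC$ really is computed by $(\blank) \circ R = (\blank) \circ \mu$-type composites at the level of Day convolution, i.e.\ that $\mu$ is (up to the equivalence $\Algdpsh \simeq \PSeg$) the right adjoint appearing here. This is not deep, but it requires carefully tracing the identification of $\PSh(\uC \times \uD)$ as the product of $\PSh(\uC)$ and $\PSh(\uD)$ in $\MonCatI$ through the construction of the Day convolution as a pullback along $\uC \mapsto \PSh(\uC)^{\otimes}$, together with \cref{lem:algdadj} to pass from the adjunction $\pi \dashv \mu$ to the adjunction $\pi_{*} \dashv \mu_{*}$ on algebroids.
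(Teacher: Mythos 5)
Your reduction to \cref{propn:orthfibprod} does not go through, because the product-preservation hypothesis of that proposition fails for $\Algdpsh$ in either orientation. In the orientation you choose ($\uA = \MonCatI$ contravariant, $\uB = \Spc$ covariant), the hypothesis you would need is exactly the one you flag as ``crucial'': that for fixed $\uC$ the functor $X \mapsto \Alg_{\Dop_{X}}(\PSh(\uC))$ preserves products. This is false: already for $X = Y = *$ it would say that $\Alg(\PSh(\uC))$ is equivalent to $\Alg(\PSh(\uC))^{\times 2}$, and in general an algebroid on $X \times Y$ objects is not a pair of algebroids on $X$ and on $Y$ objects. The justification you give (that $\Alg_{\uO}(\uV)$ is right adjoint to tensoring $\uO$ with \icats{}) is the reason $\Alg_{\Dop_{X}}(\blank)$ preserves products in the \emph{monoidal} variable, which is what \cref{algdcart} uses; it says nothing about the $\Spc$-variable, where moreover the functoriality induced by $\Dop_{X} \to \Dop_{Y}$ is contravariant (restriction), not the covariant one your orientation requires. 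In the natural orientation ($\uA = \Spc$, $\uB = \MonCatI$) the hypothesis fails as well, for the reason that makes this proposition nontrivial in the first place: $\PSh_{!}$ does not take the product $\uC \times \uD$ in $\MonCatI$ to the product $\PSh(\uC) \times \PSh(\uD)$ in $\MonCatIlax$, but to $\PSh(\uC \times \uD)$. The comparison functor $\Alg_{\Dop_{Z}}(\PSh(\uC \times \uD)) \to \Alg_{\Dop_{Z}}(\PSh(\uC)) \times \Alg_{\Dop_{Z}}(\PSh(\uD))$ that \cref{propn:orthfibprod} would require you to invert is not an equivalence; it only admits a right adjoint, namely $\mu_{*}$.

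That last point is the actual content of the proof: one verifies the universal property of $\mu_{*}(\eA \boxtimes \eB)$ directly, working fibrewise over $\Map_{\MonCatI}(\uV, \uC \times \uD) \simeq \Map(\uV,\uC) \times \Map(\uV,\uD)$, and uses the adjunction $\pi_{*} \dashv \mu_{*}$ supplied by \cref{lem:algdadj} to trade maps into $\mu_{*}(\eA \boxtimes \eB)$ in $\Algd(\PSh(\uC \times \uD))$ for maps into $\eA \boxtimes \eB$ in $\Algd(\PSh(\uC) \times \PSh(\uD))$, where the product property from \cref{algdcart} applies. You clearly have this ingredient in hand --- your closing paragraph correctly singles out $\pi \dashv \mu$ and \cref{lem:algdadj} as the crux --- but it must replace the appeal to \cref{propn:orthfibprod} rather than supplement it.
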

\begin{proof}
  Given $\ec{X} \in \Alg_{\Dop_{Z}}(\PSh(\uV))$, we must show that the map
  \[ \Map_{\Algdpsh}(\ec{X}, \mu_{*}(\eA \boxtimes \eB)) \to \Map_{\Algdpsh}(\ec{X}, \eA) \times \Map_{\Algdpsh}(\ec{X}, \eB),\]
  induced by composition with the cocartesian morphisms
  \[ \mu_{*}(\eA \boxtimes \eB) \to \pi_{1,!,*}\mu_{*}(\eA \boxtimes \eB) \simeq \eA,\] \[\mu_{*}(\eA \boxtimes \eB) \to \pi_{2,!,*}\mu_{*}(\eA \boxtimes \eB) \simeq \eB,\]
  is an equivalence. This map fits in a commutative square
  \[
    \begin{tikzcd}
      \Map(\ec{X}, \mu_{*}(\eA \boxtimes \eB)) \arrow{r} \arrow{d} & \Map(\ec{X}, \eA) \times \Map(\ec{X}, \eB) \arrow{d} \\
      \Map(\uV, \uC \times \uD) \arrow{r}{\sim} &  \Map(\uV, \uC) \times \Map(\uV, \uD),
    \end{tikzcd}
  \]
  so it suffices to check that the map on fibres over $(\Phi,\Psi) \colon \uV \to \uC \times \uD$ is an equivalence. The left-hand fibre we can identify with
  \[ \Map_{\Algd(\PSh(\uC \times \uD))}((\Phi_{!},\Psi_{!})_{*}\ec{X}, \mu_{*}(\eA \boxtimes \eB)).\]
  By \cref{lem:algdadj} we have an adjunction $\pi_{*} \dashv \mu_{*}$ on algebroids, so this is equivalent to
  \[ \Map_{\Algd(\PSh(\uC) \times \PSh(\uD))}(\pi_{*}(\Phi_{!},\Psi_{!})_{*}\ec{X}, \eA \boxtimes \eB) \simeq \Map_{\Algd(\PSh(\uC))}(\Phi_{!}\ec{X}, \eA) \times \Map_{\Algd(\PSh(\uD))}(\Psi_{!}\ec{X}, \eB),\]
  just as required.
\end{proof}

\begin{cor}\label{cor:psegprodinalgd}
  Under the equivalence of Segal presheaves and algebroids in presheaves, the external product
  \[ \PSeg(\uC_{\otimes}) \times \PSeg(\uD_{\otimes}) \to \PSeg((\uC \times \uD)_{\otimes})\]
  corresponds to the composite
  \[ \Algd(\PSh(\uC)) \times \Algd(\PSh(\uD)) \xto{\boxtimes} \Algd(\PSh(\uC)\times \PSh(\uD)) \xto{\mu_{*}} \Algd(\PSh(\uC \times \uD)). \]
\end{cor}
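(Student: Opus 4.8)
The plan is to deduce this formally from the two explicit product formulas established above, using that the comparison $\Algdpsh \simeq \PSeg$ is an equivalence of \icats{} \emph{over} $\MonCatI$.

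First I would record an elementary observation: if $p \colon \uE \to \uB$ is a functor such that $\uE$ has finite products and $p$ preserves them, then for objects $b, b' \in \uB$ the associated ``external product'' functor $\uE_{b} \times \uE_{b'} \to \uE_{b \times b'}$ is simply $(X, Y) \mapsto X \times Y$, the cartesian product computed in $\uE$, which lands in the fibre over $b \times b'$ precisely because $p$ preserves products. Consequently, given a second such functor $p' \colon \uE' \to \uB$ and an equivalence $\Phi \colon \uE \isoto \uE'$ over $\uB$, the restriction of $\Phi$ to fibres intertwines the two external product functors: $\Phi$ preserves cartesian products (any equivalence does), so $\Phi(X \times Y) \simeq \Phi(X) \times \Phi(Y)$, and both $\Phi(X)$ and $\Phi(Y)$ lie in the expected fibres since $\Phi$ is over $\uB$.

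I would then apply this with $\uB = \MonCatI$, $\uE = \Algdpsh$ and $\uE' = \PSeg$. By the propositions above, both projections to $\MonCatI$ preserve finite cartesian products, the fibre over $\uC$ is $\Algd(\PSh(\uC))$ in the first case and $\PSeg(\uC_{\otimes})$ in the second, and the external product functor attached to $\uC$ and $\uD$ is $\mu_{*}(\blank \boxtimes \blank)$, respectively the external product of Segal presheaves. Since $\Algdpsh \simeq \PSeg$ over $\MonCatI$, the previous paragraph shows that under the fibrewise equivalences $\Algd(\PSh(\uC)) \simeq \PSeg(\uC_{\otimes})$ these two external product functors are identified, which is exactly the claim.

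I do not anticipate a genuine obstacle: the substantive content is already contained in the two product formulas and in the naturality of the Segal-presheaf/algebroid comparison. The only point needing (minor) care is the compatibility of the fibrewise external product with equivalences over the base, which is the elementary observation above.
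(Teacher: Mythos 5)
Your argument is correct and is exactly the one the paper intends: the corollary is stated without proof as an immediate consequence of the preceding propositions, which identify both external products as the cartesian products in $\PSeg$ and in $\Algdpsh$ respectively (each preserved by the projection to $\MonCatI$), together with the fact that the comparison is an equivalence over $\MonCatI$ and hence preserves products and matches fibres. Your explicit formulation of that ``elementary observation'' is precisely the implicit step in the paper.
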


We saw in \cref{obs:compprescolim} that we are interested in proving that for cocomplete monoidal \icats{} $\uU$ and $\uV$, the composite 
\[ \Algd(\uV) \times \Algd(\uW) \to \Algd(\uV \times \uW) \to \Algd(\uV \otimes \uW) \]
preserves colimits in each variable. We have now seen that in the case of presheaves this is equivalent to the external product on Segal presheaves preserving colimits in each variable. In the next two sections we will prove the latter statement, and we will then deduce the general one from this.

\section{Inner anodyne maps}\label{sec:innan}
Let $\uC$ be a monoidal \icat{}, and let $p \colon \uC_{\otimes} \to \simp$ be the corresponding cartesian fibration. In this section we will discuss \emph{inner anodyne maps} in
$\PSh(\uC_{\otimes})$, closely following \cite{enropd}*{\S 2.7}, where the corresponding
notion was discussed in an operadic context.

\begin{defn}
  Let $\uC$ be a cocomplete $\infty$-category.  We say a class of
  morphisms in $\uC$ is \emph{weakly saturated} if it is closed under
  cobase change, transfinite compositions, and retracts. Given a class
  $\mathbb{S}$ of morphisms in $\uC$, we can consider the smallest
  weakly saturated class that contains $\mathbb{S}$, which we call the
  weakly saturated class \emph{generated} by $\mathbb{S}$.
\end{defn}

\begin{defn}\label{def partical/lambda x}
  For $[n](\bfc) \in \uC_{\otimes}$ we define a presheaf $\Lambda^{n}_{k}(\bfc)$ by the pullback
  \[
    \begin{tikzcd}
      \Lambda^{n}_{k}(\bfc) \arrow{r} \arrow{d} & \Delta^{n}(\bfc) \arrow{d} \\
      p^{*}\Lambda^{n}_{k} \arrow{r} & p^{*}\Delta^{n},
    \end{tikzcd}
  \]
  where the right vertical map is the unit morphism
  $\Delta^{n}(\bfc) \to p^{*}p_{!}\Delta^{n}(\bfc) \simeq p^{*}\Delta^{n}$. Similarly, we define $\partial \Delta^{n}(\bfc)$ by
  the pullback
  \[
    \begin{tikzcd}
      \partial \Delta^{n}(\bfc) \arrow{r} \arrow{d} & \Delta^{n}(\bfc) \arrow{d} \\
      p^{*}\partial\Delta^{n} \arrow{r} & p^{*}\Delta^{n}.
    \end{tikzcd}
  \]
  We refer to the inclusions
  $\Lambda^{n}_{k}(\bfc) \to \Delta^{n}(\bfc)$ for $0 < k < n$ as the
  \emph{inner horn inclusions} in $\PSh(\uC_{\otimes})$, and we say a
  map is \emph{inner anodyne} if it lies in the weakly saturated class
  generated by the inner horn inclusions.
\end{defn}

\begin{defn}
  We say a morphism in $\PSh(\uC_{\otimes})$ is a \emph{Segal equivalence} if its image under the localization functor to $\PSeg(\uC_{\otimes})$ is an equivalence. Equivalently, a morphism $f \colon X \to Y$ is a Segal equivalence \IFF{} composition with $f$ gives an equivalence $\Map(Y, Z) \to \Map(X, Z)$ for every Segal presheaf $Z$, or \IFF{} $f$ lies in the strongly saturated class generated by the spine inclusions $\Delta^{n}_{\Seg}(\bfc) \to \Delta^{n}(\bfc)$.
\end{defn}

Our goal is then to prove that inner anodyne maps in $\PSh(\uC_{\otimes})$ are Segal equivalences. The starting point for this is the following result about simplicial sets, which can be extracted from \cite{JoyalUABNotes}*{Proposition 2.12 and 2.13}:
\begin{propn}\label{propn:innerhorn}
  For all $0 < k < n$, the inclusion
  $\Delta^{n}_{\Seg} \hookrightarrow \Lambda^{n}_{k}$ has a filtration
  by pushouts of inner horn inclusions of dimension $< n$. \qed
\end{propn}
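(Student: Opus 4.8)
The plan is to treat this as a purely combinatorial statement about ordinary simplicial sets. Identify the non-degenerate simplices of $\Delta^{n}$ with the nonempty subsets $S \subseteq [n] = \{0,1,\dots,n\}$, a subset of cardinality $m+1$ being an $m$-simplex. Under this identification $S$ lies in the spine $\Delta^{n}_{\Seg}$ exactly when $S$ is a singleton or a pair $\{i,i+1\}$ of consecutive integers, and $S$ lies in $\Lambda^{n}_{k}$ exactly when $S \neq [n]$ and $S \neq [n]\setminus\{k\}$ (equivalently, some $i \neq k$ is missing from $S$). The goal is then to enumerate the finite set $P$ of non-degenerate simplices of $\Lambda^{n}_{k}$ not contained in $\Delta^{n}_{\Seg}$ and to build $\Lambda^{n}_{k}$ out of $\Delta^{n}_{\Seg}$ by attaching them, exhibiting each attachment as a pushout of an inner horn inclusion of dimension $< n$.

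Concretely I would construct a filtration $\Delta^{n}_{\Seg} = X_{0} \subseteq X_{1} \subseteq \cdots \subseteq X_{N} = \Lambda^{n}_{k}$ in which $X_{i+1}$ is obtained from $X_{i}$ by a pushout
\[
  \begin{tikzcd}
    \Lambda^{m_{i}}_{j_{i}} \arrow{r} \arrow{d} & \Delta^{m_{i}} \arrow{d} \\
    X_{i} \arrow{r} & X_{i+1}
  \end{tikzcd}
\]
along a map $\Lambda^{m_{i}}_{j_{i}} \to X_{i}$ classifying a simplex $S_{i} \in P$ of dimension $m_{i}$ together with a chosen interior index $0 < j_{i} < m_{i}$; such a pushout adjoins exactly $S_{i}$ and its interior face $d_{j_{i}}S_{i}$. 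One orders $P$ by increasing dimension. Since $\Lambda^{n}_{k}$ contains no $n$-dimensional non-degenerate simplex — its non-degenerate simplices are the faces $d_{i}\Delta^{n}$ with $i\neq k$ and their subfaces — every $S_{i}$, hence every horn $\Lambda^{m_{i}}_{j_{i}}\hookrightarrow\Delta^{m_{i}}$ we use, has $m_{i}\leq n-1$; this is exactly the asserted dimension bound, and it comes for free. Note also that the one-dimensional members of $P$, namely the non-consecutive edges $\{a,c\}$ with $c > a+1$, cannot be attached ``on their own'' (there is no inner $1$-horn); instead each is picked up as the interior face $d_{1}\{a,a+1,c\}$ of a $2$-simplex attached via $\Lambda^{2}_{1}\hookrightarrow\Delta^{2}$, whose other two faces $\{a,a+1\}$ and $\{a+1,c\}$ are available once the shorter non-consecutive edges have already been adjoined; more generally the non-interval simplices get attached passively as interior faces, and one actively attaches (roughly) the intervals of length $\geq 3$.

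The real content, and the step I expect to be the main obstacle, is to choose for each $S_{i}$ the interior index $j_{i}$ and to refine the ordering within each dimension so that at stage $X_{i}$ the whole horn $\Lambda^{m_{i}}_{j_{i}}$ is already present, i.e.\ every face $d_{\ell}S_{i}$ with $\ell \neq j_{i}$ has either been adjoined at an earlier stage or lies in $\Delta^{n}_{\Seg}$ (that $d_{j_{i}}S_{i}$ itself again lies in $\Lambda^{n}_{k}$ is automatic, as it has dimension $\leq n-2$). This is a bookkeeping problem about finding a rule $S \mapsto j_{S}$ picking out an interior vertex of $S$, together with a well-ordering of $P$ compatible with it; I would handle it as Joyal does in \cite{JoyalUABNotes}*{Prop.\ 2.12--2.13}, treating the distinguished vertex $k$ separately — ordering the simplices of a fixed dimension first according to how $\min S$ and $\max S$ sit relative to $k$ and then lexicographically, and taking $j_{S}$ to correspond to an interior vertex of $S$ closest to $k$. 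Alternatively one can induct on $n$, using that for each proper face of $\Delta^{n}$ the inclusion of its spine is already assembled from inner horns of strictly smaller dimension; in either approach the only non-formal point is verifying that the required faces are available at each step.
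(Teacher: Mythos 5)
Your proposal is correct and takes essentially the same route as the paper, which offers no argument of its own for this proposition but simply extracts it from \cite{JoyalUABNotes}*{Propositions 2.12 and 2.13}; your combinatorial setup (non-degenerate simplices as subsets of $[n]$, the dimension bound coming for free from the absence of non-degenerate $n$-simplices in $\Lambda^{n}_{k}$, and the pairing of each actively attached simplex with the single interior face it adjoins) is an accurate account of what that reference does. The one step you defer --- the rule $S \mapsto j_{S}$ and the compatible well-ordering --- is exactly the content of the cited propositions, so nothing is missing relative to the paper's own treatment.
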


Applying \cite{enropd}*{Proposition 2.7.8} (or the fact that colimits
in presheaves are universal), we get:
\begin{cor}\label{cor:innerhornC}
  For all $[n](\bfc) \in \uC_{\otimes}$ and $0 < k < n$, the morphism
  \[\Delta^{n}_{\Seg}(\bfc) \hookrightarrow \Lambda^{n}_{k}(\bfc)\] has a filtration
  by pushouts of inner horn inclusions of dimension $< n$. \qed
\end{cor}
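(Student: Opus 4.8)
The plan is to transport the filtration of \cref{propn:innerhorn} along a colimit-preserving functor. Fixing $[n](\bfc)\in\uC_{\otimes}$, I would first introduce the functor
\[ (\blank)(\bfc)\colon \PSh(\simp)_{/\Delta^{n}}\longrightarrow\PSh(\uC_{\otimes})_{/\Delta^{n}(\bfc)},\qquad (T\to\Delta^{n})\longmapsto p^{*}T\times_{p^{*}\Delta^{n}}\Delta^{n}(\bfc), \]
where $p^{*}$ denotes restriction along $p\colon\uC_{\otimes}\to\simp$. This functor preserves all colimits: $p^{*}$ does, since colimits of presheaves are pointwise, and pullback along $\Delta^{n}(\bfc)\to p^{*}\Delta^{n}$ does, since $\PSh(\uC_{\otimes})$ is a presheaf \icat{} and hence has universal colimits --- this is precisely the input invoked above from \cite{enropd}*{Proposition 2.7.8}. (The filtration of \cref{propn:innerhorn}, though stated for simplicial sets, consists of pushouts along monomorphisms and transfinite composites, which are formed the same way in $\PSh(\simp)$, so it is harmless to work there.)

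Next I would evaluate $(\blank)(\bfc)$ on the building blocks. For a simplex $\phi\colon\Delta^{m}\to\Delta^{n}$, corresponding to a map $[m]\to[n]$ in $\simp$, pasting the pullback square $\Delta^{m}(\phi^{*}\bfc)\simeq p^{*}\Delta^{m}\times_{p^{*}\Delta^{n}}\Delta^{n}(\bfc)$ noted earlier (a special case of \cite{enropd}*{Lemma 2.7.10}) with the pullback defining $\Lambda^{m}_{j}(\phi^{*}\bfc)$ in \cref{def partical/lambda x} gives
\[ (\Lambda^{m}_{j}\to\Delta^{n})(\bfc)\;=\;p^{*}\Lambda^{m}_{j}\times_{p^{*}\Delta^{n}}\Delta^{n}(\bfc)\;\simeq\;p^{*}\Lambda^{m}_{j}\times_{p^{*}\Delta^{m}}\Delta^{m}(\phi^{*}\bfc)\;=\;\Lambda^{m}_{j}(\phi^{*}\bfc), \]
and likewise $(\Delta^{m}_{\Seg}\to\Delta^{n})(\bfc)\simeq\Delta^{m}_{\Seg}(\phi^{*}\bfc)$. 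In particular $(\blank)(\bfc)$ sends an inner horn inclusion of dimension $<n$ over $\Delta^{n}$ to an inner horn inclusion of dimension $<n$ in $\PSh(\uC_{\otimes})$, and (for $\phi$ the identity of $[n]$) sends $\Delta^{n}_{\Seg}\hookrightarrow\Delta^{n}$ and $\Lambda^{n}_{k}\hookrightarrow\Delta^{n}$ to $\Delta^{n}_{\Seg}(\bfc)\hookrightarrow\Delta^{n}(\bfc)$ and $\Lambda^{n}_{k}(\bfc)\hookrightarrow\Delta^{n}(\bfc)$.

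Finally, since $(\blank)(\bfc)$ preserves pushouts and transfinite composites, applying it to the filtration of $\Delta^{n}_{\Seg}\hookrightarrow\Lambda^{n}_{k}$ from \cref{propn:innerhorn} yields, by the previous two paragraphs, a filtration of $\Delta^{n}_{\Seg}(\bfc)\hookrightarrow\Lambda^{n}_{k}(\bfc)$ by pushouts of inner horn inclusions of dimension $<n$, as claimed. I do not expect a genuine obstacle here; the step needing the most care is the second one --- checking that pullback along $\Delta^{n}(\bfc)\to p^{*}\Delta^{n}$ returns precisely the presheaves $\Delta^{m}(\phi^{*}\bfc)$ and $\Lambda^{m}_{j}(\phi^{*}\bfc)$ of \cref{def partical/lambda x} rather than something merely equivalent after reindexing --- but this is exactly what the cited pullback square from \cite{enropd}*{Lemma 2.7.10} delivers, so the corollary is a formal consequence of \cref{propn:innerhorn}.
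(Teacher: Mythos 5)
Your proposal is correct and is essentially the paper's own argument spelled out in detail: the paper's proof consists precisely of pulling back the filtration of \cref{propn:innerhorn} along the canonical map $\Delta^{n}(\bfc) \to p^{*}\Delta^{n}$, using the universality of colimits in presheaf \icats{} (equivalently \cite{enropd}*{Proposition 2.7.8}) and the pullback identifications of horns and spines that you verify. No changes needed.
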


\begin{cor}\label{cor:iaissegeqC}
  Inner anodyne maps in  $\PSh(\uC_{\otimes})$ are Segal equivalences, and an object of
  $\PSh(\uC_{\otimes})$ is a Segal presheaf \IFF{} it is local with respect to the
  inner horn inclusions.
\end{cor}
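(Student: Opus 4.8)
The plan is to bootstrap from Corollary~\ref{cor:innerhornC} to the two assertions by a standard weak-saturation argument. For the first claim, recall that the Segal equivalences form a strongly saturated class (they are the class generated by the spine inclusions $\Delta^{n}_{\Seg}(\bfc) \to \Delta^{n}(\bfc)$, hence closed under cobase change, transfinite composition, and retracts). It therefore suffices to check that each inner horn inclusion $\Lambda^{n}_{k}(\bfc) \hookrightarrow \Delta^{n}(\bfc)$ for $0 < k < n$ is a Segal equivalence; then the whole weakly saturated class it generates — i.e.\ all inner anodyne maps — lies inside the Segal equivalences.

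To see that each inner horn inclusion is a Segal equivalence, I would argue by induction on $n$. For the base case the smallest $n$ with an inner horn is $n = 2$, where $\Delta^{2}_{\Seg}(\bfc) = \Lambda^{2}_{1}(\bfc)$ by Corollary~\ref{cor:innerhornC} (the filtration is empty), so $\Lambda^{2}_{1}(\bfc) \hookrightarrow \Delta^{2}(\bfc)$ is literally a spine inclusion and hence a Segal equivalence by definition. For the inductive step, factor $\Delta^{n}_{\Seg}(\bfc) \hookrightarrow \Lambda^{n}_{k}(\bfc) \hookrightarrow \Delta^{n}(\bfc)$. The composite is a spine inclusion, hence a Segal equivalence. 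By Corollary~\ref{cor:innerhornC} the first map $\Delta^{n}_{\Seg}(\bfc) \hookrightarrow \Lambda^{n}_{k}(\bfc)$ is built from pushouts of inner horn inclusions of dimension strictly less than $n$, which are Segal equivalences by the inductive hypothesis; since Segal equivalences are closed under cobase change and transfinite composition, this first map is a Segal equivalence. By two-out-of-three the second map $\Lambda^{n}_{k}(\bfc) \hookrightarrow \Delta^{n}(\bfc)$ is a Segal equivalence, completing the induction. This establishes that inner anodyne maps are Segal equivalences.

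For the second claim, one inclusion is immediate: if $\Phi$ is a Segal presheaf it is by definition local with respect to the spine inclusions, and since (by the first part, or directly by Corollary~\ref{cor:innerhornC}) the spine inclusions are inner anodyne — indeed $\Delta^{n}_{\Seg}(\bfc) \hookrightarrow \Delta^{n}(\bfc)$ factors through $\Lambda^{n}_{k}(\bfc)$ with both pieces inner anodyne — and localization with respect to a class of maps only depends on the strongly saturated class it generates, $\Phi$ is local with respect to every inner anodyne map, in particular every inner horn inclusion. Conversely, suppose $\Phi$ is local with respect to all inner horn inclusions. Then $\Phi$ is local with respect to the whole weakly saturated class they generate, i.e.\ with respect to all inner anodyne maps; but the spine inclusions $\Delta^{n}_{\Seg}(\bfc) \hookrightarrow \Delta^{n}(\bfc)$ are inner anodyne by Corollary~\ref{cor:innerhornC} (they are obtained from inner horn inclusions by the stated filtration together with a final inner horn inclusion $\Lambda^{n}_{k}(\bfc) \hookrightarrow \Delta^{n}(\bfc)$), so $\Phi$ is local with respect to the spine inclusions, i.e.\ $\Phi$ is a Segal presheaf.

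The only mild subtlety — and the step I would be most careful about — is the claim that the spine inclusion $\Delta^{n}_{\Seg}(\bfc) \hookrightarrow \Delta^{n}(\bfc)$ is itself inner anodyne, since a priori Corollary~\ref{cor:innerhornC} only decomposes the \emph{partial} inclusion $\Delta^{n}_{\Seg}(\bfc) \hookrightarrow \Lambda^{n}_{k}(\bfc)$. One must observe that composing this with a single inner horn inclusion $\Lambda^{n}_{k}(\bfc) \hookrightarrow \Delta^{n}(\bfc)$ (say $k = 1$) yields the spine inclusion, so inductively (the pieces of the filtration having dimension $< n$) the spine inclusion is inner anodyne; alternatively this is \cite{enropd}*{Proposition 2.7.8} applied to the analogous simplicial-set statement. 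Everything else is formal manipulation of (strongly/weakly) saturated classes and the two-out-of-three property for Segal equivalences.
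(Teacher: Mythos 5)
Your proposal is correct and follows essentially the same route as the paper: induction on $n$ using the filtration of $\Delta^{n}_{\Seg}(\bfc) \hookrightarrow \Lambda^{n}_{k}(\bfc)$ from \cref{cor:innerhornC} together with the 2-out-of-3 property for Segal equivalences, and then the observation that spine inclusions are inner anodyne for the converse locality statement. The ``subtlety'' you flag (that the spine inclusion itself is inner anodyne because it is the filtration composed with one further inner horn inclusion) is exactly what the paper's phrase ``immediate from \cref{cor:innerhornC}'' is implicitly using, so your extra care there is welcome but not a departure.
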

\begin{proof}
  We first show by induction on $n$ that the inner horn inclusion
  $\Lambda^{n}_{k}(\bfc) \hookrightarrow \Delta^{n}(\bfc)$ is a Segal
  equivalence: This is immediate for $n = 2$, since
  $\Lambda^{2}_{1} = \Delta^{2}_{\Seg}$. Assuming we know the
  statement for inner horns of dimension $< n$, it follows from
  \cref{cor:innerhornC} that
  $\Delta^{n}_{\Seg}(\bfc) \hookrightarrow \Lambda^{n}_{k}(\bfc)$ is a
  Segal equivalence, and hence so is
  $\Lambda^{n}_{k}(\bfc) \hookrightarrow \Delta^{n}(\bfc)$ by the
  2-out-of-3 property. It follows that a Segal presheaf is necessarily
  local with respect to all inner horn inclusions, so suppose
  $X \in \PSh(\uC_{\otimes})$ has the latter property; then $X$ is
  also local with respect to all inner anodyne maps.  It is immediate
  from \cref{cor:innerhornC} that the spine inclusions are inner anodyne, so
  $X$ is then indeed a Segal presheaf.
\end{proof}

We also have a useful way to construct inner anodyne maps in $\PSh(\uC_{\otimes})$ from ones in $\PSh(\simp)$. To state this we first need to recall some definitions from \cite{enropd}:
\begin{defn}\label{def simple}
  For
  $X \in \PSh(\uC_{\otimes})$ and $F \in \PSh(\simp)$, we say a morphism
  $X \to p^*(F)$ in $\PSh(\uC_{\otimes})$ is \emph{simple} if for every map $\sigma \colon \Delta^{n} \to F$ in $\PSh(\simp)$ we have that in the pullback
  \[
    \begin{tikzcd}
      \sigma^{*}X \arrow{r} \arrow{d} &  X \arrow{d} \\
      p^{*}\Delta^{n} \arrow{r}{p^{*}\sigma} & p^{*}F,
    \end{tikzcd}
  \]
  the presheaf $\sigma^{*}X$ is representable and the adjoint map
  $p_{!}\sigma^{*}X \to \Delta^{n}$ is an equivalence (so
  $\sigma^{*}X \simeq \Delta^{n}(\bfc)$ for some $\bfc \in \uC^{\times n}$).
\end{defn}

\begin{ex}
  By \cite{enropd}*{Remark 2.7.12}, the counit map
  $\Delta^{n}(\bfc) \to p^{*}\Delta^{n}$ is simple for all $\bfc$, as
  is any pullback of such a map.
\end{ex}

By applying \cite{enropd}*{Lemma 2.7.14} to the set of inner
horn inclusions in $\PSh(\simp)$, we obtain the following:
\begin{propn}\label{propn pb of inner anodyne}
  If $f\colon X\to p^*L$ is a simple map in $\PSh(\uC_{\otimes})$ and
  $K\to L$ is an inner anodyne map in $\PSh(\simp)$, then its base
  change $p^{*}K \times_{p^{*}L}X \to X$ is inner anodyne in
  $\PSh(\uC_{\otimes})$ (and so in particular a Segal equivalence, by
  \cref{cor:iaissegeqC}). \qed
\end{propn}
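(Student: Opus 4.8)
The plan is to realize the statement as a concrete instance of the transfer mechanism \cite{enropd}*{Lemma 2.7.14}, applied to the set of inner horn inclusions $\Lambda^{n}_{k}\hookrightarrow\Delta^{n}$ ($0<k<n$) in $\PSh(\simp)$. Explicitly: fix the monoidal \icat{} $\uC$ and let $\mathbb{S}$ be the class of those morphisms $i\colon K\to L$ in $\PSh(\simp)$ such that for \emph{every} simple map $Y\to p^{*}L$ in $\PSh(\uC_{\otimes})$ the base change $p^{*}K\times_{p^{*}L}Y\to Y$ is inner anodyne. Since the hypothesis of the proposition is precisely that $X\to p^{*}L$ is simple, it suffices to show that $\mathbb{S}$ contains every inner anodyne map in $\PSh(\simp)$; and since the inner anodyne maps are by definition the weakly saturated class generated by the inner horn inclusions, it is enough to check that $\mathbb{S}$ is weakly saturated and contains each $\Lambda^{n}_{k}\hookrightarrow\Delta^{n}$.

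For the closure properties I would use three inputs: (i) $p^{*}$ has both a left adjoint $p_{!}$ and a right adjoint $p_{*}$, hence preserves pushouts, transfinite compositions and retracts; (ii) colimits in the presheaf \icats{} $\PSh(\simp)$ and $\PSh(\uC_{\otimes})$ are universal, so base change along $Y\to p^{*}L$ carries a pushout square (resp.\ a transfinite tower, a retract diagram) of objects of the form $p^{*}(\blank)$ to the corresponding colimit diagram in $\PSh(\uC_{\otimes})$; and (iii) a pullback of a simple map along any morphism of the form $p^{*}(h)$ is again simple, which is immediate from pasting of pullback squares together with the definition of simple. Given these, the verification is formal: for instance, for a cobase change $M\to M'=L'\amalg_{K}M$ of some $i\colon K\to L'$ in $\mathbb{S}$, pulling the pushout $p^{*}M'\simeq p^{*}L'\amalg_{p^{*}K}p^{*}M$ back along a simple map $Y\to p^{*}M'$ exhibits $p^{*}M\times_{p^{*}M'}Y\to Y$ as a cobase change of $p^{*}K\times_{p^{*}M'}Y\to p^{*}L'\times_{p^{*}M'}Y$, and the latter is inner anodyne because $i\in\mathbb{S}$ and, by (iii), the restriction $p^{*}L'\times_{p^{*}M'}Y\to p^{*}L'$ of our simple map is again simple. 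Transfinite compositions and retracts are handled the same way, using additionally that the inner anodyne maps in $\PSh(\uC_{\otimes})$ themselves form a weakly saturated class.

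It then remains to see that $\Lambda^{n}_{k}\hookrightarrow\Delta^{n}$ lies in $\mathbb{S}$, and here the very definition of simple does the work. Given a simple map $Y\to p^{*}L$ and a map $\sigma\colon\Delta^{n}\to L$, the pullback $\sigma^{*}Y:=p^{*}\Delta^{n}\times_{p^{*}L}Y$ is by hypothesis representable of the form $\Delta^{n}(\bfc)$ for some $\bfc\in\uC^{\times n}$, with its map to $p^{*}\Delta^{n}$ the canonical one. Since $p^{*}\Lambda^{n}_{k}\to p^{*}L$ factors through $p^{*}\Delta^{n}$, pasting pullback squares identifies $p^{*}\Lambda^{n}_{k}\times_{p^{*}L}Y$ with $p^{*}\Lambda^{n}_{k}\times_{p^{*}\Delta^{n}}\Delta^{n}(\bfc)$, which is exactly $\Lambda^{n}_{k}(\bfc)$ by \cref{def partical/lambda x}. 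Thus the base change in question is the inner horn inclusion $\Lambda^{n}_{k}(\bfc)\hookrightarrow\Delta^{n}(\bfc)$ in $\PSh(\uC_{\otimes})$, which is inner anodyne by definition; the final clause then follows from \cref{cor:iaissegeqC}.

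The only step requiring genuine care is the weak-saturation argument of the second paragraph — specifically confirming that pullbacks of simple maps remain simple, and that universality of colimits lets one recognize the pulled-back pushouts and towers as honest pushouts and towers in $\PSh(\uC_{\otimes})$. Once this bookkeeping is in place the argument is entirely formal, which is exactly why it can be packaged abstractly as \cite{enropd}*{Lemma 2.7.14}.
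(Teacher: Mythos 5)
Your proposal is correct and matches the paper exactly: the paper's entire proof is the one-line application of \cite{enropd}*{Lemma 2.7.14} to the set of inner horn inclusions, and you both invoke that lemma and correctly reconstruct the standard weak-saturation argument behind it (reduction to generators via closure of the pulled-back class under cobase change, transfinite composition, and retracts, plus the observation that simplicity forces the base change of a generator to be an inner horn inclusion $\Lambda^{n}_{k}(\bfc)\hookrightarrow\Delta^{n}(\bfc)$).
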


\section{Tensor products and colimits: the presheaf case}\label{sec:tenspsh}
Let $\uC$ and $\uD$ be monoidal \icats{}, with corresponding cartesian
fibrations $p \colon \uC_{\otimes} \to \simp$ and
$q \colon \uD_{\otimes} \to \simp$. In this section we will use the
preceding discussion of inner anodyne maps to prove that the external
product
\[ \PSeg(\uC_{\otimes}) \times \PSeg(\uD_{\otimes}) \to \PSeg((\uC \times \uD)_{\otimes})\]
preserves colimits in each variable.

\begin{propn}\label{propn:productsimplex}
  Given a simplex $\alpha = (\sigma,\tau) \colon \Delta^{k} \to \Delta^{n} \times \Delta^{m}$ and objects $[n](\bfc) \in \uC^{\otimes}, [m](\bfd) \in \uD^{\otimes}$, we have a pullback square
    \[
    \begin{tikzcd}
      \Delta^{k}(\sigma^{*}\bfc, \tau^{*}\bfd) \arrow{r} \arrow{d} & \Delta^{n}(\bfc) \boxtimes \Delta^{m}(\bfd) \arrow{d} \\
      r^{*}\Delta^{k} \arrow{r} & r^{*}(\Delta^{n} \times \Delta^{m}),
    \end{tikzcd}
  \]
  where $r$ denotes the cartesian fibration
  $r := p \times_{\simp} q \colon \uC_{\otimes}
  \times_{\simp}\uD_{\otimes} \to \simp$.
\end{propn}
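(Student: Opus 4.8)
The claim is fundamentally about unwinding the definitions of $\boxtimes$ and of $\Delta^n(\bfc)$ in terms of the cartesian fibration $r = p \times_\simp q$, and reducing the assertion to the known pullback square relating $\Delta^k(\phi^*\bfc)$ to $\Delta^n(\bfc)$ over $p^*\Delta^n$ (the ``special case of \cite{enropd}*{Lemma 2.7.10}'' recalled in the second observation of \S\ref{sec:segalpsh}). First I would record what the presheaf $\Delta^n(\bfc) \boxtimes \Delta^m(\bfd)$ on $\uC_\otimes \times_\simp \uD_\otimes$ is: by the characterization in \cref{obs:boxlimcolim} (or directly from the definition of $\boxtimes$ as $X \times Y$ pulled back along $\uC_\otimes \times_\simp \uD_\otimes \to \uC_\otimes \times \uD_\otimes$), we have
\[
\Map(\Delta^k(\bfe), \Delta^n(\bfc) \boxtimes \Delta^m(\bfd)) \simeq \Map(\Delta^k(\bfe'), \Delta^n(\bfc)) \times \Map(\Delta^k(\bfe''), \Delta^m(\bfd)),
\]
where $\bfe', \bfe''$ are the images of $[k](\bfe)$ under the two projections; in particular $\Delta^n(\bfc) \boxtimes \Delta^m(\bfd)$ is represented by $[n](\bfc) \times_\simp [m](\bfd)$ when the two projections land over $[n]$ and $[m]$ compatibly, and is empty otherwise. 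So $\Delta^n(\bfc) \boxtimes \Delta^m(\bfd)$ is the representable presheaf on the object of $\uC_\otimes \times_\simp \uD_\otimes$ sitting over $[n] \times_\simp [m]$ in $\simp \times_\simp \simp \simeq \simp$.

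**Key steps.** (1) Identify $r^*(\Delta^n \times \Delta^m)$: since $r = p \times_\simp q$, pulling back presheaves along $r$ is the same as pulling back along $p \times q$ composed with the inclusion $\uC_\otimes \times_\simp \uD_\otimes \hookrightarrow \uC_\otimes \times \uD_\otimes$, so $r^*(\Delta^n \times \Delta^m) \simeq (p^*\Delta^n) \boxtimes (q^*\Delta^m)$, and $r^*\Delta^k$ for a simplex $\alpha = (\sigma,\tau)$ of $\Delta^n \times \Delta^m$ is represented by $[k] \in \simp$. (2) Observe that the asserted square is the external product (over $\simp$) of the two known squares
\[
\begin{tikzcd}[cramped, sep=small]
\Delta^k(\sigma^*\bfc) \arrow{r}\arrow{d} & \Delta^n(\bfc)\arrow{d} \\ p^*\Delta^k \arrow{r} & p^*\Delta^n
\end{tikzcd}
\qquad
\begin{tikzcd}[cramped, sep=small]
\Delta^k(\tau^*\bfd) \arrow{r}\arrow{d} & \Delta^m(\bfd)\arrow{d} \\ q^*\Delta^k \arrow{r} & q^*\Delta^m
\end{tikzcd}
\]
together with the diagonal $\Delta^k \to \Delta^k \times \Delta^k$ on the bottom-left corner; more precisely, applying $\boxtimes$ to the two squares gives a square with top-left corner $\Delta^k(\sigma^*\bfc) \boxtimes \Delta^k(\tau^*\bfd) \simeq \Delta^k(\sigma^*\bfc, \tau^*\bfd)$ (the last equivalence again from the characterization of $\boxtimes$ on representables), bottom-left $p^*\Delta^k \boxtimes q^*\Delta^k \simeq r^*(\Delta^k \times \Delta^k)$, and then pull back along $r^*$ of the diagonal $\Delta^k \to \Delta^k \times \Delta^k$, which is a section of $r^*\Delta^k \to r^*(\Delta^k \times \Delta^k)$. (3) Conclude: $\boxtimes$ preserves limits in each variable by \cref{obs:boxlimcolim}, hence preserves pullback squares; composing/pasting with the pullback square for the diagonal section and using that $r^*$ preserves pullbacks yields the desired square. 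One must check that the two maps $\Delta^k(\sigma^*\bfc,\tau^*\bfd) \to \Delta^n(\bfc)\boxtimes\Delta^m(\bfd)$ and $\to r^*\Delta^k$ so produced agree with the ones induced by the simplex $\alpha$, which is a matter of tracing through the representing objects.

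**Main obstacle.** The genuine content is bookkeeping of the external product over $\simp$ versus over a point: $\boxtimes$ takes values in $\PSh(\uC_\otimes \times_\simp \uD_\otimes)$, not $\PSh(\uC_\otimes \times \uD_\otimes)$, so the ``diagonal'' interaction — that a simplex of $\Delta^n \times \Delta^m$ must have its two components $\sigma, \tau$ covering the \emph{same} $[k]$ in $\simp$ — is exactly what makes the bottom row $r^*\Delta^k \to r^*(\Delta^n \times \Delta^m)$ rather than something over $[k] \times_\simp [k]$. The cleanest route is probably to avoid an explicit diagonal manipulation altogether and instead verify the square is a pullback pointwise: for each object $[j](\bfg)$ of $\uC_\otimes \times_\simp\uD_\otimes$, evaluate all four presheaves using step (1) and the characterization of $\boxtimes$, reducing to the pointwise statement for the two separate squares over $p^*\Delta^n$ and $q^*\Delta^m$ — which holds since those squares are pullbacks — together with the elementary fact that a square of spaces obtained as a fibre product of two pullback squares over a common base map of sets $\simp([j],[k]) \to \simp([j],[n]) \times \simp([j],[m])$ is again a pullback. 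I expect this pointwise verification to be the step requiring the most care, though no deep input beyond \cite{enropd}*{Lemma 2.7.10} and \cref{obs:boxlimcolim} is needed.
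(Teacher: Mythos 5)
Your fallback strategy at the end --- evaluate all four presheaves on each $[l](\bfc',\bfd')$ and reduce to the two separate pullback squares over $p^{*}\Delta^{n}$ and $q^{*}\Delta^{m}$ --- is essentially the paper's proof: the paper computes $\Map(\Delta^{l}(\bfc',\bfd'),\Delta^{n}(\bfc)\boxtimes\Delta^{m}(\bfd))$ as a coproduct over pairs $(\phi\colon[l]\to[n],\,\psi\colon[l]\to[m])$ of products of mapping spaces, identifies the fibre of this over $\alpha$ with the value of $\alpha^{*}(\Delta^{n}(\bfc)\boxtimes\Delta^{m}(\bfd))$, and matches that with $\Map(\Delta^{l}(\bfc',\bfd'),\Delta^{k}(\sigma^{*}\bfc,\tau^{*}\bfd))$. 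So the approach is fine, but two of the intermediate assertions in your main route are false and must go. First, $\Delta^{n}(\bfc)\boxtimes\Delta^{m}(\bfd)$ is \emph{not} representable: its value at $[l](\bfc',\bfd')$ is a coproduct indexed by all pairs $(\phi,\psi)$, not by single maps out of $[l]$, and there is no object ``$[n](\bfc)\times_{\simp}[m](\bfd)$'' of $\uC_{\otimes}\times_{\simp}\uD_{\otimes}$ when $n\neq m$. If this external product were representable, \cref{cor:boxdelta1po} --- which exhibits $\Delta^{1}(\phi)\boxtimes\Delta^{1}(\psi)$ as a pushout of two $2$-simplices, mirroring $\Delta^{1}\times\Delta^{1}\simeq\Delta^{2}\amalg_{\Delta^{1}}\Delta^{2}$ --- and the entire pushout-product analysis of \S\ref{sec:tenspsh} would be vacuous. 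Second, and for the same reason, $\Delta^{k}(\sigma^{*}\bfc)\boxtimes\Delta^{k}(\tau^{*}\bfd)$ is \emph{not} equivalent to $\Delta^{k}(\sigma^{*}\bfc,\tau^{*}\bfd)$: the former sits over $r^{*}(\Delta^{k}\times\Delta^{k})$ and the latter over $r^{*}\Delta^{k}$.

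The saving grace is that your next step --- pulling back along $r^{*}$ of the diagonal $\Delta^{k}\to\Delta^{k}\times\Delta^{k}$ --- is exactly what corrects the second error: taking the external product of the two known pullback squares (a legitimate use of \cref{obs:boxlimcolim}, since a pair of cartesian squares is a limit diagram in the product category) and then pasting with the pullback along the diagonal does produce a cartesian square of the asserted shape, with top-left corner $r^{*}\Delta^{k}\times_{r^{*}(\Delta^{k}\times\Delta^{k})}(\Delta^{k}(\sigma^{*}\bfc)\boxtimes\Delta^{k}(\tau^{*}\bfd))$. Restricting the coproduct over pairs $(\phi,\psi)$ to the diagonal $\phi=\psi=\gamma$ identifies this with $\coprod_{\gamma\colon[l]\to[k]}\prod_{i}\Map_{\uC}(c'_{i},(\gamma^{*}\sigma^{*}\bfc)_{i})\times\Map_{\uD}(d'_{i},(\gamma^{*}\tau^{*}\bfd)_{i})$, which is the mapping space into the representable $\Delta^{k}(\sigma^{*}\bfc,\tau^{*}\bfd)$ because the tensor product on $\uC\times\uD$ is computed componentwise. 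But verifying that last identification is precisely the mapping-space computation you were hoping to defer to ``tracing through representing objects,'' so the structural repackaging saves no work over the direct pointwise calculation; you should either carry out that computation explicitly or simply run the pointwise argument from the start.
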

\begin{proof}
  We first compute
  \[
    \begin{split}
      \Map(\Delta^{l}(\bfc',\bfd'), & \,\Delta^{n}(\bfc) \boxtimes \Delta^{m}(\bfd)) \\
                                   & \simeq \left(\Delta^{n}(\bfc) \boxtimes \Delta^{m}(\bfd)\right)([l](\bfc', \bfd')) \\
                                                                                 &  \simeq \Delta^{n}(\bfc)([l](\bfc')) \times \Delta^{m}(\bfd)([l](\bfd')) \\
                                                                                 & \simeq \Map_{\uC_{\otimes}}([l](\bfc'), [n](\bfc)) \times \Map_{\uD_{\otimes}}([l](\bfd'), [m](\bfd)) \\
                                                                                 & \simeq \left(\coprod_{\phi \colon [l] \to [n]} \prod_{i=1}^{l}\Map_{\uC}(c'_{i},(\phi^{*}\bfc)_{i})\right) \\
       & \phantom{\simeq} \times \left(\coprod_{\psi \colon [l] \to [m]} \prod_{i=1}^{l}\Map_{\uD}(d'_{i},(\psi^{*}\bfd)_{i})\right) \\
       & \simeq \coprod_{\substack{\phi \colon [l] \to [n], \\ \psi \colon [l] \to [m]}}   \prod_{i=1}^{l}\Map_{\uC}(c'_{i},(\phi^{*}\bfc)_{i}) \times \Map_{\uD}(d'_{i},(\psi^{*}\bfd)_{i}).
    \end{split}
    \]
    From this, it is easy to identify $\Map(\Delta^{l}(\bfc',\bfd'), \alpha^{*}(\Delta^{n}(\bfc) \boxtimes \Delta^{m}(\bfd)))$ as
    \[ \coprod_{\gamma \colon [l] \to [k]} \prod_{i=1}^{l}\Map_{\uC_{\otimes}}(c'_{i}, (\gamma^{*}\sigma^{*}\bfc)_{i}) \times \Map_{\uD_{\otimes}}(d'_{i}, (\gamma^{*}\tau^{*}\bfd)_{i}),\]
    which is naturally equivalent to $\Map(\Delta^{l}(\bfc',\bfd'), \Delta^{k}(\sigma^{*}\bfc, \tau^{*}\bfd))$, as required.
  \end{proof}

\begin{cor}\label{propn:prodsimple}
  For $[n](\bfc) \in \uC^{\otimes}, [m](\bfd) \in \uD^{\otimes}$, the
  map
  \[\Delta^{n}(\bfc) \boxtimes \Delta^{m}(\bfd) \to r^{*}(\Delta^{n}
    \times \Delta^{m})\] is simple. \qed
\end{cor}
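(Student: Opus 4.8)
The plan is to verify the definition of simplicity (\cref{def simple}) directly, with all the real work delegated to \cref{propn:productsimplex}. Write $X := \Delta^{n}(\bfc) \boxtimes \Delta^{m}(\bfd)$, and recall that $r := p \times_{\simp} q \colon \uC_{\otimes}\times_{\simp}\uD_{\otimes} \to \simp$ is the ambient cartesian fibration. Unwinding \cref{def simple} for the map $X \to r^{*}(\Delta^{n}\times\Delta^{m})$, the task is to show that for every map $\alpha \colon \Delta^{k} \to \Delta^{n}\times\Delta^{m}$ in $\PSh(\simp)$, the pullback of $X$ along $r^{*}(\alpha)$ is representable and the adjoint map $r_{!}(\alpha^{*}X) \to \Delta^{k}$ is an equivalence.

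First I would note that, by the universal property of the product together with the Yoneda lemma, a map $\alpha \colon \Delta^{k} \to \Delta^{n}\times\Delta^{m}$ in $\PSh(\simp)$ amounts precisely to a pair $(\sigma,\tau)$ of maps $\sigma \colon [k] \to [n]$ and $\tau \colon [k] \to [m]$ in $\simp$ — so the maps $\alpha$ appearing in \cref{def simple} for this $X$ are exactly those treated in \cref{propn:productsimplex}. That proposition identifies the pullback $\alpha^{*}X$ with $\Delta^{k}(\sigma^{*}\bfc, \tau^{*}\bfd)$, the presheaf represented by $[k](\sigma^{*}\bfc, \tau^{*}\bfd) \in \uC_{\otimes}\times_{\simp}\uD_{\otimes}$; in particular it is representable. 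It then remains to check the adjoint map: since $r_{!}$ carries a representable presheaf to the presheaf represented by its image under $r$, we get $r_{!}\Delta^{k}(\sigma^{*}\bfc, \tau^{*}\bfd) \simeq \Delta^{k}$, and the left-hand vertical leg of the pullback square in \cref{propn:productsimplex} is (under this identification) the unit of the adjunction $r_{!}\dashv r^{*}$, whose adjoint is the identity of $\Delta^{k}$. This is exactly the condition in \cref{def simple}, so the corollary follows.

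Since everything reduces to \cref{propn:productsimplex}, there is essentially no obstacle: the statement is immediate from that proposition (which is why the paper records it without a separate proof). The one point that requires care is purely bookkeeping — confirming that the bottom horizontal map of the square in \cref{propn:productsimplex} is $r^{*}(\alpha)$ and that its left vertical leg is the relevant unit map, so that the square is literally the pullback square entering \cref{def simple}, and that the identification $r_{!}\Delta^{k}(\sigma^{*}\bfc,\tau^{*}\bfd)\simeq\Delta^{k}$ is compatible with the structure maps to $r^{*}\Delta^{k}$ and to $\Delta^{k}$.
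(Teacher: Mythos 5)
Your proposal is correct and matches the paper's (implicit) argument: the corollary is stated with no separate proof precisely because, as you observe, it is immediate from \cref{propn:productsimplex} once one notes that maps $\Delta^{k}\to\Delta^{n}\times\Delta^{m}$ are exactly pairs $(\sigma,\tau)$ and that $r_{!}$ sends the representable $\Delta^{k}(\sigma^{*}\bfc,\tau^{*}\bfd)$ to $\Delta^{k}$. The bookkeeping you flag at the end is exactly the content being suppressed, and you have handled it correctly.
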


\begin{observation}
  We have a natural equivalence
  \[
    \begin{split}
      \Map(\Delta^{k}(\bfc,\bfd), p^{*}\Delta^{n} \boxtimes q^{*}\Delta^{m}) & \simeq (p^{*}\Delta^{n} \boxtimes q^{*}\Delta^{m})([k](\bfc,\bfd)) \\
                                                                             & \simeq(p^{*}\Delta^{n})([k](\bfc)) \times (q^{*}\Delta^{m})([k], \bfd) \\
                                                                             &  \simeq \Map_{\simp}([k], [n]) \times \Map_{\simp}([k], [m]) \\
       & \simeq \Map(\Delta^{k}(\bfc,\bfd), r^{*}(\Delta^{n} \times \Delta^{m})),
    \end{split}
  \]
  so that $p^{*}\Delta^{n} \boxtimes q^{*}\Delta^{m} \simeq r^{*}(\Delta^{n} \times \Delta^{m})$.
\end{observation}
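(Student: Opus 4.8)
The plan is to deduce the claimed equivalence from the functoriality of the restriction (pullback) functors on presheaves, avoiding the explicit levelwise computation. Write $\pi_{\uC} \colon \uC_{\otimes} \times_{\simp} \uD_{\otimes} \to \uC_{\otimes}$ and $\pi_{\uD} \colon \uC_{\otimes} \times_{\simp} \uD_{\otimes} \to \uD_{\otimes}$ for the two projections out of the fibre product, so that $p \circ \pi_{\uC} = r = q \circ \pi_{\uD}$. The first step is to observe that the external product is simply a product of pullbacks along these projections: unwinding its definition as the composite through $(\uC_{\otimes})^{\op} \times (\uD_{\otimes})^{\op}$ followed by the cartesian product on $\Spc$, we get $(X \boxtimes Y)(z) \simeq X(\pi_{\uC}z) \times Y(\pi_{\uD}z)$ naturally in $z$, i.e.\ $X \boxtimes Y \simeq \pi_{\uC}^{*}X \times \pi_{\uD}^{*}Y$ for $X \in \PSh(\uC_{\otimes})$ and $Y \in \PSh(\uD_{\otimes})$.

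With this reformulation the statement becomes a short formal manipulation. Taking $X = p^{*}\Delta^{n}$ and $Y = q^{*}\Delta^{m}$ yields
\[ p^{*}\Delta^{n} \boxtimes q^{*}\Delta^{m} \simeq \pi_{\uC}^{*}p^{*}\Delta^{n} \times \pi_{\uD}^{*}q^{*}\Delta^{m}. \]
Since restriction of presheaves is contravariantly functorial, $\pi_{\uC}^{*}p^{*} \simeq (p\pi_{\uC})^{*} = r^{*}$ and likewise $\pi_{\uD}^{*}q^{*} \simeq (q\pi_{\uD})^{*} = r^{*}$, so the right-hand side is $r^{*}\Delta^{n} \times r^{*}\Delta^{m}$. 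Finally, because limits of presheaves are computed pointwise, $r^{*}$ preserves products, and I would conclude $r^{*}\Delta^{n} \times r^{*}\Delta^{m} \simeq r^{*}(\Delta^{n} \times \Delta^{m})$, giving the desired equivalence.

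Alternatively, one can verify the equivalence directly via the Yoneda lemma, which reproduces exactly the displayed chain: testing against a representable $\Delta^{k}(\bfc,\bfd)$, the left-hand side is $(p^{*}\Delta^{n})([k](\bfc)) \times (q^{*}\Delta^{m})([k](\bfd))$ by the defining property of $\boxtimes$; since $p^{*}$ and $q^{*}$ are restriction along $p$ and $q$ while $[k](\bfc)$ and $[k](\bfd)$ lie over $[k]$, this equals $\Map_{\simp}([k],[n]) \times \Map_{\simp}([k],[m])$, which is $(\Delta^{n} \times \Delta^{m})([k]) \simeq (r^{*}(\Delta^{n} \times \Delta^{m}))([k](\bfc,\bfd))$, the value of the right-hand side.

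There is no serious obstacle here, as the argument is routine; the only points needing care are bookkeeping ones. First, I would want to confirm that $\boxtimes$ really is $\pi_{\uC}^{*}(\blank) \times \pi_{\uD}^{*}(\blank)$, which is immediate from the definition. Second, every identification must be checked to be natural in $[k](\bfc,\bfd)$ --- but in the formal approach this naturality is automatic, since each step is an equivalence of functors rather than a pointwise statement, and it is exactly this naturality that promotes the levelwise equivalences into the asserted equivalence $p^{*}\Delta^{n} \boxtimes q^{*}\Delta^{m} \simeq r^{*}(\Delta^{n} \times \Delta^{m})$ of presheaves.
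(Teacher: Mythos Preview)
Your proposal is correct. Your second (Yoneda-based) approach is exactly what the paper does: the displayed chain in the Observation \emph{is} the paper's entire argument, and you have reproduced it step for step.

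Your first approach is a mild but genuine repackaging. Rather than evaluating both sides on representables, you identify the external product once and for all as $X \boxtimes Y \simeq \pi_{\uC}^{*}X \times \pi_{\uD}^{*}Y$ and then use only the functoriality of restriction ($\pi_{\uC}^{*}p^{*} \simeq r^{*} \simeq \pi_{\uD}^{*}q^{*}$) together with the fact that $r^{*}$ preserves finite products. This buys you naturality for free and avoids unwinding what $p^{*}\Delta^{n}$ does to a specific object; the paper's pointwise computation, by contrast, makes the identification completely explicit at each $[k](\bfc,\bfd)$, which is closer in spirit to the surrounding computations (e.g.\ the proof of the preceding proposition on pullbacks of $\Delta^{n}(\bfc)\boxtimes\Delta^{m}(\bfd)$). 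Either route is adequate for a statement this elementary.
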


\begin{cor}\label{cor:boxpoia}
  For all $[n](\bfc) \in \uC_{\otimes}, [m](\bfd) \in \uD_{\otimes}$ and $0 < k < m$, the pushout-product
  \[ \partial \Delta^{n}(\bfc) \boxtimes \Delta^{m}(\bfd) \amalg_{\partial \Delta^{n}(\bfc) \boxtimes \Lambda^{m}_{k}(\bfd)} \Delta^{n}(\bfc) \boxtimes \Lambda^{m}_{k}(\bfd) \to \Delta^{n}(\bfc) \boxtimes \Delta^{m}(\bfd)\]
  is inner anodyne, and so a Segal equivalence.
\end{cor}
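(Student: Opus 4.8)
The plan is to realise this pushout-product as the base change, along the simple map of \cref{propn:prodsimple}, of the corresponding pushout-product of simplicial sets, and then to apply \cref{propn pb of inner anodyne}. Write $X := \Delta^{n}(\bfc)\boxtimes\Delta^{m}(\bfd)$ and $L := \Delta^{n}\times\Delta^{m}\in\PSh(\simp)$, and let $f\colon X\to r^{*}L$ be the simple map of \cref{propn:prodsimple}. Let
\[ K := \partial\Delta^{n}\times\Delta^{m}\amalg_{\partial\Delta^{n}\times\Lambda^{m}_{k}}\Delta^{n}\times\Lambda^{m}_{k}\longrightarrow\Delta^{n}\times\Delta^{m} = L \]
be the pushout-product in $\PSh(\simp)$ of $\partial\Delta^{n}\hookrightarrow\Delta^{n}$ with the inner horn inclusion $\Lambda^{m}_{k}\hookrightarrow\Delta^{m}$. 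Since the pushout-product of a monomorphism with an inner anodyne map of simplicial sets is again inner anodyne (a classical fact, valid equally in $\PSh(\simp)$), the map $K\to L$ is inner anodyne, so \cref{propn pb of inner anodyne} (applied to the cartesian fibration $r$) tells us that the base change $r^{*}K\times_{r^{*}L}X\to X$ is inner anodyne, hence a Segal equivalence by \cref{cor:iaissegeqC}. It therefore suffices to identify this base change with the pushout-product in the statement.

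To this end, consider the functor $\Phi\colon\PSh(\simp)_{/L}\to\PSh((\uC\times\uD)_{\otimes})_{/X}$ sending $(Z\to L)$ to $r^{*}Z\times_{r^{*}L}X$; it preserves colimits, since $r^{*}$ is cocontinuous and colimits in presheaf \icats{} are universal. I claim that for any $A\to\Delta^{n}$ and $B\to\Delta^{m}$ in $\PSh(\simp)$, writing $A(\bfc) := \Delta^{n}(\bfc)\times_{p^{*}\Delta^{n}}p^{*}A$ and $B(\bfd) := \Delta^{m}(\bfd)\times_{q^{*}\Delta^{m}}q^{*}B$ (so that this matches the notation of \cref{def partical/lambda x} when $A = \partial\Delta^{n}$ or $\Lambda^{n}_{k}$, and gives $\Delta^{n}(\bfc)$ when $A = \Delta^{n}$), there is a natural equivalence $\Phi(A\times B)\simeq A(\bfc)\boxtimes B(\bfd)$. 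Granting this, applying $\Phi$ to the defining pushout square of $K$ and using that $\Phi$ preserves pushouts identifies $r^{*}K\times_{r^{*}L}X = \Phi(K)$, compatibly with the maps down to $X = \Phi(L)$, with
\[ \partial\Delta^{n}(\bfc)\boxtimes\Delta^{m}(\bfd)\amalg_{\partial\Delta^{n}(\bfc)\boxtimes\Lambda^{m}_{k}(\bfd)}\Delta^{n}(\bfc)\boxtimes\Lambda^{m}_{k}(\bfd), \]
which is exactly the source of the pushout-product in the statement.

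To prove the claim I argue directly on presheaves. First, $p^{*}A\boxtimes q^{*}B\simeq r^{*}(A\times B)$: both presheaves send a representable $\Delta^{l}(\bfc',\bfd')$ to $A([l])\times B([l])$, by the defining formula for $\boxtimes$ and since $p^{*}A$ is the presheaf $[l](\bfc')\mapsto A([l])$ (and likewise for $q$ and $r$); in particular $r^{*}L\simeq p^{*}\Delta^{n}\boxtimes q^{*}\Delta^{m}$, the equivalence already recorded after \cref{propn:prodsimple}. Next, for maps $X_{1},Y_{1}\to Z_{1}$ in $\PSh(\uC_{\otimes})$ and $X_{2},Y_{2}\to Z_{2}$ in $\PSh(\uD_{\otimes})$, evaluating on a representable and using that in $\Spc$ a pullback of products is the product of the pullbacks gives
\[ (X_{1}\boxtimes X_{2})\times_{Z_{1}\boxtimes Z_{2}}(Y_{1}\boxtimes Y_{2})\simeq(X_{1}\times_{Z_{1}}Y_{1})\boxtimes(X_{2}\times_{Z_{2}}Y_{2}). \]
Taking $X_{1} = \Delta^{n}(\bfc)$, $Z_{1} = p^{*}\Delta^{n}$, $Y_{1} = p^{*}A$ and $X_{2} = \Delta^{m}(\bfd)$, $Z_{2} = q^{*}\Delta^{m}$, $Y_{2} = q^{*}B$, the left-hand side is $X\times_{r^{*}L}r^{*}(A\times B) = \Phi(A\times B)$ while the right-hand side is $A(\bfc)\boxtimes B(\bfd)$, which is the claim. (Alternatively one can deduce the claim from \cref{propn:productsimplex} by writing $A\times B$ as a colimit of its simplices and using that $\Phi$ is cocontinuous.)

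The whole argument is essentially bookkeeping once \cref{propn:prodsimple} and \cref{propn pb of inner anodyne} are available; the only place where care is genuinely needed is the last displayed equivalence, where one has to chase the various fibre products past $\boxtimes$ correctly (and double-check the compatibility with the maps to $X$), but no new idea is required beyond the distributivity of $\boxtimes$ over pullbacks.
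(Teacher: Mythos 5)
Your proposal is correct and follows essentially the same route as the paper: both identify the pushout-product with the base change of the inner anodyne map $\partial\Delta^{n}\times\Delta^{m}\amalg_{\partial\Delta^{n}\times\Lambda^{m}_{k}}\Delta^{n}\times\Lambda^{m}_{k}\to\Delta^{n}\times\Delta^{m}$ along the simple map of \cref{propn:prodsimple}, using that pullback preserves colimits and that $\boxtimes$ takes pullbacks in both variables to pullbacks (\cref{obs:boxlimcolim}), and then invoke \cref{propn pb of inner anodyne}. The only cosmetic difference is that the paper justifies inner anodyneness of the simplicial pushout-product by pointing to the explicit filtration in Dugger--Spivak, whereas you cite it as a classical fact.
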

\begin{proof}
  We know that the inclusion $\partial \Delta^{n} \times \Delta^{m} \amalg_{\partial \Delta^{n} \times \Lambda^{m}_{k}} \Delta^{n} \times \Lambda^{m}_{k} \to \Delta^{n} \times \Delta^{m}$ is inner anodyne in $\PSh(\simp)$; see for example the explicit filtration by pushouts of inner horn inclusions in \cite{DuggerSpivakMap}*{Lemma A.1}. We claim that we have a pullback square
  \[
    \begin{tikzcd}
      \partial \Delta^{n}(\bfc) \boxtimes \Delta^{m}(\bfd) \amalg_{\partial \Delta^{n}(\bfc) \boxtimes \Lambda^{m}_{k}(\bfd)} \Delta^{n}(\bfc) \boxtimes \Lambda^{m}_{k}(\bfd)  \arrow{r} \arrow{d} & \Delta^{n}(\bfc) \boxtimes \Delta^{m}(\bfd) \arrow{d} \\
      r^{*}(\partial \Delta^{n} \times \Delta^{m} \amalg_{\partial \Delta^{n} \times \Lambda^{m}_{k}} \Delta^{n} \times \Lambda^{m}_{k}) \arrow[hookrightarrow]{r} & r^{*}(\Delta^{n} \times \Delta^{m}).
    \end{tikzcd}
  \]
  Indeed, this holds because pullbacks preserve colimits (since
  presheaves form an $\infty$-topos), and $\boxtimes$ takes diagrams that
  are pullbacks in both variables to pullbacks by
  \cref{obs:boxlimcolim}. We can therefore conclude by \cref{propn pb of
    inner anodyne} that the top horizontal map is again inner anodyne,
  as required.
\end{proof}

\begin{cor}\label{cor:boxia}
  If a morphism $X' \to X$ in $\PSh(\uC_{\otimes})$ is inner anodyne, then
  \[ X' \boxtimes Y \to X \boxtimes Y\] is a Segal equivalence in $\PSh((\uC \times \uD)_{\otimes})$ for all $Y \in \PSh(\uD_{\otimes})$.
\end{cor}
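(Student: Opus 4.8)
The plan is to reduce, in two steps, to \cref{propn pb of inner anodyne}. First, let $\mathcal{S}$ be the class of morphisms $f$ in $\PSh(\uC_{\otimes})$ such that $f \boxtimes Y$ is a Segal equivalence for every $Y \in \PSh(\uD_{\otimes})$. Since $\blank \boxtimes Y$ preserves colimits (\cref{obs:boxlimcolim}) and the Segal equivalences form a strongly (hence weakly) saturated class, the preimage under $\blank \boxtimes Y$ of the Segal equivalences is weakly saturated; intersecting over all $Y$ shows that $\mathcal{S}$ itself is weakly saturated. As the inner anodyne maps are by definition the weakly saturated class generated by the inner horn inclusions $\Lambda^{n}_{k}(\bfc) \hookrightarrow \Delta^{n}(\bfc)$ with $0 < k < n$, it therefore suffices to show that each such inclusion lies in $\mathcal{S}$.

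So fix an inner horn inclusion; I want $\Lambda^{n}_{k}(\bfc) \boxtimes Y \to \Delta^{n}(\bfc) \boxtimes Y$ to be a Segal equivalence for all $Y$. The functors $\Lambda^{n}_{k}(\bfc) \boxtimes \blank$ and $\Delta^{n}(\bfc) \boxtimes \blank$ preserve colimits, and the Segal equivalences, being strongly saturated, are closed under colimits in the arrow category; hence the class of $Y$ for which the map in question is a Segal equivalence is closed under colimits. Since every object of $\PSh(\uD_{\otimes})$ is a colimit of representables, it is enough to treat $Y = \Delta^{m}(\bfd)$.

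For $Y = \Delta^{m}(\bfd)$ I would prove the stronger statement that $\Lambda^{n}_{k}(\bfc) \boxtimes \Delta^{m}(\bfd) \to \Delta^{n}(\bfc) \boxtimes \Delta^{m}(\bfd)$ is inner anodyne, which by \cref{cor:iaissegeqC} is then a Segal equivalence. By \cref{propn:prodsimple} the map $\Delta^{n}(\bfc) \boxtimes \Delta^{m}(\bfd) \to r^{*}(\Delta^{n} \times \Delta^{m})$ is simple, while $\Lambda^{n}_{k} \times \Delta^{m} \hookrightarrow \Delta^{n} \times \Delta^{m}$ --- the pushout-product of the inner horn inclusion $\Lambda^{n}_{k} \hookrightarrow \Delta^{n}$ with $\emptyset \hookrightarrow \Delta^{m}$ --- is inner anodyne in $\PSh(\simp)$ (by the standard closure of inner anodyne maps under pushout-product with monomorphisms; compare the filtration in \cite{DuggerSpivakMap}*{Lemma A.1}). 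Hence, by \cref{propn pb of inner anodyne}, the base change
\[
  r^{*}(\Lambda^{n}_{k} \times \Delta^{m}) \times_{r^{*}(\Delta^{n} \times \Delta^{m})} \bigl(\Delta^{n}(\bfc) \boxtimes \Delta^{m}(\bfd)\bigr) \to \Delta^{n}(\bfc) \boxtimes \Delta^{m}(\bfd)
\]
is inner anodyne, and it only remains to identify its source with $\Lambda^{n}_{k}(\bfc) \boxtimes \Delta^{m}(\bfd)$. Using the identification $p^{*}F \boxtimes q^{*}G \simeq r^{*}(F \times G)$ for $F, G \in \PSh(\simp)$ (the computation preceding \cref{cor:boxpoia}, which applies verbatim to any $F,G$) together with the structure maps $\Lambda^{n}_{k}(\bfc) \to p^{*}\Lambda^{n}_{k}$ and $\Delta^{m}(\bfd) \to q^{*}\Delta^{m}$ of the relevant defining pullbacks, one obtains a natural commutative square
\[
  \begin{tikzcd}
    \Lambda^{n}_{k}(\bfc) \boxtimes \Delta^{m}(\bfd) \arrow{r} \arrow{d} & \Delta^{n}(\bfc) \boxtimes \Delta^{m}(\bfd) \arrow{d} \\
    r^{*}(\Lambda^{n}_{k} \times \Delta^{m}) \arrow{r} & r^{*}(\Delta^{n} \times \Delta^{m}),
  \end{tikzcd}
\]
which is a pullback, as one checks objectwise: evaluated at $[l](\bfc', \bfd')$ it splits as the product of the pullback square defining $\Lambda^{n}_{k}(\bfc)$, evaluated at $[l](\bfc')$, with a square whose two horizontal edges are the identity of $\Delta^{m}(\bfd)([l](\bfd'))$ --- and both of these are cartesian.

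The argument is essentially formal given the machinery already assembled; the only non-trivial inputs, both in the last step, are that $\Lambda^{n}_{k} \times \Delta^{m} \hookrightarrow \Delta^{n} \times \Delta^{m}$ is inner anodyne in $\PSh(\simp)$ (standard pushout-product combinatorics) and that the displayed square is cartesian (the objectwise check above). I expect neither to present real difficulty, so I would regard the genuine content of the result as the reduction steps rather than any single hard computation.
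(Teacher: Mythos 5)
Your proof is correct, and its key step differs genuinely from the paper's. The initial reductions (to $Y$ representable and to a single inner horn in the first variable, via weak saturation and colimit-preservation of $\boxtimes$) coincide with the paper's. But where the paper then runs an induction on $m$, decomposing $\Lambda^{n}_{k}(\bfc)\boxtimes\Delta^{m}(\bfd)\to\Delta^{n}(\bfc)\boxtimes\Delta^{m}(\bfd)$ through the pushout along $\partial\Delta^{m}(\bfd)$ and invoking \cref{cor:boxpoia} for the second factor, you instead exhibit this map in one stroke as the base change, along the simple map of \cref{propn:prodsimple}, of the inner anodyne map $\Lambda^{n}_{k}\times\Delta^{m}\hookrightarrow\Delta^{n}\times\Delta^{m}$ in $\PSh(\simp)$, and then apply \cref{propn pb of inner anodyne}. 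Your identification of the fibre product with $\Lambda^{n}_{k}(\bfc)\boxtimes\Delta^{m}(\bfd)$ is right: the objectwise check does split as a product of cartesian squares exactly as you say. This buys you two things: no induction, and the stronger conclusion that the map is actually \emph{inner anodyne} rather than merely a Segal equivalence. The price is a slightly different classical input: you need that the pushout-product of $\Lambda^{n}_{k}\hookrightarrow\Delta^{n}$ with $\emptyset\hookrightarrow\Delta^{m}$ is inner anodyne, which is not literally the instance covered by \cite{DuggerSpivakMap}*{Lemma A.1} (that handles $\partial\Delta^{n}\hat{\times}\Lambda^{m}_{k}$); it does follow by the standard reduction, writing $\emptyset\to\Delta^{m}$ as a skeletal composite of pushouts of boundary inclusions and using that the pushout-product construction respects weak saturation, but you should say a word about why this simplicial-set statement transfers to $\PSh(\simp)$ (pushouts along monomorphisms and transfinite composites of monomorphisms are homotopy colimits, hence preserved by the colimit-preserving functor from simplicial sets to $\PSh(\simp)$). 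With that caveat spelled out, your argument is complete.
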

\begin{proof}
  Since $\boxtimes$ on presheaves preserves colimits in each variable, and Segal equivalences are closed under colimits, it suffices to consider $Y$ of the form $\Delta^{m}(\bfd)$. Similarly, in the first variable it suffices to consider the case where we have a single pushout
  \[
    \begin{tikzcd}
      \Lambda^{n}_{k}(\bfc) \arrow{r} \arrow{d} & \Delta^{n}_{k}(\bfc) \arrow{d} \\
      X' \arrow{r} & X,
    \end{tikzcd}
  \]
  and applying $ \blank \boxtimes \Delta^{m}(\bfd)$ to this we see that it is furthermore enough to show that $\Lambda^{n}_{k}(\bfc) \boxtimes \Delta^{m}(\bfd) \to \Delta^{n}(\bfc) \boxtimes \Delta^{m}(\bfd)$ is a Segal equivalence. For this we induct on $m$, noting that for $m=0$ the formula for mapping spaces in the proof of \cref{propn:prodsimple} implies that we get $\Lambda^{n}_{k}(\bfc, \bbone) \to \Delta^{n}_{k}(\bfc, \bbone)$ where $\bbone$ is the unit in $\uD$. Assuming we have proved the equivalence in dimensions $< m$, decomposing the boundary of $\Delta^{m}$ as a colimit of lower-dimensional simplices implies that
  \[ \Lambda^{n}_{k}(\bfc)\boxtimes \partial\Delta^{m}(\bfd) \to \Delta^{n}_{k}(\bfc) \boxtimes \partial\Delta^{m}(\bfd)\]
  is a Segal equivalence. Our map for $\Delta^{m}$ decomposes as a composition
  \[ \Lambda^{n}_{k}(\bfc) \boxtimes \Delta^{m}(\bfd) \to
    \Lambda^{n}_{k}(\bfc) \boxtimes \Delta^{m}(\bfd)
    \amalg_{\Lambda^{n}_{k}(\bfc)\boxtimes \partial\Delta^{m}(\bfd)}
    \Delta^{n}_{k}(\bfc) \boxtimes \partial\Delta^{m}(\bfd) \to
    \Delta^{n}_{k}(\bfc) \boxtimes \Delta^{m}(\bfd).\] Here the first
  map is a cobase change of the map we just saw was a Segal
  equivalence, while the second map is inner anodyne by
  \cref{cor:boxpoia}.
\end{proof}

\begin{cor}\label{cor:boxsegeq}
  The exterior product
  \[ \blank \boxtimes \blank \colon \PSh(\uC_{\otimes}) \times  \PSh(\uD_{\otimes}) \to \PSh(\uC_{\otimes} \times_{\simp} \uD_{\otimes})\]
  preserves Segal equivalences in each variable.
\end{cor}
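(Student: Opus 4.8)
The plan is to reduce the general statement to the already-established fact, \cref{cor:boxia}, that tensoring an inner anodyne map with a representable yields a Segal equivalence. By symmetry it suffices to prove that $X \mapsto X \boxtimes Y$ preserves Segal equivalences for each fixed $Y \in \PSh(\uD_{\otimes})$; the other variable is handled identically (or by observing that $\boxtimes$ is symmetric up to the swap equivalence $(\uC \times \uD)_{\otimes} \simeq (\uD \times \uC)_{\otimes}$). So fix $Y$ and let $f \colon X' \to X$ be a Segal equivalence in $\PSh(\uC_{\otimes})$; we must show $f \boxtimes Y$ is a Segal equivalence in $\PSh((\uC \times \uD)_{\otimes})$.

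First I would recall that $\boxtimes$ preserves colimits in each variable (\cref{obs:boxlimcolim}) and that Segal equivalences in $\PSh((\uC \times \uD)_{\otimes})$ are a strongly saturated class — closed under colimits in the arrow category, in particular under cobase change and transfinite composition. Next, the key structural input is that the Segal equivalences in $\PSh(\uC_{\otimes})$ form the \emph{strongly} saturated class generated by the spine inclusions $\Delta^{n}_{\Seg}(\bfc) \to \Delta^{n}(\bfc)$. Using the standard presentation of maps in a strongly saturated class: a morphism $f$ is a Segal equivalence iff it becomes an equivalence after localizing at the spine inclusions, equivalently (via \cref{cor:iaissegeqC}) after localizing at the inner horn inclusions. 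Since a functor that preserves colimits and sends a set of generating maps of a strongly saturated class into another strongly saturated class sends the whole class into it, it suffices to check that $\blank \boxtimes Y$ sends each spine inclusion $\Delta^{n}_{\Seg}(\bfc) \hookrightarrow \Delta^{n}(\bfc)$ to a Segal equivalence. But the spine inclusions are themselves inner anodyne (noted in the proof of \cref{cor:iaissegeqC}), so I can instead reduce directly to: for every inner anodyne $X' \to X$ and every $Y$, the map $X' \boxtimes Y \to X \boxtimes Y$ is a Segal equivalence. For general $Y$, write $Y$ as a colimit of representables $\Delta^{m}(\bfd)$ (every presheaf is such a colimit); since $X' \boxtimes \blank$ and $X \boxtimes \blank$ preserve colimits and Segal equivalences are closed under colimits, the general case follows from the representable case, which is exactly \cref{cor:boxia}.

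The step requiring the most care is the reduction ``a colimit-preserving functor sending the generators of a strongly saturated class into a strongly saturated class sends the whole class there.'' This is a standard fact about saturated classes, but one must be slightly careful: strongly saturated classes are closed under cobase change, transfinite composition, retracts, \emph{and} the $2$-out-of-$3$ property is replaced by closure under colimits in $\Ar$, while the class of maps sent by $\blank \boxtimes Y$ into Segal equivalences is closed under all colimits in $\Ar(\PSh(\uC_{\otimes}))$ (because $\blank \boxtimes Y$ preserves such colimits and Segal equivalences in the target are colimit-closed). Hence that class is strongly saturated, and it contains the generating spine inclusions by the preceding paragraph, so it contains all Segal equivalences. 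I expect this bookkeeping — matching up the closure properties correctly and invoking that every presheaf is a colimit of representables to handle arbitrary $Y$ — to be the only real content; once it is in place the corollary is immediate.

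\begin{proof}
  By symmetry it suffices to show that for every fixed $Y \in \PSh(\uD_{\otimes})$, the functor $\blank \boxtimes Y$ preserves Segal equivalences. The class of morphisms $f$ in $\PSh(\uC_{\otimes})$ such that $f \boxtimes Y$ is a Segal equivalence is closed under all colimits in the arrow category, since $\blank \boxtimes Y$ preserves colimits (\cref{obs:boxlimcolim}) and the Segal equivalences in $\PSh((\uC \times \uD)_{\otimes})$ are closed under colimits; hence this class is strongly saturated. By \cref{cor:boxia} it contains all inner anodyne maps, in particular (by the proof of \cref{cor:iaissegeqC}) the spine inclusions $\Delta^{n}_{\Seg}(\bfc) \to \Delta^{n}(\bfc)$, which generate the Segal equivalences as a strongly saturated class. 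Therefore it contains all Segal equivalences. Finally, for arbitrary $Y$ we have not yet used more than \cref{cor:boxia}, which already covers the case $Y = \Delta^{m}(\bfd)$; since every presheaf is a colimit of representables and $X' \boxtimes \blank \to X \boxtimes \blank$ commutes with colimits while Segal equivalences are colimit-closed, the case of general $Y$ follows as well.
\end{proof}
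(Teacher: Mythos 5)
Your proof is correct and follows essentially the same route as the paper: reduce via colimit-preservation of $\boxtimes$ and strong saturation to the generating spine inclusions, which are inner anodyne, and then invoke \cref{cor:boxia}. (Your final step writing $Y$ as a colimit of representables is unnecessary, since \cref{cor:boxia} is already stated for arbitrary $Y \in \PSh(\uD_{\otimes})$.)
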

\begin{proof}
  Since $\boxtimes$ preserves colimits in each variable, it suffices to show that $\Delta^{n}_{\Seg}(\bfc) \boxtimes Y \to \Delta^{n}(\bfc) \boxtimes Y$ is a Segal equivalence for all $\bfc$ and $Y$. This follows from \cref{cor:boxia} since $\Delta^{n}_{\Seg}(\bfc) \to \Delta^{n}(\bfc)$ is inner anodyne.
\end{proof}

\begin{cor}\label{cor:segpshprodcolim}
  The exterior product of Segal presheaves
  \[ \blank \boxtimes \blank \colon \PSeg(\uC_{\otimes}) \times  \PSeg(\uD_{\otimes}) \to \PSeg(\uC_{\otimes} \times_{\simp} \uD_{\otimes})\]
  preserves colimits in each variable.
\end{cor}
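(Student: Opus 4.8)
The plan is to reduce the statement about Segal presheaves to the statement about ordinary presheaves established in \cref{cor:boxsegeq}, using the fact that $\PSeg(\uC_{\otimes})$ is a localization of $\PSh(\uC_{\otimes})$ and that the exterior product on presheaves already preserves colimits in each variable by \cref{obs:boxlimcolim}.

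First I would recall the general principle for transporting colimit-preservation through a localization: if $L \colon \uP \to \uP_{0}$ is a localization with fully faithful right adjoint, then a colimit in $\uP_{0}$ of a diagram $d$ is computed by taking the colimit in $\uP$ of $d$ (viewed via the inclusion) and applying $L$. Concretely, to compute colimits in $\PSeg(\uC_{\otimes})$ one forms the colimit in $\PSh(\uC_{\otimes})$ and then applies $L_{\Seg}$. So fix a Segal presheaf $Y \in \PSeg(\uD_{\otimes})$ and consider the functor $(\blank) \boxtimes Y \colon \PSeg(\uC_{\otimes}) \to \PSeg((\uC\times\uD)_{\otimes})$, which by \cref{lem:boxpresseg} is well-defined and (since on underlying presheaves it agrees with the external product followed by $L_{\Seg}$) equals the composite
\[ \PSeg(\uC_{\otimes}) \hookrightarrow \PSh(\uC_{\otimes}) \xto{\blank \boxtimes Y} \PSh((\uC\times\uD)_{\otimes}) \xto{L_{\Seg}} \PSeg((\uC\times\uD)_{\otimes}). \]
Given a small diagram $X_{\bullet} \colon \uI \to \PSeg(\uC_{\otimes})$, its colimit in $\PSeg$ is $L_{\Seg}(\colim_{\uI} X_{\bullet})$ where the inner colimit is taken in $\PSh(\uC_{\otimes})$. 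Applying $(\blank)\boxtimes Y$ and using that $\boxtimes$ on presheaves preserves colimits in the first variable (\cref{obs:boxlimcolim}), we get
\[ \Bigl(\colim_{\uI}^{\PSeg} X_{\bullet}\Bigr) \boxtimes Y = L_{\Seg}\bigl((\colim_{\uI}^{\PSh} X_{\bullet}) \boxtimes Y\bigr) \simeq L_{\Seg}\bigl(\colim_{\uI}^{\PSh} (X_{\bullet} \boxtimes Y)\bigr) \simeq \colim_{\uI}^{\PSeg} L_{\Seg}(X_{\bullet}\boxtimes Y), \]
and since each $X_{i}$ is already Segal, $L_{\Seg}(X_{i}\boxtimes Y) \simeq X_{i}\boxtimes Y$ by \cref{lem:boxpresseg}, so the right-hand side is the colimit of the diagram $X_{\bullet}\boxtimes Y$ in $\PSeg$, as desired. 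The same argument works in the second variable by symmetry.

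The one point requiring care — and the only place \cref{cor:boxsegeq} is genuinely used — is the middle step: I need to know that the canonical comparison map $\colim_{\uI}^{\PSeg} L_{\Seg}(X_{\bullet}\boxtimes Y) \to L_{\Seg}(\colim_{\uI}^{\PSh}(X_{\bullet}\boxtimes Y))$ is an equivalence, i.e. that applying $\boxtimes Y$ sends the colimit cocone in $\PSh$ to one that becomes a colimit cocone after Segal localization. Here is where \cref{cor:boxsegeq} enters: it says $\boxtimes$ preserves Segal equivalences in each variable, which is exactly the statement that $(\blank)\boxtimes Y$ descends to a well-defined functor on the localized categories and that $L_{\Seg} \circ (\blank \boxtimes Y)$ inverts Segal equivalences; combined with the universal property of the localization $\PSh(\uC_{\otimes}) \to \PSeg(\uC_{\otimes})$, this forces $L_{\Seg}(\blank \boxtimes Y)$ to factor through $\PSeg(\uC_{\otimes})$ and to preserve colimits there. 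I expect the main (mild) obstacle to be bookkeeping the fact that the localization functors on the source and target are different, so the cleanest formulation is: the square of right adjoints (inclusions) commutes with $\boxtimes$ up to the natural map, $\boxtimes$ on presheaves preserves colimits in each variable, and $\boxtimes$ preserves Segal-local equivalences (\cref{cor:boxsegeq}); these three facts together are precisely the hypotheses of the standard lemma that a colimit-preserving functor between presentable categories descends to a colimit-preserving functor between Bousfield localizations whenever it preserves the relevant local equivalences. Applying that lemma (in each variable separately) yields the claim.
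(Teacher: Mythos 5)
Your proposal is correct and follows essentially the same route as the paper: both compute colimits in $\PSeg$ as $L_{\Seg}$ applied to the presheaf colimit, use that $\boxtimes$ preserves colimits of presheaves in each variable (\cref{obs:boxlimcolim}), and invoke \cref{cor:boxsegeq} to see that $L_{\Seg}(\blank\boxtimes Y)$ inverts Segal equivalences, so that $L_{\Seg}(A)\boxtimes Y \simeq L_{\Seg}(A\boxtimes Y)$ for $Y$ Segal. The paper packages this last step as a commutative triangle under the unit maps (both legs being Segal equivalences, hence the comparison is an equivalence of Segal presheaves), which is exactly the content of your appeal to the standard descent lemma for Bousfield localizations.
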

\begin{proof}
  From \cref{lem:boxpresseg} we have a commutative square
  \[
    \begin{tikzcd}
      \PSeg(\uC_{\otimes}) \times \PSeg(\uD_{\otimes}) \arrow[hookrightarrow]{d} \arrow{r}{\boxtimes} & \PSeg((\uC \times \uD)_{\otimes}) \arrow[hookrightarrow]{d} \\
      \PSh(\uC_{\otimes}) \times \PSh(\uD_{\otimes})
      \arrow{r}{\boxtimes} & \PSh((\uC \times \uD)_{\otimes}).
    \end{tikzcd}
  \]
  If we write $L_{\Seg}$ for the localization functors $\PSh(\blank) \to \PSeg(\blank)$, left adjoint to the fully faithful inclusions $i_{\Seg} \colon \PSeg(\blank) \hookrightarrow \PSh(\blank)$,
  we get a mate transformation
  \[ L_{\Seg}(\blank \boxtimes \blank) \to L_{\Seg}(\blank) \boxtimes L_{\Seg}(\blank),\]
  which for objects $X, Y$ fits in a commutative triangle
  \[
    \begin{tikzcd}
      {} & X \boxtimes Y \arrow{dl} \arrow{dr} \\
      L_{\Seg}(X \boxtimes Y) \arrow{rr} & & L_{\Seg}X \boxtimes L_{\Seg}Y.
    \end{tikzcd}
  \]
  Here the left diagonal arrow is a Segal equivalence, and by \cref{cor:boxsegeq} so is the right diagonal arrow. It follows that the bottom arrow is an equivalence, since it is a Segal equivalence between Segal presheaves. Now recall that a colimit in Segal presheaves is computed by taking the colimit in presheaves and then applying $L_{\Seg}$. Given a diagram $\phi \colon \uK \to \PSeg(\uC_{\otimes})$ and $Y \in \PSeg(\uD_{\otimes})$, we get
  \[
    \begin{split}
(\colim_{\uK} \phi) \boxtimes Y & \simeq L_{\Seg}(\colim_{\uK} i_{\Seg}\phi) \boxtimes L_{\Seg}i_{\Seg}Y \\ &  \simeq L_{\Seg}((\colim_{\uK} i_{\Seg}\phi) \boxtimes i_{\Seg} Y) \\ & \simeq  L_{\Seg}(\colim_{\uK} (i_{\Seg}\phi \boxtimes i_{\Seg} Y)) \\ & \simeq \colim_{\uK} L_{\Seg}(i_{\Seg} \phi \boxtimes i_{\Seg} Y) \\ & \simeq \colim_{\uK} (\phi \boxtimes Y),
    \end{split}
  \]
  so that $\boxtimes$ indeed preserves colimits of Segal presheaves in each variable.
\end{proof}

Combining this with \cref{cor:psegprodinalgd}, we get:
\begin{cor}\label{cor:algdpshcolim}
  Suppose $\uC$ and $\uD$ are small monoidal \icats{}. Then the composite
  \[ \Algd(\PSh(\uC)) \times \Algd(\PSh(\uD)) \xto{\boxtimes} \Algd(\PSh(\uC) \times \PSh(\uD)) \xto{\mu_{*}} \Algd(\PSh(\uC \times \uD))\]
  preserves colimits in each variable.\qed
\end{cor}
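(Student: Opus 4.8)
The plan is to deduce \cref{cor:algdpshcolim} directly from \cref{cor:segpshprodcolim} by transporting the statement along the equivalences established in \S\ref{sec:comp}. First I would invoke \cref{cor:psegprodinalgd}, which identifies the composite $\mu_{*} \circ \boxtimes$ on $\Algd(\PSh(\blank))$ with the external product $\boxtimes$ on $\PSeg$ under the equivalence $\PSeg(\uC_{\otimes}) \simeq \Algd(\PSh(\uC))$. Since this equivalence is an equivalence of \icats{}, it preserves and reflects colimits, so a functor built out of it preserves colimits in each variable if and only if the corresponding functor on Segal presheaves does. Thus the claim is literally a restatement of \cref{cor:segpshprodcolim} under these identifications, and the proof is just a ``\qed'' once these references are assembled.

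If one wanted to spell out slightly more: the equivalence of \icats{} $\PSeg \simeq \Algdpsh$ over $\MonCatI$ from the earlier proposition fits the external product functor on $\PSeg$ into a commuting square with the composite $\mu_{*}\circ\boxtimes$ on the algebroid side, with vertical maps the fibrewise equivalences $\PSeg(\uC_{\otimes}) \isoto \Algd(\PSh(\uC))$ and likewise for $\uD$ and for $\uC \times \uD$. A square of this shape, with the vertical arrows equivalences, transports the ``preserves colimits in each variable'' property from the top functor to the bottom one, because for any diagram $\phi\colon \uK \to \PSeg(\uC_{\otimes})$ and object $Y$, the colimit $(\colim_{\uK}\phi) \boxtimes Y \simeq \colim_{\uK}(\phi \boxtimes Y)$ is carried by the (colimit-preserving) vertical equivalences to the analogous statement on the algebroid side.

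The only point requiring a touch of care is to make sure the commuting square is the right one, \ie{} that \cref{cor:psegprodinalgd} indeed matches the external product of Segal presheaves with the composite $\mu_{*} \circ \boxtimes$ on algebroids in presheaves (and not some other natural transformation); but this is exactly the content of that corollary, which in turn rests on the description of cartesian products in $\Algdpsh$ established just before it. So there is no real obstacle here: the substantive work was done in \S\ref{sec:tenspsh} in proving \cref{cor:segpshprodcolim}, and the present corollary is a formal consequence, which is why it is marked \qed in the statement. If a longer argument were desired, I would simply write out the display showing $(\colim_{\uK}\phi)\boxtimes Y \simeq \colim_{\uK}(\phi\boxtimes Y)$ transported across the equivalence, but this is the same bookkeeping already performed in the proof of \cref{cor:segpshprodcolim}.
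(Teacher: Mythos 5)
Your proposal is correct and matches the paper exactly: the corollary is obtained by combining \cref{cor:segpshprodcolim} with the identification of $\mu_{*}\circ\boxtimes$ with the external product of Segal presheaves from \cref{cor:psegprodinalgd}, transporting the colimit-preservation property along the fibrewise equivalences. This is precisely why the statement carries a \qed with no further argument in the paper.
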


\section{Tensor products and colimits: the presentable case}\label{sec:tenspres}
We now want to generalize \cref{cor:algdpshcolim} from presheaves to more general presentable \icats{}. Recall that a \emph{presentably monoidal} \icat{} is a monoidal \icat{} $\uV^{\otimes}$ such that the underlying \icat{} $\uV$ is presentable and the tensor product preserves colimits in each variable. In this case we can find a presentation of $\uV$ as a localization of a presheaf \icat{} that is compatible with the monoidal structure:
\begin{defn}
  Let $\uC$ be a small monoidal \icat{}. Then we say a set
  $\mathbb{S}$ of morphisms in $\PSh(\uC)$ is \emph{compatible with
    the monoidal structure} if the Day convolution tensor product of a
  morphism in $\mathbb{S}$ with an identity morphism lies in the
  strongly saturated class $\overline{\mathbb{S}}$ generated by
  $\mathbb{S}$. (For convenience, we will assume the set $\mathbb{S}$
  always includes the equivalences in $\uC$.) In this case, the full
  subcategory $\PSh^{\mathbb{S}}(\uC)$ of $\mathbb{S}$-local
  presheaves is a monoidal localization of $\PSh(\uC)$ by
  \cite{patterns2}*{Corollary 7.20}, meaning that the full subcategory
  $\PSh^{\mathbb{S}}(\uC)^{\otimes} \subseteq \PSh(\uC)^{\otimes}$,
  spanned by lists of $\mathbb{S}$-local presheaves, is a monoidal
  \icat{}, the inclusion is lax monoidal, and its left adjoint is
  strong monoidal.
\end{defn}

\begin{propn}\label{propn:presmonisloc}
  Suppose $\uV$ is a presentably monoidal \icat{}. Then there exists a small full monoidal subcategory $\uC$ of $\uV$ and a set $\mathbb{S}$ of morphisms in $\PSh(\uC)$ that is compatible with the monoidal structure, such that the Yoneda extension $\PSh(\uC) \to \uV$ induces an equivalence of monoidal \icats{}
  \[ \PSh^{\mathbb{S}}(\uC) \simeq \uV.\]
\end{propn}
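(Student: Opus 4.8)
The plan is to realize $\uV$ as an accessible localization of a presheaf category in the standard way, and then promote this to the monoidal setting using the characterization of monoidal localizations via compatible sets of morphisms. First I would choose a small full subcategory $\uC \subseteq \uV$ that is closed under the tensor product and contains the unit; such a $\uC$ exists because $\uV$ is presentable (so it has a small set of generators) and we can enlarge any small set of objects to a small monoidal subcategory by closing under $\otimes$ and adjoining the unit — the resulting subcategory is still small since $\uV$ is locally small and $\otimes$ is a binary operation. Since $\uV$ is presentable, by enlarging $\uC$ further if necessary we may assume the Yoneda extension $j_{!} \colon \PSh(\uC) \to \uV$ (left Kan extension of the inclusion along the Yoneda embedding) is a localization, i.e. admits a fully faithful right adjoint; this is the standard fact that every presentable $\infty$-category is an accessible localization of presheaves on a sufficiently large small subcategory.

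Next I would let $\mathbb{S}$ be the set of morphisms in $\PSh(\uC)$ inverted by $j_{!}$ — more precisely, a small set of morphisms generating the strongly saturated class of $j_{!}$-equivalences, which exists because the localization is accessible. Adding in the equivalences of $\uC$ as required by the definition is harmless. Then $\PSh^{\mathbb{S}}(\uC) \simeq \uV$ as $\infty$-categories, compatibly with the Yoneda embedding. The remaining point is to check that $\mathbb{S}$ is \emph{compatible with the monoidal structure} in the sense defined above, namely that for $f \in \mathbb{S}$ and any object $X$, the Day convolution product $f \otimes \mathrm{id}_{X}$ lies in $\overline{\mathbb{S}}$. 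Here I would use that the localization $L \colon \PSh(\uC) \to \uV$ is strong monoidal — the monoidal structure on $\uV$ restricted to $\uC$ corresponds under Day convolution to the monoidal structure on $\PSh(\uC)$, precisely because $\uC \subseteq \uV$ is a monoidal subcategory and Day convolution on $\PSh(\uC)$ is the unique colimit-preserving-in-each-variable monoidal structure extending that on $\uC$. Concretely: $j_{!}$ is a colimit-preserving functor out of $\PSh(\uC)$ whose restriction to $\uC$ is monoidal, hence it is strong monoidal for Day convolution by the universal property of Day convolution. Since $j_{!}$ preserves colimits in each variable and sends $f$ to an equivalence, it sends $f \otimes \mathrm{id}_{X}$ to an equivalence; as the $j_{!}$-equivalences are exactly $\overline{\mathbb{S}}$, we get $f \otimes \mathrm{id}_{X} \in \overline{\mathbb{S}}$, which is the compatibility condition.

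Having verified compatibility, \cite{patterns2}*{Corollary 7.20} (as quoted in the definition above) gives that $\PSh^{\mathbb{S}}(\uC)^{\otimes} \subseteq \PSh(\uC)^{\otimes}$ is a monoidal $\infty$-category with the localization strong monoidal. The equivalence $\PSh^{\mathbb{S}}(\uC) \simeq \uV$ of underlying $\infty$-categories then upgrades to an equivalence of monoidal $\infty$-categories because both are obtained from the monoidal structure on $\uC$ by the same universal property: a strong monoidal functor out of $\PSh^{\mathbb{S}}(\uC)$ preserving colimits is determined by a monoidal functor out of $\uC$, and the identity on $\uC$ gives the equivalence. I expect the main obstacle to be the bookkeeping in the first step — arranging a single small subcategory $\uC$ that is simultaneously monoidal and large enough that $j_{!}$ is a localization — since one must interleave "close under $\otimes$" with "enlarge enough generators" and check the process terminates at a small subcategory; this is routine but needs care. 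The strong-monoidality of $j_{!}$ via the universal property of Day convolution is the conceptual heart, but given the cited results it is short.
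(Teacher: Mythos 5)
Your argument is correct and is essentially the paper's: the proof there is a one-line citation of \cite{patterns2}*{Corollary 7.16}, which is established by exactly the route you describe (strong monoidality of the Yoneda extension via the universal property of Day convolution, and compatibility of the generating set of local equivalences because the colimit-preserving strong monoidal localization inverts $f \otimes \id$ whenever it inverts $f$). The bookkeeping you flag in the first step — producing one small monoidal subcategory for which $j_{!}$ is already a localization — is resolved in that reference by simply taking $\uC = \uV^{\kappa}$, the $\kappa$-compact objects for a sufficiently large regular cardinal $\kappa$, which is closed under the tensor product, contains the unit, and is dense.
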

\begin{proof}
  This follows from \cite{patterns2}*{Corollary 7.16} (which says that we can take $\uC$ to be the full subcategory of $\kappa$-compact objects for some regular cardinal $\kappa$).
\end{proof}

We can also identify the cocomplete tensor product in terms of such presentations:
\begin{lemma}\label{lem:cocomptenslocn}
  Suppose we have presentations $\uV \simeq \PSh^{\mathbb{S}}(\uC)$, $\uW \simeq \PSh^{\mathbb{T}}(\uD)$ of presentable \icats{}. Then their cocomplete tensor product is given by
  \[ \uV \otimes \uW \simeq \PSh^{\mathbb{S} \odot \mathbb{T}}(\uC \times \uD),\]
  where $\mathbb{S} \odot \mathbb{T}$ can be taken to consist of
  $\mathbb{S} \times \id_{d}$ for $d \in \uD$ together with $\id_{c}\times \mathbb{T}$ for $c \in \uC$.
  The canonical map $\uV \times \uW \to \uV \otimes \uW$ is given by the composition
  \[ \PSh^{\mathbb{S}}(\uC) \times \PSh^{\mathbb{T}}(\uD) \hookrightarrow   \PSh(\uC) \times \PSh(\uD) \xto{\mu} \PSh(\uC \times \uD) \to
    \PSh^{\mathbb{S} \odot \mathbb{T}}(\uC \times \uD).\]
\end{lemma}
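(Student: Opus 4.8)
The plan is to verify that $\PSh^{\mathbb{S}\odot\mathbb{T}}(\uC\times\uD)$, together with the displayed composite as structure map, models the cocomplete tensor product: for every cocomplete \icat{} $\uX$, precomposition with the displayed composite should induce an equivalence from the \icat{} of colimit-preserving functors $\PSh^{\mathbb{S}\odot\mathbb{T}}(\uC\times\uD)\to\uX$ to the \icat{} of functors $\uV\times\uW\to\uX$ that preserve colimits in each variable. Granting this, the displayed composite is the canonical map $\uV\times\uW\to\uV\otimes\uW$ by the defining universal property, and in particular preserves colimits in each variable.

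I would use three standard inputs. (i) For a small \icat{} $\uA$ and a set $\mathbb{U}$ of morphisms in $\PSh(\uA)$, precomposition with the localization $L_{\mathbb{U}}\colon\PSh(\uA)\to\PSh^{\mathbb{U}}(\uA)$ identifies colimit-preserving functors $\PSh^{\mathbb{U}}(\uA)\to\uX$ with colimit-preserving functors $\PSh(\uA)\to\uX$ that invert $\mathbb{U}$, equivalently that invert the strongly saturated class $\overline{\mathbb{U}}$ it generates; moreover the unit maps $X\to i_{\mathbb{U}}L_{\mathbb{U}}X$ lie in $\overline{\mathbb{U}}$. (ii) As recalled in \S\ref{sec:comp}, $\mu\colon\PSh(\uC)\times\PSh(\uD)\to\PSh(\uC\times\uD)$ exhibits its target as the cocomplete tensor product of $\PSh(\uC)$ and $\PSh(\uD)$; it preserves colimits in each variable and sends $(\Map_{\uC}(\blank,c),\Map_{\uD}(\blank,d))$ to $\Map_{\uC\times\uD}(\blank,(c,d))$. (iii) Every presheaf is a colimit of representables.

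The verification is then a chain of equivalences of \icats{} of functors to $\uX$, each implemented by precomposition. Colimit-preserving functors out of $\PSh^{\mathbb{S}\odot\mathbb{T}}(\uC\times\uD)$ correspond by (i) to colimit-preserving functors $\bar{G}\colon\PSh(\uC\times\uD)\to\uX$ inverting $\overline{\mathbb{S}\odot\mathbb{T}}$. By (ii), composing with $\mu$ identifies these with functors $G'\colon\PSh(\uC)\times\PSh(\uD)\to\uX$ preserving colimits in each variable; and since $\mu(s,\blank)$ and $\mu(\blank,t)$ preserve colimits and representables generate by (iii), $\bar{G}$ inverts $\overline{\mathbb{S}\odot\mathbb{T}}$ — i.e. inverts the generating morphisms $s\times\id_d=\mu(s,\Map_{\uD}(\blank,d))$ and $\id_c\times t=\mu(\Map_{\uC}(\blank,c),t)$ — if and only if $G'(s,Y)$ is an equivalence for all $s\in\mathbb{S}$, $Y\in\PSh(\uD)$ and $G'(X,t)$ is an equivalence for all $X\in\PSh(\uC)$, $t\in\mathbb{T}$. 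Finally, applying (i) in each of the two variables (using the unit maps to factor such a $G'$ through $L_{\mathbb{S}}\times L_{\mathbb{T}}$), composing with $i_{\mathbb{S}}\times i_{\mathbb{T}}$ identifies the latter functors with functors $\uV\times\uW\simeq\PSh^{\mathbb{S}}(\uC)\times\PSh^{\mathbb{T}}(\uD)\to\uX$ preserving colimits in each variable. The total precomposition is with $L_{\mathbb{S}\odot\mathbb{T}}\circ\mu\circ(i_{\mathbb{S}}\times i_{\mathbb{T}})$, which is the displayed composite — here one also notes that $L_{\mathbb{S}\odot\mathbb{T}}\mu(X,Y)\to L_{\mathbb{S}\odot\mathbb{T}}\mu(i_{\mathbb{S}}L_{\mathbb{S}}X,i_{\mathbb{T}}L_{\mathbb{T}}Y)$ is an equivalence, since the relevant unit maps land in $\overline{\mathbb{S}\odot\mathbb{T}}$ by the computation just made.

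I expect the step needing the most care — essentially the \emph{only} obstacle — to be this identification of localizing classes: checking that the strongly saturated class generated by the external products with representables $\mathbb{S}\times\id_d$ and $\id_c\times\mathbb{T}$ is exactly the class inverted by the colimit-preserving functors on $\PSh(\uC\times\uD)$ that correspond under $\mu$ to functors on $\PSh(\uC)\times\PSh(\uD)$ separately inverting $\mathbb{S}$ and $\mathbb{T}$, together with the parallel point that one may pass to the localizations $L_{\mathbb{S}}$, $L_{\mathbb{T}}$ before applying $\mu$ and $L_{\mathbb{S}\odot\mathbb{T}}$ without changing the result. Both come down to the same principle — all functors in sight preserve colimits in each variable and representables generate — but this is where one must take care not to conflate ``inverts $\mathbb{S}$'' with ``inverts $\mathbb{S}\times\id_{\PSh(\uD)}$''.
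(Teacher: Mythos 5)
Your argument is correct, and it is essentially the argument behind the reference the paper gives: the paper's entire proof is the citation ``this follows from the proof of \cite{HA}*{Proposition 4.8.1.15},'' and what you have written is a faithful unpacking of that proof (identify colimit-preserving functors out of a localized presheaf category with those out of the presheaf category inverting the saturated class, use the universal property of $\mu$, and match up the inverted classes using that everything preserves colimits in each variable and representables generate). The one point you rightly flag as delicate --- that inverting the generators $s \times \id_{d}$, $\id_{c} \times t$ is equivalent to inverting $\mathbb{S}$ and $\mathbb{T}$ ``objectwise'' in each variable --- is handled correctly by your colimit-of-representables argument.
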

\begin{proof}
  This follows from the proof of \cite{HA}*{Proposition 4.8.1.15}.
\end{proof}

\begin{observation}\label{obs:loctenscomp}
  The inclusion $\PSh^{\mathbb{S}}(\uC) \hookrightarrow \PSh(\uC)$ and its left adjoint induce an adjunction
  \[ \Algd(\PSh(\uC)) \rightleftarrows
    \Algd(\PSh^{\mathbb{S}}(\uC)),\] where the right adjoint is again
  fully faithful, so that we can identify
  $\Algd(\PSh^{\mathbb{S}}(\uC))$ as a full subcategory of
  $\Algd(\PSh(\uC))$.
  In the situation of \cref{lem:cocomptenslocn}, we then have a commutative diagram
  \[
    \begin{tikzcd}
      \Algd(\PSh^{\mathbb{S}}(\uC)) \times \Algd(\PSh^{\mathbb{T}}(\uD)) \arrow{r} \arrow[hookrightarrow]{d} & \Algd(\PSh^{\mathbb{S}}(\uC) \times \PSh^{\mathbb{T}}(\uD)) \arrow{r} \arrow[hookrightarrow]{d} &  \Algd(\PSh^{\mathbb{S} \odot \mathbb{T}}(\uC \times \uD)) \\
      \Algd(\PSh(\uC)) \times \Algd(\PSh(\uD)) \arrow{r}  &\Algd(\PSh(\uC) \times \PSh(\uD)) \arrow{r} &  \Algd(\PSh(\uC \times \uD)). \arrow{u}
    \end{tikzcd}
  \]
  Our goal is to show that the composite in the top row preserves
  colimits in each variable. To deduce this, we need to understand this diagram in terms of Segal presheaves.
\end{observation}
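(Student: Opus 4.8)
The plan is to deduce the statement from the presheaf case, \cref{cor:algdpshcolim}, by transferring colimit‑preservation along the reflective localizations of \cref{obs:loctenscomp}, in the style of the proof of \cref{cor:segpshprodcolim}. Write
\[ T := \mu_{*} \circ \boxtimes \colon \Algd(\PSh(\uC)) \times \Algd(\PSh(\uD)) \to \Algd(\PSh(\uC \times \uD)) \]
for the presheaf‑level composite, which preserves colimits in each variable by \cref{cor:algdpshcolim}. Let $L_{\mathbb{S}} \dashv i_{\mathbb{S}}$ denote the monoidal localization $\PSh(\uC) \rightleftarrows \PSh^{\mathbb{S}}(\uC)$ and $L_{\mathbb{S}*} \dashv i_{\mathbb{S}*}$ the induced adjunction on algebroids (\cref{lem:algdadj}), which is a reflective localization by \cref{obs:loctenscomp}; likewise for $\mathbb{T}$ and $\mathbb{S}\odot\mathbb{T}$. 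Since $L_{\mathbb{S}\odot\mathbb{T}*}$ is a left adjoint, the composite $G := L_{\mathbb{S}\odot\mathbb{T}*} \circ T$ preserves colimits in each variable. Unwinding the commutative diagram of \cref{obs:loctenscomp} — using that the external product of algebroids commutes with the monoidal‑localization inclusions (the left‑hand square) and that the second map of the top row is induced by the monoidal functor $\uV \times \uW \to \uV \otimes \uW$ of \cref{lem:cocomptenslocn} — we see that the top‑row composite, which we call $T^{\mathbb{S},\mathbb{T}}$, is identified with $G \circ (i_{\mathbb{S}*} \times i_{\mathbb{T}*})$.

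The crux is the claim that for every $\eB \in \Algd(\PSh(\uD))$ the functor $G(\blank, \eB) \colon \Algd(\PSh(\uC)) \to \Algd(\PSh^{\mathbb{S}\odot\mathbb{T}}(\uC \times \uD))$ inverts the $\mathbb{S}$‑local equivalences of algebroids, that is, the morphisms inverted by $L_{\mathbb{S}*}$ (and symmetrically, $G(\eA, \blank)$ inverts $\mathbb{T}$‑local equivalences). Granting this, fix $\eB_{0} \in \Algd(\PSh^{\mathbb{T}}(\uD))$: then $G(\blank, i_{\mathbb{T}*}\eB_{0})$ preserves colimits and inverts the $\mathbb{S}$‑local equivalences, so it factors as $\overline{G} \circ L_{\mathbb{S}*}$ through a functor $\overline{G}$ on $\Algd(\PSh^{\mathbb{S}}(\uC))$ that again preserves colimits (using that $L_{\mathbb{S}*}$ preserves colimits and that every diagram in $\Algd(\PSh^{\mathbb{S}}(\uC))$ is, up to equivalence, $L_{\mathbb{S}*}$ of one in $\Algd(\PSh(\uC))$); moreover $\overline{G} = T^{\mathbb{S},\mathbb{T}}(\blank, \eB_{0})$ by the identification above. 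Thus $T^{\mathbb{S},\mathbb{T}}$ preserves colimits in the first variable, and the symmetric argument handles the second.

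To prove the claim, note first that, since $L_{\mathbb{S}*}$ is the identity on spaces of objects and postcomposes Hom‑objects with $L_{\mathbb{S}}$, a morphism $f \colon \eA \to \eA'$ of algebroids is an $\mathbb{S}$‑local equivalence exactly when it is an equivalence on spaces of objects and each component $f_{x,y} \colon \eA(x,y) \to \eA'(x,y)$ lies in the strongly saturated class $\overline{\mathbb{S}}$ in $\PSh(\uC)$; here one uses that a map of algebroids inducing an equivalence on spaces of objects and on all binary Hom‑objects is an equivalence, which is immediate from the Segal conditions. By the explicit description of the product $\mu_{*}(\eA \boxtimes \eB)$ (the proposition preceding \cref{cor:psegprodinalgd}), the morphism $\mu_{*}(f \boxtimes \id_{\eB})$ is an equivalence on spaces of objects, and on Hom‑objects it is $\mu(f_{x,x'}, \id_{\eB(y,y')})$, where $\mu \colon \PSh(\uC) \times \PSh(\uD) \to \PSh(\uC \times \uD)$, $(F,H) \mapsto [(c,d) \mapsto F(c) \times H(d)]$, preserves colimits in each variable. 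For $s \in \mathbb{S}$ and $d \in \uD$ the morphism $\mu(s, \Map_{\uD}(\blank, d))$ is of the form $s \times \id_{d}$ and so lies in $\mathbb{S} \odot \mathbb{T}$ by \cref{lem:cocomptenslocn}; writing an arbitrary $H \in \PSh(\uD)$ as a colimit of representables and using that $\mu(s, \blank)$ is colimit‑preserving and that strongly saturated classes are closed under colimits in the arrow category, it follows that $\mu(\blank, H)$ carries $\overline{\mathbb{S}}$ into $\overline{\mathbb{S} \odot \mathbb{T}}$. Hence $\mu(f_{x,x'}, \id_{\eB(y,y')}) \in \overline{\mathbb{S} \odot \mathbb{T}}$, so $L_{\mathbb{S}\odot\mathbb{T}}$ sends it to an equivalence, and therefore $G(f, \eB)$ is an equivalence on spaces of objects and on all Hom‑objects, hence an equivalence. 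The second‑variable statement is proved in the same way, using the morphisms $\id_{c} \times \mathbb{T}$ instead.

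The main point to be careful about is the bookkeeping with the reflective localizations — identifying the top row with $G \circ (i_{\mathbb{S}*} \times i_{\mathbb{T}*})$, and checking that a functor preserving colimits in each variable, once shown to invert the relevant local equivalences, descends to one preserving colimits in each variable on the localizations. The step turning an $\mathbb{S}$‑local equivalence into an $\mathbb{S}\odot\mathbb{T}$‑local equivalence is short once the description of $\mathbb{S} \odot \mathbb{T}$ in \cref{lem:cocomptenslocn} is in hand, and nothing genuinely difficult remains, since the substantial work was already done in \S\ref{sec:tenspsh}.
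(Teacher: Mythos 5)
Note first that the observation itself only asserts the induced reflective localization on algebroids and the commutativity of the displayed diagram (both routine, via \cref{lem:algdadj}, the full faithfulness of postcomposition with $i_{\mathbb{S}}$, and \cref{lem:cocomptenslocn}); the colimit statement for the top row is merely announced there as a goal, and the paper proves it over the remainder of \S\ref{sec:tenspres}. Your proposal takes the observation's content essentially for granted and proves that goal instead, and it does so by a genuinely different route. The paper transports the problem to Segal presheaves: it identifies $\Algd(\PSh^{\mathbb{S}}(\uC))$ with $\PSeg^{\mathbb{S}}(\uC_{\otimes})$, exhibits the latter as the localization of $\PSeg(\uC_{\otimes})$ at the maps $\Delta^{1}(\phi) \to \Delta^{1}(\psi)$ for $\phi \to \psi$ in $\mathbb{S}$, and shows that the external product carries these local equivalences to $\mathbb{S}\odot\mathbb{T}$-local ones via the pushout decomposition of $\Delta^{1}(\phi) \boxtimes \Delta^{1}(\psi)$ into two $\Delta^{2}$'s (\cref{cor:boxdelta1po}, \cref{cor:boxSeq}). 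You stay on the algebroid side: you characterize the morphisms inverted by $L_{\mathbb{S}*}$ as those that are equivalences on objects with Hom-components in $\overline{\mathbb{S}}$, and verify directly from the Hom-object formula for the product in $\Algdpsh$ that $\mu$ carries such components into $\overline{\mathbb{S}\odot\mathbb{T}}$, by reducing to representables in the second slot and invoking saturation under colimits and under pullback along colimit-preserving functors. Both arguments reduce to generators and then formally descend colimit-preservation along the reflective localizations; yours bypasses the $\Delta^{n}(\bfphi)$ machinery of \S\ref{sec:tenspres} entirely, at the cost of leaning on two facts that you should make explicit with references if you write this up: that equivalences of algebroids are detected on the space of objects and the binary Hom-objects (cf.\ \cite{enr}*{Lemma 5.3.2}), and that the class of morphisms inverted by the localization at a set $\mathbb{S}$ is exactly the strongly saturated class $\overline{\mathbb{S}}$ (\cite{HTT}*{\S 5.5.4}). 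Both are true, so I see no gap; your argument is a legitimate, and arguably more direct, alternative to the paper's.
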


\begin{propn}
  The equivalence $\Algd(\PSh(\uC)) \simeq \PSeg(\uC_{\otimes})$ restricts to an equivalence
  \[ \Algd(\PSh^{\mathbb{S}}(\uC)) \simeq \PSeg^{\mathbb{S}}(\uC_{\otimes}),\]
  where $\PSeg^{\mathbb{S}}(\uC_{\otimes})$ is the full subcategory of Segal presheaves $X$ such that the restricted presheaf $X|_{\uC^{\op}}$ lies in $\PSh^{\mathbb{S}}(\uC)$.
\end{propn}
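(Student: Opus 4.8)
The plan is to recognize both sides as the same kind of localization of $\PSh(\uC_{\otimes})$ and $\Algd(\PSh(\uC))$, using that the equivalence $\Algd(\PSh(\uC)) \simeq \PSeg(\uC_{\otimes})$ is compatible with the ``restriction to $\uC$'' functors on both sides. First I would recall from \cref{obs:loctenscomp} that $\Algd(\PSh^{\mathbb{S}}(\uC))$ is the full subcategory of $\Algd(\PSh(\uC))$ on those algebroids whose Hom-objects lie in $\PSh^{\mathbb{S}}(\uC)$; this is exactly the subcategory of $\eA$ for which $\eA(x,y) \in \PSh^{\mathbb{S}}(\uC)$ for all objects $x,y$, equivalently such that the composites $E^{1} \to \eA$ (i.e.\ the values $\eA([1](c))$ viewed as presheaves on $\uC$) are $\mathbb{S}$-local. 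On the Segal side, $X|_{\uC^{\op}}$ is precisely the presheaf $c \mapsto X([1](c))$ (the value of the Segal presheaf on the ``edge'' objects), so $\PSeg^{\mathbb{S}}(\uC_{\otimes})$ is defined by the same condition. Thus the statement amounts to checking that the equivalence $\Algd(\PSh(\uC)) \simeq \PSeg(\uC_{\otimes})$ matches the Hom-object $\eA(x,y)$ of an algebroid with the presheaf $X([1](\blank))$ of the corresponding Segal presheaf.

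**Key steps.** (1) Unwind the comparison equivalence $\PSeg(\uC_{\otimes}) \simeq \Algd(\PSh(\uC))$: under the Day convolution universal property $\Alg_{\Dop_X}(\PSh(\uC)) \simeq \Mon_{\uC^{\op,\otimes}_X}(\Spc)$, an algebroid $\eA$ with object space $X$ corresponds to a Segal $\uC^{\op,\otimes}$-space, and tracing through the definitions the underlying presheaf on $\uC$ (restriction along $\uC^{\op} \hookrightarrow \uC_{\otimes}$ at level $[1]$) is the presheaf $c \mapsto \eA([1](c))$, which by the Day convolution adjunction is the coproduct $\coprod_{x,y \in X} \eA(x,y)(c)$ — in particular its $\mathbb{S}$-locality is equivalent to each $\eA(x,y)$ being $\mathbb{S}$-local. (2) Conclude that an algebroid $\eA \in \Algd(\PSh(\uC))$ lies in the full subcategory $\Algd(\PSh^{\mathbb{S}}(\uC))$ if and only if the corresponding Segal presheaf lies in $\PSeg^{\mathbb{S}}(\uC_{\otimes})$; since an equivalence of $\infty$-categories restricts to an equivalence between any pair of full subcategories that correspond under it, this gives the claimed restricted equivalence.

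**Main obstacle.** The only real work is Step (1): making precise that restriction of a Segal presheaf along $\uC^{\op} \hookrightarrow \uC_{\otimes}$ (landing in level $[1]$) corresponds, under the comparison with algebroids, to collecting the Hom-objects. This requires care because $\uC^{\op}$ sits inside $\uC_{\otimes}$ as the fiber over $[1] \in \simp$, and one must identify the restriction functor $\PSeg(\uC_{\otimes}) \to \PSh(\uC)$, $X \mapsto X|_{\uC^{\op}}$, with the functor $\Algd(\PSh(\uC)) \to \PSh(\uC)$ sending $\eA$ with object space $X$ to $\coprod_{x,y} \eA(x,y)$. This can be extracted from the proof of the comparison proposition (and is implicit in \cite{patterns3}*{\S 2.5}): both functors are ``evaluate at the edge object $[1]$'' once one unravels how $\Delta^1(c)$ represents the pair consisting of a $[1]$-simplex of the object space and a morphism $c \to \eA(\blank,\blank)$. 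Since $\PSh^{\mathbb{S}}(\uC)$ is closed under coproducts (it is a reflective localization, so closed under colimits computed via the reflection — and coproducts of $\mathbb{S}$-local objects are $\mathbb{S}$-local as $\mathbb{S}$-locality is detected by mapping out), the coproduct $\coprod_{x,y} \eA(x,y)$ is $\mathbb{S}$-local iff each summand is, so the two locality conditions agree. Given this identification the rest is formal.
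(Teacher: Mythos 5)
Your reduction of the statement to matching two full subcategories is fine, and you correctly identify that the restriction $X|_{\uC^{\op}}$ of the Segal presheaf associated to an algebroid $\eA$ with space of objects $X_{0}$ is the presheaf $c \mapsto X([1](c))$, i.e.\ the total space over $X_{0}^{\times 2}$ of the Hom-presheaves (a colimit over the $\infty$-groupoid $X_{0}^{\times 2}$, not literally a coproduct unless $X_{0}$ is discrete --- a minor imprecision). The genuine gap is the final step, where you claim this colimit is $\mathbb{S}$-local if and only if each $\eA(x,y)$ is, ``as $\mathbb{S}$-locality is detected by mapping out.'' This is backwards: an object $F$ is $\mathbb{S}$-local when $\Map(\psi,F) \to \Map(\phi,F)$ is an equivalence for every $\phi \to \psi$ in $\mathbb{S}$, so locality is tested by mapping \emph{into} $F$, and local objects are therefore closed under limits, not under coproducts or other colimits. (That the reflective subcategory $\PSh^{\mathbb{S}}(\uC)$ admits colimits is beside the point --- those are computed by reflecting the ambient colimit, which is exactly the failure mode here.) Concretely, if $\mathbb{S}$ contains a map of the form $y(c_{1}) \sqcup y(c_{2}) \to y(c_{1} \sqcup c_{2})$ --- as it does for the $\kappa$-compact presentations produced by \cref{propn:presmonisloc} --- then locality of $F$ says $F(c_{1} \sqcup c_{2}) \simeq F(c_{1}) \times F(c_{2})$, and for the total Hom-presheaf the right-hand side acquires cross terms $\eA(x,y)(c_{1}) \times \eA(x',y')(c_{2})$ that the left-hand side lacks. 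Neither implication of your claimed equivalence is formal.

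That equivalence of locality conditions is precisely the mathematical content of the proposition; the rest of your argument is bookkeeping. The paper does not argue it either, but defers to \cite{patterns3}*{Proposition 2.5.10}, which is where the work happens. A self-contained proof has to exploit the structure of the map from $X|_{\uC^{\op}}$ to the constant presheaf at $X_{0}^{\times 2}$: for instance, when a map $\phi \to \psi$ induces an equivalence on groupoidifications one can pass to fibres over $X_{0}^{\times 2}$ by descent and identify the fibre of $\Map(\psi, X|_{\uC^{\op}})$ over $(x,y)$ with $\Map(\psi, \eA(x,y))$; one must then address which generating sets $\mathbb{S}$ this covers. As written, your step (1) asserts the theorem rather than proving it.
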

\begin{proof}
  This is a special case of \cite{patterns3}*{Proposition 2.5.10}.
\end{proof}

Next, we want to describe the full subcategory $\PSeg^{\mathbb{S}}(\uC_{\otimes})$ as a localization of $\PSeg(\uC_{\otimes})$. This requires introducing some notation:
\begin{observation}
  Suppose $p \colon \uE \to \uB$ is a cocartesian fibration. Given $b \in \uB$ and a functor $\phi \colon \uE_{b} \to \Spc$, consider the left Kan extension $i_{b,!}\phi \colon \uE \to \Spc$ along the fibre inclusion $i_{b} \colon \uE_{b} \to \uE$. Its value at $x \in \uE$ is a colimit over $\uE_{b/x} := \uE_{b} \times_{\uE} \uE_{/x}$, which maps via $p$ to $\{b\} \times_{\uB} \uB_{/b'} \simeq \Map_{\uB}(b,b')$, where $b' := p(x)$. The functor $\uE_{b/x} \to \Map_{\uB}(b,b')$ is a cocartesian fibration (since its target is an \igpd{}), and its fibre at $\beta \colon b \to b'$ can be identified with the fibre product $E_{b} \times_{E_{b'}} E_{b'/x}$ along the cocartesian transport functor $\beta_{!} \colon E_{b} \to E_{b'}$. We can thus rewrite the colimit formula for $i_{b,!}\phi(x)$ in two steps, as
  \[ \colimP_{\beta \in \Map_{\uB}(b,b')} \colimP_{(y, \beta_{!}y \to x)} \phi(y).\]
  Now we apply this to the cocartesian fibration $(\uC_{\otimes})^{\op} \to \Dop$:
Given $\bfphi = (\phi_{1},\ldots,\phi_{n})$ where $\phi_{i} \in \PSh(\uC)$, we write \[\Delta^{n}(\bfphi) := i_{[n],!}\Phi\]
where $\Phi := \phi_{1} \times \cdots \times \phi_{n} \colon (\uC_{\otimes}^{\op})_{[n]} \simeq (\uC^{\op})^{\times n} \to \Spc$. This presheaf satisfies
\[ \Delta^{n}(\bfphi)([m](\bfc)) \simeq \colimP_{\alpha \in \Map_{\simp}([m],[n])} \colimP_{(\bfd, \bfc \to \alpha^{*}\bfd)} \phi_{1}(d_{1}) \times \cdots \times \phi_{n}(d_{n}). \]
Here the inner colimit is over $(\uC^{\times n} \times_{\uC^{\times m}} \uC^{\times m}_{\bfc/})^{\op}$ along the cartesian transport functor $\alpha^{*}$, which we can identify with $\prod_{i=1}^{m} (\uC^{\times n_{i}} \times_{\uC} \uC_{c_{i}/})^{\op}$, where each fibre product is over an iterated tensor product. Using that the cartesian product commutes with colimits in each variable, we can rewrite the inner colimit in terms of the Day convolution tensor product as
\[ \prod_{i=1}^{m} \colimP_{(\bfd, c_{i} \to \bigotimes d_{j})} \prod_{j} \phi_{j}(d_{j}) \simeq \prod_{i=1}^{m} \left(\bigotimes_{\alpha(i-1) < j \leq \alpha(i)} \phi_{j}\right)(c_{i}).\]
In other words, we have pullback squares
\begin{equation}
  \label{eq:mapdeltaphi}
  \begin{tikzcd}
    \prod_{i=1}^{m} \Map_{\PSh(\uC)}(c_{i}, \bigotimes_{\alpha(i-1) < j \leq \alpha(i)} \phi_{j})
    \arrow{r} \arrow{d} & \Map_{\PSh(\uC_{\otimes})}(\Delta^{m}(\bfc), \Delta^{n}(\bfphi)) \arrow{d} \\
    \{\alpha\} \arrow{r} & \Map_{\simp}([m], [n]).
  \end{tikzcd}
\end{equation}
\end{observation}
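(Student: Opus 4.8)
To establish the colimit formula for $\Delta^{n}(\bfphi)([m](\bfc))$ and the pullback squares \cref{eq:mapdeltaphi}, I would proceed in the three steps outlined above: the pointwise Kan extension formula, its reorganization over the cocartesian fibration $(\uC_{\otimes})^{\op}\to\Dop$, and the identification of the resulting inner colimits with Day convolutions.

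First I would recall that for a fibre inclusion $i_{b}\colon\uE_{b}\hookrightarrow\uE$ the pointwise formula gives $i_{b,!}\phi(x)\simeq\colim_{\uE_{b/x}}\phi$, the colimit of $\phi$ over $\uE_{b/x}=\uE_{b}\times_{\uE}\uE_{/x}$ along its projection to $\uE_{b}$. When $p\colon\uE\to\uB$ is a cocartesian fibration, the composite $\uE_{b/x}\to\uE_{/x}\xto{p}\uB_{/b'}$ (with $b'=p(x)$) factors through $\{b\}\times_{\uB}\uB_{/b'}\simeq\Map_{\uB}(b,b')$, which is an \igpd{}, so this is a cocartesian fibration; factoring a map $y\to x$ lying over $\beta\colon b\to b'$ through the cocartesian lift $y\to\beta_{!}y$ identifies its fibre over $\beta$ with $\uE_{b}\times_{\uE_{b'}}\uE_{b'/x}$ (along $\beta_{!}$), on which $\phi$ restricts to $y\mapsto\phi(y)$. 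Since for any cocartesian fibration the fibre inclusions into the corresponding comma \icats{} are cofinal (being right adjoints, with left adjoint the cocartesian transport), the colimit over $\uE_{b/x}$ may be computed as the iterated colimit $\colim_{\beta\in\Map_{\uB}(b,b')}\colim_{(y,\,\beta_{!}y\to x)}\phi(y)$.

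Next I would specialize to $\uE=(\uC_{\otimes})^{\op}$, $\uB=\Dop$, $b=[n]$, $x=[m](\bfc)$ and $\phi=\Phi=\phi_{1}\times\cdots\times\phi_{n}$. Then $\Map_{\Dop}([n],[m])\simeq\Map_{\simp}([m],[n])$, the cocartesian transport over $\alpha$ is the opposite of the cartesian transport $\alpha^{*}$ of $\uC_{\otimes}\to\simp$, and the fibre of the resulting cocartesian fibration over $\alpha$ is $(\uC^{\times n}\times_{\uC^{\times m}}\uC^{\times m}_{\bfc/})^{\op}$. Writing $n_{i}$ for the size of the block $\{j:\alpha(i-1)<j\leq\alpha(i)\}$, I would check that the constraint $\bfc\to\alpha^{*}\bfd$ decomposes coordinatewise in $[m]$, so that this comma \icat{} splits as $\prod_{i=1}^{m}(\uC^{\times n_{i}}\times_{\uC}\uC_{c_{i}/})^{\op}$ (each fibre product over the corresponding iterated tensor) and $\Phi(\bfd)=\prod_{j}\phi_{j}(d_{j})$ splits accordingly. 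Because the cartesian product of spaces preserves colimits in each variable, the inner colimit becomes $\prod_{i=1}^{m}\colim_{(\bfd,\,c_{i}\to\bigotimes d_{j})}\prod_{j}\phi_{j}(d_{j})$, and I would identify each factor with $(\bigotimes_{\alpha(i-1)<j\leq\alpha(i)}\phi_{j})(c_{i})$ via the colimit formula defining the Day convolution tensor product, i.e. with $\Map_{\PSh(\uC)}(c_{i},\bigotimes_{\alpha(i-1)<j\leq\alpha(i)}\phi_{j})$ by the Yoneda lemma. Finally, since $\simp$ is an ordinary category, $\Map_{\simp}([m],[n])$ is a set and the outer colimit over it is simply a coproduct; recording its summands is precisely the pullback square \cref{eq:mapdeltaphi}.

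The main obstacle I anticipate is the coordinatewise splitting of $\uC^{\times n}\times_{\uC^{\times m}}\uC^{\times m}_{\bfc/}$ as $\prod_{i=1}^{m}(\uC^{\times n_{i}}\times_{\uC}\uC_{c_{i}/})$, which demands careful tracking of how the cartesian transport $\alpha^{*}$ — given on objects by blockwise iterated tensor products — interacts both with the product decomposition of $\uC^{\times n}$ along the blocks of $\alpha$ and with the slice over $\bfc$, while keeping the opposite-category bookkeeping consistent throughout the passage between $\uC_{\otimes}\to\simp$ and $(\uC_{\otimes})^{\op}\to\Dop$. Granted this, the reduction to Day convolution and the final rewriting as a pullback square are routine manipulations of colimit formulas.
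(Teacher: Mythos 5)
Your proposal is correct and follows essentially the same route as the paper, which presents this observation with its justification inline: the pointwise Kan extension formula, the fibrewise decomposition of the colimit over the cocartesian fibration to $\Map_{\uB}(b,b')$, the blockwise splitting of the comma $\infty$-category along $\alpha$, and the identification of each factor with a Day convolution via its colimit formula. Your added justifications (cofinality of the fibre inclusions via the cocartesian-transport adjunction, and the reduction of the outer colimit to a coproduct over the set $\Map_{\simp}([m],[n])$) are accurate and fill in exactly the details the paper leaves implicit.
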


\begin{observation}\label{obs:mapoutdeltaphi}
    From the definition of $\Delta^{n}(\bfphi)$ as a left Kan extension, we have
     \begin{equation}
      \label{eq:mapfromdnphi}
      \Map_{\PSh(\uC_{\otimes})}(\Delta^{n}(\bfphi), X) \simeq \Map_{\PSh(\uC^{\times n})}(\Phi, X|_{\uC^{\times n, \op}}),
\end{equation}
  where $\Phi$ is the presheaf $\phi_{1} \times \cdots \times \phi_{n}$. In particular, for $n = 1$ we have
  \[ \Map_{\PSh(\uC_{\otimes})}(\Delta^{1}(\bfphi), X) \simeq
    \Map_{\PSh(\uC)}(\phi, X|_{\uC^{\op}}).\] It follows that
  $\PSeg^{\mathbb{S}}(\uC)$ can be identified with the localization of
  $\PSeg(\uC)$ at $\Delta^{1}(\phi) \to \Delta^{1}(\psi)$ for
  $\phi \to \psi$ in $\mathbb{S}$.
\end{observation}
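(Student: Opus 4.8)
The plan is to deduce the statement directly from the mapping-space formula \eqref{eq:mapfromdnphi}, with essentially no extra input. Recall that $\Delta^{1}(\blank)$ is by construction the left Kan extension functor $i_{[1],!} \colon \PSh(\uC) \to \PSh(\uC_{\otimes})$ along the inclusion $i_{[1]}$ of the fibre of $(\uC_{\otimes})^{\op} \to \Dop$ over $[1]$, and that \eqref{eq:mapfromdnphi} for $n = 1$ is precisely the associated adjunction equivalence
\[ \Map_{\PSh(\uC_{\otimes})}(\Delta^{1}(\phi), X) \simeq \Map_{\PSh(\uC)}(\phi, X|_{\uC^{\op}}), \]
natural in $\phi \in \PSh(\uC)$, which exhibits $X \mapsto X|_{\uC^{\op}}$ as right adjoint to $\Delta^{1}(\blank)$.

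First I would record that, since $\mathbb{S}$ is a set, so is the collection $W$ of morphisms $\Delta^{1}(\phi) \to \Delta^{1}(\psi)$ for $(\phi \to \psi) \in \mathbb{S}$; since $\PSeg(\uC_{\otimes})$ is presentable (being the accessible localization of the presheaf \icat{} $\PSh(\uC_{\otimes})$ at the set of spine inclusions), the full subcategory of $W$-local objects is an accessible reflective localization of $\PSeg(\uC_{\otimes})$ --- that is, it \emph{is} the localization of $\PSeg(\uC_{\otimes})$ at $W$. It therefore remains only to identify the $W$-local Segal presheaves with $\PSeg^{\mathbb{S}}(\uC_{\otimes})$. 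For this, let $X$ be a Segal presheaf and $(\phi \to \psi) \in \mathbb{S}$; by naturality of the displayed equivalence in its first argument, the restriction map $\Map_{\PSh(\uC_{\otimes})}(\Delta^{1}(\psi), X) \to \Map_{\PSh(\uC_{\otimes})}(\Delta^{1}(\phi), X)$ is identified with $\Map_{\PSh(\uC)}(\psi, X|_{\uC^{\op}}) \to \Map_{\PSh(\uC)}(\phi, X|_{\uC^{\op}})$. Hence $X$ is $W$-local \IFF{} $X|_{\uC^{\op}}$ is $\mathbb{S}$-local, i.e.\ lies in $\PSh^{\mathbb{S}}(\uC)$, which by the very definition of $\PSeg^{\mathbb{S}}(\uC_{\otimes})$ is exactly the condition $X \in \PSeg^{\mathbb{S}}(\uC_{\otimes})$. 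Combining the two observations yields the claimed description.

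No step here presents a genuine difficulty; the only point worth spelling out is the naturality invoked above --- that the equivalence \eqref{eq:mapfromdnphi} is natural in $\phi$ compatibly with the morphism $\Delta^{1}(\phi) \to \Delta^{1}(\psi)$ induced by $\phi \to \psi$ --- and this is immediate from the fact that $\Delta^{1}(\blank) = i_{[1],!}$ is a functor and that \eqref{eq:mapfromdnphi} is the unit/counit adjunction equivalence, which is natural by construction. In particular, this identification uses nothing about the compatibility of $\mathbb{S}$ with the monoidal structure.
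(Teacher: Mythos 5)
Your argument is correct and is exactly the (implicit) argument the paper intends: the identity \eqref{eq:mapfromdnphi} is the left Kan extension adjunction for $i_{[n],!}$, and the identification of $\PSeg^{\mathbb{S}}(\uC_{\otimes})$ with the $W$-local Segal presheaves, together with presentability to upgrade "full subcategory of local objects" to "localization", is precisely what the paper's "It follows that..." is asserting. Your closing remark that compatibility of $\mathbb{S}$ with the monoidal structure plays no role here is also accurate.
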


\begin{lemma}\label{lem:pbdayconv}
  Given $\alpha \colon [m] \to [n]$ let us write $\alpha^{*}\bfphi$ for the list $\otimes_{\alpha(i-1) < j \leq \alpha(i)} \phi_{j}$. Then we have pullback squares
  \[
    \begin{tikzcd}
      \Delta^{m}(\alpha^{*}\bfphi) \arrow{r} \arrow{d} & \Delta^{n}(\bfphi) \arrow{d} \\
      p^{*}\Delta^{m} \arrow{r} & p^{*} \Delta^{n}.
    \end{tikzcd}
  \]
\end{lemma}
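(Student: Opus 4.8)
The plan is to reduce the statement to the case where all the $\phi_{i}$ are representable, where the claimed square is precisely the pullback square
\[
  \begin{tikzcd}
    \Delta^{m}(\alpha^{*}\bfc) \arrow{r} \arrow{d} & \Delta^{n}(\bfc) \arrow{d} \\
    p^{*}\Delta^{m} \arrow{r} & p^{*} \Delta^{n}
  \end{tikzcd}
\]
recorded near the start of \S\ref{sec:segalpsh} (a special case of \cite{enropd}*{Lemma 2.7.10}), and then to extend to arbitrary $\bfphi$ by a colimit argument.

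First I would check that both corners on the left of the square are functorial and colimit-preserving in each of the $n$ variables $\phi_{1},\ldots,\phi_{n}$. Since the Yoneda embedding $\uC \hookrightarrow \PSh(\uC)$ is monoidal, $\alpha^{*}\bfphi$ agrees with $\alpha^{*}\bfc$ when $\bfphi = (\Map_{\uC}(\blank,c_{1}),\ldots,\Map_{\uC}(\blank,c_{n}))$; in general $\alpha^{*}\bfphi$ is computed componentwise as the Day convolution tensor products $\bigotimes_{\alpha(j-1) < k \leq \alpha(j)} \phi_{k}$, with each $\phi_{k}$ appearing in exactly one component, so $\bfphi \mapsto \alpha^{*}\bfphi$ preserves colimits in each variable, and $\Delta^{m}(\blank) = i_{[m],!}(\phi_{1}\times\cdots\times\phi_{m})$ does too (the external product of presheaves and the left Kan extension $i_{[m],!}$ each preserve colimits). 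For the same reason $\bfphi \mapsto \Delta^{n}(\bfphi)$ is colimit-preserving in each variable, naturally over $p^{*}\Delta^{n}$; pulling back along the fixed map $p^{*}\Delta^{m} \to p^{*}\Delta^{n}$ preserves colimits since $\PSh(\uC_{\otimes})$ is an $\infty$-topos, and the forgetful functor $\PSh(\uC_{\otimes})_{/p^{*}\Delta^{m}} \to \PSh(\uC_{\otimes})$ preserves colimits, so $\bfphi \mapsto p^{*}\Delta^{m} \times_{p^{*}\Delta^{n}} \Delta^{n}(\bfphi)$ also preserves colimits in each variable.

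Next, the commuting square relating $\Delta^{m}(\alpha^{*}\bfphi)$, $\Delta^{n}(\bfphi)$, $p^{*}\Delta^{m}$ and $p^{*}\Delta^{n}$ is natural in $\bfphi$ and so induces a natural comparison map
\[ \Delta^{m}(\alpha^{*}\bfphi) \longrightarrow p^{*}\Delta^{m} \times_{p^{*}\Delta^{n}} \Delta^{n}(\bfphi). \]
By the representable case above this is an equivalence whenever all the $\phi_{i}$ are representable. Since representable presheaves generate $\PSh(\uC)$ under colimits and both sides preserve colimits in each variable, inducting over the $n$ variables one at a time shows the comparison map is an equivalence for all $\bfphi$, which is the assertion of the lemma.

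I expect the main work to be the (essentially routine) verification of these colimit-preservation properties and the identification of the comparison map on representables; the only genuinely non-formal input is the representable case itself, \ie{} \cite{enropd}*{Lemma 2.7.10}. One could also argue directly that the square is a pullback: mapping in an arbitrary representable $\Delta^{l}(\bfc')$ turns the bottom corners into the discrete sets $\Map_{\simp}([l],[m])$ and $\Map_{\simp}([l],[n])$, decomposes the upper right corner by \eqref{eq:mapdeltaphi} into a coproduct indexed by maps $[l] \to [n]$, and then associativity of the tensor product --- giving $\bigotimes_{\gamma(i-1) < j \leq \gamma(i)} (\alpha^{*}\bfphi)_{j} \simeq \bigotimes_{(\alpha\gamma)(i-1) < k \leq (\alpha\gamma)(i)} \phi_{k}$ for $\gamma \colon [l] \to [m]$ --- identifies the fibre over $\alpha\gamma$ with the $\gamma$-summand of $\Map(\Delta^{l}(\bfc'), \Delta^{m}(\alpha^{*}\bfphi))$.
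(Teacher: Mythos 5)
Your closing ``direct'' argument is, modulo wording, exactly the proof the paper gives: \cref{lem:pbdayconv} is deduced there in one line from the mapping-space decomposition \eqref{eq:mapdeltaphi}, by testing against representables $\Delta^{l}(\bfc')$ and using $\gamma^{*}(\alpha^{*}\bfphi)\simeq(\alpha\gamma)^{*}\bfphi$. Your primary argument --- reduction to the representable case of \cite{enropd}*{Lemma 2.7.10} via colimit-preservation in each variable --- is a genuinely different and perfectly reasonable route; the verification that the pullback corner preserves colimits (a left Kan extension followed by base change in a presheaf $\infty$-topos) is correct, and on representables the comparison map is the equivalence recorded at the start of \S\ref{sec:segalpsh}.

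The one real weak point is the claim that each $\phi_{k}$ appears in exactly one component of $\alpha^{*}\bfphi$. That is true precisely when $\alpha(0)=0$ and $\alpha(m)=n$. Otherwise the indices $j\notin(\alpha(0),\alpha(m)]$ appear in no component, so $\Delta^{m}(\alpha^{*}\bfphi)$ is constant in those $\phi_{j}$ and does not preserve colimits in them, whereas $p^{*}\Delta^{m}\times_{p^{*}\Delta^{n}}\Delta^{n}(\bfphi)$ does; your induction cannot get started in those variables. In fact the statement itself is delicate there: for $\alpha=\{0\}\colon[0]\to[1]$ and $\phi=\emptyset$ one has $\Delta^{0}(\alpha^{*}\phi)=\Delta^{0}$, while $\Delta^{1}(\emptyset)=i_{[1],!}(\emptyset)=\emptyset$, so the pullback is empty; more generally the fibre of $\Delta^{n}(\bfphi)$ over such an $\alpha$ picks up extra factors $\colim_{\uC^{\op}}\phi_{j}$ for the unconstrained indices, which are contractible for representable $\phi_{j}$ but not in general. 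This is a subtlety you share with the source --- \eqref{eq:mapdeltaphi} tacitly makes the same assumption, and \cref{obs:segphi} applies the lemma to inert maps that miss the endpoints --- but if you run the colimit argument you should state the hypothesis $\alpha(0)=0$, $\alpha(m)=n$ explicitly, since it is exactly what your colimit-preservation claim requires.
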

\begin{proof}
  Follows from the description of mapping spaces in \cref{eq:mapdeltaphi}.
\end{proof}

\begin{lemma}\label{lem:delnphicolim}
  For presheaves $\phi_{i} \in \PSh(\uC)$ we have
  \[ \Delta^{n}(\bfphi) \simeq \colim_{\bfc \in \prod \uC_{/\phi_{i}}} \Delta^{n}(\bfc).\]
\end{lemma}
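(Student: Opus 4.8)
The plan is to establish the claimed colimit formula by checking it after mapping into an arbitrary presheaf $X \in \PSh(\uC_{\otimes})$ and using the adjunction description of $\Delta^n(\bfphi)$ from \cref{obs:mapoutdeltaphi}. Concretely, I would compute
\[
  \Map_{\PSh(\uC_{\otimes})}\Bigl(\colim_{\bfc \in \prod \uC_{/\phi_i}} \Delta^n(\bfc),\ X\Bigr)
  \simeq \lim_{\bfc \in \prod \uC_{/\phi_i}} \Map_{\PSh(\uC_{\otimes})}(\Delta^n(\bfc), X),
\]
and identify this with $\Map_{\PSh(\uC_{\otimes})}(\Delta^n(\bfphi), X)$ via \cref{eq:mapfromdnphi}, which says the latter is $\Map_{\PSh(\uC^{\times n})}(\phi_1 \times \cdots \times \phi_n, X|_{\uC^{\times n, \op}})$.

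First I would note that $\Delta^n(\bfc)$ is, by definition, $i_{[n],!}$ applied to the representable $\Map_{\uC^{\times n}}(\blank, \bfc)$ on the fibre $(\uC_{\otimes}^{\op})_{[n]} \simeq (\uC^{\op})^{\times n}$, so by the analogue of \cref{eq:mapfromdnphi} for representables we get $\Map_{\PSh(\uC_{\otimes})}(\Delta^n(\bfc), X) \simeq \Map_{\PSh(\uC^{\times n})}(\Map_{\uC^{\times n}}(\blank, \bfc), X|_{\uC^{\times n,\op}}) \simeq X|_{\uC^{\times n, \op}}(\bfc)$ by Yoneda. Thus the right-hand side of the displayed limit is $\lim_{\bfc \in \prod \uC_{/\phi_i}} X|_{\uC^{\times n, \op}}(\bfc)$. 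Since $\phi_1 \times \cdots \times \phi_n \simeq \colim_{\bfc \in \prod \uC_{/\phi_i}} \Map_{\uC^{\times n}}(\blank, \bfc)$ — each $\phi_i$ being the colimit of representables indexed by $\uC_{/\phi_i}$, and the cartesian product of such colimit diagrams being the colimit over the product indexing category, using that products of spaces commute with colimits in each variable — we get $\Map_{\PSh(\uC^{\times n})}(\phi_1 \times \cdots \times \phi_n, X|_{\uC^{\times n, \op}}) \simeq \lim_{\bfc} X|_{\uC^{\times n, \op}}(\bfc)$, again by Yoneda. Comparing with \cref{eq:mapfromdnphi} gives the desired equivalence, naturally in $X$, and hence the claim by Yoneda in $\PSh(\uC_{\otimes})$.

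Alternatively — and perhaps more cleanly — since $i_{[n],!}$ preserves colimits (being a left adjoint), it suffices to observe that $\phi_1 \times \cdots \times \phi_n \simeq \colim_{\bfc \in \prod \uC_{/\phi_i}} \Map_{\uC^{\times n}}(\blank, \bfc)$ as presheaves on $\uC^{\times n}$, and then apply $i_{[n],!}$ to both sides; this directly yields $\Delta^n(\bfphi) \simeq \colim_{\bfc} i_{[n],!}\Map_{\uC^{\times n}}(\blank, \bfc) \simeq \colim_{\bfc} \Delta^n(\bfc)$. I expect the only real point requiring care is the identification $\phi_1 \times \cdots \times \phi_n \simeq \colim_{\bfc \in \prod \uC_{/\phi_i}} \Map_{\uC^{\times n}}(\blank, \bfc)$: one writes each $\phi_i$ as the colimit of its canonical diagram over $\uC_{/\phi_i}$, and then uses that the external product of presheaves (here just the pointwise product of functors to $\Spc$) preserves colimits in each variable — equivalently, that $\PSh(\uC^{\times n}) \simeq \PSh(\uC)^{\otimes n}$ in the cocomplete tensor product, so that the colimit of the product diagram over $\prod_i \uC_{/\phi_i}$ recovers the product of the colimits. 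This is the same interchange-of-colimits argument used repeatedly in the paper (e.g.\ in \cref{obs:boxlimcolim}), so it is routine but is the substantive step.
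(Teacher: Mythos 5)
Your proof is correct and takes essentially the same route as the paper: both reduce the claim, via the mapping-space description of $\Delta^{n}(\bfphi)$ from \cref{obs:mapoutdeltaphi} (equivalently, the left Kan extension description), to the fact that $\Phi = \phi_{1}\times\cdots\times\phi_{n}$ is the colimit of representables indexed by $\prod_{i}\uC_{/\phi_{i}}$, and then conclude by Yoneda. The only cosmetic difference is that the paper gets this last fact immediately by identifying $\prod_{i}\uC_{/\phi_{i}}$ as the category of elements (right fibration) of $\Phi$, whereas you derive it from the interchange of colimits with the external product; both are fine.
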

\begin{proof}
  Let $\Phi = \phi_{1} \times \cdots \times \phi_{n}$, so that we have
  \[ \Map_{\PSh(\uC_{\otimes})}(\Delta^{n}(\bfphi), X) \simeq \Map_{\PSh(\uC^{\times n})}(\Phi, X|_{\uC^{\times n, \op}}) \]
  by \cref{obs:mapoutdeltaphi}.
  In $\PSh(\uC^{\times n})$, the presheaf $\Phi$ is a colimit of representables over $(\uC^{\times n})_{/\Phi}$; here $(\uC^{\times n})_{/\Phi} \to \uC^{\times n}$ is the right fibration for $\Phi$, which is the product $\prod_{i} \uC_{/\phi_{i}}$. We thus have
  \[ \Map_{\PSh(\uC_{\otimes})}(\Delta^{n}(\bfphi), X) \simeq \lim_{\bfc \in (\prod_{i} \uC_{/\phi_{i}})^{\op}} \Map(\Delta^{n}(\bfc), X),\]
  which by Yoneda implies the required colimit description of $\Delta^{n}(\bfphi)$.
\end{proof}

\begin{observation}\label{obs:segphi}
  If we define
  \[ \Delta^{n}_{\Seg}(\bfphi) := \Delta^{1}(\phi_{1}) \amalg_{\Delta^{0}} \cdots \amalg_{\Delta^{0}} \Delta^{1}(\phi_{n}),\] then we have a pullback square
  \[
    \begin{tikzcd}
      \Delta^{n}_{\Seg}(\bfphi) \arrow{r} \arrow{d} & \Delta^{n}(\bfphi) \arrow{d} \\
      p^{*}\Delta^{n}_{\Seg} \arrow{r} & p^{*} \Delta^{n},
    \end{tikzcd}
  \]
  using \cref{lem:pbdayconv} and the fact that pullbacks preserve
  colimits in presheaves. On the other hand, applying the colimit description of $\Delta^{n}(\bfphi)$ from 
  \cref{lem:delnphicolim} to the same pullback square implies that the top horizontal morphism $\Delta^{n}_{\Seg}(\bfphi) \to \Delta^{n}(\bfphi)$ is the map on colimits
  \[ \colim_{\bfc \in \prod \uC_{/\phi_{i}}} \Delta^{n}_{\Seg}(\bfc) \to  \colim_{\bfc \in \prod \uC_{/\phi_{i}}} \Delta^{n}(\bfc).\]
  It follows that any Segal presheaf on $\uC_{\otimes}$ is also local with respect to $\Delta^{n}_{\Seg}(\bfphi) \to \Delta^{n}(\bfphi)$.
\end{observation}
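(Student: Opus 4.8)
The plan is to verify the asserted pullback square, transport it through the colimit formula of \cref{lem:delnphicolim}, and finally deduce the locality statement from the fact that a presheaf one maps into sends colimits to limits.

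First I would produce the pullback square. Applying \cref{lem:pbdayconv} to the inert edge map $\alpha\colon[1]\to[n]$ sending $0,1$ to $i-1,i$, and noting $\alpha^{*}\bfphi=(\phi_{i})$, identifies $\Delta^{1}(\phi_{i})$ with the pullback of $\Delta^{n}(\bfphi)\to p^{*}\Delta^{n}$ along the $i$-th spine edge $p^{*}\Delta^{1}\to p^{*}\Delta^{n}$; applying it to the vertex maps $[0]\to[n]$ identifies $\Delta^{0}$ with the pullback along each vertex $p^{*}\Delta^{0}\to p^{*}\Delta^{n}$. Since $p^{*}$ preserves colimits, $p^{*}\Delta^{n}_{\Seg}\to p^{*}\Delta^{n}$ is the iterated pushout $p^{*}\Delta^{1}\amalg_{p^{*}\Delta^{0}}\cdots\amalg_{p^{*}\Delta^{0}}p^{*}\Delta^{1}$ of these edge and vertex maps, and because colimits are universal in the $\infty$-topos $\PSh(\uC_{\otimes})$, pulling $\Delta^{n}(\bfphi)$ back along this map computes the corresponding pushout of the fibrewise pullbacks, namely $\Delta^{1}(\phi_{1})\amalg_{\Delta^{0}}\cdots\amalg_{\Delta^{0}}\Delta^{1}(\phi_{n})=\Delta^{n}_{\Seg}(\bfphi)$. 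This is exactly the claimed pullback square, and it exhibits $\Delta^{n}_{\Seg}(\bfphi)\to\Delta^{n}(\bfphi)$ as the base change of the inclusion $p^{*}\Delta^{n}_{\Seg}\hookrightarrow p^{*}\Delta^{n}$.

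Next I would feed in \cref{lem:delnphicolim}, which writes $\Delta^{n}(\bfphi)\simeq\colim_{\bfc\in\prod_{i}\uC_{/\phi_{i}}}\Delta^{n}(\bfc)$ over a diagram whose composite to $p^{*}\Delta^{n}$ is constant. Using universality of colimits once more, base change along $p^{*}\Delta^{n}_{\Seg}\to p^{*}\Delta^{n}$ commutes with this colimit, so $\Delta^{n}_{\Seg}(\bfphi)\simeq\colim_{\bfc}\bigl(p^{*}\Delta^{n}_{\Seg}\times_{p^{*}\Delta^{n}}\Delta^{n}(\bfc)\bigr)$, and by the defining pullback square for $\Delta^{n}_{\Seg}(\bfc)$ the term inside is $\Delta^{n}_{\Seg}(\bfc)$; these equivalences are natural over $\Delta^{n}(\bfphi)$, so the top map of our square is precisely $\colim_{\bfc}\bigl(\Delta^{n}_{\Seg}(\bfc)\to\Delta^{n}(\bfc)\bigr)$. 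Finally, a Segal presheaf $X$ is by definition local with respect to each $\Delta^{n}_{\Seg}(\bfc)\to\Delta^{n}(\bfc)$; since $\Map(-,X)$ takes colimits to limits and a limit of equivalences is an equivalence, $X$ is local with respect to any colimit of such maps, in particular $\Delta^{n}_{\Seg}(\bfphi)\to\Delta^{n}(\bfphi)$.

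I do not expect a serious obstacle here: the only point requiring care is bookkeeping the two applications of universality of colimits — the one assembling the spine $p^{*}\Delta^{n}_{\Seg}$ as an iterated pushout and the one over $\prod_{i}\uC_{/\phi_{i}}$ — so that the resulting morphism is genuinely identified with the colimit of the spine inclusions $\Delta^{n}_{\Seg}(\bfc)\to\Delta^{n}(\bfc)$, rather than merely shown abstractly equivalent to a Segal equivalence. One could also sidestep the pushout manipulation by combining \cref{lem:delnphicolim} directly with the definition of $\Delta^{n}_{\Seg}(\bfc)$ and with $\Delta^{1}(\phi)\simeq\colim_{c\in\uC_{/\phi}}\Delta^{1}(c)$, but the route above keeps the identification of the map cleanest.
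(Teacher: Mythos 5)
Your proposal is correct and follows the same route as the paper: establish the pullback square via \cref{lem:pbdayconv} applied to the spine's edges and vertices together with universality of colimits, then base-change the colimit decomposition of \cref{lem:delnphicolim} to identify the map as a colimit of spine inclusions, from which locality of Segal presheaves follows since mapping spaces turn colimits into limits. The only difference is that you spell out the two applications of universality of colimits that the paper leaves as brief parenthetical justifications.
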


\begin{lemma}\label{lem:deltanlocaleq}
  A Segal presheaf $X \in \PSh^{\mathbb{S}}(\uC_{\otimes})$ is also local with respect to the map $\Delta^{n}(\bfphi) \to \Delta^{n}(\bfpsi)$ induced by a list of morphisms $\phi_{i} \to \psi_{i}$ in $\mathbb{S}$.
\end{lemma}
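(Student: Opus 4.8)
The plan is to use the Segal conditions to reduce the statement for a list $\bfphi$ to the case $n = 1$, where it holds by the defining property of $\PSh^{\mathbb{S}}$. First I would invoke \cref{obs:segphi}: since $X$ is a Segal presheaf it is local with respect to the spine inclusion $\Delta^{n}_{\Seg}(\bfphi) \to \Delta^{n}(\bfphi)$, and this inclusion is natural in the list $\bfphi$ (it is assembled from the inert maps $[0],[1] \to [n]$). Writing $Z := \Map(\Delta^{0}, X)$ and using that $\Delta^{n}_{\Seg}(\bfphi)$ is the iterated pushout $\Delta^{1}(\phi_{1}) \amalg_{\Delta^{0}} \cdots \amalg_{\Delta^{0}} \Delta^{1}(\phi_{n})$, mapping into $X$ turns this pushout into an iterated pullback of spaces, so I obtain a natural identification
\[ \Map(\Delta^{n}(\bfphi), X) \simeq \Map(\Delta^{1}(\phi_{1}), X) \times_{Z} \cdots \times_{Z} \Map(\Delta^{1}(\phi_{n}), X). \]

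Next I would observe that under this identification the comparison map $\Map(\Delta^{n}(\bfpsi), X) \to \Map(\Delta^{n}(\bfphi), X)$ induced by $\bfphi \to \bfpsi$ is the iterated pullback, over the identity of $Z$, of the maps $\Map(\Delta^{1}(\psi_{i}), X) \to \Map(\Delta^{1}(\phi_{i}), X)$; the $\Delta^{0}$-components are identities because the spine only involves the objects $[0]$ and $[1]$. Hence it suffices to show each of these maps is an equivalence. By the $n = 1$ case of the formula in \cref{obs:mapoutdeltaphi}, this map is identified with
\[ \Map_{\PSh(\uC)}(\psi_{i}, X|_{\uC^{\op}}) \to \Map_{\PSh(\uC)}(\phi_{i}, X|_{\uC^{\op}}), \]
which is an equivalence since $X|_{\uC^{\op}} \in \PSh^{\mathbb{S}}(\uC)$ and $\phi_{i} \to \psi_{i}$ lies in $\mathbb{S}$. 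Finally, a pullback of equivalences of spaces is an equivalence, so the iterated pullback is one as well, and therefore $X$ is local with respect to $\Delta^{n}(\bfphi) \to \Delta^{n}(\bfpsi)$.

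I do not anticipate a genuine difficulty here; the only point requiring a little care is the naturality in $\bfphi$ of the spine identification, so that the induced map on iterated pullbacks really is the factorwise one. This follows from the functoriality of the assignments $\bfphi \mapsto \Delta^{n}(\bfphi)$ and $\bfphi \mapsto \Delta^{n}_{\Seg}(\bfphi)$ in lists of morphisms (both are, via \cref{obs:mapoutdeltaphi} and its analogue, controlled by the external product $\phi_{1} \times \cdots \times \phi_{n}$), together with the compatibility of the spine inclusion with the inert maps $[0],[1] \to [n]$ used to build it.
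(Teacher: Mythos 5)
Your proof is correct and follows essentially the same route as the paper: the paper phrases it as a two-out-of-three argument in the commutative square comparing $\Delta^{n}_{\Seg}(\bfphi) \to \Delta^{n}_{\Seg}(\bfpsi)$ with $\Delta^{n}(\bfphi) \to \Delta^{n}(\bfpsi)$, using \cref{obs:segphi} for the vertical maps and the $n=1$ case (via the pushout decomposition of the spine) for the top map. Your iterated-pullback computation is just a more explicit unwinding of that same square, so there is nothing to add.
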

\begin{proof}
  We have a commutative square
  \[
    \begin{tikzcd}
      \Delta^{n}_{\Seg}(\bfphi) \arrow{d} \arrow{r} & \Delta^{n}_{\Seg}(\bfpsi) \arrow{d} \\
      \Delta^{n}(\bfphi) \arrow{r}& \Delta^{n}(\bfpsi).
    \end{tikzcd}
  \]
  Here  $X$ is local with respect to the top horizontal morphism since it is a colimit of maps with respect to which $X$ is local by assumption. Moreover, $X$ is local with respect to the vertical morphisms by
  \cref{obs:segphi} since $X$ is a Segal presheaf. It follows that $X$ is indeed
  also local with respect to the bottom horizontal morphism.
\end{proof}

\begin{propn}
  For every simplex $\alpha = (\sigma,\tau) \colon \Delta^{k} \to \Delta^{n} \times \Delta^{m}$, we have pullback squares
  \[
    \begin{tikzcd}
      \Delta^{k}(\sigma^{*}\bfphi, \tau^{*}\bfpsi) \arrow{r} \arrow{d} & \Delta^{n}(\bfphi) \boxtimes \Delta^{m}(\bfpsi) \arrow{d} \\
      r^{*}\Delta^{k} \arrow{r} & r^{*}(\Delta^{n} \times \Delta^{m}).
    \end{tikzcd}
  \]
\end{propn}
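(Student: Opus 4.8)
The plan is to prove this exactly as \cref{propn:productsimplex} was proved, but with the representable presheaves $\Delta^{n}(\bfc)$, $\Delta^{m}(\bfd)$ replaced by $\Delta^{n}(\bfphi)$, $\Delta^{m}(\bfpsi)$: I will detect the square by mapping in from arbitrary representables $\Delta^{l}(\bfc',\bfd')$ and compute both sides, now feeding in the hom-space formula \cref{eq:mapdeltaphi} (and its ``chain rule'' $\gamma^{*}\sigma^{*}\bfphi \simeq (\sigma\gamma)^{*}\bfphi$, which is part of \cref{lem:pbdayconv}) in place of the explicit description of mapping spaces in $\uC_{\otimes}$ and $\uD_{\otimes}$.

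In detail: first, using that $\boxtimes$ is characterized as in \cref{obs:boxlimcolim} and then applying \cref{eq:mapdeltaphi} to each tensor factor, one gets
\[
\Map\bigl(\Delta^{l}(\bfc',\bfd'),\, \Delta^{n}(\bfphi) \boxtimes \Delta^{m}(\bfpsi)\bigr) \simeq \coprod_{\substack{\rho \colon [l]\to[n] \\ \kappa \colon [l]\to[m]}} \prod_{i=1}^{l} (\rho^{*}\bfphi)_{i}(c'_{i}) \times (\kappa^{*}\bfpsi)_{i}(d'_{i}),
\]
which lies over $\Map_{\simp}([l],[n]) \times \Map_{\simp}([l],[m])$ by recording the pair $(\rho,\kappa)$ on the corresponding summand. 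Second, pulling back along $r^{*}\alpha$ for $\alpha = (\sigma,\tau)$ keeps precisely the summands indexed by pairs of the form $(\sigma\gamma, \tau\gamma)$ with $\gamma \colon [l]\to[k]$, so that the pulled-back presheaf has value $\coprod_{\gamma\colon [l]\to[k]} \prod_{i}(\gamma^{*}\sigma^{*}\bfphi)_{i}(c'_{i}) \times (\gamma^{*}\tau^{*}\bfpsi)_{i}(d'_{i})$ at $\Delta^{l}(\bfc',\bfd')$. Third, since the Day convolution of external products of presheaves splits as the external product of the Day convolutions, $(\gamma^{*}\sigma^{*}\bfphi)_{i} \boxtimes (\gamma^{*}\tau^{*}\bfpsi)_{i}$ is exactly the $i$-th term of $\gamma^{*}$ applied to the list underlying $\Delta^{k}(\sigma^{*}\bfphi,\tau^{*}\bfpsi)$, so another application of \cref{eq:mapdeltaphi}, now for $\uC \times \uD$, identifies this with $\Map(\Delta^{l}(\bfc',\bfd'), \Delta^{k}(\sigma^{*}\bfphi,\tau^{*}\bfpsi))$; checking that these equivalences are natural and compatible with the maps in the square then finishes the proof.

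I do not expect a serious obstacle: the real work is already done in \cref{obs:mapoutdeltaphi}, \cref{eq:mapdeltaphi} and \cref{lem:pbdayconv}, and what remains is the (slightly fiddly but routine) bookkeeping of the coproduct reindexing under pullback, just as in \cref{propn:productsimplex}. If one prefers a more structural argument, an alternative is to write $\Delta^{n}(\bfphi) \simeq \colim_{\bfc}\Delta^{n}(\bfc)$ and $\Delta^{m}(\bfpsi) \simeq \colim_{\bfd}\Delta^{m}(\bfd)$ via \cref{lem:delnphicolim}, note that $\boxtimes$ preserves colimits in each variable (\cref{obs:boxlimcolim}) and that colimits are universal in the presheaf $\infty$-topos $\PSh((\uC\times\uD)_{\otimes})$, so the asserted square is the colimit over $(\bfc,\bfd)$ of the squares of \cref{propn:productsimplex}, and finally identify $\colim_{(\bfc,\bfd)}\Delta^{k}(\sigma^{*}\bfc,\tau^{*}\bfd)$ with $\Delta^{k}(\sigma^{*}\bfphi,\tau^{*}\bfpsi)$ using \cref{lem:delnphicolim} for $\uC\times\uD$; on that route the only delicate point is the cofinality of the comparison functor $\prod_{i}\uC_{/\phi_{i}}\times\prod_{j}\uD_{/\psi_{j}} \to \prod_{l}\uC_{/(\sigma^{*}\bfphi)_{l}}\times\prod_{l}\uD_{/(\tau^{*}\bfpsi)_{l}}$ tensoring together grouped coordinates, which in turn rests on Day convolution preserving colimits in each variable and the Yoneda embedding being monoidal.
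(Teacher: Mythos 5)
Your first route is exactly the paper's proof, which simply says to compute mapping spaces out of representables $\Delta^{l}(\bfc',\bfd')$ as in \cref{propn:productsimplex}, now using the formula \cref{eq:mapdeltaphi} for maps into $\Delta^{n}(\bfphi)$; your bookkeeping of the coproduct reindexing under pullback is the intended argument. The alternative colimit route via \cref{lem:delnphicolim} is a reasonable variant but is not needed.
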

\begin{proof}
  Compute mapping spaces as in the proof of \cref{propn:productsimplex}.
\end{proof}

We can then lift the description of $\Delta^{1}\times \Delta^{1}$ as a pushout $\Delta^{2} \amalg_{\Delta^{1}} \Delta^{2}$:
\begin{cor}\label{cor:boxdelta1po}
  We have a pushout
  \[ \Delta^{1}(\phi) \boxtimes \Delta^{1}(\psi) \simeq
    \Delta^{2}(\phi \times \bbone, \bbone \times \psi) \amalg_{\Delta^{1}(\phi \times \psi)} \Delta^{2}(\bbone \times \psi, \phi \times \bbone)
  \]
  in $\PSh((\uC \times \uD)_{\otimes})$ for all $\phi \in \PSh(\uC)$,
  $\psi \in \PSh(\uD)$. \qed
\end{cor}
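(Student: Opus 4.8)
The plan is to deduce this from the analogous well-known pushout decomposition of $\Delta^{1} \times \Delta^{1}$ in $\PSh(\simp)$, namely $\Delta^{1} \times \Delta^{1} \simeq \Delta^{2} \amalg_{\Delta^{1}} \Delta^{2}$, where the two $\Delta^{2}$'s are glued along the diagonal edge $\Delta^{1} \hookrightarrow \Delta^{1} \times \Delta^{1}$. Writing $r = p \times_{\simp} q$, the preceding proposition identifies, for each simplex $\alpha = (\sigma,\tau) \colon \Delta^{k} \to \Delta^{1} \times \Delta^{1}$, the fibre product $r^{*}\Delta^{k} \times_{r^{*}(\Delta^{1}\times\Delta^{1})} (\Delta^{1}(\phi)\boxtimes\Delta^{1}(\psi))$ with $\Delta^{k}(\sigma^{*}(\phi), \tau^{*}(\psi))$. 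Since $\PSh((\uC\times\uD)_{\otimes})$ is an $\infty$-topos, pullback along $\Delta^{1}(\phi)\boxtimes\Delta^{1}(\psi) \to r^{*}(\Delta^{1}\times\Delta^{1})$ preserves colimits, so applying it to the pushout square expressing $\Delta^{1}\times\Delta^{1}$ in $\PSh(\simp)$ (pulled back along $r$, which also preserves colimits) exhibits $\Delta^{1}(\phi)\boxtimes\Delta^{1}(\psi)$ as the corresponding pushout of the pieces $\Delta^{k}(\sigma^{*}(\phi),\tau^{*}(\psi))$.

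Concretely, first I would fix notation for the three relevant simplices of $\Delta^{1}\times\Delta^{1}$: the two nondegenerate $2$-simplices $\sigma_{0}, \sigma_{1} \colon \Delta^{2} \to \Delta^{1}\times\Delta^{1}$ and the shared diagonal edge $\delta \colon \Delta^{1} \to \Delta^{1}\times\Delta^{1}$, so that in $\PSh(\simp)$ we have $\Delta^{1}\times\Delta^{1} \simeq \Delta^{2} \amalg_{\Delta^{1}} \Delta^{2}$ along $\delta$. Next I would compute the restrictions $\sigma_{i}^{*}$ and $\delta^{*}$ applied to the lists $(\phi)$ and $(\psi)$: these are determined by the combinatorics of which edges of $\Delta^{2}$ map to which edges of $\Delta^{1}\times\Delta^{1}$, with the convention that the edges collapsed by $\sigma$ (resp. $\tau$) contribute a tensor factor of $\bbone_{\uC}$ (resp. $\bbone_{\uD}$). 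This yields exactly $\Delta^{2}(\phi\times\bbone, \bbone\times\psi)$ and $\Delta^{2}(\bbone\times\psi, \phi\times\bbone)$ for the two $2$-simplices and $\Delta^{1}(\phi\times\psi)$ for the diagonal, matching the statement. Then I would invoke the previous proposition termwise to identify the pullback of the simplicial pushout square along the counit $\Delta^{1}(\phi)\boxtimes\Delta^{1}(\psi) \to r^{*}(\Delta^{1}\times\Delta^{1})$, and conclude by universality of colimits in the $\infty$-topos $\PSh((\uC\times\uD)_{\otimes})$.

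The only real subtlety is bookkeeping: getting the tensor factors right in $\sigma_{i}^{*}(\phi)$ and $\tau_{i}^{*}(\psi)$ and checking that the gluing maps on the pulled-back pieces are the evident face maps $\Delta^{1}(\phi\times\psi) \to \Delta^{2}(\phi\times\bbone, \bbone\times\psi)$ and $\Delta^{1}(\phi\times\psi) \to \Delta^{2}(\bbone\times\psi, \phi\times\bbone)$ picking out the diagonal edge; these follow formally from the naturality of the pullback identification in the preceding proposition with respect to maps of simplices over $\Delta^{1}\times\Delta^{1}$. There is no hard analytic content here — everything reduces to the simplicial identity $\Delta^{1}\times\Delta^{1}\simeq\Delta^{2}\amalg_{\Delta^{1}}\Delta^{2}$ together with universality of colimits — so this should be a short "apply the previous proposition and compute" argument.
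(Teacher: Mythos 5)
Your argument is correct and is exactly the one the paper intends: the corollary is stated with no written proof beyond the remark that one ``lifts'' the pushout $\Delta^{2}\amalg_{\Delta^{1}}\Delta^{2}\simeq\Delta^{1}\times\Delta^{1}$ along the simple map $\Delta^{1}(\phi)\boxtimes\Delta^{1}(\psi)\to r^{*}(\Delta^{1}\times\Delta^{1})$ using the preceding proposition's pullback squares and universality of colimits in the presheaf $\infty$-topos. Your bookkeeping of the restricted lists ($(\phi\times\bbone,\bbone\times\psi)$, $(\bbone\times\psi,\phi\times\bbone)$, and $\phi\times\psi$ for the diagonal) is also correct.
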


\begin{cor}\label{cor:boxSeq}
  The external product $\PSeg(\uC) \times \PSeg(\uD) \to \PSeg(\uC \times \uD)$ takes $\mathbb{S}$-local Segal equivalences in the first variable and $\mathbb{T}$-local Segal equivalences in the second variable to $\mathbb{S} \odot \mathbb{T}$-local Segal equivalences.
\end{cor}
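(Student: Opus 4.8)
The plan is to mimic the argument for \cref{cor:boxsegeq}, but now tracking the extra localizing maps that cut out $\PSh^{\mathbb{S}}$, $\PSh^{\mathbb{T}}$ and $\PSh^{\mathbb{S} \odot \mathbb{T}}$. Recall from \cref{obs:mapoutdeltaphi} that the $\mathbb{S}$-local Segal equivalences are the strongly saturated class generated by the spine inclusions $\Delta^{n}_{\Seg}(\bfc) \to \Delta^{n}(\bfc)$ together with the maps $\Delta^{1}(\phi) \to \Delta^{1}(\psi)$ for $\phi \to \psi$ in $\mathbb{S}$, and similarly on the $\uD$-side and for $\mathbb{S} \odot \mathbb{T}$ on $\uC \times \uD$. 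Since $\boxtimes$ on presheaves preserves colimits in each variable and strongly saturated classes are closed under colimits, it suffices to check that $\boxtimes$ sends each of these two families of generators in the first variable, boxed against an arbitrary $Y \in \PSh(\uD_{\otimes})$, to an $\mathbb{S} \odot \mathbb{T}$-local Segal equivalence — and symmetrically in the second variable. For the spine inclusions this is exactly \cref{cor:boxsegeq} (an ordinary Segal equivalence is in particular $\mathbb{S}\odot\mathbb{T}$-local). So the real content is the generators coming from $\mathbb{S}$.

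For these, I would first reduce $Y$ to a representable $\Delta^{m}(\bfd)$ by colimit-closure, and then by the colimit description of $\Delta^{1}(\phi)$ and $\Delta^{1}(\psi)$ from \cref{lem:delnphicolim} (or directly, since $\mathbb{S}\odot\mathbb{T}$ contains $\id_c \times \mathbb{T}$ and $\mathbb{S} \times \id_d$ by \cref{lem:cocomptenslocn}) reduce further to showing that $\Delta^{1}(\phi) \boxtimes \Delta^{1}(\psi) \to \Delta^{1}(\phi') \boxtimes \Delta^{1}(\psi)$ is an $\mathbb{S} \odot \mathbb{T}$-local Segal equivalence for $\phi \to \phi'$ in $\mathbb{S}$ and $\psi \in \PSh(\uD)$ arbitrary. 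Here \cref{cor:boxdelta1po} is the key tool: it expresses $\Delta^{1}(\phi) \boxtimes \Delta^{1}(\psi)$ as the pushout $\Delta^{2}(\phi \times \bbone, \bbone \times \psi) \amalg_{\Delta^{1}(\phi \times \psi)} \Delta^{2}(\bbone \times \psi, \phi \times \bbone)$, and naturally in $\phi$. So the map in question is a pushout of the three maps $\Delta^{2}(\phi \times \bbone, \bbone \times \psi) \to \Delta^{2}(\phi' \times \bbone, \bbone \times \psi)$, $\Delta^{2}(\bbone \times \psi, \phi \times \bbone) \to \Delta^{2}(\bbone \times \psi, \phi' \times \bbone)$, and $\Delta^{1}(\phi \times \psi) \to \Delta^{1}(\phi' \times \psi)$. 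The last is a generator of the $\mathbb{S}\odot\mathbb{T}$-local Segal equivalences since $\phi\times\psi \to \phi'\times\psi$ is $(\mathbb{S}\times\id)$-type (i.e. in $\mathbb{S}\odot\mathbb{T}$). For the first two, using the pullback square of \cref{lem:pbdayconv} (in the $(\uC\times\uD)_{\otimes}$ version) one sees that $\Delta^{2}(\phi\times\bbone,\bbone\times\psi) \to \Delta^{2}(\phi'\times\bbone,\bbone\times\psi)$ sits in a pushout decomposition whose pieces are spine inclusions $\Delta^{2}_{\Seg} \hookrightarrow \Delta^{2}$ and the edge map $\Delta^{1}(\phi\times\bbone) \to \Delta^{1}(\phi'\times\bbone)$ — again an $\mathbb{S}\odot\mathbb{T}$-local Segal equivalence (spine inclusions are ordinary Segal equivalences, and $\phi\times\bbone \to \phi'\times\bbone$ is of $(\mathbb{S}\times\id)$-type up to the unit, hence in $\mathbb{S}\odot\mathbb{T}$). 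Since pushouts of $\mathbb{S}\odot\mathbb{T}$-local Segal equivalences are again such, we conclude.

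The main obstacle I expect is the bookkeeping in that last reduction: one must check carefully that $\Delta^{2}(\phi\times\bbone,\bbone\times\psi)$, compared against $\Delta^{2}(\phi'\times\bbone,\bbone\times\psi)$, really does decompose — via the pullback squares of \cref{lem:pbdayconv} and \cref{obs:segphi} applied over $\Delta^{2}_{\Seg}$ — into a gluing of the edge-inclusion map (which is $\Delta^{1}(\phi\times\bbone)\to\Delta^{1}(\phi'\times\bbone)$ on the appropriate edge, constant on the others) with spine inclusions, so that it lands in the strongly saturated class in question. An equivalent and perhaps cleaner way to organize this is to note that the right $\mathbb{S}\odot\mathbb{T}$-local objects are the $\mathbb{S}\odot\mathbb{T}$-local Segal presheaves, i.e. (by \cref{lem:deltanlocaleq} and the analogue for $\mathbb{S}\odot\mathbb{T}$) the Segal presheaves $Z$ on $(\uC\times\uD)_{\otimes}$ with $Z|_{(\uC\times\uD)^{\op}}$ both $\mathbb{S}$-local in the first and $\mathbb{T}$-local in the second variable; then one checks directly that $\Map(\Delta^{1}(\phi)\boxtimes\Delta^{1}(\psi), Z) \to \Map(\Delta^{1}(\phi')\boxtimes\Delta^{1}(\psi), Z)$ is an equivalence by running the mapping-space computation of \cref{propn:productsimplex} and using that $Z$ is local with respect to $\Delta^{1}(\phi)\to\Delta^{1}(\phi')$ and with respect to spine inclusions. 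Either route finishes the proof.
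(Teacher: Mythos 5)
Your proposal is correct and follows essentially the same route as the paper: reduce by colimit-closure to the generating maps, apply the pushout decomposition of \cref{cor:boxdelta1po}, and check that $\mathbb{S}\odot\mathbb{T}$-local Segal presheaves are local against the resulting pieces. The $\Delta^{2}$-bookkeeping you flag as the main obstacle is exactly what \cref{lem:deltanlocaleq} (applied over $\uC\times\uD$ with $\mathbb{S}\odot\mathbb{T}$) is there to dispose of, so you can simply cite it rather than re-deriving the spine-plus-edge decomposition by hand.
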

\begin{proof}
  Since the external product preserves colimits in each variable, it suffices to consider the case
  \[ \Delta^{1}(\phi) \boxtimes \Delta^{1}(\psi) \to \Delta^{1}(\phi')\boxtimes \Delta^{1}(\psi')\]
  for $\phi \to \phi'$ in $\mathbb{S}$ and $\psi \to \psi'$ in $\mathbb{T}$. Here it follows from the colimit decomposition in \cref{cor:boxdelta1po} and \cref{lem:deltanlocaleq} that this is indeed an $\mathbb{S} \odot \mathbb{T}$-local Segal equivalence.
\end{proof}

\begin{cor}
  The composite
  \[ \PSeg^{\mathbb{S}}(\uC_{\otimes}) \times \PSeg^{\mathbb{T}}(\uD_{\otimes})
    \hookrightarrow \PSeg(\uC_{\otimes}) \times \PSeg(\uD_{\otimes})
    \xto{\boxtimes} \PSeg((\uC \times \uD)_{\otimes})
    \to \PSeg^{\mathbb{S} \odot \mathbb{T}}((\uC \times \uD)_{\otimes})\]
  preserves colimits in each variable.
\end{cor}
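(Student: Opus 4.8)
The plan is to mirror the argument used for \cref{cor:segpshprodcolim}, replacing the Segal localization $L_{\Seg}$ by the three localizations onto $\mathbb{S}$-, $\mathbb{T}$-, and $\mathbb{S}\odot\mathbb{T}$-local Segal presheaves, and using \cref{cor:boxSeq} in place of \cref{cor:boxsegeq}. Write $i_{\mathbb{S}} \colon \PSeg^{\mathbb{S}}(\uC_{\otimes}) \hookrightarrow \PSeg(\uC_{\otimes})$ for the inclusion and $L_{\mathbb{S}}$ for its left adjoint — which exists since by \cref{obs:mapoutdeltaphi} this full subcategory is the localization of $\PSeg(\uC_{\otimes})$ at a set of maps — and similarly $i_{\mathbb{T}}, L_{\mathbb{T}}$ on $\uD_{\otimes}$ and $i_{\mathbb{S}\odot\mathbb{T}}, L_{\mathbb{S}\odot\mathbb{T}}$ on $(\uC\times\uD)_{\otimes}$. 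The composite to be analysed is then
\[ F(X,Y) := L_{\mathbb{S}\odot\mathbb{T}}\bigl(i_{\mathbb{S}}X \boxtimes i_{\mathbb{T}}Y\bigr), \]
and by symmetry it suffices to fix $Y \in \PSeg^{\mathbb{T}}(\uD_{\otimes})$ and show that $F(\blank,Y)$ preserves colimits.

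First I would record the elementary observation that a functor out of $\PSeg^{\mathbb{S}}(\uC_{\otimes})$ preserves colimits if and only if its precomposite with $L_{\mathbb{S}}$ does: one direction holds because $L_{\mathbb{S}}$ is a left adjoint, and the other because a colimit in $\PSeg^{\mathbb{S}}(\uC_{\otimes})$ is computed by taking the colimit in $\PSeg(\uC_{\otimes})$ and applying $L_{\mathbb{S}}$, together with $L_{\mathbb{S}}i_{\mathbb{S}} \simeq \id$. This reduces the task to showing that $F(L_{\mathbb{S}}\blank, Y) \colon \PSeg(\uC_{\otimes}) \to \PSeg^{\mathbb{S}\odot\mathbb{T}}((\uC\times\uD)_{\otimes})$ preserves colimits. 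The key step is to identify this functor with the manifestly colimit-preserving $L_{\mathbb{S}\odot\mathbb{T}}(\blank \boxtimes i_{\mathbb{T}}Y)$: for $X \in \PSeg(\uC_{\otimes})$ the unit $X \to i_{\mathbb{S}}L_{\mathbb{S}}X$ is an $\mathbb{S}$-local Segal equivalence and $\id_{i_{\mathbb{T}}Y}$ is a $\mathbb{T}$-local Segal equivalence (since $i_{\mathbb{T}}Y$ is $\mathbb{T}$-local), so \cref{cor:boxSeq} shows the induced map $X \boxtimes i_{\mathbb{T}}Y \to i_{\mathbb{S}}L_{\mathbb{S}}X \boxtimes i_{\mathbb{T}}Y$ is an $\mathbb{S}\odot\mathbb{T}$-local Segal equivalence, hence becomes an equivalence after $L_{\mathbb{S}\odot\mathbb{T}}$, naturally in $X$. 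Finally $\blank \boxtimes i_{\mathbb{T}}Y$ preserves colimits by \cref{cor:segpshprodcolim} and $L_{\mathbb{S}\odot\mathbb{T}}$ does since it is a left adjoint, so $F(L_{\mathbb{S}}\blank,Y)$ does, completing the reduction.

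I do not expect a serious obstacle: given \cref{cor:boxSeq} and \cref{cor:segpshprodcolim} the argument is purely formal. The only point that needs care is the bookkeeping with the three localizations — in particular checking that the external product interacts correctly with all of them — and this is precisely what \cref{cor:boxSeq} provides (applied with a $\mathbb{T}$-local identity map in the second variable), so the genuine content is already in hand.
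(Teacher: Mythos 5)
Your proposal is correct and follows essentially the same route as the paper: both arguments reduce to the observation that the unit $X \to i_{\mathbb{S}}L_{\mathbb{S}}X$ becomes an $\mathbb{S}\odot\mathbb{T}$-local Segal equivalence after applying $\blank \boxtimes i_{\mathbb{T}}Y$ (via \cref{cor:boxSeq}), and then use that $\boxtimes$ preserves colimits in each variable and that $L_{\mathbb{S}\odot\mathbb{T}}$ is a left adjoint. Your repackaging via ``a functor out of a localization preserves colimits iff its precomposite with the localization does'' is just a cleaner way of organizing the same computation the paper performs directly on a diagram $p \colon \uK \to \PSeg^{\mathbb{S}}(\uC_{\otimes})$.
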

\begin{proof}
  Let us write $L^{\mathbb{S}}$ for the localization $\PSeg(\uC_{\otimes}) \to \PSeg^{\mathbb{S}}(\uC_{\otimes})$ and $i^{\mathbb{S}}$ for its fully faithful left adjoint. We then want to show that the functor
  $L^{\mathbb{S} \odot \mathbb{T}}(i^{\mathbb{S}}(\blank) \boxtimes
  i^{\mathbb{T}}(\blank))$ preserves colimits in each variable. We
  consider the colimit of a diagram $p \colon \uK \to
  \PSeg^{\mathbb{S}}(\uC_{\otimes})$, which can be described as
  $L^{\mathbb{S}}(\colim_{\uK} i^{\mathbb{S}}p)$. By \cref{cor:boxSeq}
  we know that the map \[(\colim_{\uK} i^{\mathbb{S}}p) \boxtimes
    i^{\mathbb{T}}Y \to i^{\mathbb{S}}L^{\mathbb{S}}(\colim_{\uK}
    i^{\mathbb{S}}p) \boxtimes i^{\mathbb{T}}Y
    \simeq i^{\mathbb{S}}(\colim_{\uK} p) \boxtimes i^{\mathbb{T}}Y
  \] is an $\mathbb{S} \odot \mathbb{T}$-local Segal equivalence for any $Y$  in $\PSeg^{\mathbb{T}}(\uD_{\otimes})$. We therefore have
  \[
    \begin{split}
      L^{\mathbb{S} \odot \mathbb{T}}(i^{\mathbb{S}}(\colim_{\uK} p) \boxtimes i^{\mathbb{T}}Y) & \simeq
      L^{\mathbb{S} \odot \mathbb{T}}(\colim_{\uK} i^{\mathbb{S}}p \boxtimes i^{\mathbb{T}}Y) \\             & \simeq \colim_{\uK}                                                                                     L^{\mathbb{S} \odot \mathbb{T}}(i^{\mathbb{S}}p \boxtimes i^{\mathbb{T}}Y),
    \end{split}
  \]
  since $\boxtimes$ preserves colimits in each variable and $L^{\mathbb{S} \odot \mathbb{T}}$ is a left adjoint.
\end{proof}

Combining this with \cref{propn:presmonisloc},
\cref{lem:cocomptenslocn}, and \cref{obs:loctenscomp}, we get:
\begin{cor}\label{cor:prestenscolim}
  If $\uV$ and $\uW$ are presentably monoidal \icats{}, then the composite
  \[ \Algd(\uV) \times \Algd(\uW) \to \Algd(\uV \times \uW) \to \Algd(\uV \otimes \uW)\]
  preserves colimits in each variable. \qed
\end{cor}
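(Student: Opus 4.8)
The plan is to reduce the statement to the computation with Segal presheaves carried out above, by realising $\uV$ and $\uW$ as compatible monoidal localizations of presheaf \icats{}. First I would invoke \cref{propn:presmonisloc} to choose small full monoidal subcategories $\uC \subseteq \uV$ and $\uD \subseteq \uW$, together with sets of morphisms $\mathbb{S} \subseteq \PSh(\uC)$ and $\mathbb{T} \subseteq \PSh(\uD)$ compatible with the respective Day convolution structures, so that the Yoneda extensions give monoidal equivalences $\uV \simeq \PSh^{\mathbb{S}}(\uC)$ and $\uW \simeq \PSh^{\mathbb{T}}(\uD)$. By \cref{lem:cocomptenslocn} we then have $\uV \otimes \uW \simeq \PSh^{\mathbb{S} \odot \mathbb{T}}(\uC \times \uD)$, and --- what matters here --- the canonical functor $\uV \times \uW \to \uV \otimes \uW$ is identified with the composite $\PSh^{\mathbb{S}}(\uC) \times \PSh^{\mathbb{T}}(\uD) \hookrightarrow \PSh(\uC) \times \PSh(\uD) \xto{\mu} \PSh(\uC \times \uD) \to \PSh^{\mathbb{S} \odot \mathbb{T}}(\uC \times \uD)$.

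Applying $\Algd(\blank)$ and using the commutative diagram of \cref{obs:loctenscomp}, the functor in the statement is then identified with the composite in the top row of that diagram, namely
\[ \Algd(\PSh^{\mathbb{S}}(\uC)) \times \Algd(\PSh^{\mathbb{T}}(\uD)) \to \Algd(\PSh^{\mathbb{S}}(\uC) \times \PSh^{\mathbb{T}}(\uD)) \to \Algd(\PSh^{\mathbb{S} \odot \mathbb{T}}(\uC \times \uD)). \]
Next I would transport this across the equivalences $\Algd(\PSh^{\mathbb{S}}(\uC)) \simeq \PSeg^{\mathbb{S}}(\uC_{\otimes})$ (and likewise for $\uD$ and for $\uC \times \uD$) from the proposition above, together with \cref{cor:psegprodinalgd}, so as to recognise the composite as the functor
\[ \PSeg^{\mathbb{S}}(\uC_{\otimes}) \times \PSeg^{\mathbb{T}}(\uD_{\otimes}) \to \PSeg^{\mathbb{S} \odot \mathbb{T}}((\uC \times \uD)_{\otimes}), \]
whose preservation of colimits in each variable was established in the preceding corollary. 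Since this property is invariant under equivalence of \icats{}, the claim follows.

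The main obstacle is bookkeeping rather than mathematics: one has to check that the identifications tacitly used above are mutually compatible --- in particular that $\mu_{*} \circ \boxtimes$ on algebroids in presheaves corresponds, under the Segal-presheaf equivalence, to the external product $\boxtimes$ of Segal presheaves (which is the content of \cref{cor:psegprodinalgd}), and that the monoidal functor $\mu \colon \uV \times \uW \to \uV \otimes \uW$ appearing in the statement really is the one produced by the presentation of \cref{lem:cocomptenslocn}, not merely an abstractly equivalent functor. Once \cref{obs:loctenscomp} is arranged so that these squares commute on the nose, there is no further content beyond the presheaf and localized-presheaf cases already handled.
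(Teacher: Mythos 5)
Your proposal is correct and follows exactly the paper's route: the corollary is obtained by combining \cref{propn:presmonisloc}, \cref{lem:cocomptenslocn}, and \cref{obs:loctenscomp} with the preceding corollary on $\mathbb{S}$- and $\mathbb{T}$-local Segal presheaves, via the equivalence $\Algd(\PSh^{\mathbb{S}}(\uC)) \simeq \PSeg^{\mathbb{S}}(\uC_{\otimes})$ and \cref{cor:psegprodinalgd}. The compatibility bookkeeping you flag is indeed the only remaining content, and the paper treats it as already arranged by those cited results.
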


Together with \cref{obs:compprescolim}, this gives:
\begin{cor}
  If $F \colon \uV \times \uW \to \uU$ is a monoidal functor between
  presentably monoidal \icats{} that preserves colimits in each
  variable, then
  \[ \Algd(\uV) \times \Algd(\uW) \xto{\boxtimes} \Algd(\uV \times \uW) \xto{F_{*}} \Algd(\uU)\]
  also preserves colimits in each variable.
\end{cor}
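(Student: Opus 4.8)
The plan is to run the reduction described in \cref{obs:compprescolim} in reverse, feeding in the universal case that we have already established as \cref{cor:prestenscolim}. First I would use that $F \colon \uV \times \uW \to \uU$ is a monoidal functor preserving colimits in each variable to invoke the universal property of the cocomplete tensor product (\cite{HA}*{\S 4.8.1}): this produces a \emph{colimit-preserving} monoidal functor $\overline{F} \colon \uV \otimes \uW \to \uU$ together with a factorization $F \simeq \overline{F} \circ \mu$ of monoidal functors, where $\mu \colon \uV \times \uW \to \uV \otimes \uW$ is the canonical map. Here I would also note that $\uV \otimes \uW$ is again presentably monoidal, so that \cref{cor:prestenscolim} does apply to the pair $(\uV, \uW)$.

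Next I would apply the functor $\Algd(\blank)$ on $\MonCatIlax$ to the factorization $F \simeq \overline{F} \circ \mu$ to get $F_{*} \simeq \overline{F}_{*} \circ \mu_{*}$, so that the composite in the statement becomes
\[ \Algd(\uV) \times \Algd(\uW) \xto{\boxtimes} \Algd(\uV \times \uW) \xto{\mu_{*}} \Algd(\uV \otimes \uW) \xto{\overline{F}_{*}} \Algd(\uU). \]
By \cref{cor:prestenscolim} the composite of the first two arrows preserves colimits in each variable, while $\overline{F}_{*}$ preserves colimits by \cref{obs:compprescolim} (i.e.\ \cite{enr}*{Proposition 3.6.10}), since $\overline{F}$ is a colimit-preserving monoidal functor. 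Since composing a functor that preserves colimits in each variable with one that preserves all colimits again preserves colimits in each variable, this would finish the proof.

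I do not expect any genuine obstacle: all of the real content is in the previous sections, and the only things to check are the monoidal factorization $F \simeq \overline{F} \circ \mu$, which is exactly the universal property of $\uV \otimes \uW$, and the routine fact that the cocomplete tensor product of presentably monoidal \icats{} remains presentably monoidal.
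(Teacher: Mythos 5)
Your proposal is correct and is essentially the paper's own argument: the paper derives this corollary by combining \cref{cor:prestenscolim} with \cref{obs:compprescolim}, which is exactly the factorization $F \simeq \overline{F} \circ \mu$ through the universal colimit-preserving-in-each-variable functor, followed by the observation that $\overline{F}_{*}$ preserves colimits. The only detail you spell out more explicitly than the paper is that the factorization is one of monoidal functors, which is indeed the implicit content of \cref{obs:compprescolim}.
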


\section{Tensor products and colimits: the general case}\label{sec:tensgen}

We now want to extend \cref{cor:prestenscolim} to the case of general
cocomplete \icats{}; in other words, we want to drop the assumption of
presentability. For this we pass to a larger universe: let $\LSpc$ be
the (very large) \icat{} of large spaces, and write
$\LPSh(\uC) := \Fun(\uC^{\op}, \widehat{\Spc})$. If $\uC$ is
cocomplete, we get a fully faithful functor
$\uC \hookrightarrow \LPSh^{\mathbb{S}}(\uC)$ that preserves small
colimits, where $\mathbb{S}$ consists of the maps
$\colim_{\uK} y(p) \to y(\colim_{\uK} p)$ for a generating (large) set
of small diagrams $p$ in $\uC$.

If $\uD$ is another cocomplete \icat{}, with a corresponding embedding
$\uD \hookrightarrow \LPSh^{\mathbb{T}}(\uD)$, then the construction
of the cocomplete tensor product in \cite{HA}*{\S 4.8.1} shows that
the canonical functor $\uC \times \uD \to \uC \otimes \uD$ fits in a
commutative square
\[
  \begin{tikzcd}
    \uC \times \uD \arrow{r} \arrow[hookrightarrow]{d} & \uC \otimes \uD \arrow[hookrightarrow]{d} \\
    \LPSh^{\mathbb{S}}(\uC) \times \LPSh^{\mathbb{T}}(\uD) \arrow{r} & \LPSh^{\mathbb{S} \odot \mathbb{T}}(\uC \times \uD),
  \end{tikzcd}
\]
where the vertical maps are fully faithful and the right-hand one preserves small colimits. If $\uC$ and $\uD$ are monoidal \icats{} where the tensor products are compatible with small colimits, then the vertical maps are strong monoidal, and we obtain a commutative diagram
\[
  \begin{tikzcd}
    \Algd(\uC) \times \Algd(\uD) \arrow{r} \arrow[hookrightarrow]{d} & \Algd(\uC \otimes \uD) \arrow[hookrightarrow]{d} \\
    \Algd(\LPSh^{\mathbb{S}}(\uC)) \times \Algd(\LPSh^{\mathbb{T}}(\uD)) \arrow{r} & \Algd(\LPSh^{\mathbb{S} \odot \mathbb{T}}(\uC \times \uD)),
  \end{tikzcd}
\]
where the vertical maps are compatible with small colimits in the appropriate senses. Since the bottom horizontal map preserves (large) colimits in each variable by \cref{cor:prestenscolim} (applied in a larger universe), we conclude that the top horizontal map preserves small colimits in each variable. This proves the following:
\begin{cor}\label{cor:cocomptenscolim}
  If $\uV$ and $\uW$ are cocomplete monoidal \icats{} whose tensor products are compatible with small colimits, then the composite
  \[ \Algd(\uV) \times \Algd(\uW) \to \Algd(\uV \times \uW) \to \Algd(\uV \otimes \uW)\]
  preserves colimits in each variable. \qed
\end{cor}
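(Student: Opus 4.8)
The plan is to reduce to the presentable case, \cref{cor:prestenscolim}, by enlarging the universe, in the same spirit as the reduction to the ``universal'' case in \cref{obs:compprescolim}: I would embed the cocomplete monoidal \icats{} $\uV$ and $\uW$ into presentably monoidal \icats{} living in a larger universe, compatibly with the cocomplete tensor product and with the passage to algebroids, and then transport the colimit-preservation statement back along these embeddings.

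For the embeddings I would use the standard presentation of a cocomplete \icat{} in an enlarged universe: writing $\LPSh(\uV) = \Fun(\uV^{\op}, \LSpc)$, the Yoneda embedding factors through the localization $\LPSh^{\mathbb{S}}(\uV)$ at the (large) set $\mathbb{S}$ of maps $\colim_{\uK} y(p) \to y(\colim_{\uK} p)$ for a generating family of small diagrams $p$ in $\uV$, giving a fully faithful functor $\uV \hookrightarrow \LPSh^{\mathbb{S}}(\uV)$ that preserves small colimits and whose target is presentable relative to the larger universe; since the chosen colimits are preserved, $\uV$ is moreover closed under small colimits inside $\LPSh^{\mathbb{S}}(\uV)$. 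When the tensor product on $\uV$ is compatible with small colimits, $\mathbb{S}$ can be taken compatible with the Day convolution (as in \cite{patterns2}), so that $\LPSh^{\mathbb{S}}(\uV)$ is presentably monoidal in the larger universe and the embedding is strong monoidal (the Yoneda embedding into Day convolution being strong monoidal, as is the localization). Doing the same for $\uW$ with a set $\mathbb{T}$, the construction of the cocomplete tensor product in \cite{HA}*{\S 4.8.1} (compare \cref{lem:cocomptenslocn}) identifies $\uV \otimes \uW$ with a full monoidal subcategory of $\LPSh^{\mathbb{S} \odot \mathbb{T}}(\uV \times \uW)$ and realizes the canonical functor $\uV \times \uW \to \uV \otimes \uW$ as the restriction of $\mu \colon \LPSh^{\mathbb{S}}(\uV) \times \LPSh^{\mathbb{T}}(\uW) \to \LPSh^{\mathbb{S} \odot \mathbb{T}}(\uV \times \uW)$, the embedding on the right being strong monoidal and small-colimit-preserving.

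Next I would apply $\Algd(\blank)$, which is functorial in lax monoidal functors. The three strong monoidal, small-colimit-preserving embeddings above induce functors $\Algd(\uV) \hookrightarrow \Algd(\LPSh^{\mathbb{S}}(\uV))$ (and likewise for $\uW$ and for $\uV \otimes \uW$) that are fully faithful with image closed under small colimits and that preserve small colimits: this combines the fact that $\uV$ is closed under small colimits in $\LPSh^{\mathbb{S}}(\uV)$ with the observation used already in \cref{obs:compprescolim} (via \cite{enr}*{Proposition 3.6.10}) that a colimit-preserving monoidal functor induces a colimit-preserving functor on algebroids. Assembling this, I obtain a commutative square whose top row is the composite in the statement and whose bottom row is the analogous composite of the external product $\boxtimes$ followed by $\mu_{*}$, now for $\LPSh^{\mathbb{S}}(\uV)$, $\LPSh^{\mathbb{T}}(\uW)$ and $\LPSh^{\mathbb{S} \odot \mathbb{T}}(\uV \times \uW)$. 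By \cref{cor:prestenscolim}, applied one universe up, the bottom row preserves large colimits in each variable, and hence in particular small ones; since the two vertical embeddings are fully faithful and reflect small colimits, the top row preserves small colimits in each variable.

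I expect the only real obstacle to be universe bookkeeping: one must check that $\LPSh^{\mathbb{S}}(\uV)$ genuinely is presentably monoidal relative to the enlarged universe despite $\mathbb{S}$ being only a large set, so that \cref{cor:prestenscolim} applies to it there, and that the vertical functors $\Algd(\uV) \hookrightarrow \Algd(\LPSh^{\mathbb{S}}(\uV))$ reflect small colimits and have image closed under them --- equivalently, that the colimit in algebroids of a diagram coming from $\uV$ is computed the same way whether taken in $\uV$ or in $\LPSh^{\mathbb{S}}(\uV)$. Granting this, the conclusion falls out by chasing the commutative square.
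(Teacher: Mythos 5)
Your proposal is correct and takes essentially the same route as the paper: the paper likewise embeds $\uV$ and $\uW$ into the presentably monoidal (in a larger universe) localizations $\LPSh^{\mathbb{S}}(\uV)$ and $\LPSh^{\mathbb{T}}(\uW)$, forms the same commutative square via the construction of the cocomplete tensor product in \cite{HA}*{\S 4.8.1}, and deduces the claim from \cref{cor:prestenscolim} applied one universe up, using that the vertical functors on algebroids are fully faithful and compatible with small colimits. The universe and colimit-reflection points you flag are exactly the ones the paper treats (briefly) as well.
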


Together with \cref{obs:compprescolim}, this gives:
\begin{cor}
  Let $\uV$ and $\uW$ be cocomplete monoidal \icats{} whose tensor
  products are compatible with small colimits. If
  $F \colon \uV \times \uW \to \uU$ is a monoidal functor that
  preserves colimits in each variable, then
  \[ \Algd(\uV) \times \Algd(\uW) \xto{\boxtimes} \Algd(\uV \times \uW) \xto{F_{*}} \Algd(\uU)\]
  also preserves colimits in each variable. \qed
\end{cor}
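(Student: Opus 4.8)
The plan is to reduce to \cref{cor:cocomptenscolim} by factoring $F$ through the cocomplete tensor product. First I would invoke the monoidal enhancement of the universal property recalled in \cref{obs:compprescolim}: since $\mu \colon \uV \times \uW \to \uV \otimes \uW$ is the universal monoidal functor out of $\uV \times \uW$ that preserves colimits in each variable (equivalently, $\uV \otimes \uW$ is the tensor product of $\uV$ and $\uW$ as $E_{1}$-algebras in $\mathrm{Pr}^{L}$, or a large variant thereof as in \S\ref{sec:tensgen}), the monoidal functor $F$, which preserves colimits in each variable, factors essentially uniquely as $F \simeq \bar{F} \circ \mu$ for a colimit-preserving monoidal functor $\bar{F} \colon \uV \otimes \uW \to \uU$.

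Granting this, I would observe that $F_{*} \simeq \bar{F}_{*} \circ \mu_{*}$ as functors $\Algd(\uV \times \uW) \to \Algd(\uU)$, by functoriality of $\Algd(\blank)$, so that the composite in the statement identifies with
\[ \Algd(\uV) \times \Algd(\uW) \xto{\boxtimes} \Algd(\uV \times \uW) \xto{\mu_{*}} \Algd(\uV \otimes \uW) \xto{\bar{F}_{*}} \Algd(\uU). \]
Here the composite of the first two functors is exactly the functor appearing in \cref{cor:cocomptenscolim}, which preserves colimits in each variable (this is where the hypothesis that the tensor products of $\uV$ and $\uW$ are compatible with small colimits enters), while $\bar{F}_{*}$ preserves all colimits by \cref{obs:compprescolim}, since $\bar{F}$ is colimit-preserving and monoidal. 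Finally, postcomposing a functor that preserves colimits in each variable with one that preserves all colimits again preserves colimits in each variable (fix one input and restrict), which finishes the argument.

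The one step requiring care — and the one not literally covered by \cref{obs:compprescolim} as stated there — is the factorization of the \emph{monoidal} functor $F$ through a \emph{monoidal} functor $\bar{F}$; the observation only records the non-monoidal universal property of $\uV \otimes \uW$. This monoidal refinement is standard, following from the construction of the symmetric monoidal structure on (large) presentable \icats{} together with the fact that the tensor product of $E_{1}$-algebras there is computed by the cocomplete tensor product with its componentwise monoidal structure; if $\uU$ is not itself cocomplete one first embeds it into $\LPSh$ of a small generating subcategory, exactly as in \S\ref{sec:tensgen}, and runs the argument in that setting.
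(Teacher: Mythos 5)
Your proof is correct and is essentially the paper's own argument: the paper deduces this corollary by combining \cref{cor:cocomptenscolim} with \cref{obs:compprescolim}, exactly as you do by factoring $F$ as $\bar{F} \circ \mu$ with $\bar{F}$ colimit-preserving monoidal and writing $F_{*} \circ \boxtimes$ as $\bar{F}_{*}$ composed with the universal functor $\Algd(\uV) \times \Algd(\uW) \to \Algd(\uV \otimes \uW)$. Your observation that one needs the factorization to be through a \emph{monoidal} functor $\bar{F}$ (so that $\bar{F}_{*}$ is defined on algebroids and preserves colimits via \cref{obs:compprescolim}) is a genuine subtlety the paper leaves implicit, and your proposed resolution via the monoidal refinement of the cocomplete tensor product, passing to $\LPSh$ of a generating subcategory as in \S\ref{sec:tensgen} when $\uU$ is not presentable, is the right way to handle it.
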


We can also deduce the corresponding results for complete algebroids:
\begin{cor}
  Let $\uV$ and $\uW$ be cocomplete monoidal \icats{} whose tensor products are compatible with small colimits.
  \begin{enumerate}[(i)]
  \item The composite
  \[ \Cat(\uV) \times \Cat(\uW) \to \Cat(\uV \times \uW) \to \Cat(\uV \otimes \uW)\]
  preserves colimits in each variable.
\item If
  $F \colon \uV \times \uW \to \uU$ is a monoidal functor that
  preserves colimits in each variable, then
  \[ \Cat(\uV) \times \Cat(\uW) \xto{\boxtimes} \Cat(\uV \times \uW) \xto{F_{*}} \Cat(\uU)\]
  also preserves colimits in each variable.
  \end{enumerate}
\end{cor}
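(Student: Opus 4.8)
The plan is to deduce both parts from the analogous statements for algebroids (\cref{cor:cocomptenscolim} and its extension to an arbitrary monoidal functor that preserves colimits in each variable) by a localization argument of the same shape as the one used for Segal presheaves in \cref{cor:segpshprodcolim}. Both parts are instances of the following: if $G\colon\uV\times\uW\to\uU$ is a monoidal functor that preserves colimits in each variable between cocomplete monoidal \icats{} whose tensor products are compatible with small colimits, then the composite
\[ \Cat(\uV)\times\Cat(\uW)\xto{\boxtimes}\Cat(\uV\times\uW)\xto{G_{*}}\Cat(\uU) \]
preserves colimits in each variable; part (i) is the case $G=\mu$ and part (ii) the case $G=F$. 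Here $\boxtimes$ lands in complete algebroids by \cref{extprodcomp}(ii), and $G_{*}\colon\Cat(\uV\times\uW)\to\Cat(\uU)$ denotes $L_{\uU}\circ G_{*}\circ i_{\uV\times\uW}$, where throughout $L_{\uC}\colon\Algd(\uC)\to\Cat(\uC)$ is the completion functor, left adjoint to the fully faithful inclusion $i_{\uC}$. Since this is a reflective localization, a colimit in $\Cat(\uC)$ is computed by forming the colimit in $\Algd(\uC)$ and applying $L_{\uC}$, and by \cite{enr}*{Theorem 5.6.6} the morphisms inverted by $L_{\uC}$ are precisely the strongly saturated class generated by the fully faithful and essentially surjective ones.

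The heart of the argument is the following \textbf{claim}: if $f\colon\eA\to\eA'$ in $\Algd(\uV)$ is inverted by $L_{\uV}$ and $\eB\in\Algd(\uW)$ is arbitrary, then $G_{*}(f\boxtimes\id_{\eB})$ is inverted by $L_{\uU}$, and likewise with the roles of the two variables exchanged. To prove it I would observe that the functor $\eA\mapsto L_{\uU}(G_{*}(\eA\boxtimes\eB))$ preserves colimits --- by the algebroid statement it preserves colimits in the displayed variable, and $L_{\uU}$ is a left adjoint --- so the class of morphisms it inverts is strongly saturated, and it therefore suffices to treat the case where $f$ is fully faithful and essentially surjective. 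For such $f$, two applications of \cref{extprodcomp}(iii) (once with $f$ fully faithful and $\id_{\eB}$ fully faithful, once with both essentially surjective) show that $f\boxtimes\id_{\eB}$ is again fully faithful and essentially surjective, hence inverted by $L_{\uV\times\uW}$; and since the completion localization is compatible with the cocartesian functoriality of $\Algd$ in monoidal functors (so that $L_{\uU}\circ G_{*}\simeq G_{*}^{\Cat}\circ L_{\uV\times\uW}$, using \cite{enr}*{Proposition 5.7.1}), it follows that $G_{*}(f\boxtimes\id_{\eB})$ is inverted by $L_{\uU}$.

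Given the claim, the result follows formally. Fix a diagram $\phi\colon\uK\to\Cat(\uV)$ and an object $Y\in\Cat(\uW)$; writing the colimit in $\Cat(\uV)$ as $L_{\uV}(\colim_{\uK}i_{\uV}\phi)$ with the colimit now taken in $\Algd(\uV)$, the unit map $\colim_{\uK}i_{\uV}\phi\to i_{\uV}L_{\uV}(\colim_{\uK}i_{\uV}\phi)$ is inverted by $L_{\uV}$, so by the claim it becomes an equivalence after applying $L_{\uU}(G_{*}((\blank)\boxtimes i_{\uW}Y))$. Combining this with the fact (from the algebroid statement) that $G_{*}((\blank)\boxtimes i_{\uW}Y)$ preserves colimits and with the fact that $L_{\uU}$ preserves colimits, we obtain
\[ L_{\uU}\bigl(G_{*}(i_{\uV}L_{\uV}(\colim_{\uK}i_{\uV}\phi)\boxtimes i_{\uW}Y)\bigr)\simeq L_{\uU}\bigl(\colim_{\uK}G_{*}(i_{\uV}\phi\boxtimes i_{\uW}Y)\bigr)\simeq\colim_{\uK}L_{\uU}\bigl(G_{*}(i_{\uV}\phi\boxtimes i_{\uW}Y)\bigr), \]
which is precisely the assertion that the composite above preserves colimits in the first variable; the second variable is symmetric, and specializing $G$ to $\mu$ and to $F$ yields (i) and (ii) respectively.

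I expect the main obstacle to be the claim, and within it the input that $G_{*}$ carries $L$-equivalences to $L$-equivalences: this is the step that cannot be argued purely formally and that forces us to track the interaction of completion with the functoriality of algebroids. Once the compatibility of \cite{enr}*{Proposition 5.7.1} is brought to bear, the remaining work --- the passage from fully faithful and essentially surjective morphisms to all $L$-equivalences via strong saturation, together with the final colimit manipulation --- is routine.
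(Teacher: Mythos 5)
Your proof is correct and follows essentially the same route as the paper: the paper deduces (i) from \cref{extprodcomp}(iii) by repeating the localization/mate argument of \cref{cor:segpshprodcolim} (which is exactly your saturation claim plus the final colimit manipulation), and gets (ii) from (i) via \cref{obs:compprescolim}. Your only deviation is treating both parts uniformly through a general $G$ rather than reducing (ii) to the universal case, which is an immaterial difference.
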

\begin{proof}
  The first part follows from \cref{extprodcomp}(iii), by the same
  argument as in the proof of \cref{cor:segpshprodcolim}. We get the
  second part by combining this with \cref{obs:compprescolim}.
\end{proof}

\begin{cor}
  Suppose $\uV$ is a cocomplete $\uO \times \mathbb{E}_{1}$-monoidal
  \icat{} that is compatible with small colimits. Then the induced
  $\uO$-monoidal structures on $\Algd(\uV)$ and $\Cat(\uV)$ are also
  compatible with small colimits. \qed
\end{cor}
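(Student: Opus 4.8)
The plan is to unpack what ``compatible with small colimits'' means for an $\uO$-monoidal \icat{} and verify its two ingredients separately: that the underlying \icats{} $\Algd(\uV)$ and $\Cat(\uV)$ admit small colimits, and that for each multimorphism of $\uO$ the corresponding operation preserves small colimits in each variable. Cocompleteness is the easy part: $\Algd(\uV)$ admits small colimits whenever $\uV$ does (by \cite{enr}, or --- for $\uV$ merely cocomplete --- via the fully faithful, small-colimit-compatible embedding $\Algd(\uV) \hookrightarrow \Algd(\LPSh^{\mathbb{S}}(\uV))$ used in \S\ref{sec:tensgen}), and $\Cat(\uV)$ is a reflective localization of $\Algd(\uV)$, hence cocomplete as well. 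So the content is the behaviour of the $\uO$-operations.

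For $\Algd(\uV)$: by \cref{cor:algdOmonoidal} the $\uO$-monoidal structure is pulled back from the cartesian product on $\Algd$, and concretely a multimorphism $\omega$ of $\uO$ with $n$ inputs corresponds to a monoidal functor $\omega \colon \uV^{\times n} \to \uV$ (for the $n$-fold product monoidal structure on $\uV^{\times n}$, which preserves colimits in each variable because $\uV$ is compatible with small colimits as an $\uO \times \mathbb{E}_{1}$-monoid), and the associated operation on $\Algd(\uV)$ is the composite
\[ \Algd(\uV)^{\times n} \xto{\boxtimes^{n}} \Algd(\uV^{\times n}) \xto{\omega_{*}} \Algd(\uV). \]
Factor $\omega$ through the universal colimit-in-each-variable functor as $\uV^{\times n} \to \uV^{\otimes n} \xto{\bar{\omega}} \uV$, where $\bar{\omega}$ is monoidal and preserves small colimits. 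Functoriality of $(\blank)_{*}$ rewrites the operation as $\bar{\omega}_{*}$ precomposed with the iterated external tensor product $\Algd(\uV)^{\times n} \to \Algd(\uV^{\otimes n})$. The latter preserves colimits in each variable by repeated application of \cref{cor:cocomptenscolim} (each time to the two cocomplete monoidal \icats{} $\uV^{\otimes k}$ and $\uV$, both compatible with small colimits), while $\bar{\omega}_{*}$ preserves all colimits by \cref{obs:compprescolim}. Hence every $\uO$-operation on $\Algd(\uV)$ preserves small colimits in each variable, so the $\uO$-monoidal structure on $\Algd(\uV)$ is compatible with small colimits.

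For $\Cat(\uV)$: the localization $L \colon \Algd(\uV) \to \Cat(\uV)$ (left adjoint to the inclusion $i$) is $\uO$-monoidal, so the $\uO$-operation on $\Cat(\uV)$ attached to $\omega$ is $L\bigl(\omega_{*}(i(\blank) \boxtimes \cdots \boxtimes i(\blank))\bigr)$, and colimits in $\Cat(\uV)$ are computed by applying $L$ to colimits in $\Algd(\uV)$. By \cref{extprodcomp}(iii) the external product sends essentially surjective, resp.\ fully faithful, maps in each variable again to such maps, hence --- since $\boxtimes$ and $\omega_{*}$ preserve colimits in each variable --- sends the whole class of maps inverted by $L$ in each variable into the class inverted by the target localization; running the argument of the proof of \cref{cor:segpshprodcolim} then shows $L(\omega_{*}(i(\blank)\boxtimes\cdots\boxtimes i(\blank)))$ preserves colimits in each variable. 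Combining the $\Algd$ and $\Cat$ conclusions gives the claim.

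I expect the only real friction to be organizational: checking that the $\uO$-operation of \cref{cor:algdOmonoidal} is genuinely the composite $\omega_{*}\circ\boxtimes^{n}$ at the level of the full coherent structure (not just underlying functors), promoting the binary results \cref{cor:cocomptenscolim} and \cref{cor:segpshprodcolim} to the $n$-ary operations indexed by arbitrary multimorphisms of $\uO$, and confirming that the intermediate cocomplete tensor powers $\uV^{\otimes k}$ are again monoidal and compatible with small colimits so that those corollaries apply to them.
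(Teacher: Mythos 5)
Your proposal is correct and is essentially the argument the paper intends: the statement is left as an immediate consequence of \cref{cor:cocomptenscolim}, \cref{obs:compprescolim}, and their $\Cat$-analogues, and your unpacking (identifying each $\uO$-operation as $\omega_{*}\circ\boxtimes^{n}$ for a monoidal functor $\omega\colon\uV^{\times n}\to\uV$ preserving colimits in each variable, then iterating the binary results) is exactly the intended route. The only superfluous step is the factorization through $\uV^{\otimes n}$, since the corollary immediately preceding the statement already treats monoidal functors that preserve colimits in each variable directly.
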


\begin{remark}
  If $\uV$ is a presentably $\mathbb{E}_{n+1}$-monoidal \icat{} (for
  $n \geq 1$) it now follows from the adjoint functor theorem that
  $\Algd(\uV)$ and $\Cat(\uV)$ are \emph{closed}
  $\mathbb{E}_{n}$-monoidal \icats{}. For $n \geq 2$ let
  $\FunV(\blank,\blank)$ denote the corresponding internal Hom on
  $\Algd(\uV)$. (We ignore the case $n = 1$ only because this requires
  considering different adjoints in the two variables.) It follows
  from \cite{enr}*{Proposition 5.5.9} that if $\eB$ is complete, then
  $\FunV(\eA, \eB)$ is also complete for any algebroid $\eA$;
  hence $\FunV(\blank, \blank)$ is also the internal Hom on
  $\Cat(\uV)$.
\end{remark}

\section{Essentially surjective and fully faithful functors}\label{sec:ffes}
In this section we first show that essentially surjective and fully
faithful morphisms form a factorization system on $\Cat(\uV)$, and
then prove that the tensor product (and internal Hom in the
presentable case) are compatible with this factorization system.

\begin{propn}\label{propn:esfffact}
  The essentially surjective and the fully faithful $\uc{V}$-functors
  form a factorization system on $\Cat(\uc{V})$. In other words,
  any $\uc{V}$-functor $F \colon \ec{C} \to \ec{D}$
  factors essentially uniquely as an essentially surjective functor
  followed by a fully faithful one.
\end{propn}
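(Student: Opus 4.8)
The plan is to build the factorization at the level of algebroids and then transport it to $\Cat(\uV)$ via the completion localization. Given a $\uV$-functor $F \colon \ec{C} \to \ec{D}$ between complete algebroids, let $X = \iota_{0}\ec{C}$ and $Y = \iota_{0}\ec{D}$ be the underlying spaces, and let $X' := X \times_{Y} Y$ be... more precisely, I would take the factorization of the underlying map of spaces $X \to Y$ through $X \times_{\iota\ec{D}}\iota\ec{D}$ --- but since we are working with complete objects, the cleaner route is: factor $\iota F \colon \iota\ec{C} \to \iota\ec{D}$ in $\Spc$ as a surjection (on $\pi_0$) followed by an inclusion of a union of components $\iota\ec{C} \to Z \hookrightarrow \iota\ec{D}$. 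Pull the $\uV$-enrichment of $\ec{D}$ back along $Z \hookrightarrow \iota\ec{D} \leftarrow Y$ --- concretely, using the observation (cited above from \cite{enr}*{Lemma 5.3.2}) that fully faithful morphisms are exactly the cartesian morphisms for $\Algd(\uV) \to \Spc$, there is a fully faithful $\ec{E} \to \ec{D}$ whose space of objects is the preimage of $Z$ under $Y \to \iota\ec{D}$, together with an induced factorization $\ec{C} \to \ec{E} \to \ec{D}$ where the first map is essentially surjective by construction and the second is fully faithful.

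The main work is then to check that $\ec{E}$ can be chosen complete (so that this factorization lives in $\Cat(\uV)$ and not merely in $\Algd(\uV)$), and that the factorization has the required orthogonality property. For completeness: since $\ec{E} \to \ec{D}$ is fully faithful with $\ec{D}$ complete, one shows $\iota_{\bullet}\ec{E}$ is obtained from $\iota_{\bullet}\ec{D}$ by restricting to the appropriate components in each simplicial degree (this uses that $E^{n}$ has a contractible space of objects in each component, so $\iota_{n}\ec{E}$ is the preimage of $Z$ under $\iota_{n}\ec{D} \to \iota_{0}\ec{D} \to \iota\ec{D}$ pulled back suitably); taking the colimit over $\simp^{\op}$ commutes with this pullback along a map of spaces, and one deduces $\iota_{0}\ec{E} \to \iota\ec{E}$ is an equivalence from the corresponding fact for $\ec{D}$. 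For orthogonality, I would verify directly that essentially surjective maps are left orthogonal to fully faithful maps: given a lifting square with $P \colon \ec{A} \to \ec{B}$ essentially surjective and $Q \colon \ec{C} \to \ec{D}$ fully faithful, the space of objects of a lift is forced on a dense set of components by essential surjectivity of $P$ together with the completeness of $\ec{C}$ (which lets us promote a lift-up-to-the-interval to an honest one), and then the Hom objects of the lift are uniquely determined because $Q$ induces equivalences on Hom objects. Essential uniqueness of the factorization is then formal from orthogonality.

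The step I expect to be the main obstacle is verifying completeness of the intermediate object $\ec{E}$ --- that is, checking that the fully faithful "full subcategory on a union of components" construction preserves completeness. The subtlety is that completeness is about the colimit of the simplicial space $\iota_{\bullet}$, so one needs that passing to $\ec{E}$ commutes with that colimit; this is where I would lean on \cref{obs:psegviafib}-style descriptions or on the explicit identification of $\iota_{n}\ec{E}$ as a pullback of spaces, together with the fact that colimits in $\Spc$ are universal so that the pullback along $Z \hookrightarrow \iota\ec{D}$ commutes with the geometric realization. A secondary technical point is making the factorization functorial enough to state as a genuine factorization system rather than a mere factorization; but that follows once orthogonality is in hand. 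Alternatively --- and this might be cleaner --- one can note that $\Cat(\uV)$ is a localization of $\Algd(\uV)$ at the essentially surjective and fully faithful maps (the remark above), construct the (ess.\ surj., f.f.) factorization system on $\Algd(\uV)$ first using that $\Algd(\uV) \to \Spc$ has cartesian morphisms over monomorphisms, and then transport it across the localization, using that the localization inverts maps in the left class.
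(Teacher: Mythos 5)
Your overall strategy---restrict the enrichment of $\ec{D}$ to the components hit by $F$, check that the intermediate object is complete, and get orthogonality from the fibration $\Algd(\uV)\to\Spc$---is the one the paper uses, and the ``alternative'' in your last sentence is essentially the paper's actual proof: one lifts the epi-/monomorphism factorization system on $\Spc$ along the cartesian fibration $\Algd(\uV)\to\Spc$ via \cite{HA}*{Proposition 2.1.2.5} and then checks that it \emph{restricts} to the full subcategory $\Cat(\uV)$ (not ``transports across the localization''---the localization inverts precisely the maps lying in both classes, so the relevant point is that the intermediate object of the factorization of a map between complete algebroids is again complete). Your argument for that completeness, identifying $\iota_{\bullet}\ec{E}$ as the pullback of $\iota_{\bullet}\ec{D}$ along the inclusion of components $Z\hookrightarrow\iota\ec{D}$ and commuting the realization with this pullback by universality of colimits, is a workable alternative to the paper's (\cref{lem:fullsubcomp}, which instead deduces completeness from the uniqueness of epi/mono factorizations in $\Spc$); it does use completeness of $\ec{D}$ to know that all vertices of a point of $\iota_{n}\ec{D}$ land in the same component of $\iota\ec{D}$.

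There are two genuine gaps. First, the direct verification of orthogonality is not a proof: ``the space of objects of a lift is forced on a dense set of components'' and ``promote a lift-up-to-the-interval to an honest one'' do not produce a contractible space of lifts, and building one by hand is exactly the delicate point that \cite{HA}*{Proposition 2.1.2.5} is designed to bypass; you should invoke it. Second, and more substantively, after lifting the factorization system you only know that its right class consists of \emph{cartesian} morphisms lying over \emph{monomorphisms} of object spaces; to conclude that on $\Cat(\uV)$ this class is all fully faithful functors, you must show that a fully faithful functor between complete algebroids induces a monomorphism $\iota\ec{C}\to\iota\ec{D}$. This is not formal: it is \cref{lem:ffmono}, which rests on the $\infty$-topos lemma \cref{lem:pbgivesmono} about groupoid objects, and your proposal nowhere engages with it. (One could instead factor a given fully faithful $F$ and observe that the first factor is then fully faithful and essentially surjective between complete objects, hence an equivalence by \cite{enr}*{Theorem 5.6.6}; but some such argument is required.)
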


The proof relies on some preliminary results:
\begin{observation}
  Recall that a morphism in $\Spc$ is an \emph{epimorphism} if it is
  surjective on $\pi_{0}$ and a \emph{monomorphism} if it is an
  inclusion of connected components. These classes of maps form a
  factorization system on $\Spc$ \cite{HTT}*{Example 5.2.8.16}. Moreover, the
  epimorphisms are precisely the effective epimorphisms when we regard
  $\Spc$ as an $\infty$-topos.
\end{observation}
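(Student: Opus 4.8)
The plan is to reduce the three assertions to the standard theory of $\Spc$ as an $\infty$-topos: the two pointwise characterizations I would establish directly, while the factorization system itself is recorded in \cite{HTT}.

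First I would pin down the monomorphisms, i.e.\ verify that the right class agrees with the usual categorical notion. A morphism $f \colon X \to Y$ in $\Spc$ is a monomorphism exactly when the diagonal $X \to X \times_{Y} X$ is an equivalence, and since $\Spc$ is an $\infty$-topos this is the same as $f$ being $(-1)$-truncated. Passing to fibres, $f$ is $(-1)$-truncated \IFF{} every homotopy fibre $\mathrm{fib}_{y}(f)$ is empty or contractible, which is precisely the condition that $X$ be equivalent over $Y$ to the union of those connected components of $Y$ lying in the image of $f$; that is, $f$ is an inclusion of connected components.

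Next I would identify the epimorphisms with the effective epimorphisms. By definition an effective epimorphism in an $\infty$-topos is a map $f \colon X \to Y$ exhibiting $Y$ as the colimit of the \v{C}ech nerve of $f$; in $\Spc$ this colimit computes the union of the connected components of $Y$ hit by $f$, so $f$ is an effective epimorphism \IFF{} that union is all of $Y$, \IFF{} $\pi_{0}(f)$ is surjective. This is the content of the standard characterization of effective epimorphisms in $\Spc$, so the $\pi_{0}$-surjections and the effective epimorphisms coincide, giving the last sentence of the observation.

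Finally I would assemble the factorization system. For any $f \colon X \to Y$ the construction above yields the factorization $X \to \mathrm{im}(f) \hookrightarrow Y$, where $\mathrm{im}(f)$ is the union of the components of $Y$ in the image of $\pi_{0}(f)$: the first map is surjective on $\pi_{0}$ and the second is an inclusion of components. That the $\pi_{0}$-surjections are left orthogonal to the inclusions of connected components, and that this factorization is essentially unique, is the general statement for the $\infty$-topos $\Spc$ recorded in \cite{HTT}*{Example 5.2.8.16}; combining it with the two characterizations above gives the observation as stated. The only genuine computation here is the identification of effective epimorphisms with $\pi_{0}$-surjections, since it rests on evaluating the geometric realization of the \v{C}ech nerve, but as this is a standard fact about $\Spc$ it may simply be cited, leaving the rest as routine bookkeeping.
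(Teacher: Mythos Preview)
Your proposal is correct, but note that the paper does not actually give a proof here: this is an \emph{observation} in the sense of a reminder, not a result being established. The first sentence is essentially definitional (the paper is fixing the terms ``epimorphism'' and ``monomorphism'' in $\Spc$ to mean $\pi_{0}$-surjection and inclusion of components, respectively), the factorization system is simply cited from \cite{HTT}*{Example 5.2.8.16}, and the identification of these epimorphisms with effective epimorphisms is stated as a standard fact without argument.

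Your write-up therefore goes beyond what the paper does: you supply the (correct and standard) reasons why categorical monomorphisms in $\Spc$ coincide with inclusions of components, and why effective epimorphisms coincide with $\pi_{0}$-surjections via the \v{C}ech nerve. None of this is wrong, and the arguments you sketch are the right ones; they are just more than what is required to match the paper, which treats the entire statement as background.
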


\begin{propn}\label{lem:pbgivesmono}
  Suppose $\uc{X}$ is an $\infty$-topos,
  $X_{\bullet},Y_{\bullet} \colon \Dop \to \uc{X}$ are groupoid
  objects, and $f \colon X_{\bullet} \to Y_{\bullet}$ is a map such
  that the commutative square
  \[
    \begin{tikzcd}
      X_{1} \arrow{r}{f_{1}} \arrow{d}[swap]{(d_{1},d_{0})} &
      Y_{1}\arrow{d}{(d_{1},d_{0})} \\
      X_{0}^{\times 2} \arrow{r}{f_{0}^{\times 2}} & Y_{0}^{\times 2}
    \end{tikzcd}
  \]
  is cartesian. Then the induced map on colimits
  \[ f_{-1} \colon X_{-1} := \colim_{\Dop} X_{\bullet} \to
  \colim_{\Dop} Y_{\bullet} =: Y_{-1}\] is a monomorphism.
\end{propn}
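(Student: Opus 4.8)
The plan is to reduce the claim, via descent, to exactly the hypothesis. I will use two standard facts about an $\infty$-topos $\uc{X}$: groupoid objects are effective, and a morphism is an equivalence whenever some base change of it along an effective epimorphism is (\cite{HTT}*{\S 6.1}). By effectivity, $X_{\bullet}$ is the \v{C}ech nerve of the effective epimorphism $p \colon X_{0}\to X_{-1}$ and $Y_{\bullet}$ is the \v{C}ech nerve of $q \colon Y_{0}\to Y_{-1}$; in particular $X_{1}\simeq X_{0}\times_{X_{-1}}X_{0}$ and $Y_{1}\simeq Y_{0}\times_{Y_{-1}}Y_{0}$, with $(d_{1},d_{0})$ the canonical projections, and the two composites $X_{0}\xto{p}X_{-1}\xto{f_{-1}}Y_{-1}$ and $X_{0}\xto{f_{0}}Y_{0}\xto{q}Y_{-1}$ agree.

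First I would rewrite the hypothesis. Pasting pullback squares and using $Y_{1}\simeq Y_{0}\times_{Y_{-1}}Y_{0}$ gives
\[
  X_{0}^{\times 2}\times_{Y_{0}^{\times 2}}Y_{1} \simeq X_{0}\times_{Y_{0}}Y_{1}\times_{Y_{0}}X_{0}\simeq X_{0}\times_{Y_{-1}}X_{0}.
\]
Unwinding the effectivity equivalences together with the compatibility of $f_{1}$ with the face maps (since $f$ is a map of simplicial objects), the comparison map whose invertibility is asserted by the hypothesis is identified with the canonical map $\iota \colon X_{1}\simeq X_{0}\times_{X_{-1}}X_{0}\to X_{0}\times_{Y_{-1}}X_{0}$ induced by $f_{-1}$; so the hypothesis says exactly that $\iota$ is an equivalence.

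Then I would observe that $\iota$ is the base change of the relative diagonal $\Delta_{f_{-1}}\colon X_{-1}\to X_{-1}\times_{Y_{-1}}X_{-1}$ along the effective epimorphism $p\times p \colon X_{0}\times X_{0}\to X_{-1}\times X_{-1}$ (effective epimorphisms being stable under products): pulling back the target along $p\times p$ yields $X_{0}\times_{Y_{-1}}X_{0}$, while pulling back the source, which maps to $X_{-1}\times X_{-1}$ by the diagonal, yields $X_{0}\times_{X_{-1}}X_{0}\simeq X_{1}$. By descent, $\Delta_{f_{-1}}$ is an equivalence if and only if $\iota$ is, so the hypothesis is equivalent to $\Delta_{f_{-1}}$ being an equivalence --- which is precisely the statement that $f_{-1}$ is $(-1)$-truncated, i.e.\ a monomorphism.

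The main obstacle is the bookkeeping in the middle step: verifying that the comparison map of the hypothesis really is the canonical map $\iota$. This is a routine but somewhat delicate diagram chase through the effectivity equivalences and the fact that $f$ is a map of simplicial objects; everything else is formal.
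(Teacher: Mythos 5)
Your proposal is correct and is essentially the paper's own argument in slightly different packaging: both reduce the claim to showing that the relative diagonal $X_{-1}\to X_{-1}\times_{Y_{-1}}X_{-1}$ becomes the hypothesized equivalence $X_{1}\simeq X_{0}\times_{X_{-1}}X_{0}\to X_{0}\times_{Y_{-1}}X_{0}$ after base change along the effective epimorphism $X_{0}^{\times 2}\to X_{-1}^{\times 2}$, using effectivity of groupoid objects and conservativity of pullback along effective epimorphisms (the paper organizes this as a commutative cube plus a two-square pasting lemma). The ``delicate'' identification you flag is present in the paper's version too, as the commutativity of that cube, so it is not a gap.
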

I thank Bastiaan Cnossen and Adrian Clough for explaining the
following proof. It is convenient to first spell out a key part of the argument separately:
\begin{observation}\label{obs:pbpastingtopos}
  Suppose $\uc{X}$ is an $\infty$-topos, and we have a commutative
  diagram
  \[
    \begin{tikzcd}
      A' \arrow{r} \arrow{d} & B' \arrow{r} \arrow{d} & C' \arrow{d} \\
      A \arrow[twoheadrightarrow]{r}{f} & B \arrow{r} & C
    \end{tikzcd}
  \]
  in $\uc{X}$, where the left square and the outer square are
  cartesian, and $f$ is an effective epimorphism. Then the right
  square is also cartesian: We want to show that the canonical map
  $B' \to C' \times_{C} B$ is an equivalence. Since pullback along an
  effective epimorphism is conservative by \cite{HTT}*{Lemma 6.2.3.16} and this map
  lives over $B$, we can check this after pulling back along $f$. But
  then since the left square is cartesian we get the canonical map
  $A' \to C' \times_{C} A$, which is an equivalence since the outer
  square is cartesian.
\end{observation}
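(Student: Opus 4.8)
The plan is to prove the equivalence $B' \isoto C' \times_{C} B$ by checking it after base change along the effective epimorphism $f$, exploiting that pullback along an effective epimorphism is conservative \cite{HTT}*{Lemma 6.2.3.16}. First I would form the canonical comparison morphism $g \colon B' \to C' \times_{C} B$ induced by the top middle map $B' \to C'$ and the middle vertical map $B' \to B$. Since the second projection $C' \times_{C} B \to B$ carries $g$ to the map $B' \to B$, the morphism $g$ lives over $B$, which is exactly the setting in which conservativity of $f^{*}$ applies.

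It then suffices to show that $f^{*}g$ is an equivalence, so I would compute both base changes explicitly. The left square being cartesian gives $f^{*}B' \simeq A \times_{B} B' \simeq A'$. For the target, base change commutes with fibre products, so
\[ f^{*}(C' \times_{C} B) \simeq A \times_{B}(C' \times_{C} B) \simeq C' \times_{C}(B \times_{B} A) \simeq A \times_{C} C', \]
and the cartesianness of the outer square identifies $A \times_{C} C'$ with $A'$ as well.

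The one point that requires care --- and the main obstacle --- is checking that under these two identifications the morphism $f^{*}g$ really is the equivalence $A' \isoto A \times_{C} C'$ witnessing that the outer square is cartesian, and not some other endomorphism of $A'$. I expect this to follow by tracking the two structure maps $A' \to C'$ and $A' \to A$ through the base change and applying the pasting law for pullbacks: the composite $A' \simeq f^{*}B' \xto{f^{*}g} f^{*}(C' \times_{C} B) \simeq A \times_{C} C'$ agrees with the canonical comparison map defining the outer pullback, which is an equivalence by hypothesis. Once this compatibility is confirmed, conservativity of $f^{*}$ forces $g$ itself to be an equivalence, so the right-hand square is cartesian.
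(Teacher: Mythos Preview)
Your proposal is correct and follows essentially the same argument as the paper: reduce to checking the comparison map after pulling back along the effective epimorphism $f$, using conservativity of $f^{*}$ from \cite{HTT}*{Lemma 6.2.3.16}, and identify the pulled-back map with the canonical comparison for the outer square. Your additional care in verifying that $f^{*}g$ really is the outer comparison map (rather than some other map) is a point the paper leaves implicit, but the strategy is the same.
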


\begin{proof}[Proof of \cref{lem:pbgivesmono}]
  We want to show that the diagonal $X_{-1} \to X_{-1} \times_{Y_{-1}}
  X_{-1}$ is an equivalence, or equivalently that the commutative square
  \[
    \begin{tikzcd}
      X_{-1} \arrow{r} \arrow{d}{\Delta} & Y_{-1} \arrow{d}{\Delta} \\
      X_{-1}^{\times 2} \arrow{r} & Y_{-1}^{\times 2}
    \end{tikzcd}
  \]
  is cartesian.

  Since $\uc{X}$ is an $\infty$-topos, the map $X_{0} \to X_{-1}$ is
  an effective epimorphism and the groupoid object $X_{\bullet}$ is
  effective, which means that the commutative square
  \[
    \begin{tikzcd}
      X_{1} \arrow{r}{d_{0}} \arrow{d}{d_{1}} & X_{0}\arrow{d} \\
      X_{0} \arrow{r} & X_{-1}
    \end{tikzcd}
  \]
  is cartesian, and similarly for $Y_{\bullet}$. Equivalently, the
  commutative square
  \[
    \begin{tikzcd}
      X_{1} \arrow{d}{(d_{1},d_{0})} \arrow{r} & X_{-1}
      \arrow{d}{\Delta} \\
      X_{0}^{\times 2}\arrow{r} & X_{-1}^{\times 2}
    \end{tikzcd}
  \]
  is cartesian, so that we have a commutative cube
  \[
    \begin{tikzcd}
      X_{1} \arrow{rr}\arrow{dr} \arrow{dd} & & Y_{1} \arrow{dd}
      \arrow{dr} \\
      & X_{-1} \arrow[crossing over]{rr}  & & Y_{-1} \arrow{dd} \\
      X_{0}^{\times 2} \arrow[crossing over]{rr} \arrow{dr}& & Y_{0}^{\times 2}
      \arrow{dr} \\
       & X_{-1}^{\times 2} \arrow{rr} \arrow[crossing over,leftarrow]{uu} & & Y_{-1}^{\times 2}
    \end{tikzcd}
  \]
  where the left, right and back faces are cartesian. Since
  $X_{0}^{\times 2} \to X_{-1}^{\times 2}$ is an effective
  epimorphism, the front face is also cartesian by \cref{lem:pbgivesmono}.
\end{proof}

\begin{cor}\label{lem:ffmono}
  Suppose $f \colon \ec{C} \to \ec{D}$ is a fully faithful morphism in
  $\Algd(\uc{V})$. Then $\iota \ec{C} \to \iota \ec{D}$ is a
  monomorphism in $\Spc$.
\end{cor}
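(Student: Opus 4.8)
The plan is to deduce this from \cref{lem:pbgivesmono}, applied in the $\infty$-topos $\Spc$, by exhibiting $\iota \ec{C} \to \iota \ec{D}$ as the map on colimits over $\Dop$ of a map of groupoid objects $\iota_{\bullet}\ec{C} \to \iota_{\bullet}\ec{D}$. Two things need to be verified: that $\iota_{\bullet}\ec{C}$ and $\iota_{\bullet}\ec{D}$ are groupoid objects in $\Spc$, and that the square
\[
  \begin{tikzcd}
    \iota_{1}\ec{C} \arrow{r}{\iota_{1}f} \arrow{d}[swap]{(d_{1},d_{0})} & \iota_{1}\ec{D} \arrow{d}{(d_{1},d_{0})} \\
    (\iota_{0}\ec{C})^{\times 2} \arrow{r}{(\iota_{0}f)^{\times 2}} & (\iota_{0}\ec{D})^{\times 2}
  \end{tikzcd}
\]
is cartesian. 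Granting these, \cref{lem:pbgivesmono} immediately yields that $\iota f \colon \iota \ec{C} \to \iota \ec{D}$ is a monomorphism in $\Spc$.

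For the cartesian square, I would use the observation recalled above (\cite{enr}*{Lemma 5.3.2}) that the fully faithful morphism $f$ is a cartesian morphism for the projection $G \colon \Algd(\uc{V}) \to \Spc$, $\eA \mapsto \iota_{0}\eA$; in particular $f$ is a cartesian lift of $\iota_{0}f = Gf$. The universal property of cartesian morphisms then gives, naturally in $\eB \in \Algd(\uc{V})$, a pullback square
\[
  \begin{tikzcd}[column sep = large]
    \Map(\eB, \ec{C}) \arrow{r}{f \circ -} \arrow{d} & \Map(\eB, \ec{D}) \arrow{d} \\
    \Map_{\Spc}(G\eB, \iota_{0}\ec{C}) \arrow{r}{\iota_{0}f \circ -} & \Map_{\Spc}(G\eB, \iota_{0}\ec{D}).
  \end{tikzcd}
\]
Feeding in the cosimplicial object $E^{\bullet}$ and using that $G(E^{n}) = \{0,\ldots,n\}$, so that $G(E^{\bullet})$ is the standard cosimplicial finite set and hence $\Map_{\Spc}(G(E^{\bullet}), Z)$ is the \v{C}ech nerve $\check{C}(Z \to \ast) \simeq ([n] \mapsto Z^{\times(n+1)})$, I obtain a natural equivalence of simplicial spaces
\[ \iota_{\bullet}\ec{C} \;\simeq\; \iota_{\bullet}\ec{D} \times_{\check{C}(\iota_{0}\ec{D} \to \ast)} \check{C}(\iota_{0}\ec{C} \to \ast). \]
Its value at $[1]$ is exactly the cartesian square displayed above.

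It remains to know that $\iota_{\bullet}\eA$ is a groupoid object in $\Spc$ for every algebroid $\eA$: this holds since $\iota_{\bullet}\eA$ is the underlying simplicial space of the core $\infty$-groupoid of the underlying Segal space of $\eA$ (see \cite{enr}*{\S 5.2}); alternatively, one can reuse the displayed equivalence, as \v{C}ech nerves are groupoid objects and groupoid objects are closed under limits. Once this is in hand the rest is formal, so the only point requiring real care is this identification of $\iota_{\bullet}\eA$ as a groupoid object (together with the minor bookkeeping that the simplicial structure on $[n] \mapsto \Map_{\Spc}(\{0,\ldots,n\}, Z)$ is that of the \v{C}ech nerve, with coface maps acting by deleting coordinates); I expect that to be the main, though still mild, obstacle.
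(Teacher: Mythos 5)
Your proof is correct and takes essentially the same route as the paper: the paper likewise applies \cref{lem:pbgivesmono} to $\iota_{\bullet}\ec{C} \to \iota_{\bullet}\ec{D}$, simply citing \cite{enr}*{Lemma 5.3.5} for the cartesian square on $1$-simplices that you instead rederive from the cartesian-morphism characterization of fully faithful functors applied to $E^{\bullet}$. One small caveat: your ``alternative'' justification that $\iota_{\bullet}\ec{A}$ is a groupoid object is circular (it presupposes the claim for $\ec{D}$), so you do need your primary justification via the core of the underlying Segal space, which is indeed the content of \cite{enr}*{\S 5.2} that the paper implicitly invokes.
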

\begin{proof}
  By \cite{enr}*{Lemma 5.3.5}, the commutative
  square
  \[
    \begin{tikzcd}
      \iota_{1} \ec{C} \arrow{r} \arrow{d} & \iota_{1} \ec{D}
      \arrow{d} \\
      \iota_{0} \ec{C} \times \iota_{0} \ec{C} \arrow{r} &       \iota_{0} \ec{D} \times \iota_{0} \ec{D}
    \end{tikzcd}
  \]
  is cartesian. Since $\iota_{\bullet} \ec{C}$ and $\iota_{\bullet}
  \ec{D}$ are groupoid objects in the $\infty$-topos $\Spc$, this
  implies that the map on colimits $\iota \ec{C} \to \iota \ec{D}$ is
  a monomorphism by \cref{lem:pbgivesmono}.
\end{proof}

\begin{lemma}\label{lem:fullsubcomp}
  Let $\ec{C}$ be a complete algebroid in $\uc{V}$, and let
  $f \colon X \to \iota \ec{C}$ be a monomorphism in $\Spc$. Then the
  cartesian transport $f^{*}\ec{C}$ in $\Algd(\uV)$ is also complete.
\end{lemma}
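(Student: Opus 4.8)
The plan is to verify directly that $f^{*}\ec{C}$ is complete, \ie{} that the canonical map $\phi \colon \iota_{0}(f^{*}\ec{C}) = X \to \iota(f^{*}\ec{C})$ is an equivalence, by checking that it is a monomorphism in $\Spc$ which is surjective on $\pi_{0}$; such a map is an inclusion of connected components that hits every component, hence an equivalence. Surjectivity on $\pi_{0}$ needs nothing special: for any algebroid $\ec{A}$ the simplicial space $\iota_{\bullet}\ec{A}$ is a groupoid object in $\Spc$ (as used in the proof of \cref{lem:ffmono}), so the canonical map $\iota_{0}\ec{A} \to \iota\ec{A} = \colim_{\Dop}\iota_{\bullet}\ec{A}$ is an effective epimorphism, in particular surjective on $\pi_{0}$. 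All of the content is therefore in the monomorphism claim.

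For this, let $g \colon f^{*}\ec{C} \to \ec{C}$ be the cartesian morphism over $f$ defining the cartesian transport. Since the cartesian morphisms of $\Algd(\uc{V}) \to \Spc$ are exactly the fully faithful functors, $g$ is fully faithful, and by construction its underlying map of spaces is $f$. Applying the canonical natural transformation $\iota_{0} \Rightarrow \iota$ of functors $\Algd(\uc{V}) \to \Spc$ to $g$ produces a commutative square
\[
  \begin{tikzcd}
    X = \iota_{0}(f^{*}\ec{C}) \arrow{r}{f} \arrow{d}[swap]{\phi} & \iota_{0}\ec{C} \arrow{d}{\sim} \\
    \iota(f^{*}\ec{C}) \arrow{r}{\iota g} & \iota\ec{C},
  \end{tikzcd}
\]
in which the right-hand vertical map is an equivalence because $\ec{C}$ is complete. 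Hence $\iota g \circ \phi$ is $f$ followed by an equivalence, so it is a monomorphism; and $\iota g$ is itself a monomorphism by \cref{lem:ffmono}, as $g$ is fully faithful. The monomorphisms in $\Spc$ are the right class of a factorization system and so are closed under left cancellation, whence $\phi$ is a monomorphism. Combined with the first paragraph, this shows $\phi$ is an equivalence, \ie{} $f^{*}\ec{C}$ is complete.

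I do not anticipate any real obstacle here: the only non-formal ingredient is \cref{lem:ffmono}, which is already available, and the argument never requires understanding colimits in $\Algd(\uc{V})$. (A more hands-on variant: completeness of $\ec{C}$ forces the groupoid object $\iota_{\bullet}\ec{C}$ to be constant at $\iota_{0}\ec{C}$, and the universal property of the cartesian morphism $g$ identifies $\iota_{n}(f^{*}\ec{C})$ with $\iota_{0}\ec{C} \times_{(\iota_{0}\ec{C})^{\times(n+1)}} X^{\times(n+1)}$; since $f$ is a monomorphism this iterated fibre product over a monomorphism is again $X$, so $\iota_{\bullet}(f^{*}\ec{C})$ is constant at $X$ and therefore $\iota(f^{*}\ec{C}) \simeq X$.)
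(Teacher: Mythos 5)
Your proof is correct and is essentially the paper's argument: both rest on the triangle $X \to \iota(f^{*}\ec{C}) \to \iota\ec{C}$ over $f$, the $\pi_{0}$-surjectivity of the first map, and \cref{lem:ffmono} applied to the cartesian (fully faithful) morphism $f^{*}\ec{C} \to \ec{C}$ for the second. The only difference is cosmetic: the paper invokes uniqueness of the epi/mono factorization of the monomorphism $f$, while you use right-cancellation for monomorphisms together with ``epi and mono implies equivalence'', which is the same factorization-system content.
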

\begin{proof}
  Consider the commutative triangle
  \[
    \begin{tikzcd}
      X \arrow[twoheadrightarrow]{d} \arrow[hookrightarrow]{dr}{f} \\
      \iota(f^{*}\ec{C}) \arrow[hookrightarrow]{r} & \iota \ec{C}
    \end{tikzcd}
  \]
  Here the bottom horizontal map is a monomorphism by
  \cref{lem:ffmono}, while the left vertical map is an
  epimorphism. These maps must therefore give the unique
  epimorphism/monomorphism factorization of $f$. Since $f$ is by
  assumption a monomorphism, it follows that $X \to
  \iota(f^{*}\ec{C})$ must be an equivalence, \ie{} that $f^{*}\ec{C}$
  is complete. 
\end{proof}

\begin{proof}[Proof of \cref{propn:esfffact}]
  Since $\Algd(\uc{V}) \to \Spc$ is by definition a cartesian
  fibration, by \cite{HA}*{Proposition 2.1.2.5} we can lift the
  epi-/monomorphism factorization system on $\Spc$ to a factorization
  system on $\Algd(\uc{V})$ where the left class consists of all
  morphisms that map to epimorphisms in $\Spc$ and the right class
  consists of \emph{cartesian} morphisms over monomorphisms in $\Spc$,
  that is fully faithful morphisms given by monomorphisms on spaces of
  objects.  It follows from \cref{lem:fullsubcomp} that this restricts
  to a factorization system on the full subcategory
  $\Cat(\uc{V})$. The left class clearly consists of the essentially
  surjective morphisms in $\Cat(\uc{V})$, so it only remains to
  observe that the right class consists precisely of the fully
  faithful ones, since a fully faithful morphism between complete
  algebroids always lies over a monomorphism in $\Spc$ by \cref{lem:ffmono}.
\end{proof}

\begin{lemma}\label{lem:tensoresssurj}
  If $F \colon \ec{C} \to \ec{D}$ is an essentially
  surjective morphism in $\Cat(\uV)$, then so is \[F \otimes
  \id_{\ec{A}} \colon \ec{C} \otimes \ec{A} \to
  \ec{D} \otimes \ec{A}\] for any $\uc{V}$-\icat{}
  $\ec{A}$. 
\end{lemma}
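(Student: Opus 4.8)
The plan is to unwind the definition of the tensor product on $\Cat(\uc{V})$ and reduce to \cref{extprodcomp}(iii) together with two elementary facts about the functor $\iota$. By the construction of the monoidal structure on enriched \icats{} from \cref{cor:algdOmonoidal} (and the fact that the completion functor $\Algd(\uc{V})\to\Cat(\uc{V})$ is strong monoidal), the tensor product of $\uc{V}$-\icats{} $\ec{C}$ and $\ec{A}$ is $\ec{C}\otimes\ec{A}\simeq L\bigl(m_{*}(\ec{C}\boxtimes\ec{A})\bigr)$, where $\boxtimes$ is the external product on $\Algd$, the functor $m_{*}\colon\Algd(\uc{V}\times\uc{V})\to\Algd(\uc{V})$ is induced by composition with the monoidal multiplication $m\colon\uc{V}\times\uc{V}\to\uc{V}$, and $L$ denotes the completion functor. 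Hence $F\otimes\id_{\ec{A}}\simeq L\bigl(m_{*}(F\boxtimes\id_{\ec{A}})\bigr)$, and it is enough to show that each of $\blank\boxtimes\ec{A}$, $m_{*}$, and $L$ carries essentially surjective morphisms to essentially surjective morphisms.

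The first of these is immediate from \cref{extprodcomp}(iii): the morphism $F$, being essentially surjective in $\Cat(\uc{V})$, is in particular essentially surjective as a morphism of $\Algd(\uc{V})$, and $\id_{\ec{A}}$ is trivially so, whence $F\boxtimes\id_{\ec{A}}$ is essentially surjective in $\Algd(\uc{V}\times\uc{V})$. For the other two steps I would use that essential surjectivity of a morphism $g\colon\eA\to\eB$ depends only on the induced map $\pi_{0}\iota\eA\to\pi_{0}\iota\eB$, together with the observation that $\pi_{0}\iota(\blank)$ is always a quotient of the set of equivalence classes of objects — that is, the canonical map $\iota_{0}\eA\to\iota\eA$ is surjective on $\pi_{0}$ for every algebroid. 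For $m_{*}$: since $m$ is strong monoidal it carries the cosimplicial object $E^{\bullet}$ to $E^{\bullet}$, so $m_{*}$ induces a natural map of simplicial spaces $\iota_{\bullet}\eA\to\iota_{\bullet}(m_{*}\eA)$ that is the identity on $\iota_{0}$; passing to realizations gives a natural map $\iota\eA\to\iota(m_{*}\eA)$ which is $\pi_{0}$-surjective, since it is so after precomposition with the $\pi_{0}$-surjection $\iota_{0}\eA\to\iota\eA$. For $L$: the completion unit $\eta\colon\eA\to L\eA$ is itself essentially surjective (and fully faithful) — part of the identification of $\Cat(\uc{V})$ with the localization of $\Algd(\uc{V})$ at the essentially surjective and fully faithful morphisms, \cite{enr}*{Theorem 5.6.6} — so $\pi_{0}\iota\eta$ is $\pi_{0}$-surjective. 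In either case, applying the relevant natural transformation to $g$ yields a commutative square of sets whose top row and vertical maps are $\pi_{0}$-surjective, forcing the bottom row — which is the image of $g$ under $m_{*}$, respectively $L$ — to be $\pi_{0}$-surjective as well. Composing the three steps, $F\otimes\id_{\ec{A}}$ is essentially surjective.

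I expect the main obstacle to be the treatment of $m_{*}$ and $L$ in the previous paragraph: it is not formal that these preserve essential surjectivity, since neither functor preserves $\iota$ on the nose — only the space of objects $\iota_{0}$ — so one really must exploit that $\iota$ is, functorially, a $\pi_{0}$-quotient of $\iota_{0}$, plus (for $L$) that the completion unit is essentially surjective. Everything else is routine. An alternative that bypasses the bookkeeping with $m_{*}$ is to work through the underlying-\icat{} functor $\Cat(\uc{V})\to\CatI$, which is lax monoidal for the cartesian structure on $\CatI$ and detects essential surjectivity, and to check that the induced comparison $\ec{C}_{0}\times\ec{A}_{0}\to(\ec{C}\otimes\ec{A})_{0}$ is essentially surjective; this carries the same content.
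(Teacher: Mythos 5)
Your proof is correct and follows essentially the same route as the paper's: both unwind $\ec{C}\otimes\ec{A}$ as the completion of the pushforward along $\uc{V}\times\uc{V}\to\uc{V}$ of $\ec{C}\boxtimes\ec{A}$, and deduce $\pi_{0}$-surjectivity of $\iota(\ec{C}\otimes\ec{A})\to\iota(\ec{D}\otimes\ec{A})$ from a commutative square whose top row is $\iota\ec{C}\times\iota\ec{A}\to\iota\ec{D}\times\iota\ec{A}$ and whose vertical maps are $\pi_{0}$-surjective. The paper asserts the $\pi_{0}$-surjectivity of those vertical comparison maps in one line, where you justify it by splitting into the $m_{*}$ and $L$ steps (via $\pi_{0}$-surjectivity of $\iota_{0}\to\iota$ and essential surjectivity of the completion unit); the extra detail is sound but the argument is the same.
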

\begin{proof}
  By construction of $\otimes$ on $\Cat(\uV)$,
  it is given by pushing forward the external tensor product in $\Cat(\uV \times \uV)$ along the tensor product functor $\uV \times \uV \to \uV$ and then completing. We therefore have a commutative square of
  \igpds{}
  \[
    \begin{tikzcd}
      \iota \ec{C} \times \iota \ec{A} \arrow{r} \arrow{d} &
      \iota \ec{D} \times \iota \ec{A} \arrow{d} \\
      \iota(\ec{C} \otimes \ec{A}) \arrow{r} &
      \iota(\ec{D} \otimes \ec{A})
    \end{tikzcd}
  \]
  where the vertical maps are surjective on $\pi_{0}$. Moreover, the
  top horizontal map is surjective on $\pi_{0}$ since $\pi_{0}$
  preserves products. It follows that the bottom horizontal map is
  also surjective on $\pi_{0}$, \ie{} $F \otimes \id_{\ec{A}}$
  is essentially surjective.
\end{proof}

\begin{cor}\label{cor:FunFF}
  Let $\uV$ be a presentably $\mathbb{E}_{n+1}$-monoidal \icat{} for $n \geq 2$. If
  a $\uc{V}$-functor $F \colon \ec{C} \to \ec{D}$ is a
  fully faithful morphism in $\Cat(\uV)$, then so is \[F_{*} \colon \FunV(\ec{A}, \ec{C}) \to
  \FunV(\ec{A}, \ec{D})\] for any $\uc{V}$-\icat{} $\ec{A}$.
\end{cor}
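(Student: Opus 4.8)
The plan is to deduce this formally from the factorization system of \cref{propn:esfffact} together with \cref{lem:tensoresssurj}, using the adjunction that defines $\FunV$. First recall that $\FunV(\ec{A},\ec{C})$ and $\FunV(\ec{A},\ec{D})$ are again complete (as noted in the preceding remark), so that $F_{*}$ is indeed a morphism of $\Cat(\uc{V})$. Now, in an orthogonal factorization system the right class consists precisely of the morphisms that are right orthogonal to every morphism of the left class; applied to \cref{propn:esfffact}, a morphism of $\Cat(\uc{V})$ is fully faithful \IFF{} it is right orthogonal to every essentially surjective $\uc{V}$-functor. It therefore suffices to show that for every essentially surjective $G \colon \ec{X} \to \ec{Y}$ in $\Cat(\uc{V})$ the square
\[
  \begin{tikzcd}
    \Map(\ec{Y}, \FunV(\ec{A},\ec{C})) \arrow{r} \arrow{d} & \Map(\ec{X}, \FunV(\ec{A},\ec{C})) \arrow{d} \\
    \Map(\ec{Y}, \FunV(\ec{A},\ec{D})) \arrow{r} & \Map(\ec{X}, \FunV(\ec{A},\ec{D}))
  \end{tikzcd}
\]
is cartesian.

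Next I would invoke the defining adjunction of the internal Hom, which gives equivalences $\Map(\ec{Z}, \FunV(\ec{A},\ec{E})) \simeq \Map(\ec{Z} \otimes \ec{A}, \ec{E})$ natural in $\ec{Z}, \ec{E} \in \Cat(\uc{V})$; here the hypothesis $n \geq 2$ guarantees that there is no ambiguity between the two possible internal Homs. Under these equivalences the square above is carried to
\[
  \begin{tikzcd}
    \Map(\ec{Y} \otimes \ec{A}, \ec{C}) \arrow{r} \arrow{d} & \Map(\ec{X} \otimes \ec{A}, \ec{C}) \arrow{d} \\
    \Map(\ec{Y} \otimes \ec{A}, \ec{D}) \arrow{r} & \Map(\ec{X} \otimes \ec{A}, \ec{D}),
  \end{tikzcd}
\]
whose horizontal maps are induced by $G \otimes \id_{\ec{A}}$ and whose vertical maps are induced by $F$. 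This square is cartesian exactly when $F$ is right orthogonal to $G \otimes \id_{\ec{A}}$.

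To conclude, $G \otimes \id_{\ec{A}}$ is again essentially surjective by \cref{lem:tensoresssurj}, and $F$ is fully faithful by hypothesis, so $F$ is right orthogonal to $G \otimes \id_{\ec{A}}$ by \cref{propn:esfffact}; hence the square is cartesian and $F_{*}$ is fully faithful. The only point that requires care is the second step: one must check that the natural adjunction equivalences assemble into an equivalence of the two commutative squares, equivalently that the relevant mate is an equivalence. This is a routine consequence of the naturality of the unit and counit of the adjunction $(\blank) \otimes \ec{A} \dashv \FunV(\ec{A}, \blank)$ in both variables, and I do not expect any genuine obstacle — at bottom the argument is just the standard fact that the right class of a factorization system is detected by a (unique) lifting property.
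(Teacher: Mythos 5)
Your proposal is correct and follows essentially the same route as the paper: reduce via the factorization system of \cref{propn:esfffact} to an orthogonality (equivalently, unique-lifting) condition against essentially surjective functors, transpose across the adjunction $(\blank)\otimes\ec{A} \dashv \FunV(\ec{A},\blank)$, and conclude with \cref{lem:tensoresssurj}. The paper states this in terms of unique diagonal fillers rather than cartesian mapping squares, but these are the same condition.
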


\begin{proof}
  Since essentially surjective and fully faithful
  $\uc{V}$-functors form a factorization system on $\Cat(\uV)$, it suffices to
  show that any commutative square
  \[
    \begin{tikzcd}
      \ec{I} \arrow{d}[swap]{G} \arrow{r} & \FunV(\ec{A},
      \ec{C}) \arrow{d}{F_{*}} \\
      \ec{J} \arrow{r} \arrow[dashed]{ur} & \FunV(\ec{A}, \ec{D})
    \end{tikzcd}
  \]
  where $G$ is essentially surjective, has an essentially unique
  diagonal filler. By adjunction, this is equivalent to showing there
  is an essentially unique filler in the square
    \[
    \begin{tikzcd}
      \ec{I} \boxtimes \ec{A} \arrow{d}[swap]{G \boxtimes \id_{\ec{A}}} \arrow{r} & \ec{C} \arrow{d}{F} \\
      \ec{J} \boxtimes \ec{A} \arrow{r} \arrow[dashed]{ur} &
      \ec{D}.
    \end{tikzcd}
  \]
  This follows since here the right vertical morphism is fully
  faithful and the left vertical morphism is essentially
  surjective  by \cref{lem:tensoresssurj}.
\end{proof}

\begin{bibdiv}
  \begin{biblist}
    \bib{enropd}{article}{
  author={Chu, Hongyi},
  author={Haugseng, Rune},
  title={Enriched $\infty $-operads},
  eprint={arXiv:1707.08049},
  journal={Adv. Math.},
  volume={361},
  date={2020},
  pages={106913, 85},
}

\bib{patterns2}{article}{
  author={Chu, Hongyi},
  author={Haugseng, Rune},
  title={Free algebras through Day convolution},
  eprint={arXiv:2006.08269},
  journal={Algebr. Geom. Topol.},
  volume={22},
  date={2022},
  number={7},
  pages={3401--3458},
}

\bib{patterns3}{article}{
  author={Chu, Hongyi},
  author={Haugseng, Rune},
  title={Enriched homotopy-coherent structures},
  eprint={arXiv:2308.11502},
  date={2023},
}

\bib{DuggerSpivakMap}{article}{
  author={Dugger, Daniel},
  author={Spivak, David I.},
  title={Mapping spaces in quasi-categories},
  journal={Algebr. Geom. Topol.},
  volume={11},
  date={2011},
  number={1},
  pages={263--325},
}

\bib{enr}{article}{
  author={Gepner, David},
  author={Haugseng, Rune},
  title={Enriched $\infty $-categories via non-symmetric $\infty $-operads},
  journal={Adv. Math.},
  volume={279},
  pages={575--716},
  eprint={arXiv:1312.3178},
  date={2015},
}

\bib{GlasmanDay}{article}{
  author={Glasman, Saul},
  title={Day convolution for $\infty $-categories},
  journal={Math. Res. Lett.},
  volume={23},
  date={2016},
  number={5},
  pages={1369--1385},
  eprint={arXiv:1308.4940},
}

\bib{HHLN1}{article}{
  author={Haugseng, Rune},
  author={Hebestreit, Fabian},
  author={Linskens, Sil},
  author={Nuiten, Joost},
  title={Lax monoidal adjunctions, two-variable fibrations and the calculus of mates},
  journal={Proc. Lond. Math. Soc. (3)},
  volume={127},
  date={2023},
  number={4},
  pages={889--957},
  eprint={arXiv:2011.08808},
}

\bib{HeineThesis}{article}{
  author={Heine, Hadrian},
  title={Restricted $\mathrm {L}_{\infty }$-algebras},
  date={2018},
  note={Ph.D.~thesis, University of Osnabrück. Available at \url {https://repositorium.ub.uni-osnabrueck.de/handle/urn:nbn:de:gbv:700-201909201996}},
}

\bib{HeineEnr}{article}{
  author={Heine, Hadrian},
  title={An equivalence between enriched $\infty $-categories and $\infty $-categories with weak action},
  journal={Adv. Math.},
  volume={417},
  date={2023},
  pages={Paper No. 108941, 140},
  eprint={arXiv:2009.02428},
}

\bib{JoyalUABNotes}{article}{
  author={Joyal, Andr\'{e}},
  title={The theory of quasi-categories and its applications},
  date={2008},
  book={ title={Advanced course on simplicial methods in higher categories}, series={CRM Quaderns}, volume={45-2}, },
  eprint={http://mat.uab.cat/~kock/crm/hocat/advanced-course/Quadern45-2.pdf},
}

\bib{HTT}{book}{
  author={Lurie, Jacob},
  title={Higher Topos Theory},
  series={Annals of Mathematics Studies},
  publisher={Princeton University Press},
  address={Princeton, NJ},
  date={2009},
  volume={170},
  note={Available from \url {http://math.ias.edu/~lurie/}},
}

\bib{HA}{book}{
  author={Lurie, Jacob},
  title={Higher Algebra},
  date={2017},
  note={Available at \url {http://math.ias.edu/~lurie/}.},
}

\bib{RezkCSS}{article}{
  author={Rezk, Charles},
  title={A model for the homotopy theory of homotopy theory},
  journal={Trans. Amer. Math. Soc.},
  volume={353},
  date={2001},
  number={3},
  pages={973--1007},
}

\bib{ToriiDuoid}{article}{
  author={Torii, Takeshi},
  title={On duoidal $\infty $-categories},
  date={2021},
  eprint={arXiv:2106.14121},
}
\end{biblist}
\end{bibdiv}

\end{document}